\title{$S^1$-equivariant contact homology for hypertight contact forms}
\author{Michael Hutchings\footnote{Partially supported by NSF grants DMS-1406312 and DMS-1708899.}\;  and Jo Nelson\footnote{Partially supported by NSF grants DMS-1303903 and DMS-1810692.}\;}
\date{}
\definecolor{indigo}{RGB}{51,0,102}
\definecolor{brightpurple}{RGB}{102,0,153}
\definecolor{fuchsia}{RGB}{180,51,180}
\definecolor{jolightpurple}{RGB}{188,171,240}
\newcommand{\mc}[1]{{\mathcal #1}}
\numberwithin{equation}{section}
\newtheorem{theorem}{Theorem}[section]
\newtheorem{proposition}[theorem]{Proposition}
\newtheorem{corollary}[theorem]{Corollary}
\newtheorem{lemma}[theorem]{Lemma}
\newtheorem{lemma-definition}[theorem]{Lemma-Definition}
\theoremstyle{definition}
\newtheorem{definition}[theorem]{Definition}
\newtheorem{remark}[theorem]{Remark}
\newtheorem{example}[theorem]{Example}
\newcommand{\eqdef}{\;{:=}\;}
\renewcommand{\frak}{\mathfrak}
\newcommand{\bbC}{{\mathbb C}}
\newcommand{\Q}{{\mathbb Q}}
\newcommand{\R}{{\mathbb R}}
\newcommand{\Z}{{\mathbb Z}}
\newcommand{\op}{\operatorname}
\newcommand{\dbar}{\overline{\partial}}
\newcommand{\M}{\mc{M}}
\newcommand{\Ker}{\op{Ker}}
\newcommand{\Coker}{\op{Coker}}
\newcommand{\tensor}{\otimes}
\newcommand{\CZ}{\op{CZ}}
\newcommand{\bpm}{\begin{pmatrix}}
\newcommand{\epm}{\end{pmatrix}}
\newcommand{\J}{\mathbb{J}}
\newcommand{\Mt}{\widetilde{\M}}
\newcommand{\ca}{{\mbox{\lightning}}}
\begin{document}

\maketitle

\begin{abstract}
In a previous paper, we showed that the original definition of cylindrical contact homology, with rational coefficients, is valid on a closed three-manifold with a dynamically convex contact form. However we did not show that this cylindrical contact homology is an invariant of the contact structure.

In the present paper, we define ``nonequivariant contact homology'' and ``$S^1$-equivariant contact homology'', both with integer coefficients, for a contact form on a closed manifold in any dimension with no contractible Reeb orbits. We prove that these contact homologies depend only on the contact structure. Our construction uses Morse-Bott theory and is related to the positive $S^1$-equivariant symplectic homology of Bourgeois-Oancea. However, instead of working with Hamiltonian Floer homology, we work directly in contact geometry, using families of almost complex structures. When  cylindrical contact homology can also be defined, it agrees with the tensor product of the $S^1$-equivariant contact homology with $\Q$. We also present examples showing that the $S^1$-equivariant contact homology contains interesting torsion information.

In a subsequent paper we will use obstruction bundle gluing to extend the above story to closed three-manifolds with dynamically convex contact forms, which in particular will prove that their cylindrical contact homology has a lift to integer coefficients which depends only on the contact structure.


\end{abstract}

\tableofcontents





\section{Introduction and statement of results}

Let $Y$ be a closed odd-dimensional manifold with a nondegenerate contact form $\lambda$. This paper is concerned with the foundations of three kinds of contact homology of $(Y,\lambda)$, each of which, when defined, depends only on the contact structure $\xi=\Ker(\lambda)$:

\medskip

(1) {\em Cylindrical contact homology\/} as defined by Eliashberg-Givental-Hofer \cite{egh}, which we denote by $CH^{EGH}_*(Y,\lambda;J)$. In the absence of certain contractible Reeb orbits, this is the homology of a chain complex over $\Q$ which is generated by ``good'' Reeb orbits. The differential, which we denote by $\partial^{EGH}$, counts $J$-holomorphic cylinders in $\R\times Y$, where $J$ is a generic ``$\lambda$-compatible'' almost complex structure on $\R\times Y$.

In general, it is not possible to obtain sufficient transversality for $J$-holomorphic cylinders to define this theory, even with generic $J$, so some abstract perturbations are needed. However in our previous paper \cite{dc}, we showed that in the three-dimensional case, for dynamically convex\footnote{A nondegenerate contact form $\lambda$ on a three-manifold $Y$ is called {\em dynamically convex\/} if there are no contractible Reeb orbits, or the following two conditions hold: (1) $c_1(\xi)$ vanishes on $\pi_2(Y)$, so that each contractible Reeb orbit $\gamma$ has a well-defined Conley-Zehnder index $\CZ(\gamma)\in\Z$; and (2) each contractible Reeb orbit $\gamma$ has $\CZ(\gamma)\ge 3$. In \cite{dc} we made the additional hypothesis that a contractible Reeb orbit $\gamma$ has $\CZ(\gamma)=3$ only if it is embedded; this assumption can be dropped by \cite{chz}.} contact forms, if $J$ is generic then the differential $\partial^{EGH}$ is in fact well-defined and satisfies $(\partial^{EGH})^2=0$. Thus for dynamically convex contact forms in three dimensions, for generic $J$ we have a well-defined homology $CH^{EGH}_*(Y,\lambda;J)$. 

Continuing the work in \cite{dc}, the next step is to show that this homology depends only on $\xi$ and not on $J$, and more generally to define maps on cylindrical contact homology induced by appropriate symplectic cobordisms. A natural approach would be to define a cobordism map by counting $J$-holomorphic cylinders in the cobordism for a generic ``cobordism-compatible'' almost complex structure $J$. However even for cobordisms between dynamically convex contact forms on $S^3$, sometimes there does not exist $J$ satisfying sufficient transversality; see \cite[Ex.\ 1.26]{jo1} for an example arising from an inclusion of four-dimensional ellipsoids.

Instead, we will bring in two new ingredients: Morse-Bott theory, and obstruction bundle gluing. The present paper explains the Morse-Bott part, which suffices to prove invariance in the case when $Y$ is three dimensional and $\lambda$ is {\em hypertight\/}, meaning that $\lambda$ has no contractible Reeb orbits. That is, if $\lambda'$ is another hypertight contact form on $Y$ with $\Ker(\lambda)=\Ker(\lambda')$, and if $J'$ is a generic $\lambda'$-compatible almost complex structure, then there is a canonical isomorphism\footnote{In this paper we denote canonical isomorphisms by an equals sign.}
\begin{equation}
\label{eqn:eghinv}
CH_*^{EGH}(Y,\lambda;J) = CH_*^{EGH}(Y,\lambda';J').
\end{equation}
In fact, the cylindrical contact homology $CH_*^{EGH}$ has an integral lift which is also an invariant of $\xi$; see equation \eqref{eqn:integrallift} below. In the sequel \cite{inv}, we will use obstruction bundle gluing to extend this result (the existence of an invariant integral lift of cylindrical homology) from the hypertight case to the dynamically convex case in three dimensions.

\medskip

(2) {\em Nonequivariant contact homology}, which we denote by $NCH_*(Y,\lambda;\J)$. This theory, which is defined over $\Z$, is a stepping stone to proving invariance of the cylindrical contact homology $CH_*^{EGH}$, and it has interest in its own right. In this paper we define nonequivariant contact homology $NCH_*$ for closed manifolds $Y$ of arbitrary odd dimension, assuming that $\lambda$ is hypertight.

The idea, combining ingredients from \cite{bce,bee,boduke}, is to count $\J$-holomorphic cylinders in $\R\times Y$ between Reeb orbits, where $\J$ is an almost complex structure on $\R\times Y$ which now depends on the $S^1$ coordinate on the domain. Breaking the $S^1$-symmetry this way eliminates the transversality problems in defining $\partial^{EGH}$, and gives us transverse moduli spaces of $\J$-holomorphic cylinders for generic $\lambda$-compatible $\J$. However the gluing theory to prove that $(\partial^{EGH})^2=0$ does not carry over to this situation to give a chain complex with one generator for each (good) Reeb orbit; see Remark~\ref{rem:jowantsthis}. To define a chain complex in this situation, we need two generators for each (good or bad) Reeb orbit $\alpha$, which we denote by $\widecheck{\alpha}$ and $\widehat{\alpha}$. The differential counts ``Morse-Bott cascades'' built out of $\J$-holomorphic cylinders, using the algebraic formalism in \cite{td}. We then obtain a well-defined homology $NCH_*(Y,\lambda;\J)$, which we call ``nonequivariant contact homology''. We also prove that if $\lambda'$ is another nondegenerate hypertight contact form on $Y$ with $\Ker(\lambda)=\Ker(\lambda')$, and if $\J'$ is a generic $S^1$-family of $\lambda'$-compatible almost complex structures, then there is a canonical isomorphism
\[
NCH_*(Y,\lambda;\J) = NCH_*(Y,\lambda';\J').
\]

\medskip

(3) {\em $S^1$-equivariant contact homology\/}, which we denote by $CH_*^{S^1}(Y,\lambda;\frak{J})$. This homology is also defined over $\Z$, and to define it we again assume that $Y$ is a closed manifold of arbitrary odd dimension and $\lambda$ is hypertight. Equivariant contact homology is a ``family'' version of nonequivariant contact homology, which is defined using a larger family $\mathfrak{J}$ of $\lambda$-compatible almost complex structures on $\R\times Y$, following ideas of \cite{bo12,sesm}, adapted to the contact setting. Roughly speaking, $\mathfrak{J}$ is a $BS^1$-family of $S^1$-families of almost complex structures $\J$. More precisely, $\mathfrak{J}$ is a generic $S^1$-equivariant family of almost complex structures on $\R\times Y$ parametrized by $S^1\times ES^1$.

The $S^1$-equivariant contact homology is the homology of a chain complex whose generators have the form $\widecheck{\alpha}\tensor U^k$ and $\widehat{\alpha}\tensor U^k$, where $k$ is a nonnegative integer, $U$ is a formal variable, and $\alpha$ is a Reeb orbit. Here $U^k$ corresponds to the index $2k$ critical point of a perfect Morse function on $BS^1$. The differential counts holomorphic cylinders in $\R\times Y$ which are ``coupled'' to Morse flow lines on $BS^1$. We denote the resulting homology by $CH^{S^1}_*(Y,\lambda;\mathfrak{J})$. We prove that if $\lambda'$ is another hypertight contact form on $Y$ with $\Ker(\lambda)=\Ker(\lambda')$, and if ${\mathfrak J}'$ is a generic family of $\lambda'$-compatible almost complex structures, then there is a canonical isomorphism
\begin{equation}
\label{eqn:chs1inv}
CH^{S^1}_*(Y,\lambda;\mathfrak{J}) = CH_*^{S^1}(Y,\lambda';\mathfrak{J}').
\end{equation}

Returning to the original goal, we show that if $\lambda$ is hypertight, if $J$ is a $\lambda$-compatible almost complex structure on $\R\times Y$ satisfying sufficient transversality for $J$-holomorphic cylinders to define cylindrical contact homology (which can always be achieved in the three-dimensional case), and if we set $\mathfrak{J}$ to be the constant family given by $J$, then there is a canonical isomorphism
\begin{equation}
\label{eqn:integrallift}
CH^{S^1}_*(Y,\lambda;\mathfrak{J})\tensor\Q = CH_*^{EGH}(Y,\lambda;J).
\end{equation}
Combining this with the topological invariance of equivariant contact homology \eqref{eqn:chs1inv}, we obtain the desired topological invariance of cylindrical contact homology \eqref{eqn:eghinv} in the hypertight case.

Although hypertight contact forms are somewhat special, one application of the present paper is to give a rigorous definition of the ``local contact homology'' from \cite{hm} of a degenerate Reeb orbit; see \S\ref{sec:addstr} and \S\ref{sec:localcontact}. One can also obtain many examples of hypertight contact forms from taut foliations on three-manifolds \cite{ch,zung}.

The nonequivariant and $S^1$-equivariant contact homology described above will be extended to the dynamically convex case in three dimensions in \cite{inv}.

\subsection{Contact preliminaries}
\label{sec:contactprelim}

To explain the above story in more detail, we first recall some basic definitions. Let $Y$ be a closed odd-dimensional manifold with a nondegenerate contact form $\lambda$. Let $\xi=\Ker(\lambda)$ denote the associated contact structure, and let $R$ denote the Reeb vector field determined by $\lambda$.

A {\em Reeb orbit\/} is a map $\gamma:\R/T\Z\to Y$, for some $T>0$, such that $\gamma'(t)=R(\gamma(t))$. We consider two Reeb orbits to be equivalent if they differ by a translation of the domain. We do not assume that $\gamma$ is an embedding; every Reeb orbit is a $d$-fold cover of an embedded Reeb orbit for some positive integer $d$.  For a Reeb orbit as above, the linearized Reeb flow for time $T$ defines a symplectic linear map
\begin{equation}
\label{slm}
P_\gamma : \left( \xi_{\gamma(0)}, d\lambda \right) \to  \left( \xi_{\gamma(0)}, d\lambda \right).
\end{equation}
The Reeb orbit $\gamma$ is \emph{nondegenerate} if $P_\gamma$ does not have 1 as an eigenvalue.  The contact form $\lambda$ is called \emph{nondegenerate} if all Reeb orbits are nondegenerate; generic contact forms have this property.

\begin{definition}
\label{def:lambdacompatible}
An almost complex structure $J$ on ${\mathbb R}\times Y$ is called {\em $\lambda$-compatible\/} if $J(\partial_r)=R$, where $r$ denotes the $\R$ coordinate; $J$ sends $\xi=\Ker(\lambda)$ to itself, compatibly with the linear symplectic form $d\lambda$ on $\xi$; and $J$ is invariant under translation of the $\R$ factor on $\R\times Y$.
\end{definition}

Fix a $\lambda$-compatible almost complex structure $J$, and let $\gamma_+$ and $\gamma_-$ be Reeb orbits. We consider maps $u:\R\times S^1\to\R\times Y$ such that
\begin{equation}
\label{eqn:Floerautonomous}
\partial_s u + J \partial _t u = 0,
\end{equation}
$\lim_{s\to\pm\infty}\pi_\R(u(s,t))=\pm\infty$, and $\lim_{s\to\pm\infty}\pi_Y(u(s,\cdot))$ is a parametrization of $\gamma_\pm$. Here $\pi_\R$ and $\pi_Y$ denote the projections from $\R\times Y$ to $\R$ and $Y$ respectively. We declare two such maps to be equivalent if they differ by translation of the $\R$ and $S^1$ coordinates on the domain $\R\times S^1$, and we denote the set of equivalence classes by $\widetilde{\M}^J(\gamma_+,\gamma_-)$.

Given $u$ as above, we define its {\em Fredholm index\/} by
\begin{equation}
\label{eqn:Fredholmindex}
\op{ind}(u) = \CZ_\tau(\gamma_+) - \CZ_\tau(\gamma_-) + 2c_1(u^*\xi,\tau).
\end{equation}
Here $\tau$ is a symplectic trivialization of $\gamma_+^*\xi$ and $\gamma_-^*\xi$, while  $\CZ_\tau(\gamma_\pm)\in\Z$ is the Conley-Zehnder index of $\gamma_\pm$ with respect to $\tau$, and $c_1(u^*\xi,\tau)$ denotes the relative first Chern class of $u^*\xi$ with respect to $\tau$, which vanishes if and only if $\tau$ extends to a trivialization of $u^*\xi$. If $J$ is generic and $u$ is somewhere injective, then $\widetilde{\M}^J(\gamma_+,\gamma_-)$ is a manifold near $u$ of dimension $\op{ind}(u)$. This is a special case of a more general result for holomorphic curves that are not necessarily cylinders (where the index formula includes an additional Euler characteristic term) which is proved in \cite{dragnev} and explained in more detail in \cite[Thms. 5.4 and 8.1]{wendl}.

Note that $\R$ acts on $\widetilde{\M}^J(\gamma_+,\gamma_-)$ by translation of the $\R$ factor in the target $\R\times Y$. We define
\begin{equation}
\label{eqn:modholcyl}
\M^J(\gamma_+,\gamma_-) = \widetilde{\M}^J(\gamma_+,\gamma_-)/\R.
\end{equation}
Let $\M^J_d(\gamma_+,\gamma_-)$ denote the set of $u\in\M^J(\gamma_+,\gamma_-)$ with Fredholm index $\op{ind}(u)=d$.

Recall that if $\gamma$ is a Reeb orbit and $\tau$ is a trivialization of $\gamma^*\xi$, then the parity of the Conley-Zehnder index $\CZ_\tau(\gamma)$ does not depend on $\tau$. Thus every Reeb orbit $\gamma$ has a well-defined mod $2$ Conley-Zehnder index $\CZ(\gamma)\in\Z/2$. A Reeb orbit $\gamma$ is called {\em bad\/} if it is a (necessarily even degree) multiple cover of a Reeb orbit $\gamma'$ such that
\[
\CZ(\gamma) \neq \CZ(\gamma') \in \Z/2.
\]
Otherwise, $\gamma$ is called {\em good\/}.

\subsection{Cylindrical contact homology}
\label{sec:cchintro}

We now review what we will need to know about the cylindrical contact homology $CH_*^{EGH}(Y,\lambda;J)$. The original definition is due to Eliashberg-Givental-Hofer \cite{egh}; we are using notation\footnote{A notational difference is that in \cite{dc}, we denoted cylindrical contact homology by $CH^\Q(Y,\lambda,J)$.} from \cite{dc}.

Assume that $\lambda$ is nondegenerate and hypertight. Let $J$ be a $\lambda$-compatible almost complex structure on $\R\times Y$. Assuming that $J$ satisfies certain transversality conditions (to be specified below), we define a chain complex $CC_*^{EGH}(Y,\lambda;J)$ over $\Q$ as follows.

As a module, $CC_*^{EGH}(Y,\lambda;J)$ is noncanonically isomorphic to the vector space over $\Q$ generated by good Reeb orbits; an isomorphism is fixed by making certain orientation choices. More precisely, for each good Reeb orbit $\gamma$, the theory of coherent orientations as in \cite{bm,fh} can be used to define a $\Z$-module $\mc{O}_\gamma$ which is noncanonically isomorphic to $\Z$; see Proposition~\ref{prop:orientations} and \S\ref{sec:oms}. We then define
\[
CC_*^{EGH}(Y,\lambda;J) = \bigoplus_{\mbox{\scriptsize $\gamma$ good}}\mc{O}_\gamma\tensor_\Z\Q.
\]
Choosing a generator of $\mc{O}_\gamma$ for each good Reeb orbit $\gamma$ specifies an isomorphism
\[
CC_*^{EGH}(Y,\lambda;J) \simeq \Q\{\mbox{good Reeb orbits}\}.
\]

This chain complex has a canonical $\Z/2$-grading determined by the mod $2$ Conley-Zehnder index\footnote{It is common in the literature to instead define the grading on cylindrical contact homology to be the Conley-Zehnder index plus $1-n$, where $\dim(Y)=2n-1$.}. In some cases the grading can be refined; see \S\ref{sec:addstr} below.

To define the differential, we first define an operator
\[
\delta: CC_*^{EGH}(Y,\lambda;J) \longrightarrow CC_{*-1}^{EGH}(Y,\lambda;J)
\]
as follows: If $\alpha$ is a good Reeb orbit, then
\begin{equation}
\label{eqn:defdelta}
\delta\alpha = \sum_\beta \sum_{u\in\M_1^J(\alpha,\beta)}\frac{\epsilon(u)}{d(u)}\beta,
\end{equation}
where the sum is over good Reeb orbits $\beta$. Here $\epsilon(u)\in\{\pm1\}$ is a sign\footnote{More precisely, $\epsilon(u)$ is an element of $\{\pm1\}$ after generators of $\mc{O}_\alpha$ and $\mc{O}_\beta$ have been chosen. Without making such choices, $\epsilon(u)$ is an isomorphism $\mc{O}_\alpha\simeq \mc{O}_\beta$.} associated to $u$; our sign convention is spelled out in Definition~\ref{def:EGHsigns}. Also, $d(u)\in\Z^{>0}$ is the covering multiplicity of $u$, which is $1$ if and only if $u$ is somewhere injective. The definition \eqref{eqn:defdelta} makes sense provided that all moduli spaces $\M^J_d(\alpha,\beta)$ with Fredholm index $d\le 1$ are cut out transversely\footnote{In particular, then all moduli spaces $\M^J_d(\alpha,\beta)$ with $\alpha\neq\beta$ and $d\le 0$ are empty, which under our hypertightness assumption guarantees that the moduli spaces $\M^J_1(\alpha,\beta)$ are compact so that we obtain finite counts.}.

Next we define an operator
\[
\kappa: CC_*^{EGH}(Y,\lambda;J) \longrightarrow CC_*^{EGH}(Y,\lambda;J)
\]
by
\[
\kappa(\alpha) = d(\alpha)\alpha.
\]
If we further assume suitable transversality for the moduli spaces $\M^J_2(\alpha,\beta)$, then counting their ends leads to the equation
\begin{equation}
\label{eqn:dkd0}
\delta\kappa\delta = 0.
\end{equation}
This was proved in the three-dimensional case in \cite{dc}, and we will recover it in arbitrary odd dimensions from the Morse-Bott theory below; see Corollary~\ref{cor:degh20}. Equation \eqref{eqn:dkd0} implies that
\begin{equation}
\label{cyldiff}
\partial^{EGH}:=\delta\kappa
\end{equation}
is a differential on $CC_*^{EGH}(Y,\lambda;J)$. 

\begin{definition}
\label{def:cch}
If $\lambda$ is hypertight and $J$ is admissible (see Definition~\ref{def:admissible}; this is a certain transversality hypothesis on the moduli spaces $\M^J_d(\alpha,\beta)$ for $d\le 2$), we define the {\em cylindrical contact homology\/} $CH_*^{EGH}(Y,\lambda;J)$ to be the homology of the chain complex $(CC_*^{EGH}(Y,\lambda;J),\partial^{EGH})$.
\end{definition}

\begin{remark}
It is also possible to take the differential to be $\kappa\delta$ instead of $\delta\kappa$. In fact, both of these differentials arise naturally in the Morse-Bott story; see equation \eqref{eqn:tdan} below. The operator $\kappa$ defines an isomorphism between these two chain complexes over $\Q$, because $(\kappa\delta)\kappa = \kappa(\delta\kappa)$. While both of these differentials are actually defined over $\Z$, we do not expect the homologies over $\Z$ to be isomorphic to each other or invariant in the sense of \eqref{eqn:eghinv}.
\end{remark}

In \cite{dc} we showed that in the three-dimensional case, the transversality for $J$-holomorphic cylinders needed to define $CH_*^{EGH}(Y,\lambda;J)$ can be achieved by choosing $J$ generically; see also \S\ref{sec:admissible3} below. However this is impossible in most higher dimensional cases. The difficulty is that there may exist multiply covered $J$-holomorphic cylinders with negative Fredholm index, even when $J$ is generic.

\subsection{Nonequivariant contact homology}
\label{sec:nchintro}

As suggested in \cite{bce}, one can fix the transversality problems for holomorphic cylinders using a domain-dependent almost complex structure.  Breaking the $S^1$ symmetry naturally leads one to a ``Morse-Bott" version of the chain complex.  The homology of this chain complex is not the cylindrical contact homology described in the previous section, but rather a ``non-equivariant" version of it, which we define in \S\ref{section:plainmoduli} and \S\ref{cascades}.

To introduce this, let $Y$ be a closed odd-dimensional manifold, and let $\lambda$ be a nondegenerate hypertight contact form on $Y$. Let $\J=\{J_t\}$ be a family of $\lambda$-compatible almost complex structures on $\R\times Y$ parametrized by $t\in S^1$. If $\gamma_+$ and $\gamma_-$ are Reeb orbits, we consider maps $u:\R\times S^1\to \R\times Y$ such that
\begin{equation}
\label{eqn:Floerdd}
\partial_s u + J_t\partial_t u = 0,
\end{equation}
$\lim_{s\to\pm\infty}\pi_\R(u(s,t))=\pm\infty$, and $\lim_{s\to\pm\infty}\pi_Y(u(s,\cdot))$ is a parametrization of $\gamma_\pm$. We declare two such maps to be equivalent if they differ by translation of the $\R$ coordinate on the domain $\R\times S^1$, and we denote the set of equivalence classes by $\widetilde{\M}^\J(\gamma_+,\gamma_-)$. Note that for solutions to \eqref{eqn:Floerdd}, unlike \eqref{eqn:Floerautonomous}, we can no longer mod out by rotation of the $S^1$ coordinate on the domain.

Given $u$ as above, let $\widetilde{\M}^\J_u(\gamma_+,\gamma_-)$ denote the component of $\widetilde{\M}^\J(\gamma_+,\gamma_-)$ containing $u$.
If $\J$ is generic, then this is a smooth manifold of dimension 
\[
\dim\left(\widetilde{\M}^\J_u(\gamma_+,\gamma_-)\right) = \CZ_\tau(\gamma_+) - \CZ_\tau(\gamma_-) + 2c_1(u^*\xi,\tau) + 1.
\]
The right hand side here is one greater than the right hand side of \eqref{eqn:Fredholmindex}, because we are no longer modding out by an $S^1$ symmetry.

As before, $\R$ acts on $\widetilde{\M}^\J(\gamma_+,\gamma_-)$ by translation of the $\R$ factor in the target $\R\times Y$, and we let $\M^\J(\gamma_+,\gamma_-)$ denote the quotient. Below, if $\gamma_+\neq\gamma_-$, and if $d$ is a nonnegative integer, let $\M^\J_d(\gamma_+,\gamma_-)$ denote the union of the $d$-dimensional components of $\M^\J(\gamma_+,\gamma_-)$. 

We now also have well-defined smooth evaluation maps
\[
\begin{split}
e_\pm: \M^\J(\gamma_+,\gamma_-) & \longrightarrow \overline{\gamma_\pm},\\
u & \longmapsto \lim_{s\to\pm\infty}\pi_Y(u(s,0)).
\end{split}
\]
Here $\overline{\gamma}$ denotes the image of the Reeb orbit $\gamma$ in $Y$.

The moduli spaces $\M^\J_d(\gamma_+,\gamma_-)$, together with the evaluation maps $e_\pm$ (and some orientations and compactifications), constitute what we call\footnote{To be more precise one could call this an ``$S^1$-Morse-Bott system'', as here the analogues of ``critical submanifolds'' are circles.} a ``Morse-Bott system" in \cite{td}. As explained in \cite{td}, out of this data we can naturally construct a ``cascade'' chain complex $(NCC_*(Y,\lambda),\partial_\ca^\J)$ as follows.

We define $NCC_*(Y,\lambda)$ to be the free $\Z$-module with two generators $\widecheck{\alpha}$ and $\widehat{\alpha}$ for each Reeb orbit $\alpha$. This module has a canonical $\Z/2$-grading, where the grading of $\widecheck{\alpha}$ is $\CZ(\alpha)$, and the grading of $\widehat{\alpha}$ is $\CZ(\alpha)+1$.

To define the differential, we generically choose a point $p_\alpha\in\overline{\alpha}$ for each Reeb orbit $\alpha$. If $\alpha\neq\beta$, then the differential coefficient $\langle\partial_\ca^\J\widehat{\alpha},\widecheck{\beta}\rangle$ is a signed count of tuples $(u_1,\ldots,u_k)$, where there are distinct Reeb orbits $\alpha=\gamma_0,\gamma_1,\ldots,\gamma_k=\beta$ such that $u_i\in\M_0^\J(\gamma_{i-1},\gamma_i)$, and for $1<i<k$, the points $p_{\gamma_i},e_-(u_i)$, and $e_+(u_{i+1})$ are cyclically ordered on $\overline{\gamma_i}$ with respect to the orientation given by the Reeb vector field. If we replace $\widehat{\alpha}$ by $\widecheck{\alpha}$, then we add the constraint that $e_+(u_1)=p_\alpha$, and we increase the dimension of $u_1$'s moduli space by $1$. Likewise, if we replace $\widecheck{\beta}$ by $\widehat{\beta}$, then we add the constraint that $e_-(u_k)=p_\beta$, and we increase the dimension of $u_k$'s moduli space by $1$. When $\alpha=\beta$, all differential coefficients are defined to be zero, except that
\begin{equation}
\label{eqn:minus2}
\langle\partial_\ca^\J\widehat{\alpha},\widecheck{\alpha}\rangle = -2
\end{equation}
when $\alpha$ is a bad Reeb orbit.

Some motivation for the above definition comes from finite-dimensional Morse-Bott theory. A Morse-Bott function on a finite-dimensional manifold can be perturbed using a Morse function $f_S$ on each critical submanifold $S$. Gradient flow lines after perturbation correspond to ``cascades'', which start and end at critical points of the perturbing Morse functions $f_S$, and which are alternating sequences of downward gradient flow lines of the Morse-Bott function and downward gradient trajectories of the perturbing Morse functions $f_S$; see \cite{banyaga-hurtubise,bourgeois,frauenfelder}. In the situation of nonequivariant contact homology, there is no direct analogue of perturbing to a Morse function. However the above differential still counts an analogue of cascades, in which the simple Reeb orbits $\overline{\gamma}$ play the role of critical submanifolds. One can imagine choosing for each $\gamma$ a perturbing Morse function $f_\gamma$ on $\overline{\gamma}$ which has two critical points which are very close to $p_\gamma$, such that the downward gradient flow away from $p_\gamma$ moves in the direction of the Reeb vector field. One can then think of the generators $\widehat{\gamma}$ and $\widecheck{\gamma}$ as representing the maximum and minimum, respectively, of the perturbing Morse function $f_\gamma$. The cyclic ordering condition in the previous paragraph corresponds to the fact that the downward gradient trajectories of $f_\gamma$ move in the direction of the Reeb vector field.

Formal arguments in \cite{td} show that $\left(\partial_\ca^\J\right)^2=0$, and that the homology does not depend on the choice of base points $p_\alpha$. This homology is the nonequivariant contact homology, which we denote by $NCH_*(Y,\lambda;\J)$. It is invariant in the following sense:

\begin{theorem}
\label{thm:NCHinvariant}
Let $Y$ be a closed manifold, and let $\lambda$ and $\lambda'$ be nondegenerate hypertight contact forms on $Y$ with $\Ker(\lambda)=\Ker(\lambda')$. Let $\J$ be a generic $S^1$-family of $\lambda$-compatible almost complex structures, and let $\J'$ be a generic $S^1$-family of $\lambda'$-compatible almost complex structures. Then there is a canonical isomorphism
\[
NCH_*(Y,\lambda;\J) = NCH_*(Y,\lambda';\J').
\]
\end{theorem}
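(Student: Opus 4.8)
The plan is to prove the theorem by the standard cobordism argument, carried out entirely within the Morse-Bott cascade formalism of \cite{td}. Since $\Ker(\lambda)=\Ker(\lambda')$ we may write $\lambda'=e^g\lambda$ for some $g\colon Y\to\R$, and rescaling a contact form by a positive constant only reparametrizes Reeb orbits and canonically identifies all moduli spaces, hence induces a canonical isomorphism on $NCH_*$. So, after rescaling $\lambda'$ (downward to build a map one way, upward to build a map the other way), it suffices to treat pairs of hypertight forms $\lambda_+>\lambda_-$ on $(Y,\xi)$ (meaning $\lambda_+=e^h\lambda_-$ with $h>0$) for which there is an exact symplectic cobordism $(X,\omega)$ — for instance $\omega=d(h\lambda_-)$ on $Y\times[0,1]$ with $h$ monotone and sufficiently steep in the $[0,1]$-direction, interpolating between the two conformal factors — whose symplectic completion $\overline{X}$ is a symplectization near each end.

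Next I would construct the cobordism chain map. On $\overline{X}$ choose a generic $S^1$-family $\J_X=\{J_{X,t}\}_{t\in S^1}$ of $\omega$-compatible, cobordism-admissible almost complex structures that restrict to $\J$ on the positive end and $\J'$ on the negative end. Counting $\J_X$-holomorphic cylinders in $\overline{X}$ asymptotic to Reeb orbits at the two ends — solutions of the same domain-dependent equation as \eqref{eqn:Floerdd}, but now with no $\R$-action to quotient by — together with the evaluation maps $e_\pm$, coherent orientations, and Morse-Bott compactifications, produces a ``Morse-Bott system'' over the cobordism in the sense of \cite{td}. Feeding this into the \cite{td} formalism, with the same check/hat generators and generic base-point constraints $p_\alpha$ used to set up the differentials, yields a chain map
\[
\Phi_X\colon NCC_*(Y,\lambda;\J)\longrightarrow NCC_*(Y,\lambda';\J'),\qquad \partial_\ca^{\J'}\circ\Phi_X=\Phi_X\circ\partial_\ca^{\J},
\]
whose coefficients count cascades of cobordism cylinders. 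Hypertightness of both forms, together with exactness of $X$, is exactly what makes this analytically sound: it forbids the bubbling of holomorphic spheres and planes, so the relevant low-dimensional moduli spaces of cobordism cylinders and their cascade compactifications are compact. Transversality for the index $\le 1$ moduli spaces entering $\Phi_X$ is obtained for generic $\J_X$ as in the symplectization case: somewhere injective cobordism cylinders are cut out transversely, and multiply covered ones are absorbed by the two-generator structure of $NCC_*$, including the cobordism analogue of the coefficient $-2$ in \eqref{eqn:minus2}.

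Then I would run the formal package. First, if $\J_X,\J_X'$ are two admissible $S^1$-families on $\overline{X}$, or if $X$ is deformed through exact cobordisms with fixed ends, a generic homotopy of the data yields a chain homotopy between the corresponding chain maps; since the relevant space of cobordisms-with-data from $\lambda_+$ to $\lambda_-$ is connected, the induced map on $NCH_*$ is independent of all choices. Second, for composable cobordisms $X$ (from $\lambda_+$ to $\lambda_0$) and $X'$ (from $\lambda_0$ to $\lambda_-$), stretching the neck along $(Y,\lambda_0)$ and applying SFT-type compactness identifies $\Phi_{X'\circ X}$ with $\Phi_{X'}\circ\Phi_X$ on homology. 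Third, the chain map of a trivial (product symplectization) cobordism induces the identity on $NCH_*$; this is the standard computation, in which for the $\R$-invariant choice of $\J_X$ the only rigid cascades are the trivial cylinders over Reeb orbits. Applying these three facts to the composite of a cobordism $\lambda\leadsto\lambda'$ with one $\lambda'\leadsto\lambda$ — the rescaling isomorphisms absorbing the necessary changes of conformal factor, which is consistent because the rescalings involved compose to the identity — shows that the two cobordism maps are mutually inverse isomorphisms on homology, and by the first fact the resulting isomorphism $NCH_*(Y,\lambda;\J)=NCH_*(Y,\lambda';\J')$ is canonical. The case $\lambda'=\lambda$, i.e.\ invariance under change of the $S^1$-family of almost complex structures, is the first fact applied to the product cobordism with a homotopy from $\J$ to $\J'$.

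I expect the main obstacle to be the analytic foundation of the cobordism chain map: extending the Morse-Bott cascade compactness and gluing theory from symplectizations (the subject of the rest of this paper) to exact symplectic cobordisms, so that $\Phi_X$ is genuinely a chain map and the neck-stretching degeneration in the composition step has the expected cascade structure. A secondary but real point is transversality in the presence of multiply covered cobordism cylinders, where one cannot simply perturb $\J_X$ and must instead verify that, for the index $\le 1$ moduli spaces appearing in $\Phi_X$, the contributions of bad or multiply covered curves vanish or are cancelled by the two-generator structure, exactly as for the differential $\partial_\ca^\J$ itself. Once these geometric inputs are in place, the remainder is the formal machinery of \cite{td}.
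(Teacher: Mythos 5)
Your proposal follows essentially the same route as the paper: cobordism maps defined by counting domain-dependent holomorphic cylinders in exact symplectic cobordisms sitting inside $\R\times Y$, fed into the morphism/homotopy formalism of the abstract Morse--Bott theory, combined with the scaling, independence-of-choices, and composition properties, yield mutually inverse isomorphisms exactly as in the paper's argument. The only inaccuracy is the side remark that multiply covered cobordism cylinders are ``absorbed by the two-generator structure'' --- the point of using $S^1$-dependent families is precisely that generic choices make \emph{all} cylinders, multiply covered or not, transverse --- but this does not affect the validity of the approach.
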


In particular, if $\xi$ is a contact structure on $Y$ admitting a nondegenerate\footnote{In fact one can remove the nondegeneracy assumption here; see \S\ref{sec:addstr} below.} hypertight contact form, then we have a well-defined nonequivariant contact homology $NCH_*(Y,\xi)$.

\begin{remark}
Our Morse-Bott chain complex $(NCC_*(Y,\lambda),\partial_\ca^\J)$ is a contact analogue of the Floer theory for autonomous Hamiltonians studied in \cite{boduke}. In that paper, the idea was to perturb the autonomous Hamiltonian to a nondegenerate one, and to understand the Floer chain complex of the nondegenerate perturbation in Morse-Bott terms. In our situation, by contrast, we need to define the homology and prove its invariance entirely in the Morse-Bott setting.
\end{remark}

\subsection{$S^1$-equivariant contact homology}
\label{sec:chs1intro}

In \S\ref{equicurrents}, we carry out a variant of the above construction defined using a larger family of almost complex structures on $\R\times Y$, namely an $S^1$-equivariant $S^1\times ES^1$ family of almost complex structures ${\mathfrak J}$.

To define the chain complex, we fix a ``perfect Morse function'' $f$ on $BS^1$; see \S\ref{sec:familymoduli} for details. Let $\widetilde{f}$ denote its pullback to $ES^1$, and if $x$ is a critical point of $f$, let $\pi^{-1}(x)$ denote its inverse image in $ES^1$. Given critical points $x_\pm$ of $f$, and given Reeb orbits $\gamma_\pm$, we consider pairs $(\eta,u)$, where $\eta:\R\to ES^1$ is an upward gradient flow line of $\widetilde{f}$ asymptotic to points in $\pi^{-1}(x_\pm)$, and $u:\R\times S^1\to\R\times Y$ satisfies the equation
\[
\partial_s u + {\mathfrak J}_{t,\eta(s)}\partial_t u = 0,
\]
with the asymptotic conditions that $\lim_{s\to\pm\infty}\pi_\R u(s,\cdot)=\pm\infty$, and $\lim_{s\to\pm\infty}\pi_Yu(s,\cdot)$ is a parametrization of a Reeb orbit $\gamma_\pm$. As before, there is an $\R$ action on the set of solutions by translating the $\R$ coordinate in the domains of $\eta$ and $u$ simultaneously, and another $\R$ action by translating the $\R$ coordinate on the target $\R\times Y$ of $u$. There is also an $S^1$ action which simultaneously translates the $S^1$ factor on the domain of $u$ and acts on $ES^1$ in the target of $\eta$. We denote the quotient of the solution set by these actions by $\M^{\mathfrak J}((x_+,\gamma_+),(x_-,\gamma_-))$. As before, there are evaluation maps, which now have the form
\[
e_\pm: \M^{\mathfrak J}((x_+,\gamma_+),(x_-,\gamma_-)) \longrightarrow \left(\pi^{-1}(x_\pm)\times\overline{\gamma_\pm}\right)/S^1.
\]

These moduli spaces and evaluation maps satisfy the axioms of a Morse-Bott system, so we can again invoke the formalism of \cite{td} to obtain a chain complex $\left(CC_*^{S^1}(Y,\lambda),\partial^{S^1,\frak{J}}\right)$. This chain complex then has two generators for each pair $(x,\alpha)$, where $x$ is a critical point of $f$ and $\alpha$ is a Reeb orbit. We denote these two generators by $\widecheck{\alpha}\tensor U^k$ and $\widehat{\alpha}\tensor U^k$, where $2k$ is the Morse index of $x$. 
More concisely, we have a canonical identification of $\Z$-modules
\[
CC_*^{S^1}(Y,\lambda) = NCC_*(Y,\lambda)\tensor \Z[U].
\]
One can think of the formal variable $U$ as having degree $2$, although for now this chain complex is only $\Z/2$-graded, where $\widecheck{\alpha}\tensor U^k$ has grading $\op{CZ}(\alpha)$ and $\widehat{\alpha}\tensor U^k$ has grading $\op{CZ}(\alpha)+1$.

The homology of this chain complex is the $S^1$-equivariant contact homology, which we denote by $CH_*^{S^1}(Y,\lambda;\mathfrak{J})$. It is invariant in the following sense, analogously to Theorem~\ref{thm:NCHinvariant}:

\begin{theorem}
\label{thm:CHS1invariant}
Let $Y$ be a closed manifold, and $\lambda$ and $\lambda'$ be nondegenerate hypertight contact forms on $Y$ with $\Ker(\lambda)=\Ker(\lambda')$. Let $\mathfrak{J}$ be a generic $S^1$-equivariant $S^1\times ES^1$-family of $\lambda$-compatible almost complex structures, and let $\mathfrak{J}'$ be a generic $S^1$-equivariant $S^1\times ES^1$-family of $\lambda'$-compatible almost complex structures. Then there is a canonical isomorphism
\[
CH_*^{S^1}(Y,\lambda;\mathfrak{J}) = CH_*^{S^1}(Y,\lambda';\mathfrak{J}').
\]
\end{theorem}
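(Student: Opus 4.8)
The plan is to prove Theorem~\ref{thm:CHS1invariant} by constructing continuation chain maps from a cobordism, following the same template as the proof of Theorem~\ref{thm:NCHinvariant} but enlarged by the $ES^1$-parameter in exactly the way that $NCC_*$ was enlarged to $CC_*^{S^1}$. Write $\lambda'=f\lambda$ for a positive function $f$ on $Y$, choose a family of contact forms $(\lambda_s)_{s\in\R}$ with $\Ker(\lambda_s)=\xi$ for all $s$, equal to $\lambda$ for $s\le 0$ and to $\lambda'$ for $s\ge 1$ (the intermediate forms need not be hypertight, and indeed the notion is irrelevant there, since no Reeb orbits of the $\lambda_s$ with $0<s<1$ enter the compactness analysis), and choose a generic $S^1$-equivariant family $\mathfrak{J}_{s,t,z}$ of almost complex structures on $\R\times Y$, parametrized by $s\in\R$ in addition to $(t,z)\in S^1\times ES^1$, with $\mathfrak{J}_{s,\cdot,\cdot}$ being $\lambda_s$-compatible, agreeing with $\mathfrak{J}$ for $s\le 0$ and with $\mathfrak{J}'$ for $s\ge 1$. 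One then considers pairs $(\eta,u)$ where $\eta:\R\to ES^1$ is an upward gradient flow line of $\widetilde{f}$ and $u:\R\times S^1\to\R\times Y$ solves the $s$-dependent equation $\partial_s u + \mathfrak{J}_{s,t,\eta(s)}\partial_t u = 0$ with the usual Reeb-orbit asymptotics (a $\lambda$-orbit $\gamma_-$ as $s\to-\infty$, a $\lambda'$-orbit $\gamma_+$ as $s\to+\infty$), modulo the residual $S^1$-action and target $\R$-translation; note that the $s$-dependence removes the domain $\R$-translation, just as in the definition of the differential the dependence of $\eta$ on $s$ does.

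These cobordism moduli spaces, together with their evaluation maps to $(\pi^{-1}(x_\pm)\times\overline{\gamma_\pm})/S^1$ and their SFT-type compactifications, should be shown to form a ``morphism of Morse-Bott systems'' from $(Y,\lambda,\mathfrak{J})$ to $(Y,\lambda',\mathfrak{J}')$ in the sense of \cite{td}; feeding this into the algebraic formalism of \cite{td} produces a $\Z[U]$-linear chain map $\Phi:CC_*^{S^1}(Y,\lambda)\to CC_*^{S^1}(Y,\lambda')$. The chain-map identity $\partial^{S^1,\mathfrak{J}'}\Phi = \Phi\,\partial^{S^1,\mathfrak{J}}$ comes from analyzing the boundary of the one-dimensional moduli spaces of cobordism cascades: such a family degenerates either by splitting off a cascade for $(\lambda,\mathfrak{J})$ at the negative end or a cascade for $(\lambda',\mathfrak{J}')$ at the positive end, and here the hypertightness of both $\lambda$ and $\lambda'$ is exactly what guarantees that SFT compactness produces only these two expected types of breaking — in particular no contractible Reeb orbits, hence no plane or other extraneous components, can appear.

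To upgrade $\Phi$ to an isomorphism, I would invoke the composition property of cobordism maps together with the fact that the map induced by the trivial (constant) cobordism is chain homotopic to the identity. Concatenating the cobordism from $(\lambda,\mathfrak{J})$ to $(\lambda',\mathfrak{J}')$ with a reverse one and stretching the neck exhibits the composite $\Psi\Phi$ as the map induced by a cobordism homotopic to the trivial one, giving $\Psi\Phi\simeq\mathrm{id}$, and symmetrically $\Phi\Psi\simeq\mathrm{id}$, so $\Phi$ is a quasi-isomorphism. A further ``homotopy of homotopies'' argument shows the induced isomorphism on $CH_*^{S^1}$ is independent of the auxiliary data (the interpolations $\lambda_s$ and $\mathfrak{J}_s$, the Morse function $f$, the base points, and the generic perturbations), yielding the canonical isomorphism asserted. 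The case $\lambda=\lambda'$ in particular gives independence of $\mathfrak{J}$, so that one may write $CH_*^{S^1}(Y,\xi)$.

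I expect the main obstacle to be not the algebra but verifying that the cobordism moduli spaces genuinely satisfy the axioms of a morphism of Morse-Bott systems of \cite{td}: one needs (i) transversality for the $s$-dependent, $ES^1$-coupled equation, including for multiply covered cylinders, where generic $\mathfrak{J}_s$ should again suffice precisely because the $S^1$-symmetry is already broken (now by the $s$- and $t$-dependence), as in the closed case; (ii) the correct compactness statement, identifying all SFT limits of cobordism cylinders coupled to $ES^1$-gradient flow lines with broken cobordism cascades; and (iii) gluing results matching the ends of the compactified one- and two-dimensional moduli spaces with the expected broken configurations. Carrying the $ES^1$-coupling and the $S^1$-equivariance through all three while keeping the resulting maps $\Z[U]$-linear and compatible under composition is the delicate point; once it is in place, the invariance follows formally just as it does for $NCH_*$.
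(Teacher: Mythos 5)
Your proposal correctly identifies the algebraic skeleton the paper uses (build a morphism of Morse--Bott systems in the sense of \cite{td}, get a chain map, use composition plus homotopy-of-homotopies to upgrade to a canonical isomorphism), and your last paragraph correctly flags compactness as the delicate point. But the analytic setup you propose is genuinely different from the paper's, and as written it has a gap exactly at that delicate point.

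The paper never introduces an $s$-dependent family of contact forms $\lambda_s$ with an $s$-dependent $\lambda_s$-compatible equation. Instead (see Proposition~\ref{prop:famcobmap}, Proposition~\ref{prop:famcobmapprops}, and the template in \S\ref{sec:nchinv}), one writes $\lambda_1 = e^f\lambda_2$, scales by a large constant $e^c$ so that $c+f>0$ everywhere, and then uses the genuine \emph{exact symplectic cobordism}
\[
X=\{(r,y)\in\R\times Y\mid 0\le r\le f(y)+c\},\qquad \lambda_X = e^r\lambda_2,
\]
from $(Y,e^c\lambda_1)$ to $(Y,\lambda_2)$, with a cobordism-compatible family of almost complex structures on the completion $\overline{X}$ (that is, $d\lambda_X$-compatible in the middle, and compatible with the two contact forms on the cylindrical ends). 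The Liouville structure is what makes the symplectic action weakly decrease along holomorphic cylinders in $\overline{X}$, which is precisely the a priori bound that feeds the SFT compactness argument and makes the morphism/finiteness axioms of \cite{td} hold. One then composes the cobordism map with the scaling isomorphism $s_{e^c}$, and the properties (i)--(iii) in \S\ref{sec:nchinv} give canonicity. The proof of Theorem~\ref{thm:CHS1invariant} is literally the same argument run with the $ES^1$-coupled moduli spaces.

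In your proposal, the continuation equation $\partial_s u + \mathfrak{J}_{s,t,\eta(s)}\partial_t u = 0$ with $\mathfrak{J}_s$ only required to be $\lambda_s$-compatible does \emph{not} automatically produce such an energy bound. A short computation shows that for a solution $u$, writing $a(s)=\int_{S^1}\gamma_s^*\lambda_s$ with $\gamma_s=u(s,\cdot)$, one gets
\[
a'(s) = \int_{S^1} d\lambda_s(-\mathfrak{J}_s\partial_t u,\partial_t u)\,dt \;+\; \int_{S^1}\gamma_s^*(\partial_s\lambda_s),
\]
and the second term (which is absent in the autonomous symplectization and is the contact analogue of the "curvature" term in Hamiltonian continuation maps) has no favorable sign for a generic interpolation $\lambda_s$. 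So $a(s)$ need not be monotone, and you cannot simply invoke SFT compactness (which you do) without first establishing the required a priori bound; the hypertightness of the end conditions only controls bubbling, not this. To close this gap you would have to impose a monotonicity/Liouville-type condition on the interpolation, at which point the construction essentially reproduces the paper's cobordism map (the scaling by $e^c$ is exactly what the paper does to ensure such a monotone Liouville cobordism exists). This is not a cosmetic point: without it, Propositions~\ref{prop:famcom} and \ref{prop:famcurcom} have no analogue in your setup, and the Morse--Bott morphism axioms fail to even be well-posed.

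A minor secondary discrepancy: you say the $s$-dependence "removes the domain $\R$-translation" and you mod out by "the residual $S^1$-action and target $\R$-translation." In the paper's cobordism moduli spaces it is the opposite: one still quotients by domain $\R\times S^1$-translation (of $\eta$ and $u$ simultaneously), and there is \emph{no} target $\R$-action to quotient by, because the cobordism-compatible almost complex structure is not $\R$-invariant on $\overline{X}$. The dimensions come out the same either way, but the two setups are not interchangeable when it comes to checking the orientation and gluing conventions of \cite{td}, which the paper does explicitly in \S\ref{sec:cobsigns} and \S\ref{sec:familysigns}.
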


In particular, if $\xi$ is a contact structure admitting a hypertight contact form, then we have a well-defined $S^1$-equivariant contact homology $CH_*^{S^1}(Y,\xi)$.

\begin{remark}
The $S^1$-equivariant contact homology defined above is analogous to the $S^1$-equivariant symplectic homology introduced in \cite{bo12}. The difference is that we are using contact forms instead of Hamiltonians, and we are working in a Morse-Bott setting.
\end{remark}

\begin{remark}
The nonequivariant contact homology $NCH_*(Y,\lambda;\J)$ is the homology of the subcomplex of $CC_*^{S^1}(Y,\lambda;\mathfrak{J})$ in which the exponent of $U$ is zero. Here $\J$ is obtained by restricting $\mathfrak{J}$ to the part of $S^1\times ES^1$ corresponding to the index $0$ critical point of $f$. If $\mathfrak{J}$ is chosen appropriately, then the differential on $CC_*^{S^1}$ will commute with ``multiplication by $U^{-1}$'', namely the map sending $U^k\mapsto U^{k-1}$ for $k>0$, and sending $1\mapsto 0$; see \cite[Rmk.\ 5.15]{guhu} for explanation in the similar situation of $S^1$-equivariant symplectic homology. It follows that, analogously to \cite{bo12}, there is a long exact sequence
\[
\cdots\to NCH_* \to CH_*^{S^1} \to CH_*^{S^1} \to NCH_{*-1}\to\cdots
\]
where the middle map is induced by multiplication by $U^{-1}$ on the chain complex.
\end{remark}

\subsection{The autonomous case}

We now explain how to recover the cylindrical contact homology in \S\ref{sec:cchintro} from the $S^1$-equivariant contact homology in \S\ref{sec:chs1intro}.

Suppose that $J$ is a $\lambda$-compatible almost complex structure on $\R\times Y$ which satisfies the transversality conditions needed to define cylindrical contact homology, see Definition~\ref{def:cch}. We can then compute the $S^1$-equivariant contact homology using the ``autonomous'' family of almost complex structures $\mathfrak{J} = \{J\}$. (In general, a slight perturbation of the autonomous family might be needed to obtain the transversality necessary to define the $S^1$-equivariant differential. See \S\ref{sec:chaut} for details.)

In this case, we find that the equivariant differential is given by
\[
\partial^{S^1} = \partial_\ca^J\tensor 1 + \partial_1\tensor U^{-1}.
\]
Here $\partial_\ca^J$ denotes the nonequivariant cascade differential for the autonomous family $\J=\{J\}$. In addition, the ``BV operator'' $\partial_1$ is given by
\begin{equation}
\label{eqn:defbv}
\begin{split}
\partial_1\widehat{\alpha} &= 0,\\
\partial_1\widecheck{\alpha} &= \left\{\begin{array}{cl} d(\alpha)\widehat{\alpha}, & \mbox{$\alpha$ good},\\
0, & \mbox{$\alpha$ bad}.
\end{array}
\right.
\end{split}
\end{equation}

We will see in \S\ref{sec:chaut} that the above differential is related to the cylindrical contact homology differential as follows: If $\alpha$ and $\beta$ are good Reeb orbits, then
\begin{equation}
\label{eqn:tdan}
\begin{split}
\left\langle\partial_\ca^J\widecheck{\alpha},\widecheck{\beta}\right\rangle &= \left\langle\delta\kappa\alpha,\beta\right\rangle,\\
\left\langle\partial_\ca^J\widehat{\alpha},\widehat{\beta}\right\rangle &= \left\langle -\kappa\delta\alpha,\beta\right\rangle.
\end{split}
\end{equation}
In addition, if $\alpha$ is a bad Reeb orbit, then $\left\langle\partial_\ca^J\widecheck{\alpha}, \widecheck{\beta}\right\rangle=0$ for any Reeb orbit $\beta$; and if $\beta$ is a bad Reeb orbit, then $\left\langle\partial_\ca^J\widehat{\alpha}, \widehat{\beta}\right\rangle=0$ for any Reeb orbit $\alpha$. Finally, $\left\langle\partial_\ca^J\widehat{\alpha}, \widecheck{\beta}\right\rangle=0$, except when $\alpha$ and $\beta$ are equal and bad, in which case the differential coefficient is -2; c.f.\ \eqref{eqn:minus2}.

Given the above observations, a calculation in \S\ref{sec:comparison} proves the following:

\begin{theorem}
\label{thm:cchinv}
Let $Y$ be a closed manifold, let $\lambda$ be a nondegenerate hypertight contact form on $Y$, and write $\xi=\op{Ker}(\lambda)$. Let $J$ be an almost complex structure on $\R\times Y$ which is admissible (see Definition~\ref{def:admissible}). Then there is a canonical isomorphism
\[
CH_*^{S^1}(Y,\xi)\tensor \Q = CH_*^{EGH}(Y,\lambda;J).
\]
\end{theorem}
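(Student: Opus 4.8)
The plan is to compute $CH_*^{S^1}(Y,\lambda;\mathfrak J)$ using the ``autonomous'' family $\mathfrak J=\{J\}$ -- slightly perturbed if necessary to obtain the transversality needed for the $S^1$-equivariant differential, which does not change the homology by a continuation argument -- then to read off the resulting differential from \S\ref{sec:chaut}, to simplify the chain complex over $\Q$ by quotienting by an acyclic subcomplex until only $(CC_*^{EGH}(Y,\lambda;J),\partial^{EGH})$ remains, and finally to invoke Theorem~\ref{thm:CHS1invariant} to replace $CH_*^{S^1}(Y,\lambda;\mathfrak J)$ by the invariant $CH_*^{S^1}(Y,\xi)$.

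For the middle step, I would use that, by \S\ref{sec:chaut}, for a suitable autonomous $\mathfrak J$ we have $CC_*^{S^1}(Y,\lambda)=NCC_*(Y,\lambda)\otimes\Z[U]$ with differential $\partial^{S^1}=\partial_\ca^J\otimes 1+\partial_1\otimes U^{-1}$, where $\partial_1$ is the BV operator \eqref{eqn:defbv} and the coefficients of $\partial_\ca^J$ are those in and after \eqref{eqn:tdan} -- in particular $\langle\partial_\ca^J\widecheck\alpha,\widehat\beta\rangle=0$. Thus on generators, with $U^{-1}$-terms read as zero,
\[
\partial^{S^1}(\widecheck\gamma\otimes U^k)=(\delta\kappa\gamma)\otimes U^k+d(\gamma)\,\widehat\gamma\otimes U^{k-1},\qquad \partial^{S^1}(\widehat\gamma\otimes U^k)=-(\kappa\delta\gamma)\otimes U^k
\]
for $\gamma$ good (the first term lying in the $\widecheck{}$-span, the second in the $\widehat{}$-span), while $\partial^{S^1}(\widehat\alpha\otimes U^k)=-2\,\widecheck\alpha\otimes U^k$ and $\partial^{S^1}(\widecheck\alpha\otimes U^k)=0$ for $\alpha$ bad. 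Tensoring with $\Q$, I would let $V$ be the submodule spanned by $\widecheck\alpha\otimes U^k$ and $\widehat\alpha\otimes U^k$ for every bad $\alpha$ and every $k\ge 0$, by $\widehat\gamma\otimes U^k$ for every good $\gamma$ and every $k\ge 0$, and by $\widecheck\gamma\otimes U^k$ for every good $\gamma$ and every $k\ge 1$. The displayed formulas show that $V$ is a subcomplex, and that it is acyclic over $\Q$: the coefficient $-2$ (see \eqref{eqn:minus2}) cancels every bad pair $(\widehat\alpha\otimes U^k,\widecheck\alpha\otimes U^k)$, and, once these are removed, $\partial^{S^1}$ restricts on the good part of $V$ to the mapping cone of the map $\widecheck\gamma\otimes U^k\mapsto d(\gamma)\,\widehat\gamma\otimes U^{k-1}$, which is an isomorphism over $\Q$ because every $d(\gamma)>0$. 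The quotient $\bigl(CC_*^{S^1}(Y,\lambda)\otimes\Q\bigr)/V$ is then freely generated over $\Q$ by $\{\widecheck\gamma\otimes U^0\}$ as $\gamma$ runs over good Reeb orbits, with induced differential $\widecheck\gamma\otimes U^0\mapsto(\delta\kappa\gamma)\otimes U^0$; this is canonically $(CC_*^{EGH}(Y,\lambda;J),\partial^{EGH})$, matching the $\Z/2$-gradings. Hence $CH_*^{S^1}(Y,\lambda;\mathfrak J)\otimes\Q=CH_*^{EGH}(Y,\lambda;J)$, and Theorem~\ref{thm:CHS1invariant} together with the independence of the two sides of all auxiliary choices gives the asserted canonical isomorphism $CH_*^{S^1}(Y,\xi)\otimes\Q=CH_*^{EGH}(Y,\lambda;J)$.

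The step I expect to be the real obstacle is \S\ref{sec:chaut} itself, i.e.\ proving that the cascade differential for the autonomous family has exactly the stated form, hence \eqref{eqn:defbv} and \eqref{eqn:tdan}: one must analyze the moduli spaces of $S^1$-families of $J$-holomorphic cylinders coupled to gradient flow lines on $BS^1$, identify the $U$-degree-preserving cascades with $\delta$ and $\kappa$, check that the only $U$-lowering contribution is the BV operator, and extract the coefficient $-2$ for bad orbits; the delicate ingredients there are the evaluation-map incidence conditions, the interplay of the two $\R$-actions and the $S^1$-action, and -- most seriously -- the coherent orientations, which fix the signs in \eqref{eqn:tdan} and are precisely what make the final isomorphism canonical rather than merely an isomorphism. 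Granting \S\ref{sec:chaut}, the remaining points to verify in the algebra above are routine: that $V$ is closed under $\partial^{S^1}$, and that it is the $\widecheck{}$-subcomplex carrying the differential $\delta\kappa$ -- not a $\kappa\delta$ variant -- that survives in the quotient, which is forced by the asymmetry $\partial_1\widehat\alpha=0$, $\partial_1\widecheck\alpha=d(\alpha)\widehat\alpha$ of the BV operator.
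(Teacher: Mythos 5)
Your proposal follows essentially the same route as the paper: the subcomplex $V$ you quotient by is exactly the paper's $C_*'$ (everything except $\widecheck{\gamma}\tensor 1$ for $\gamma$ good), its acyclicity over $\Q$ is proved the same way, and the quotient is identified with $(CC_*^{EGH},\delta\kappa)$. Two small imprecisions worth noting: the restriction of $\partial^{S^1}$ to $V$ is not literally the mapping cone you describe, since it also carries the $\delta\kappa$, $-\kappa\delta$, and the undetermined $\partial_-$/$\widehat{\partial}$ components (the paper leaves $\langle\partial_\ca^J\widecheck{\alpha},\widehat{\beta}\rangle$ and $\widehat{\partial}$ on bad orbits unknown, so your displayed formulas for bad $\alpha$ overstate what is proved) — one must first filter by symplectic action and argue on the associated graded, as the paper does explicitly via a spectral sequence; these extra terms all land in $V$ and strictly decrease action, so the argument survives.
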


\begin{corollary}
\label{cor:eghinv}
$CH_*^{EGH}$ is an invariant of closed contact manifolds $(Y,\xi)$ for which there exists a pair $(\lambda,J)$ where $\lambda$ is a nondegenerate hypertight contact form with $\Ker(\lambda)=\xi$, and $J$ is an admissible $\lambda$-compatible almost complex structure.
\end{corollary}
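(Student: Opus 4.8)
\emph{Proof proposal for Corollary~\ref{cor:eghinv}.}
The plan is to deduce the corollary formally, with no new analysis, by combining Theorem~\ref{thm:cchinv} with the invariance of $S^1$-equivariant contact homology (Theorem~\ref{thm:CHS1invariant}). First I would fix a closed manifold $Y$ with contact structure $\xi$, and take two pairs $(\lambda,J)$ and $(\lambda',J')$ in which $\lambda,\lambda'$ are nondegenerate hypertight contact forms with $\Ker(\lambda)=\Ker(\lambda')=\xi$ and $J,J'$ are admissible. Applying Theorem~\ref{thm:cchinv} to each pair produces canonical isomorphisms
\[
CH_*^{EGH}(Y,\lambda;J) = CH_*^{S^1}(Y,\xi)\tensor\Q = CH_*^{EGH}(Y,\lambda';J'),
\]
where the common middle term is the invariant $S^1$-equivariant contact homology $CH_*^{S^1}(Y,\xi)$, which is well defined — independently of the hypertight contact form with kernel $\xi$, the generic family $\mathfrak{J}$, the perfect Morse function on $BS^1$, the base points, and the coherent orientations — precisely by Theorem~\ref{thm:CHS1invariant}. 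Composing these two isomorphisms gives the desired canonical identification $CH_*^{EGH}(Y,\lambda;J)=CH_*^{EGH}(Y,\lambda';J')$, and transitivity of this identification over three or more admissible pairs follows from the corresponding transitivity already built into Theorems~\ref{thm:cchinv} and \ref{thm:CHS1invariant}.

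To upgrade this to honest invariance of closed contact manifolds, I would then observe that a contactomorphism $\phi\colon(Y,\xi)\to(Y',\xi')$ carries admissible data $(\lambda',J')$ on $Y'$ to admissible data $(\phi^*\lambda',\phi^*J')$ on $Y$, and that pullback by $\phi$ is a tautological chain isomorphism $CC_*^{EGH}(Y,\phi^*\lambda';\phi^*J')\cong CC_*^{EGH}(Y',\lambda';J')$ intertwining $\partial^{EGH}$. Composing this with the independence statement of the previous paragraph on $Y$ yields an isomorphism $CH_*^{EGH}(Y,\lambda;J)\cong CH_*^{EGH}(Y',\lambda';J')$ that depends only on $\phi$ (indeed only on its contact isotopy class, since isotopic contactomorphisms induce the same map on all the theories involved).

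The one point that genuinely needs to be checked — and which I expect to be the only nontrivial step — is the compatibility of the two instances of Theorem~\ref{thm:cchinv} through the invariance isomorphism of Theorem~\ref{thm:CHS1invariant}: one must verify that the cobordism/continuation maps proving Theorem~\ref{thm:CHS1invariant}, after restricting to the relevant part of the complex and tensoring with $\Q$, intertwine the maps realizing \eqref{eqn:tdan}, so that the square relating the two copies of $CH_*^{EGH}$ and the two copies of $CH_*^{S^1}(\,\cdot\,)\tensor\Q$ commutes. Given the algebraic structure already set up in \S\ref{sec:chaut} and \S\ref{sec:comparison}, this is a routine diagram chase rather than a substantive difficulty, so the corollary really is a formal consequence of the two invariance theorems.
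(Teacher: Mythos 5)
Your proposal is correct and matches the paper's (implicit) argument: the corollary is an immediate consequence of composing the two canonical isomorphisms from Theorem~\ref{thm:cchinv} through the single well-defined object $CH_*^{S^1}(Y,\xi)\tensor\Q$ furnished by Theorem~\ref{thm:CHS1invariant}. The ``compatibility square'' you flag at the end is not actually required for the statement as written, since both instances of Theorem~\ref{thm:cchinv} already land in the same invariant object, so composing one isomorphism with the inverse of the other (and the tautological pullback under a contactomorphism) is all that is needed.
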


\subsection{Additional structure}
\label{sec:addstr}

The three kinds of contact homology discussed above have some additional structure on them. These are standard constructions given the material in the rest of the paper, so we will just briefly describe them here. We will mostly ignore cylindrical contact homology below, since $S^1$-equivariant contact homology determines it by Theorem~\ref{thm:cchinv} but is defined more generally.

\paragraph{Splitting by free homotopy classes.}
The differentials on the chain complexes defining cylindrical, nonequivariant, and $S^1$-equivariant contact homology all preserve the free homotopy class of Reeb orbits (since they count cylinders which project to homotopies in $Y$ between Reeb orbits). Furthermore, the chain maps proving topological invariance of the nonequivariant and $S^1$-equivariant contact homologies also preserve the free homotopy class of Reeb orbits. Consequently, if $\xi$ is a contact structure on $Y$ admitting a hypertight contact form, and if $\Gamma$ is a free homotopy class of loops in $Y$, then we have well-defined contact homologies $NCH_*(Y,\xi,\Gamma)$ and $CH_*^{S^1}(Y,\xi,\Gamma)$, which are the homologies of the subcomplexes involving Reeb orbits in the class $\Gamma$.

\paragraph{Refined grading.}
Let $N$ denote twice the minimum positive pairing of $c_1(\xi)$ with a toroidal class in $H_2(Y)$, or infinity if $c_1(\xi)$ annihilates all toroidal classes in $H_2(Y)$. 
Each of the above contact homologies has a noncanonical $\Z/N$-grading, which refines the canonical $\Z/2$-grading. To define this relative grading on cylindrical contact homology, for each free homotopy class $\Gamma$ that contains good Reeb orbits, choose a good Reeb orbit $\gamma$ in the class $\Gamma$, and choose an arbitrary value of the grading $|\gamma|\in\Z/N$ which has the same parity as $\op{CZ}(\gamma)$. There is then a unique way to extend the $\Z/N$-grading over all good Reeb orbits in the class $\Gamma$ such that if $u$ is any homotopy class of cylinder with boundary $\gamma_+ - \gamma_-$, then
\[
|\gamma_+| - |\gamma_-| = \op{ind}(u).
\]
Here $\op{ind}(u)$ is defined as in \eqref{eqn:Fredholmindex}, which makes sense even if $u$ does not come from a holomorphic cylinder.

To define the grading on nonequivariant or equivariant contact homology, one likewise chooses the grading $|\widecheck{\gamma}|$ for all Reeb orbits $\gamma$ in the homotopy class $\Gamma$. We then adopt the conventions
\[
\begin{split}
\left|\widehat{\gamma}\right| &= \left|\widecheck{\gamma}\right| + 1,\\
\left|\widecheck{\gamma}\tensor U^k\right| &= \left|\widecheck{\gamma}\right| + 2k,\\
\left|\widehat{\gamma}\tensor U^k\right| &= \left|\widecheck{\gamma}\right| + 2k+1.\\
\end{split}
\]
The topological invariance in Theorems~\ref{thm:NCHinvariant} and \ref{thm:CHS1invariant}, and the isomorphism in Theorem~\ref{thm:cchinv}, respect the relative gradings.

\paragraph{Cobordism maps.} Let $(Y_+,\lambda_+)$ and $(Y_-,\lambda_-)$ be closed manifolds with nondegenerate hypertight contact forms. Let $(X,\lambda)$ be an exact\footnote{One can also obtain cobordism maps from a strong symplectic cobordism if one uses a suitable Novikov completion of contact homology; see \cite{field} for the analogous story for embedded contact homology.} symplectic cobordism (see Definition~\ref{def:exactcob}) from $(Y_+,\lambda_+)$ to $(Y_-,\lambda_-)$, and assume further that no Reeb orbit in $Y_+$ is contractible in $X$.
Proposition~\ref{prop:cobordismmap} and the subsequent discussion show that the cobordism $(X,\lambda)$ induces a map
\[
\Phi(X,\lambda): NCH_*(Y_+,\lambda_+) \longrightarrow NCH_*(Y_-,\lambda_-)
\]
which is functorial with respect to composition of cobordisms. Likewise, Proposition~\ref{prop:famcobmap} and the subsequent discussion give a functorial map
\[
\Phi(X,\lambda): CH_*^{S^1}(Y_+,\lambda_+) \longrightarrow CH_*^{S^1}(Y_-,\lambda_-).
\]

\paragraph{Filtered versions.} Let $Y$ be a closed manifold, let $\lambda$ be a contact form on $Y$, let $L$ be a positive real number, and assume that $\lambda$ is ``$L$-nondegenerate'' and ``$L$-hypertight'', meaning that all Reeb orbits of action less than $L$ are nondegenerate and noncontractible. (In particular, $\lambda$ does not need to be hypertight.) We can then repeat the constructions of nonequivariant and equivariant contact homology above, considering only Reeb orbits with symplectic action less than $L$, to obtain well-defined ``filtered contact homologies''$NCH_*^{< L}(Y,\lambda)$ and $CH_*^{S^1,< L}(Y,\lambda)$. These do not depend on the choice of almost complex structure, although they do depend on the contact form $\lambda$; cf.\ \cite[Thm.\ 1.3]{cc2}. When $\lambda$ is actually nondegenerate and hypertight, the usual contact homologies are recovered from the filtered contact homologies by taking the direct limit over $L$, e.g.
\begin{equation}
\label{eqn:dirlimfil}
CH_*^{S^1}(Y,\xi) = \lim_{L\to\infty}CH_*^{S^1,< L}(Y,\lambda).
\end{equation}

\paragraph{The degenerate case.} 
If $\lambda$ is $L$-hypertight but possibly degenerate, and if $\lambda$ does not have any Reeb orbit of action equal to $L$, then one can still define the filtered nonequivariant or $S^1$-equivariant contact homology by letting $\lambda'$ be a small $L$-nondegenerate and $L$-hypertight perturbation of $\lambda$ and defining
\[
CH_*^{S^1,<L}(Y,\lambda) = CH_*^{S^1,<L}(Y,\lambda'),
\]
and likewise for nonequivariant contact homology. This does not depend on the choice of $\lambda'$ if the perturbation is sufficiently small. With this definition, if $\lambda$ is hypertight but possibly degenerate, then we still have the direct limit \eqref{eqn:dirlimfil}. (If $\xi$ has hypertight representatives but they are all degenerate, then the right hand side of \eqref{eqn:dirlimfil} is still an invariant of $(Y,\xi)$ and can be taken as a definition of the left hand side.)

\paragraph{Local contact homology.}

In \cite{hm}, Hryniewicz and Macarini introduced the {\em local contact homology\/} of the $d^{th}$ iterate of a simple Reeb orbit $\gamma_0$ in a (not necessarily compact) contact manifold $(Y,\lambda_0)$. We assume that the Reeb orbits $\gamma_0^k$ for $1\le k\le d$ are isolated in the loop space of $Y$, but we do not assume that these are nondegenerate. Local contact homology is defined analogously to the cylindrical contact homology $CH_*^{\op{EGH}}$, but only working in a small tubular neighborhood $N$of $\gamma_0$, for a nondegenerate perturbation $\lambda$ of $\lambda_0$, and only considering Reeb orbits of $\lambda$ that wind $d$ times around $N$.  This local contact homology is defined in \cite{hm}, assuming that one can find almost complex structures satisfying suitable transversality, and it is a key ingredient in various dynamical applications, see e.g.\ \cite{gg,ggm,ghhm}.

Using our methods, without any transversality difficulties, we can define local versions of nonequivariant and $S^1$-equivariant contact homology, which we denote by $NCH_*(Y,\lambda_0,\gamma_0,d)$ and $CH_*^{S^1}(Y,\lambda_0,\gamma_0,d)$, and prove that these are invariants which depend only on the contact form $\lambda_0$ in a neighborhood of the Reeb orbit $\gamma_0$. As in Theorem~\ref{thm:cchinv}, if there exists a perturbation $\lambda$ of $\lambda_0$ in $N$ and a $\lambda$-compatible almost complex structure $J$ satisfying sufficient transversality to define the cylindrical contact homology $CH_*^{\op{EGH}}$, which is always true in the three-dimensional case, then this cylindrical contact homology does not depend on the perturbation $\lambda$ or on $J$ and agrees with $CH_*^{S^1}(Y,\lambda_0,\gamma_0,d)\tensor\Q$. See \S\ref{sec:localcontact} for details.

\subsection{Relation with other approaches}

Bao-Honda \cite{bh} give another construction of cylindrical contact homology for hypertight contact forms in dimension 3, by modifying the contact form so that all Reeb orbits of action less than $L$ are hyperbolic, using obstruction bundle gluing to prove that the cylindrical contact homology in action less than $L$ for the modified contact form is independent of the choice of modification, and then taking the direct limit over $L$. Action-filtered versions of Theorems~\ref{thm:CHS1invariant} and \ref{thm:cchinv} show that this definition of cylindrical contact homology is also isomorphic to $CH^{S^1}\tensor\Q$.

Bourgeois-Oancea \cite[\S4.1.2(2)]{bo12} define a version of positive $S^1$-equivariant symplectic homology (over $\Z$) for a nondegenerate contact form $\lambda$ on a closed manifold $Y^{2n-1}$, assuming that $\lambda$ is hypertight, or that $c_1(\xi)|_{\pi_2(Y)}=0$ and every contractible Reeb orbit $\gamma$ satisfies $CZ(\gamma)>4-n$. In particular this includes the dynamically convex case in three dimensions, and also local contact homology. The theory defined by Bourgeois-Oancea can be used a substitute for cylindrical contact homology in some applications. We expect that it is canonically isomorphic to the $S^1$-equivariant contact homology $CH^{S^1}_*(Y,\xi)$ defined here (in the hypertight case) and in the sequel \cite{inv} (in the dynamically convex case in three dimensions).

Bao-Honda \cite{bh2} and Pardon \cite{p} use virtual techniques (variations on the idea of ``Kuranishi structure'') to define the contact homology algebra (over $\Q$) of any closed manifold with a nondegenerate contact form. In the hypertight case, one can obtain cylindrical contact homology and its invariance from the contact homology algebra. 

More generally, work in progress of Fish-Hofer will use the polyfold theory of Hofer-Wysocki-Zehnder \cite{HWZbook} to define symplectic field theory (SFT), which in particular will yield cylindrical contact homology (over $\Q$) for a dynamically convex contact form.  An alternate foundation for SFT is proposed by Ishikawa \cite{is}.

One reason why we are pursuing the more geometric approach in the present paper and \cite{dc,inv}, even though it is less general than the more abstract approaches above, is that for computations and applications, it is desirable when possible to understand cylindrical contact homology directly in terms of Reeb orbits and holomorphic cylinders between them. Also, in applications to symplectic embedding problems, it is important to understand the holomorphic curves in symplectic cobordisms that arise from contact homology, see e.g.\ \cite{mcd}.

\subsection{The plan}

In \S\ref{section:plainmoduli} and \S\ref{cascades} we explain the definition of nonequivariant contact homology and prove its invariance (Theorem~\ref{thm:NCHinvariant}). In \S\ref{equicurrents} we modify this construction to define equivariant contact homology and prove its invariance (Theorem~\ref{thm:CHS1invariant}). In \S\ref{equicasc} we describe the nonequivariant and equivariant contact homology for autonomous $J$ (assuming suitable transversality) and prove the relation with cylindrical contact homology (Theorem~\ref{thm:cchinv}). In \S\ref{examples} we work out some examples, including a definition of local contact homology.

Our constructions use various analytical results on transversality, compactness, and gluing, and we omit the proofs of these where they follow from standard arguments. However we do include a long appendix giving details of the orientations of the moduli spaces that we consider. The gluing theory is sketched in \S\ref{sec:gbs}, and more details about gluing will be provided in the sequel \cite{inv}, where we need to consider a more general situation.

In \cite{inv}, we will extend the machinery in the present paper to construct an invariant integral lift of cylindrical contact homology for dynamically convex context forms in three dimensions. In this case, for a generic $\lambda$-compatible almost complex structure $J$ on $\R\times Y$, there may exist certain nontransverse index $2$ holomorphic buildings with one positive end and one negative end. These do not interfere with the proof that $(\partial^{EGH})^2=0$, as shown in \cite[Prop.\ 3.1]{dc}. However these buildings do make nontrivial contributions to the cascade differentials computing nonequivariant and $S^1$-equivariant contact homology. To understand these contributions, we will need to use a bit of obstruction bundle gluing as in \cite{ht2}.

\paragraph{Acknowledgments.} We thank Mohammed Abouzaid, Helmut Hofer, Umberto Hryniewicz, Dusa McDuff, Paul Seidel, and Chris Wendl for helpful conversations. We thank the anonymous referee for careful reading and many detailed comments which helped us improve the paper.

\paragraph{Notation conventions.} Many moduli spaces below are defined by first defining a larger moduli space and then modding out by some group action(s). Generally, if $\M$ is a moduli space of interest, then $\widetilde{\M}$ denotes a larger version of this moduli space before modding out by an $\R$ action, so that $\M=\widetilde{\M}/\R$. Likewise, where applicable, $\widetilde{\widetilde{\M}}$ denotes a larger version of $\M$ before modding out by an $\R^2$ action, and $\widehat{\M}$ denotes a larger version of $\M$ before modding out by an $\R^2\times S^1$ action.

In addition, $\overline{\M}$ denotes a compactification of $\M$. Moduli spaces of the form $\M^\ca$ are cascade moduli spaces defined in \S\ref{cascades}.


\section{Nonequivariant moduli spaces}
\label{section:plainmoduli}

In this section we give the definitions and state the key properties of moduli spaces of holomorphic cylinders for $S^1$-dependent almost complex structures.  These moduli spaces will be used in \S\ref{cascades} to define nonequivariant contact homology.

\subsection{Definitions}

Let $(Y^{2n-1}, \lambda)$ be a closed nondegenerate contact manifold with contact structure $\xi=\ker \lambda$ and Reeb vector field $R$. We assume throughout that $\lambda$ is hypertight, i.e.\ all Reeb orbits are noncontractible.  

Let $\J=\{J_t\}_{t\in S^1}$ be an $S^1$-family of $\lambda$-compatible almost complex structures on $\R\times Y$; see Definition~\ref{def:lambdacompatible}. 

\begin{definition}
\label{def:Mtilde}
If $\gamma_+$ and $\gamma_-$ are Reeb orbits, let $\Mt^\J(\gamma_+,\gamma_-)$ denote the moduli space of maps $u:\R\times S^1\to \R\times Y$ satisfying the equations
\begin{gather}
\label{eqn:cr}
\partial_s u + J_t\partial_t u = 0,\\
\label{eqn:pirlim}
\lim_{s\to\pm\infty}\pi_\R(u(s,\cdot))=\pm\infty,\\
\label{eqn:param}
\lim_{s\to\pm\infty}\pi_Y(u(s,\cdot)) \;\mbox{is a parametrization of $\gamma_\pm$},
\end{gather}
modulo $\R$ translation in the domain. If $\gamma_+$ and $\gamma_-$ are distinct, then $\R$ acts freely on $\Mt^\J(\gamma_+,\gamma_-)$ by translation of the $\R$ coordinate on the target $\R\times Y$, and we define
\[
\M^\J(\gamma_+,\gamma_-)=\Mt^\J(\gamma_+,\gamma_-)/\R.
\]
\end{definition}

If $\gamma$ is a Reeb orbit, let $\overline{\gamma}$ denote the underlying simple Reeb orbit, so that $\gamma$ is a $d$-fold cover of $\overline{\gamma}$ for some integer $d>0$. There are then well-defined evaluation maps
\[
e_\pm: \M^\J(\gamma_+,\gamma_-) \longrightarrow \overline{\gamma_\pm}
\]
defined by
\begin{equation}
\label{eqn:defepm}
e_\pm(u) \eqdef \lim_{s\to\pm\infty} \pi_Y(u(s,0)).
\end{equation}

\subsection{Transversality}
\label{sec:trans}

If $d$ is an integer, let $\M^\J_d(\gamma_+,\gamma_-)$ denote the set of $u\in\M^\J(\gamma_+,\gamma_-)$ with
\begin{equation}
\label{eqn:mdconstraint}
\CZ_\tau(\gamma_+)-\CZ_\tau(\gamma_-)+2c_1(u^*\xi,\tau)=d.
\end{equation}
Here the notation is as in equation \eqref{eqn:Fredholmindex}. A standard transversality argument based on \cite[\S8]{wendl}, going back to \cite{dragnev,fhs}, shows the following.

\begin{proposition}
\label{prop:transversality}
If $\J$ is generic, then:
\begin{description}
\item{\emph{(a)}}
For any distinct Reeb orbits $\gamma_+$ and $\gamma_-$, and any integer $d$, the moduli space $\M_d^\J(\gamma_+,\gamma_-)$ is cut out transversely and is a smooth manifold of dimension $d$,
and the evaluation maps $e_+$ and $e_-$ on it are smooth.
\item{\emph{(b)}}
For any distinct Reeb orbits $\gamma_0,\ldots,\gamma_k$ and any integers $d_1,\ldots,d_k$ the $k$-fold fiber product
\[
\M_{d_1}^\J(\gamma_0,\gamma_1)\times_{\overline{\gamma_1}}\M_{d_2}^\J(\gamma_1,\gamma_2) \times_{\overline{\gamma_2}} \cdots \times_{\overline{\gamma_{k-1}}}\M_{d_k}^\J(\gamma_{k-1},\gamma_k)
\]
is cut out transversely, and in particular is a smooth manifold of dimension $1-k+\sum_{i=1}^kd_i$.
\end{description}
\end{proposition}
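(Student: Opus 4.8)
The proof is a standard transversality argument, following the template of Dragnev \cite{dragnev} and Floer--Hofer--Salamon \cite{fhs} as adapted in Wendl \cite[\S8]{wendl}, so the main task is to set up the right Banach manifold of maps and the right universal moduli space, and then to check that the linearization of the Cauchy--Riemann operator together with the variation in $\J$ is surjective at every solution. The key feature that makes this work without any somewhere-injectivity hypothesis on $u$ is that the almost complex structure is allowed to depend on the $S^1$ coordinate $t$ on the domain: even a multiply covered cylinder $u:\R\times S^1\to\R\times Y$ is ``injective'' in the relevant sense once we remember $t$, because the map $(s,t)\mapsto(u(s,t),t)$ into $(\R\times Y)\times S^1$ is somewhere injective, and perturbing $\J=\{J_t\}$ as a $t$-family is enough to achieve transversality. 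This is precisely the mechanism anticipated in \cite{bce} and used in \cite{bee,boduke}.

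For part (a), I would first fix Reeb orbits $\gamma_\pm$ and a homotopy class of map, choose suitable weighted Sobolev completions (exponential weights at the cylindrical ends so that the asymptotic conditions \eqref{eqn:pirlim}--\eqref{eqn:param} become a closed condition and the linearized operator is Fredholm of the expected index), and form the universal moduli space $\{(u,\J) : \dbar_\J u = 0\}$, where $\J$ ranges over a Banach manifold of $S^1$-families of $\lambda$-compatible almost complex structures of class $C^\eps$ (or a Banach space of perturbations of a fixed smooth $\J_0$). The central computation is that at any solution $(u,\J)$ the operator $(\xi,\mathbf{Y})\mapsto D_u\xi + \tfrac12 \mathbf{Y}(u)\circ du\circ j$ is surjective, where $\mathbf{Y}$ ranges over the tangent space to the space of $\J$'s. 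One argues by duality: if $\eta$ is in the cokernel, i.e.\ $\eta$ is $L^2$-orthogonal to the image, then pairing against the $\J$-variations forces $\eta$ to vanish on the open set of ``injective points'' $(s,t)$ where $du\neq 0$ and no other point of the domain maps to $(u(s,t),t)$, and pairing against the $D_u$-variations (unique continuation for the formal adjoint, which is a first-order elliptic operator with a similar structure) then forces $\eta\equiv 0$. Here one needs the standard fact that the set of injective points is open and dense in $\R\times S^1$ for a solution with nonconstant asymptotics; this uses that $u$ is not $\R$-independent (its ends escape to $\pm\infty$ in the $\R$ factor) together with the fact that distinct Reeb orbits are geometrically distinct, plus a Carleman-type unique continuation argument for the difference of two branches. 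Once surjectivity of the universal linearization is established, the implicit function theorem makes the universal moduli space a Banach manifold, and the Sard--Smale theorem applied to the projection to the space of $\J$'s gives a comultiplication residual set of $\J$ for which $\M^\J_d(\gamma_+,\gamma_-)$ is a manifold of dimension $d$; a Taubes-style argument intersects countably many such residual sets (over all pairs $\gamma_\pm$, all $d$, and all homotopy classes) and promotes $C^\eps$-genericity to $C^\infty$-genericity. Smoothness of $e_\pm$ is immediate from elliptic regularity and smooth dependence on asymptotic data.

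For part (b), I would enlarge the universal setup to the space of $k$-tuples $(u_1,\dots,u_k)$ with matching asymptotics $\M^\J(\gamma_{i-1},\gamma_i)$, and impose the $k-1$ fiber-product (evaluation-matching) constraints $e_-(u_i)=e_+(u_{i+1})$ in $\overline{\gamma_i}$. The point is that one may perturb the almost complex structure \emph{independently} near each of the orbits $\gamma_0,\dots,\gamma_k$ — more precisely, one still has enough freedom because the injective points of the $u_i$ are disjoint in the target once we again remember the $S^1$-coordinate and use that consecutive cylinders have distinct asymptotic orbits — so the linearization of the combined system (the $k$ Cauchy--Riemann operators together with the $k-1$ evaluation constraints) is surjective at every solution. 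Transversality of the fiber product then follows from surjectivity of this combined operator, and the dimension count $\sum_{i=1}^k d_i - k + 1$ is the alternating sum of the dimensions $d_i$ of the factors minus the codimensions $(\dim \overline{\gamma_i} ) \cdot 0$... more precisely one subtracts $1$ for each of the $k-1$ one-dimensional matching conditions after modding the middle factors appropriately — the bookkeeping is the standard Morse-Bott gluing count and matches what is needed in \S\ref{cascades}.

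\textbf{Main obstacle.} The real work is the somewhere-injectivity-free surjectivity argument: verifying carefully that the set of injective points (in the sense of remembering the domain $S^1$-coordinate) is open and dense for every solution, and that this suffices to kill any element of the cokernel. This is where the domain-dependence of $\J$ is essential, and it is the only place where one must be careful rather than invoke a black box; everything else (Fredholm theory on weighted Sobolev spaces, Sard--Smale, the $C^\infty$ promotion) is routine. Since the paper explicitly defers such standard analytic arguments, I would cite \cite[\S8]{wendl} together with \cite{dragnev,fhs} and indicate only this injective-points mechanism in detail.
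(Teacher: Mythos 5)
The paper does not prove this proposition at all: it defers entirely to \cite[\S8]{wendl}, \cite{dragnev}, and \cite{fhs} in the sentence preceding the statement, so there is no argument in the text to compare against. Your sketch is a faithful expansion of that citation, and you correctly identify the essential mechanism — that the $t$-dependence of $\J$ makes the graph map $(s,t)\mapsto(u(s,t),t)$ somewhere injective even for multiply covered $u$, so that varying $\J=\{J_t\}$ supplies enough freedom to kill any cokernel element via the usual duality and unique-continuation argument — together with the universal moduli space, Sard--Smale, and Taubes-style $C^\infty$-promotion. Two small imprecisions are worth flagging. First, ``distinct Reeb orbits are geometrically distinct'' is false (a simple orbit and its iterates are distinct Reeb orbits with the same image in $Y$); the statement one actually uses, proved in \cite{dragnev} and \cite[\S8]{wendl}, is that for a solution of the $t$-dependent equation with nonconstant asymptotics the set of injective points of the graph map is open and dense. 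Second, in part (b) your claim that the injective points of the $u_i$ are pairwise disjoint in the target is stronger than what holds and is not really what drives the fiber-product Sard--Smale argument: the correct input is that two distinct solutions of the same $t$-dependent Cauchy--Riemann equation agree (as graph maps) only on a nowhere dense set, by unique continuation for the difference, which is enough to localize a $\J$-perturbation near an injective point of one $u_i$ while missing all the others, and hence to kill each component of a putative cokernel element of the combined operator one at a time. Neither issue affects the validity of the conclusion or the strategy.
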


The precise meaning of transversality in part (a) is that each $u$ in the moduli space is ``regular'' in the sense of Definition~\ref{def:regular}; see \S\ref{sec:tms} for explanation. The transversality in (b) means that if $(u_1,\ldots,u_k)$ is an element of this fiber product, then the map
\[
\begin{split}
T_{(u_1,\ldots,u_k)}\prod_{i=1}^k\M_{d_i}^\J(\gamma_{i-1},\gamma_i) &\longrightarrow \bigoplus_{i=1}^{k-1} T_{e_-(u_i)}\overline{\gamma_i},\\
(v_1,\ldots,v_k) & \longmapsto (de_-(v_1)-de_+(v_2), \ldots, de_-(v_{k-1}) - de_+(v_k))
\end{split}
\]
is surjective.

Assume below that $\J$ is generic in the sense of Proposition~\ref{prop:transversality}.

\subsection{Orientations}

Recall that any manifold $M$ has an ``orientation sheaf'' $\mc{O}_M$, which is a local system locally isomorphic to $\Z$, defined by $\mc{O}_M(p)=H_{\dim(M)}(M,M\setminus\{p\})$ for $p\in M$; an orientation of $M$ is equivalent to a section of $\mc{O}_M$ which restricts to a generator of each fiber. If $\mc{O}$ is another local system on $M$ which is locally isomorphic to $\Z$, then we define an ``orientation of $M$ with values in $\mc{O}$'' to be a section
\[
\mathfrak{o}\in H^0(M;\mc{O}_M\tensor\mc{O})
\]
which restricts to a generator of each fiber.

As we review in the appendix (see Definition~\ref{def:Ogamma}), one can use the theory of coherent orientations to assign to each Reeb orbit $\gamma$ a canonical local system $\mc{O}_\gamma$ over $\overline{\gamma}$, locally isomorphic to $\Z$, such that:

\begin{proposition}
\label{prop:orientations}
(proved in \S\ref{sec:oms})
\begin{description}
\item{\emph{(a)}}
The local system $\mc{O}_{\gamma}$ is trivial, i.e.\ (noncanonically) isomorphic to $\overline{\gamma}\times\Z$, if and only if $\gamma$ is a good Reeb orbit.
\item{\emph{(b)}}
The moduli space $\M^\J(\gamma_+,\gamma_-)$ has a canonical orientation with values in $e_+^*\mc{O}_{\gamma_+} \tensor e_-^*\mc{O}_{\gamma_-}$.
\end{description}
\end{proposition}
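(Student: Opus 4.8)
The plan is to construct the local system $\mc{O}_\gamma$ from coherent orientations and then verify both claims by tracking how these orientations behave under the $\Z/d$ symmetry of a $d$-fold cover. First I would fix, once and for all, a system of coherent orientations in the sense of Bourgeois-Mohnke \cite{bm} and Floer-Hofer \cite{fh}: to each Reeb orbit $\gamma$ and each choice of trivialization $\tau$ of $\gamma^*\xi$, the theory assigns a determinant line, and a choice of coherent orientation picks out an orientation of this line for each $\gamma$, compatibly with gluing. Concretely, $\gamma$ is a $d$-fold cover of a simple orbit $\overline{\gamma}$, and the deck group $\Z/d$ acts on $\overline{\gamma}$ by rotation; this action lifts to an action on the relevant asymptotic operator / Cauchy-Riemann determinant data over $\gamma$. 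I would \emph{define} $\mc{O}_\gamma$ to be the local system over $\overline{\gamma}$ whose fiber at a point $p$ records the orientation of the determinant line of the asymptotic operator at $p$, with the monodromy around $\overline{\gamma}$ given by the action of the generator of $\Z/d$ on that line. This is well-defined and locally isomorphic to $\Z$ because a determinant line is always (noncanonically) $\cong\R$, hence its orientations form a torsor over $\Z/2$, i.e. a rank-one local system.

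For part (a), the monodromy of $\mc{O}_\gamma$ around $\overline{\gamma}$ is $\pm 1$ according to whether the generator of $\Z/d$ preserves or reverses the orientation of the determinant line, and I would show this sign is exactly $(-1)^{\CZ_\tau(\gamma) - \CZ_\tau(\overline{\gamma})}$ up to the standard index bookkeeping — more precisely, that the local system is trivial iff $\CZ(\gamma) \equiv \CZ(\gamma^{1}) \pmod 2$ for the appropriate simple-orbit comparison, which is precisely the definition of $\gamma$ being good. This is the classical computation (due to Bourgeois-Mohnke, see also \cite{egh}) that bad orbits are exactly those whose covering symmetry acts by an orientation-reversing map on the coherent orientation line; one reduces it to a model computation with a path of symplectic matrices and the behavior of the Conley-Zehnder index, where the parity of $\CZ_\tau(\gamma)$ controls the sign of the determinant of the linearized operator's symmetry. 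I would cite or reproduce this model computation rather than redo it from scratch.

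For part (b), the orientation of $\M^\J(\gamma_+,\gamma_-)$ with values in $e_+^*\mc{O}_{\gamma_+}\tensor e_-^*\mc{O}_{\gamma_-}$ comes from the standard fact that the determinant line of the linearized operator $D_u$ at a cylinder $u$ is canonically isomorphic to $\det(D_u) \cong \Lambda^{\max} T_u\M^\J \tensor (\text{asymptotic data at } +\infty) \tensor (\text{asymptotic data at } -\infty)$, i.e. the gluing/excision formula for determinant lines under cutting the cylinder at its ends. Coherent orientations orient $\det(D_u)$ in a way compatible with gluing; transporting this through the excision isomorphism gives an orientation of $T_u\M^\J$ twisted by the tensor product of the asymptotic determinant lines at the two ends, which by construction of $\mc{O}_{\gamma_\pm}$ is exactly $e_+^*\mc{O}_{\gamma_+}\tensor e_-^*\mc{O}_{\gamma_-}$ (the evaluation maps record which point of $\overline{\gamma_\pm}$ the asymptotic end sits over, hence which fiber of the local system is relevant). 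The only subtlety is that we have quotiented by the $\R$-action on the target; I would handle this by noting that the $\R$-translation direction is canonically trivialized (it is spanned by $\partial_r$, which pairs with the Reeb direction), so passing from $\Mt^\J$ to $\M^\J = \Mt^\J/\R$ does not affect orientability and the twisted orientation descends. Writing this out carefully in the appendix (sections \ref{sec:oms} and around Definition \ref{def:Ogamma}) is where the real work lies.

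The main obstacle I expect is \emph{bookkeeping consistency}: making the sign conventions in the excision/gluing formula for determinant lines match the covering-symmetry computation of part (a), so that the statement "$\mc{O}_\gamma$ trivial $\iff$ $\gamma$ good" and the statement "$\M^\J$ is oriented with values in $e_+^*\mc{O}_{\gamma_+}\tensor e_-^*\mc{O}_{\gamma_-}$" use the \emph{same} local system $\mc{O}_\gamma$ with the \emph{same} monodromy, with no hidden sign discrepancy. In particular one must check that when both $\gamma_\pm$ are good, so both local systems are trivial and a choice of trivialization reduces this to an honest $\Z$-valued orientation, the resulting orientation of $\M^\J(\gamma_+,\gamma_-)$ agrees with the one used implicitly in the cylindrical differential \eqref{eqn:defdelta} and in \cite{dc}; and when $\gamma_+$ or $\gamma_-$ is bad, the nontriviality of the monodromy is precisely what forces the relevant moduli count to vanish or the factor $-2$ in \eqref{eqn:minus2} to appear. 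This is routine but error-prone, and it is the content deferred to \S\ref{sec:oms}.
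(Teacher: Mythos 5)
Your proposal follows the same overall strategy as the paper — define $\mc{O}_\gamma$ via coherent orientations of Cauchy–Riemann determinant lines, observe that the covering symmetry reverses orientation exactly for bad orbits, and then orient $\M^\J$ by gluing — so at that level of granularity you are aligned. But there are two places where your sketch glosses over a real technical point that the paper has to work to address.

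First, the object you are orienting at a Reeb orbit is not ``the determinant line of the asymptotic operator at $p$.'' The asymptotic operator $A_{\mc{L}}$ is a self-adjoint operator on sections over $S^1$ and has no Fredholm determinant line in the relevant sense. What the paper does (Definitions~\ref{def:col}--\ref{def:Ogamma} and Lemma~\ref{lem:ownol}) is package the asymptotic data into an ``orientation loop'' $\mc{L}_{\gamma,p,\J}$ and then define $\mc{O}_{\gamma,p}$ as the set of orientations of a Fredholm Cauchy--Riemann operator on a \emph{plane} with one end asymptotic to that loop — a ``cap.'' Lemma~\ref{lem:ownol} shows this is independent of the chosen cap, and Proposition~\ref{prop:orcy} then gives the gluing identification $\mc{O}(\mc{C}) \simeq \mc{O}(\mc{L}_+)\tensor\mc{O}(\mc{L}_-)$ for the cylinder $\mc{C}$. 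Your phrase ``determinant line of the asymptotic operator'' is shorthand that hides this extra structure; making it precise requires the capping-surface machinery.

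Second, and more substantively, your step from ``orientation of $\det(D_u)$'' to ``orientation of $T_u\M^\J$'' is not just an excision formula. The operator $D_u$ in \eqref{eqn:lincr} acts on exponentially weighted Sobolev spaces $L^{2,\delta}_1$, so $\Ker(D_u)$ sees only deformations that decay at both ends. The actual tangent space to $\widetilde{\widetilde{\M}}^\J(\gamma_+,\gamma_-)$ is larger: it must allow the asymptotic Reeb cylinders to slide in the $\partial_r$ and $R$ directions, which adds a $\bbC\oplus\bbC$'s worth of deformations not in the weighted kernel. The paper handles this with the four-term exact sequence of Proposition~\ref{prop:tanx},
\[
0 \to \Ker(D_u) \to T_u\widetilde{\widetilde{\M}}^\J(\gamma_+,\gamma_-) \xrightarrow{(\sigma_+,\sigma_-)} \bbC\oplus\bbC \xrightarrow{(\tau_+,\tau_-)} \Coker(D_u) \to 0,
\]
and only after choosing the canonical complex orientation on the $\bbC\oplus\bbC$ term does $\mc{O}(D_u)$ determine an orientation of $T_u\widetilde{\widetilde{\M}}^\J$. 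Without this step the isomorphism you assert between $\det(D_u)$ and $\Lambda^{\max}T_u\M^\J$ twisted by the asymptotic lines is not literally true, and it is precisely this exact sequence that makes the sign bookkeeping in Proposition~\ref{prop:gbs} and Definition~\ref{def:EGHsigns} come out correctly (including the $(-1)^{d_+}$ factors in \eqref{eqn:boundaryhatcheck}). Your paragraph on the $\R$-quotient is also conflating two distinct $\R$ actions (domain translation and target translation); the paper applies the ``$\R$-direction first'' convention twice, once for each. For part (a), your reduction to the parity computation is correct in spirit and the paper simply cites \cite[\S5]{bm}, so that part is fine.
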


\subsection{Compactness}

\begin{definition}
Let $\gamma_+$ and $\gamma_-$ be distinct Reeb orbits. A ($k$-level) {\em broken $\J$-holomorphic cylinder\/} from $\gamma_+$ to $\gamma_-$ is a $k$-tuple $(u_1,\ldots,u_k)$ where there exist distinct Reeb orbits $\gamma_+=\gamma_0,\gamma_1,\ldots,\gamma_k=\gamma_-$ such that $u_i\in\M^\J(\gamma_{i-1},\gamma_i)$ for $i=1,\ldots,k$ and $e_-(u_i)=e_+(u_{i+1})$ for $i=1,\ldots,k-1$.
\end{definition}

\begin{definition}
\label{def:Mdbar}
\begin{itemize}
\item
If $\gamma_+$ and $\gamma_-$ are distinct Reeb orbits, let $\overline{\M^\J_d}(\gamma_+,\gamma_-)$ denote the set of broken $\J$-holomorphic cylinders $(u_1,\ldots,u_k)$ as above, where $u_i\in\M^\J_{d_i}(\gamma_{i-1},\gamma_i)$ with $\sum_{i=1}^kd_i=d$.
\item
Define evaluation maps
\[
e_\pm:\overline{\M^\J_d}(\gamma_+,\gamma_-) \longrightarrow \overline{\gamma_\pm}
\]
by $e_+(u_1,\ldots,u_k) = e_+(u_1)$ and $e_-(u_1,\ldots,u_k) = e_-(u_k)$.
\item
We give $\overline{\M^\J_d}(\gamma_+,\gamma_-)$ the usual topology. In particular, a sequence $\{u(\nu)\}_{\nu = 1,2,\ldots}$ in $\M^\J_d(\gamma_+,\gamma_-)$ converges to $(u_1,\ldots,u_k)\in\overline{\M^\J_d}(\gamma_+,\gamma_-)$ if and only if one can assign to each $\nu$ a choice of $k$ representatives $u(\nu)_1,\ldots,u(\nu)_k\in\Mt^\J_d(\gamma_+,\gamma_-)$ of $u(\nu)$ such that for each $i=1,\ldots,k$, the sequence $\{u(\nu)_i\}_{\nu = 1,2,\ldots}$ of maps $\R\times S^1\to \R\times Y$ converges in $C^\infty$ on compact sets to $u_i$.
\end{itemize}
\end{definition}

\begin{proposition}
\label{prop:compactness}
For any $\J$ (not necessarily generic), if $\gamma_+$ and $\gamma_-$ are distinct Reeb orbits, then $\overline{\M^\J_d}(\gamma_+,\gamma_-)$ is compact.
\end{proposition}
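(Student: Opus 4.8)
The plan is to deduce this from the SFT compactness theorem together with the hypertightness hypothesis; since the topology of Definition~\ref{def:Mdbar} is metrizable, it suffices to prove sequential compactness. Given a sequence in $\overline{\M^\J_d}(\gamma_+,\gamma_-)$, I would first pass to a subsequence along which the combinatorial type --- the number of levels $k$, the chain of intermediate orbits $\gamma_+=\gamma_0,\dots,\gamma_k=\gamma_-$, and the indices $d_1,\dots,d_k$ with $\sum_i d_i=d$ --- is constant. Only finitely many types occur: a nondegenerate contact form on a closed manifold has finitely many Reeb orbits of action at most $\mathcal A(\gamma_+)$, and the actions strictly decrease along the chain because a cylinder between distinct orbits has positive $d\lambda$-energy (it cannot be a branched cover of a trivial cylinder). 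As the evaluation maps $e_\pm$ are continuous, the fiber-product matching conditions $e_-(u_i)=e_+(u_{i+1})$ pass to any limit, so it remains to treat a sequence $\{u(\nu)\}$ in a single $\M^\J_{d'}(\gamma',\gamma'')$, with $\gamma'\neq\gamma''$, and exhibit a subsequence converging in $\overline{\M^\J_{d'}}(\gamma',\gamma'')$.

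Each $u(\nu)$ has Hofer energy bounded above in terms of $\mathcal A(\gamma')$, uniformly in $\nu$, so the SFT compactness theorem of Bourgeois--Eliashberg--Hofer--Wysocki--Zehnder --- which applies verbatim to the domain-dependent equation \eqref{eqn:cr}, since $\{J_t\}_{t\in S^1}$ is a smooth compact family of $\lambda$-compatible almost complex structures and the $S^1$-coordinate on the domain is not rescaled in the limit --- produces, after passing to a subsequence, a holomorphic building $(v_1,\dots,v_m)$ in $\R\times Y$ of arithmetic genus $0$, with top orbit $\gamma'$ and bottom orbit $\gamma''$, to which $u(\nu)$ converges. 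The next step, which is the heart of the matter, is to show that this building is a chain of honest cylinders. There are no closed sphere components, since the symplectic form on $\R\times Y$ is exact; there are no plane components, since a finite-energy holomorphic plane in $\R\times Y$ is asymptotic to a Reeb orbit that bounds a disk in $\R\times Y\simeq Y$, contradicting hypertightness; and there is no component with only negative punctures, since by Stokes such a component would carry nonpositive $d\lambda$-energy and hence be a branched cover of a trivial cylinder, which has a positive puncture. Because the building has genus $0$, the graph whose vertices are the components and whose edges are the breaking orbits and nodes is a tree with exactly two legs, namely the overall ends $\gamma'$ (at the top) and $\gamma''$ (at the bottom). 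A leaf-pruning argument now applies: a leaf of the tree attached by a single breaking orbit is, after accounting for the legs, either a plane (excluded) or a cylinder; a leaf attached by a node is a ghost component with too few nodes to be stable (excluded); and the level structure forces the positive punctures of a top-level component to reduce to $\gamma'$, and dually at the bottom. One concludes that the tree is a path, that it has no nodes or ghost components, and that each $v_i$ is an honest cylinder, so $v_i\in\M^\J(\delta_{i-1},\delta_i)$ for Reeb orbits $\gamma'=\delta_0,\dots,\delta_m=\gamma''$, with $e_-(v_i)=e_+(v_{i+1})$ recovered by tracking the $\{t=0\}$ marked point through the neck regions.

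It remains to check that this limit lies in $\overline{\M^\J_{d'}}(\gamma',\gamma'')$. The orbits $\delta_0,\dots,\delta_m$ are pairwise distinct, since each $v_i$ has strictly positive $d\lambda$-energy $\mathcal A(\delta_{i-1})-\mathcal A(\delta_i)$, so the actions strictly decrease along the chain. The index sum $\sum_i\bigl(\CZ_\tau(\delta_{i-1})-\CZ_\tau(\delta_i)+2c_1(v_i^*\xi,\tau)\bigr)$ equals $d'$, by telescoping the Conley--Zehnder terms (using a fixed trivialization $\tau$ along the common free homotopy class of the $\delta_i$) and the additivity of the relative first Chern class under gluing along the breaking orbits. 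And the SFT convergence is exactly the $C^\infty_{\mathrm{loc}}$ convergence of $\R$-translated representatives required by Definition~\ref{def:Mdbar}.

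I expect the main obstacle to be the identification of the limiting building as a chain of cylinders --- excluding sphere, plane, pair-of-pants, node, and ghost components. This is precisely where the hypertightness hypothesis is used, through the nonexistence of holomorphic planes in the symplectization, and where the genus-zero, two-puncture topology of cylinders does the combinatorial work. The remaining ingredients --- the uniform energy bound, SFT compactness for a domain-dependent almost complex structure, finiteness of Reeb orbits below an action bound, continuity of the evaluation maps, and additivity of the index --- are routine.
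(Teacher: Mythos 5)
Your proof is correct and follows exactly the same route as the paper, which simply cites the SFT compactness theorem of Bourgeois--Eliashberg--Hofer--Wysocki--Zehnder, remarks that passing to a domain-dependent $S^1$-family of almost complex structures does not affect the argument, and notes that hypertightness is what excludes plane bubbling. You have essentially written out the details that the paper leaves to the reader.
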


\begin{proof}
This follows from standard compactness arguments as in \cite[Thm.\ 10.4]{behwz}. (This reference does not consider domain-dependent almost complex structures, but that does not affect the argument here.) The hypertightness assumption is needed to avoid bubbling of holomorphic planes.
\end{proof}

\subsection{Constrained moduli spaces and gluing}
\label{sec:constrained}

Let $\gamma_+,\gamma_-$ be distinct Reeb orbits, and let $p_\pm\in\overline{\gamma_\pm}$. We then define moduli spaces with point constraints
\[
\begin{split}
\M^\J_d(\gamma_+,p_+,\gamma_-) &= e_+^{-1}(p_+)\subset \M^\J_{d}(\gamma_+,\gamma_-),\\
\M^\J_d(\gamma_+,\gamma_-,p_-) &= e_-^{-1}(p_-)\subset \M^\J_{d}(\gamma_+,\gamma_-),\\
\M^\J_d(\gamma_+,p_+,\gamma_-,p_-) &= (e_+\times e_-)^{-1}(p_+,p_-) \subset \M^\J_{d}(\gamma_+,\gamma_-).
\end{split}
\]
If the pair $(p_+,p_-)$ is generic, then each set on the left hand side is a smooth manifold of dimension $d-1$ or $d-2$, with a canonical orientation with values in $e_+^*\mc{O}_+\tensor e_-^*\mc{O}_-$. Here we orient the spaces with point constraints using the conventions in \cite[\S2.2]{td}.
We also define ``compactified'' constrained moduli spaces by
\[
\overline{\M^\J_d}(\gamma_+,p_+,\gamma_-) = e_+^{-1}(p_+)\subset \overline{\M^\J_{d}}(\gamma_+,\gamma_-)
\]
and so forth.

In the proposition below, we orient fiber products using the convention in \cite[\S2.1]{td}.

\begin{proposition}
\label{prop:currentcompact}
Let $\gamma_+$ and $\gamma_-$ be distinct Reeb orbits. Assume that the pair $(p_+,p_-)$ is generic so that:
\begin{itemize}
\item
$p_+$ is a regular value of all evaluation maps $e_+: \M_d^\J(\gamma_+,\gamma_0)\to\overline{\gamma_+}$ for $d\le 2$.
\item
$p_-$ is a regular value of all evaluation maps $e_-:\M_d^\J(\gamma_0,\gamma_-)\to\overline{\gamma_-}$ for $d\le 2$.
\item
$(p_+,p_-)$ is a regular value of all products of evaluation maps
\[
e_+\times e_-: \M_d^\J(\gamma_+,\gamma_-) \longrightarrow \overline{\gamma_+}\times\overline{\gamma_-}
\]
for $d\le 3$.
\end{itemize}
Then:
\begin{description}
\item{\emph{(a)}} $\M^\J_0(\gamma_+,\gamma_-)$ is finite.

\item{\emph{(b)}} $\overline{\M^\J_1}(\gamma_+,\gamma_-)$ is a compact oriented topological one-manifold with oriented boundary

\begin{equation}
\label{eqn:boundaryhatcheck}
\partial\overline{\M^{\J}_1}(\gamma_+,\gamma_-) = \coprod_{\substack{\gamma_0\neq\gamma_+,\gamma_-\\d_++d_-=1}} (-1)^{d_+}\M^\J_{d_+}(\gamma_+,\gamma_0)\times_{\overline{\gamma_0}}\M^\J_{d_-}(\gamma_0,\gamma_-).
\end{equation}

\item{\emph{(c)}}
$\overline{\M^\J_2}(\gamma_+,\gamma_-,p_-)$ is a compact oriented topological one-manifold with oriented boundary
\begin{equation}
\label{eqn:boundarycheckcheck}
\partial \overline{\M^\J_2}(\gamma_+,\gamma_-,p_-) =
 \coprod_{\substack{\gamma_0\neq\gamma_+,\gamma_-\\d_++d_-=2}} (-1)^{d_+} \M^\J_{d_+}(\gamma_+,\gamma_0) \times_{\overline{\gamma_0}} \M^\J_{d_-}(\gamma_0,\gamma_-,p_-)\\
\end{equation}
Similarly, $\overline{\M^\J_2}(\gamma_+,p_+,\gamma_-)$ is a compact oriented topological one-manifold with oriented boundary
\begin{equation}
\label{eqn:boundaryhathat}
\partial\overline{\M^\J_2}(\gamma_+,p_+,\gamma_-) =  \coprod_{\substack{\gamma_0\neq \gamma_+,\gamma_-\\d_++d_-=2}} (-1)^{d_+-1}\M^\J_{d_+}(\gamma_+,p_+,\gamma_0)\times_{\overline{\gamma_0}}\M^\J_{d_-}(\gamma_0,\gamma_-).
\end{equation}
\item{\emph{(d)}}
$\overline{\M^\J_3}(\gamma_+,p_+,\gamma_-,p_-)$ is a compact oriented topological one-manifold with oriented boundary
\begin{equation}
\label{eqn:boundarycheckhat}
\partial \overline{\M^\J_3}(\gamma_+,p_+,\gamma_-,p_-) =  \coprod_{\substack{\gamma_0\neq\gamma_+,\gamma_-\\d_++d_-=3}} (-1)^{d_+-1} \M^\J_{d_+}(\gamma_+,p_+,\gamma_0)\times_{\overline{\gamma_0}} \M^\J_{d_-}(\gamma_0,\gamma_-,p_-).
\end{equation}
\end{description}
\end{proposition}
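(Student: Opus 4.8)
The plan is to deduce all four parts from the transversality of Proposition~\ref{prop:transversality}, the compactness of Proposition~\ref{prop:compactness}, the coherent orientations of Proposition~\ref{prop:orientations}, and a standard gluing theorem, with systematic dimension counting controlling which broken configurations can contribute. First I would record the dimension bookkeeping. By Proposition~\ref{prop:transversality}(a), $\M^\J_d(\gamma_+,\gamma_-)$ is a smooth $d$-manifold, hence empty for $d<0$; by the regular-value hypotheses on $(p_+,p_-)$, the constrained spaces $\M^\J_d(\gamma_+,p_+,\gamma_-)$, $\M^\J_d(\gamma_+,\gamma_-,p_-)$ and $\M^\J_d(\gamma_+,p_+,\gamma_-,p_-)$ are smooth of dimensions $d-1$, $d-1$, $d-2$ and empty below these thresholds. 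By Proposition~\ref{prop:transversality}(b), a $k$-level fiber product of cylinders of indices $d_1,\dots,d_k$ has dimension $\sum_i d_i-k+1$, and imposing a point constraint at the positive end, the negative end, or both lowers this by $1$, $1$, or $2$ respectively. Part (a) is then immediate: an element of $\overline{\M^\J_0}(\gamma_+,\gamma_-)$ is a broken cylinder $(u_1,\dots,u_k)$ with $\sum_i d_i=0$ and each $d_i\ge 0$, hence $d_i=0$ for all $i$, lying in a fiber product of dimension $1-k$; nonemptiness forces $k=1$, so $\overline{\M^\J_0}(\gamma_+,\gamma_-)=\M^\J_0(\gamma_+,\gamma_-)$, which is compact by Proposition~\ref{prop:compactness} and zero-dimensional, hence finite.

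For parts (b)--(d) I would argue in three steps. \emph{Interior:} in each case the unbroken stratum is cut out transversely and, by the count above, is a smooth open $1$-manifold --- the hypothesis that $(p_+,p_-)$ is a regular value of $e_+\times e_-$ on $\M^\J_d$ for $d\le 3$ is precisely what makes the doubly-constrained space in (d) a $1$-manifold. \emph{Compactness and classification of limits:} a point constraint cuts out a closed subset of the compact space $\overline{\M^\J_d}(\gamma_+,\gamma_-)$ of Proposition~\ref{prop:compactness}, so all four compactifications are compact; by Definition~\ref{def:Mdbar} every sequence subconverges to a broken cylinder (still satisfying the point constraints, by continuity of the evaluation maps), and plane bubbling is excluded by hypertightness as in the proof of Proposition~\ref{prop:compactness}. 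A limiting $k$-level broken cylinder lies in the appropriate $k$-fold (possibly constrained) fiber product, whose dimension is $2-k$ in each of (b), (c), (d): for (b) it equals $\sum_i d_i-k+1=1-k+1$, for (c) it equals $2-k+1-1$, and for (d) it equals $3-k+1-2$. Hence $k=1$ recovers the interior, $k=2$ gives a $0$-dimensional stratum, and $k\ge 3$ is empty by transversality; the only limits of non-convergent sequences are therefore two-level broken cylinders, landing in exactly the fiber products appearing in \eqref{eqn:boundaryhatcheck}--\eqref{eqn:boundarycheckhat}, which are finite as discrete subsets of compact $1$-manifolds. \emph{Gluing:} conversely, a standard gluing theorem for holomorphic cylinders --- adapted to $S^1$-dependent $\J$ and respecting the point constraints in the constrained cases --- attaches to each such two-level configuration a half-open arc in the moduli space limiting onto it, with every end of the $1$-manifold arising this way; this exhibits each compactification as a compact topological $1$-manifold with the stated boundary as a set.

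It remains to pin down the signs. Orienting the $1$-manifolds via Proposition~\ref{prop:orientations}(b) and the constrained conventions of \cite[\S2.2]{td}, and the fiber products via \cite[\S2.1]{td}, the comparison of the boundary orientation of each $1$-manifold with the fiber-product orientation of its ends is a coherent-orientations bookkeeping computation; it yields the sign $(-1)^{d_+}$ in \eqref{eqn:boundaryhatcheck} and \eqref{eqn:boundarycheckcheck}, and the shifted sign $(-1)^{d_+-1}$ in \eqref{eqn:boundaryhathat} and \eqref{eqn:boundarycheckhat}, the shift reflecting whether the surviving constraint sits at the positive or the negative end. I expect this gluing-and-signs step to be the main obstacle: the gluing construction itself is standard Floer theory adapted to a domain-dependent almost complex structure, but checking that the gluing map is an orientation-preserving homeomorphism onto a boundary collar --- in particular in the doubly-constrained situation of part (d) --- and matching its output with the fiber-product and coherent-orientation conventions of \cite{td}, is the delicate part; the underlying orientation computations are deferred to the appendix.
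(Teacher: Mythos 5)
Your proposal is correct and follows essentially the same route as the paper: part (a) by combining transversality (which forces the index-zero compactification to be discrete) with compactness, and parts (b)--(d) by compactness plus a gluing-and-orientation statement for two-level breaking, with the dimension count ruling out deeper strata; the paper likewise defers the gluing map and the sign comparison to Proposition~\ref{prop:gbs} in the appendix. Your extra bookkeeping (the uniform ``$2-k$'' dimension of the $k$-level strata and the observation that the sign shift tracks the constraint at the positive end) is consistent with the paper's conventions.
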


\begin{proof}
(a) By Proposition~\ref{prop:transversality}, the moduli space $\M^\J_0(\gamma_+,\gamma_-)$ is a $0$-dimensional manifold, and $\overline{\M^\J_0(\gamma_+,\gamma_-)}\setminus \M^\J_0(\gamma_+,\gamma_-)$ is empty. Thus $\overline{\M^\J_0(\gamma_+,\gamma_0)}$ is discrete. It then follows from Proposition~\ref{prop:compactness} that $\overline{\M^\J_0(\gamma_+,\gamma_-)}$, and in particular $\M^\J_0(\gamma_+,\gamma_-)$, is finite.

(b)--(d). The compactness follows from Proposition~\ref{prop:compactness}. The fact that the compactified moduli spaces are manifolds with boundary as described follows from Proposition~\ref{prop:gbs}.
\end{proof}

\subsection{Morse-Bott systems}
\label{sec:mbs}

It follows from the above results that for generic $\J$, the moduli spaces $\M^\J_d(\gamma_+,\gamma_-)$ and the evaluation maps on them constitute a ``Morse-Bott system'' in the sense of \cite[Def.\ 2.1]{td}.

More precisely, a Morse-Bott system is a tuple $(X,|\cdot|,S,\mc{O},M_*,e_\pm)$ where:
\begin{itemize}
\item
$X$ is a set.
\item
$|\cdot|$ is a function $X\to\Z/2$ (the mod 2 grading).
\item
$S$ is a function which assigns to each $x\in X$ a closed connected oriented $1$-manifold $S(x)$.
\item
$\mc{O}$ assigns to each $x\in X$ a local system $\mc{O}_x$ over $S(x)$ which is locally isomorphic to $\Z$.
\item
If $x_+,x_-\in X$ are distinct and $d\in\{0,1,2,3\}$, then $M_d(x_+,x_-)$ is a smooth manifold of dimension $d$.
\item
$e_\pm:M_d(x_+,x_-)\to S(x_\pm)$ are smooth maps.
\item
$M_d(x_+,x_-)$ is equipped with an orientation with values in $e_+^*\mc{O}_{x_+}\tensor e_-^*\mc{O}_{x_-}$.
\end{itemize}
These are required to satisfy the ``Grading'', ``Finiteness'', ``Fiber Product Transversality'', and ``Compactification'' axioms in \cite[\S2.2]{td}.

In the present case we can take $X$ to be the set of Reeb orbits. For a Reeb orbit $\gamma$, we define $|\gamma|$ to be the mod $2$ Conley-Zehnder index $\op{CZ}(\gamma)$, and $S(\gamma)=\overline{\gamma}$, oriented by the Reeb vector field. Then $\mc{O}_\gamma$ is the local system in Proposition~\ref{prop:orientations}, and $M_d(\gamma_+,\gamma_-)$ is the moduli space $\M_d^\J(\gamma_+,\gamma_-)$, with the evaluation maps defined by \eqref{eqn:defepm}. Here we are discarding the moduli spaces $\M_d^\J$ with $d>3$.

\begin{proposition}
\label{prop:mbs}
If $\J$ is generic, then the above data constitute a Morse-Bott system in the sense of \cite[Def.\ 2.1]{td}.
\end{proposition}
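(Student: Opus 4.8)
The plan is simply to verify the four axioms of a Morse-Bott system from \cite[Def.\ 2.1]{td} — ``Grading'', ``Finiteness'', ``Fiber Product Transversality'', and ``Compactification'' — one at a time, drawing on the results already established in this section. First I would record that the underlying data is well-formed: $X$ is the set of Reeb orbits; $|\gamma| \eqdef \op{CZ}(\gamma) \in \Z/2$ is well-defined because the parity of the Conley--Zehnder index is independent of the trivialization; $S(\gamma) = \overline{\gamma}$ is a closed connected oriented $1$-manifold, oriented by the Reeb vector field; and $\mc{O}_\gamma$ is the local system of Proposition~\ref{prop:orientations}, locally isomorphic to $\Z$. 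For distinct Reeb orbits $\gamma_\pm$ and $d\in\{0,1,2,3\}$ one sets $M_d(\gamma_+,\gamma_-) = \M^\J_d(\gamma_+,\gamma_-)$ with $e_\pm$ as in \eqref{eqn:defepm}; Proposition~\ref{prop:transversality}(a) makes this a smooth $d$-manifold with smooth evaluation maps, and Proposition~\ref{prop:orientations}(b) equips it with the required orientation valued in $e_+^*\mc{O}_{\gamma_+} \tensor e_-^*\mc{O}_{\gamma_-}$ (the higher-dimensional moduli spaces with $d>3$ are discarded).

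Next I would dispatch the first three axioms, each of which is essentially immediate. The Grading axiom, that $M_d(\gamma_+,\gamma_-)=\emptyset$ unless $d \equiv |\gamma_+|-|\gamma_-| \pmod 2$, follows from the defining constraint \eqref{eqn:mdconstraint}: any $u\in\M^\J_d(\gamma_+,\gamma_-)$ satisfies $d = \op{CZ}_\tau(\gamma_+) - \op{CZ}_\tau(\gamma_-) + 2c_1(u^*\xi,\tau)$, which has the parity of $\op{CZ}(\gamma_+) - \op{CZ}(\gamma_-)$. The Finiteness axiom, that $M_0(\gamma_+,\gamma_-)$ is finite, is Proposition~\ref{prop:currentcompact}(a) — equivalently, Proposition~\ref{prop:compactness} together with the absence of negative-dimensional moduli spaces. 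The Fiber Product Transversality axiom, that the iterated fiber products $\M_{d_1}^\J(\gamma_0,\gamma_1) \times_{\overline{\gamma_1}} \cdots \times_{\overline{\gamma_{k-1}}} \M_{d_k}^\J(\gamma_{k-1},\gamma_k)$ are cut out transversely, is exactly Proposition~\ref{prop:transversality}(b).

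The remaining content is the Compactification axiom: one needs $\overline{\M^\J_d}(\gamma_+,\gamma_-)$ to be compact with its codimension-one stratum consisting of once-broken configurations, and — after cutting down by generic point constraints at the ends — one needs the compactified constrained moduli spaces in the range $d\le 3$ to be compact oriented $1$-manifolds with the prescribed oriented boundaries. Compactness is Proposition~\ref{prop:compactness}, and the manifold-with-boundary structure together with the oriented boundary formulas \eqref{eqn:boundaryhatcheck}--\eqref{eqn:boundarycheckhat} is Proposition~\ref{prop:currentcompact}(b)--(d). Consequently the main (and essentially only) obstacle is not mathematics but bookkeeping: one must confirm that the sign conventions in force here — the orientation of $\overline{\gamma}$ by $R$, the relative grading, and above all the fiber-product and boundary sign conventions of \cite[\S2.1--2.2]{td} — are precisely those used to state Propositions~\ref{prop:orientations} and \ref{prop:currentcompact}, so that the boundary formulas reproduce verbatim the signs demanded by the ``Compactification'' axiom. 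Granting that match, all four axioms hold and the proposition follows.
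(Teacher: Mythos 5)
Your overall structure is right — verify the four axioms one at a time using the results established earlier in the section — and your treatment of the Grading, Fiber Product Transversality, and Compactification axioms matches the paper's proof. But you have misidentified the content of the Finiteness axiom, and this is a real gap, not bookkeeping.

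You assert that the Finiteness axiom says $M_0(\gamma_+,\gamma_-)$ is finite and dispatch it by citing Proposition~\ref{prop:currentcompact}(a). That finiteness statement is actually part (a) of the \emph{Compactification} axiom (the paper's proof cites Proposition~\ref{prop:currentcompact}(a) there, not here). The Finiteness axiom of \cite[\S2.2]{td} is a different, global condition: for each Reeb orbit $\gamma_0$, there must be only finitely many tuples $(k,\gamma_1,\ldots,\gamma_k)$ with $\gamma_1,\ldots,\gamma_k$ distinct Reeb orbits for which there exist $d_1,\ldots,d_k\in\{0,1,2,3\}$ with $\M^\J_{d_i}(\gamma_{i-1},\gamma_i)\neq\emptyset$ for every $i$. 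In other words, only finitely many chains of potentially-contributing moduli spaces can emanate from any given orbit. This does not follow from compactness of the individual $\overline{\M^\J_d}$; it requires a separate argument. The correct argument is an action filtration: if $\gamma_+\neq\gamma_-$ and $\M^\J_d(\gamma_+,\gamma_-)\neq\emptyset$, then the symplectic action of $\gamma_+$ strictly exceeds that of $\gamma_-$ (the usual energy computation, using that the $J_t$ are $\lambda$-compatible), and because $Y$ is compact and $\lambda$ is nondegenerate there are only finitely many Reeb orbits of action below any fixed bound $L$. Hence every chain starting at $\gamma_0$ has length bounded by the number of Reeb orbits of action at most $\mc{A}(\gamma_0)$, and each entry is drawn from that finite set. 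Adding this argument fills the gap; the rest of your proposal is sound.
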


\begin{proof}
The Grading axiom in \cite[\S2.2]{td} requires that if $\M_d^\J(\gamma_+,\gamma_-)$ is nonempty then
\begin{equation}
\label{eqn:gradingaxiom}
\op{CZ}(\gamma_+) - \op{CZ}(\gamma_-) \equiv d \mod 2.
\end{equation}
This follows from equation \eqref{eqn:mdconstraint} and Proposition~\ref{prop:transversality}(a).

The Finiteness axiom in \cite[\S2.2]{td} requires that for each Reeb orbit $\gamma_0$, there are only finitely many tuples $(k,\gamma_1,\ldots,\gamma_k)$ where $k$ is a positive integer, $\gamma_1,\ldots,\gamma_k$ are distinct Reeb orbits, and there exist $d_1,\ldots,d_k\in\{0,1,2,3\}$ with $\M_{d_i}^\J(\gamma_{i-1},\gamma_i)\neq\emptyset$ for each $i=1,\ldots,k$. This holds in the present case because if $\gamma_+\neq\gamma_-$ then $\M_d^\J(\gamma_+,\gamma_-)\neq\emptyset$ only if the symplectic action of $\gamma_+$ is strictly greater than the symplectic action of $\gamma_-$; and for each $L\in\R$, there are only finitely many Reeb orbits with action less than $L$ (because $Y$ is compact and the contact form $\lambda$ is nondegenerate).

The Fiber Product Transversality axiom in \cite[\S2.2]{td} follows from Proposition~\ref{prop:transversality}(b). (The latter is a much stronger statement.)

Parts (a)--(d) of the Compactness axiom in \cite[\S2.2]{td} follow from the corresponding parts of Proposition~\ref{prop:currentcompact}. The rest of the Compactness axiom holds automatically as explained in \cite[Rmk.\ 2.6]{td}.
\end{proof}

\section{Nonequivariant contact homology}
\label{cascades}

As in \S\ref{section:plainmoduli}, let $Y$ be a closed manifold, let $\lambda$ be a nondegenerate hypertight contact form on $Y$, and let $\J$ be a generic $S^1$-family of $\lambda$-compatible almost complex structures on $\R\times Y$. In this section we define the nonequivariant contact homology $NCH_*(Y,\lambda;\J)$ and prove that it is an invariant of $Y$ and $\xi=\Ker(\lambda)$.

\subsection{Abstract Morse-Bott theory}

To define nonequivariant contact homology and prove its invariance, we will invoke the following result from \cite{td}. The statement of this result includes some terminology defined in \cite{td} which will be reviewed below.

\begin{theorem}\cite[Thm.\ 1.1]{td}
\label{thm:td}
\begin{description}
\item{(a)} Let $A$ be a Morse-Bott system. Then the cascade homology $H_*^\ca(A)$ is well-defined, independently of the choice of base points.
\item{(b)} Let $\Phi$ be a morphism of Morse-Bott systems from $A_1$ to $A_2$. Then:
\begin{description}
\item{(i)}
There is a well-defined induced map on cascade homology
\[
\Phi_*: H_*^\ca(A_1) \longrightarrow H_*^\ca(A_2).
\]
\item{(ii)} If $A_1=A_2$ and $\Phi$ is the identity morphism, then $\Phi_*$ is the identity map.
\item{(iii)} If $\Psi$ is a morphism from $A_2$ to $A_3$, and if $\Phi$ and $\Psi$ are composable, then the composition $\Psi\circ\Phi$ satisfies
\[
(\Psi\circ\Phi)_* = \Psi_* \circ \Phi_*: H_*^\ca(A_1) \longrightarrow H_*^\ca(A_3).
\]
\item{(iv)}
If $\Phi'$ is another morphism from $A_1$ to $A_2$ which is homotopic to $\Phi$, then
\[
\Phi_* = (\Phi')_*: H_*^\ca(A_1) \longrightarrow H_*^\ca(A_2).
\]
\end{description}
\end{description}
\end{theorem}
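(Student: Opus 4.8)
The plan is to prove this result of \cite{td} by constructing the cascade chain complex explicitly from a Morse--Bott system, establishing $\partial^{2}=0$ and base-point independence in part~(a) through a dissection of the compactified one-dimensional cascade moduli spaces, and then deriving part~(b) by running the same dissection for the moduli spaces attached to a morphism, to a composition of morphisms, and to a homotopy of morphisms. I will only sketch the argument here; full details are in \cite{td}.

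For part~(a) I would fix a Morse--Bott system $A=(X,|\cdot|,S,\mc{O},M_*,e_\pm)$ and, for each object $x\in X$, choose a base point $p_x\in S(x)$ that is a regular value of all evaluation maps $e_\pm$ on $M_d$ for $d\le 3$. Form the free module with two generators $\widecheck{x},\widehat{x}$ for each $x$ (gradings $|\widecheck{x}|=|x|$, $|\widehat{x}|=|x|+1$), and define the differential $\partial$ by the signed cascade counts described in \S\ref{sec:nchintro}, including the term \eqref{eqn:minus2} at objects with nontrivial local system; the Finiteness axiom together with transversality makes these counts finite. To prove $\partial^{2}=0$ one checks that, for each ordered pair of generators, the corresponding coefficient of $\partial^{2}$ is the signed count of the boundary of a compact oriented topological one-manifold: by the Fiber Product Transversality axiom the space of ``total-dimension-one cascades'' between the two objects is cut out transversely, and by the Compactification axiom its compactification is a compact one-manifold whose boundary consists exactly of (i) two-level configurations in which some constituent in an $M_1$ degenerates into a fiber product $M_0\times_{S}M_0$, and (ii) configurations in which an evaluation map of some constituent crosses a base point $p_{x_0}$, which changes the cyclic-ordering pattern and thereby toggles between $\widecheck{x_0}$ and $\widehat{x_0}$. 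Since the signed boundary count of a compact one-manifold vanishes, $\partial^{2}=0$; the coefficient $-2$ in \eqref{eqn:minus2} is precisely what makes this vanishing compatible with case~(ii) when the intermediate object $x_0$ has nontrivial local system (equivalently, it records that $H_*(S(x_0);\mc{O}_{x_0})$ is $(\Z/2,0)$ rather than $(\Z,\Z)$). Independence of the base points is then a standard continuation argument: joining two generic choices $\{p_x\}$ and $\{p_x'\}$ by a generic path and counting cascades in which the base points are swept along the path produces a chain homotopy between the two differentials, by the same one-manifold-with-boundary analysis one parameter higher.

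For part~(b), a morphism $\Phi$ from $A_1$ to $A_2$ carries auxiliary moduli spaces $N_d(x_+,x_-)$ (with $x_+$ an object of $A_1$, $x_-$ an object of $A_2$, and $d$ small) satisfying cobordism analogues of the Morse--Bott axioms, in particular a compactification axiom whose boundary strata pair an $N$ with an $M$ of $A_1$ or of $A_2$. I would define the induced map on chains by counting ``mixed cascades'' --- a possibly empty cascade in $A_1$, one constituent from some $N_0$, then a possibly empty cascade in $A_2$, with the same base-point cyclic-ordering conditions --- and obtain $\Phi\partial_1=\partial_2\Phi$, i.e.\ (i), from the boundary analysis of part~(a) applied to the one-dimensional mixed-cascade moduli spaces. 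For (ii), the identity morphism is set up with $N_0(x,x)$ a single point with unconstrained evaluations and $N_d$ otherwise trivial, so its mixed-cascade count is the identity chain map by inspection. For (iii) and (iv) I would use the usual neck-stretching device: composability of $\Phi$ and $\Psi$ means precisely that their morphism data glue along a neck of length $T$, giving a one-parameter family of morphism data from $A_1$ to $A_3$ that degenerates as $T\to\infty$ into the broken configuration computing the composite chain map and agrees for finite $T$ with a choice of composite morphism $\Psi\circ\Phi$; the ends of the associated one-dimensional cascade moduli spaces then furnish a chain homotopy between the chain map induced by $\Psi\circ\Phi$ and the composite of the chain maps induced by $\Phi$ and $\Psi$, so $(\Psi\circ\Phi)_*=\Psi_*\circ\Phi_*$ on homology. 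The same device applied to a one-parameter family of morphism data realizing a homotopy between $\Phi$ and $\Phi'$ yields a chain homotopy between their chain maps, proving (iv).

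The hardest step will be the orientation bookkeeping. One must fix, once and for all, conventions for orientations with values in the relevant local systems on fiber products, point-constrained moduli spaces, mixed-cascade moduli spaces, and their parametrized versions, and then verify that every boundary identity above holds with the correct sign --- in particular that the monodromy of $\mc{O}_{x_0}$ at an object with nontrivial local system reproduces the coefficient $-2$ consistently in the definition of $\partial$, in $\partial^{2}=0$, and in the chain-map and chain-homotopy relations. A secondary issue is to isolate the right abstract notions of ``morphism'', ``composable morphisms'', and ``homotopic morphisms'' so that the gluing inputs needed for (iii) and (iv) are part of the axioms; once this is arranged, all the analytic content --- transversality, compactness, gluing --- has been absorbed into the axioms of a Morse--Bott system and its morphisms, and what remains is exactly the combinatorial and sign analysis sketched above.
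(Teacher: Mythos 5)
This theorem is not proved in the present paper: it is quoted verbatim from the companion paper \cite[Thm.~1.1]{td} and used as a black box, so there is no internal proof here to compare your sketch against. That said, your outline is consistent with the cascade formalism as it is set up in \cite{td} and recalled in \S\ref{sec:nchintro}, \S\ref{sec:mbs}, and \S\ref{sec:cms}--\S\ref{sec:defnch}: the two generators per object, the toggling between $\widecheck{x}$ and $\widehat{x}$ governed by the cyclic-ordering condition at the base points, the identification of the two boundary phenomena of the one-dimensional cascade moduli spaces (breaking into a fiber product over some $S(x_0)$ versus an intermediate evaluation point crossing $p_{x_0}$), and the role of the coefficient $-2$ at objects with nontrivial local system are exactly the ingredients of the argument in \cite{td}.

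One caveat on parts (b)(iii)--(iv): in the axiomatic framework the ``neck-stretching'' you invoke is not part of the proof of the theorem. Composability of $\Phi$ and $\Psi$ is an abstract transversality hypothesis on fiber products of the morphism moduli spaces (\cite[Def.~2.10]{td}), the composite morphism is defined combinatorially from those fiber products, and $(\Psi\circ\Phi)_*=\Psi_*\circ\Phi_*$ is again a formal boundary count; likewise a ``homotopy'' is abstract data (\cite[Def.~2.15]{td}) consisting of parametrized moduli spaces with prescribed boundary, exactly like the $K_d$ in Lemma~\ref{lem:homotopy}. The geometric neck-stretching enters only when one verifies, in an application, that particular morphisms are composable and that the stretched family supplies such a homotopy --- this is what \S\ref{sec:composition} of the present paper does. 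You partially acknowledge this in your closing paragraph, but as written the sketch risks placing the gluing analysis both in the axioms and in the proof; once that division of labor is made precise, your outline matches the intended argument.
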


We define the nonequivariant contact homology $NCH_*(Y,\lambda;\J)$ to be the cascade homology of the Morse-Bott system in Proposition~\ref{prop:mbs}. We now spell out explicitly what this means.

\subsection{Cascade moduli spaces}
\label{sec:cms}

To start, we need to generically choose, for each Reeb orbit $\gamma$, a base point $p_\gamma\in\overline{\gamma}$. Denote this set of choices by $\mc{P}$. 

We need to study ``cascade'' moduli spaces $\M^\ca_d\left(\widehat{\alpha},\widehat{\beta}\right)$, $\M^\ca_d\left(\widehat{\alpha},\widecheck{\beta}\right)$, $\M^\ca_d\left(\widecheck{\alpha},\widehat{\beta}\right)$, and $\M^\ca_d\left(\widecheck{\alpha},\widecheck{\beta}\right)$ for each pair of (possibly equal) Reeb orbits $\alpha,\beta$ and each nonnegative integer $d$. These will be $d$-dimensional manifolds with orientations with values in $\mc{O}_\alpha(p_\alpha)\tensor \mc{O}_\beta(p_\beta)$.

When $\alpha=\beta$, the definition is simple:

\begin{definition}
\label{def:cascaa}
If $\alpha$ is a Reeb orbit, define
\begin{gather*}
\M^\ca_d\left(\widehat{\alpha},\widehat{\alpha}\right) = \M^\ca_d\left(\widecheck{\alpha},\widehat{\alpha}\right) =  M^\ca_d\left(\widecheck{\alpha},\widecheck{\alpha}\right) = \emptyset,\\
\M^\ca_d\left(\widehat{\alpha},\widecheck{\alpha}\right) = \left\{\begin{array}{cl} \mbox{$2$ points} & \mbox{if $d=0$},\\
\emptyset & \mbox{if $d>0$.}
\end{array}\right.
\end{gather*}
The above two points have opposite orientations when $\alpha$ is good; and they both have negative orientation\footnote{It makes sense to speak of ``negative orientation'' here because the orientation of $\M^\ca_0\left(\widehat{\alpha},\widecheck{\alpha}\right)$ has values in $\mc{O}_\alpha(p_\alpha)\tensor\mc{O}_\alpha(p_\alpha)=\Z$.} when $\alpha$ is bad.
\end{definition}

We now define the cascade moduli spaces for $\alpha\neq\beta$.

\paragraph{Notation guide.} Below, the notation $\widecheck{\gamma}$ means that there is a point constraint when $\gamma$ is at the top, but not when it is at the bottom; and $\widehat{\gamma}$ means that there is a point constraint when $\gamma$ is at the bottom, but not when it is at the top.

\begin{definition}
\label{def:cms}
If $\alpha$ and $\beta$ are distinct Reeb orbits, let  $\widetilde{\alpha}$ denote either $\widehat{\alpha}$ or $\widecheck{\alpha}$, and let $\widetilde{\beta}$ denote either $\widehat{\beta}$ or $\widecheck{\beta}$. We define the cascade moduli space $\M^\ca_d\left(\widetilde{\alpha},\widetilde{\beta}\right)$ as follows. An element of $\M^\ca_d\left(\widetilde{\alpha},\widetilde{\beta}\right)$ is a tuple $(u_1,\ldots,u_k)$ for some positive integer $k$, such that there are distinct Reeb orbits $\alpha=\gamma_0,\gamma_1,\ldots,\gamma_k=\beta$ and nonnegative integers $d_1,\ldots,d_k$, such that:
\begin{itemize}
\item
\[
\sum_{i=1}^kd_i=
\left\{\begin{array}{cl}
d, & (\widetilde{\alpha},\widetilde{\beta}) = (\widehat{\alpha},\widecheck{\beta}),\\
d+1, & (\widetilde{\alpha},\widetilde{\beta}) = (\widehat{\alpha},\widehat{\beta}),\; (\widecheck{\alpha},\widecheck{\beta}),\\
d+2, & (\widetilde{\alpha},\widetilde{\beta}) = (\widecheck{\alpha},\widehat{\beta}).
\end{array}
\right.
\]
\item
If $k=1$, then
\[
u_1\in\left\{\begin{array}{cl} \M^\J_d(\alpha,\beta), & (\widetilde{\alpha},\widetilde{\beta}) = (\widehat{\alpha},\widecheck{\beta}),\\
\M^\J_{d+1}(\alpha,p_{\alpha},\beta), & (\widetilde{\alpha},\widetilde{\beta}) = (\widecheck{\alpha},\widecheck{\beta}),\\
\M^\J_{d+1}(\alpha,\beta,p_{\beta}), & (\widetilde{\alpha},\widetilde{\beta}) = (\widehat{\alpha},\widehat{\beta}),\\
\M^\J_{d+2}(\alpha,p_\alpha,\beta,p_\beta) & (\widetilde{\alpha},\widetilde{\beta}) = (\widecheck{\alpha},\widehat{\beta}).
\end{array}
\right.
\]
\item
If $k>1$, then:
\begin{itemize}
\item
\[
u_1\in\left\{\begin{array}{cl} \M^\J_{d_1}(\alpha,\gamma_1), & \widetilde{\alpha}=\widehat{\alpha},\\
\M^\J_{d_1}(\alpha,p_\alpha,\gamma_1), & \widetilde{\alpha}=\widecheck{\alpha}.
\end{array}
\right.
\]
\item
If $1<i<k$ then $u_i\in\M^\J_{d_i}(\gamma_{i-1},\gamma_i)$.
\item
\[
u_k\in\left\{\begin{array}{cl} \M^\J_{d_k}(\gamma_{k-1},\beta), & \widetilde{\beta}=\widecheck{\beta},\\
\M^\J_{d_k}(\gamma_{k-1},\beta,p_\beta), & \widetilde{\beta}=\widehat{\beta}.
\end{array}\right.
\]
\item For $i=1,\ldots,k-1$, the points $p_{\gamma_i}$, $e_-(u_i)$, and $e_+(u_{i+1})$, are distinct and positively cyclically ordered with respect to the orientation of $\overline{\gamma_i}$.
\end{itemize}
\item When $u_1$ is not constrained to have $e_+(u_1)=p_\alpha$, we assume that $e_+(u_1)\neq p_\alpha$; likewise when $u_k$ is not constrained to have $e_-(u_k)=p_\beta$, we assume that $e_-(u_k)\neq p_\beta$.
\end{itemize}
We topologize $\M^\ca_d\left(\widetilde{\alpha},\widetilde{\beta}\right)$ as a subset of the disjoint union of Cartesian products
\[
\M^\J_{d_1}(\gamma_0,\gamma_1)\times\cdots\times\M^\J_{d_k}(\gamma_{k-1},\gamma_k).
\]
\end{definition}

It follows from Proposition~\ref{prop:transversality}(a) that if the set of base points $\mc{P}=\{p_\gamma\}$ is generic, then $\M^\ca_d\left(\widetilde{\alpha},\widetilde{\beta}\right)$ is a smooth manifold of dimension $d$. Furthermore, $\M^\ca_d\left(\widetilde{\alpha},\widetilde{\beta}\right)$ has a canonical orientation with values in $\mc{O}_\alpha(p_\alpha)\tensor\mc{O}_\beta(p_\beta)$, according to the convention in \cite[\S3.2]{td}.

We now have the following compactness result for the cascade moduli spaces. Below, following \cite[\S2.1]{td}, a ``compactification'' of a smooth oriented $1$-manifold $M$ means a compact oriented topological manifold with boundary $\overline{M}$ such that $M$ is an open oriented submanifold of $\overline{M}$ and $\overline{M}\setminus M$ is finite (but possibly larger than $\partial\overline{M}$).

\begin{proposition}
\label{prop:cascadekey}
Suppose the set of base points $\mc{P}=\{p_\gamma\}$ is generic. Let $\alpha$ and $\beta$ be Reeb orbits, let $\widetilde{\alpha}$ denote either $\widehat{\alpha}$ or $\widecheck{\alpha}$, and let $\widetilde{\beta}$ denote either $\widehat{\beta}$ or $\widecheck{\beta}$. Then:
\begin{description}
\item{(a)}
$\M^\ca_0\left(\widetilde{\alpha},\widetilde{\beta}\right)$ is finite.
\item{(b)}
$\M^\ca_1\left(\widetilde{\alpha},\widetilde{\beta}\right)$ has a compactification $\overline{\M}^\ca_1\left(\widetilde{\alpha},\widetilde{\beta}\right)$ with oriented boundary
\[
\begin{split}
\partial \overline{\M}^\ca_1\left(\widetilde{\alpha},\widetilde{\beta}\right) &= \coprod_{\gamma} \M^\ca_0\left(\widetilde{\alpha},\widehat{\gamma}\right)\times \M^\ca_0\left(\widehat{\gamma},\widetilde{\beta}\right) \\
&\;\sqcup\coprod_{\gamma} \M^\ca_0\left(\widetilde{\alpha},\widecheck{\gamma}\right)\times \M^\ca_0\left(\widecheck{\gamma},\widetilde{\beta}\right).
\end{split}
\]
\end{description}
\end{proposition}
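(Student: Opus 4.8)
The plan is to deduce Proposition~\ref{prop:cascadekey} from the moduli space results in \S\ref{sec:constrained}, following the abstract formalism of \cite[\S3]{td}. The key point is that a cascade moduli space $\M^\ca_d(\widetilde{\alpha},\widetilde{\beta})$ is, by definition, an open subset of a finite disjoint union of fiber products of the basic moduli spaces $\M^\J_{d_i}(\gamma_{i-1},\gamma_i)$ (some carrying point constraints at the two ends), cut out by the \emph{open} conditions that the triples $p_{\gamma_i}, e_-(u_i), e_+(u_{i+1})$ be distinct and positively cyclically ordered. So all the real analytic input --- transversality of fiber products, orientations, and one-dimensional compactness with the described boundary --- has already been packaged: transversality and the dimension count come from Proposition~\ref{prop:transversality}, the orientations with values in $\mc{O}_\alpha(p_\alpha)\tensor\mc{O}_\beta(p_\beta)$ come from Proposition~\ref{prop:orientations}(b) together with the fiber-product and point-constraint sign conventions of \cite[\S2.1--2.2]{td}, and the compactness statements are exactly Proposition~\ref{prop:currentcompact}(a)--(d). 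Indeed, Proposition~\ref{prop:mbs} already verifies that this data is a Morse-Bott system in the sense of \cite[Def.~2.1]{td}, so Proposition~\ref{prop:cascadekey} is literally the instance of the general ``cascade compactness'' statement \cite[Prop.~3.x]{td} applied to that system. The cleanest writeup is therefore: invoke Proposition~\ref{prop:mbs}, and then quote the relevant structural proposition of \cite{td}, after spelling out the dictionary between the abstract cascade moduli spaces there and the $\M^\ca_d(\widetilde{\alpha},\widetilde{\beta})$ here.

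For part (a), I would argue directly: $\M^\ca_0(\widetilde{\alpha},\widetilde{\beta})$ is a $0$-manifold by the dimension formula (Proposition~\ref{prop:transversality}(a) and the $\sum d_i$ bookkeeping in the definition), and it is a subset of the $0$-dimensional part of a finite union of products $\overline{\M^\J_{d_1}}(\gamma_0,\gamma_1)\times_{\overline{\gamma_1}}\cdots\times_{\overline{\gamma_{k-1}}}\overline{\M^\J_{d_k}}(\gamma_{k-1},\gamma_k)$; each factor is finite by Proposition~\ref{prop:currentcompact}(a) (using hypertightness and the action filtration, which bounds $k$ via the Finiteness axiom of Proposition~\ref{prop:mbs}), hence the whole union is finite.

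For part (b), the heart of the matter is identifying the boundary of the compactification of the one-dimensional cascade space. There are two sources of boundary points. First, within a fixed combinatorial type $(k;\gamma_0,\dots,\gamma_k;d_1,\dots,d_k)$, a sequence can limit to a configuration where one of the open cyclic-ordering/distinctness conditions degenerates: either some $e_-(u_i)$ collides with $e_+(u_{i+1})$ --- which glues $u_i$ and $u_{i+1}$ into a broken cylinder in $\overline{\M^\J}(\gamma_{i-1},\gamma_{i+1})$, i.e.\ in the limit we get an element of a cascade moduli space of \emph{shorter} length (this is where Proposition~\ref{prop:currentcompact}(b)--(d) feeds in, describing the compactified basic moduli spaces as manifolds with the stated boundary) --- or some $e_\pm(u_i)$ collides with the base point $p_{\gamma_i}$, which introduces a new point constraint and hence switches the decoration of $\gamma_i$ between $\widehat{}$ and $\widecheck{}$, producing $\M^\ca_0(\widetilde{\alpha},\widehat{\gamma})\times\M^\ca_0(\widehat{\gamma},\widetilde{\beta})$ or $\M^\ca_0(\widetilde{\alpha},\widecheck{\gamma})\times\M^\ca_0(\widecheck{\gamma},\widetilde{\beta})$. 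Second, an actual level of a holomorphic cylinder can break off. One must then check that all such breaking strata are among the listed ones, that no codimension-$\ge 2$ phenomena produce spurious boundary, that the constraint-collision and curve-breaking contributions do not double count, and --- the genuinely technical point --- that the orientations match with the stated signs, i.e.\ that the induced boundary orientation of $\overline{\M}^\ca_1$ agrees with the product orientation (with no extra sign, unlike in \eqref{eqn:boundaryhatcheck}--\eqref{eqn:boundarycheckhat}) after the point-constraint sign bookkeeping of \cite[\S3.2]{td} is accounted for. I expect this orientation/sign reconciliation --- tracking how the $(-1)^{d_+}$ and $(-1)^{d_+-1}$ factors of Proposition~\ref{prop:currentcompact} interact with the cascade gluing conventions of \cite[\S3]{td} so that the net sign on $\partial\overline{\M}^\ca_1$ is $+1$ --- to be the main obstacle; the underlying soft topology (gluing, which is again standard and quoted from Proposition~\ref{prop:gbs}) is routine. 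Since \cite{td} was written precisely to handle this bookkeeping abstractly, the efficient route is to verify the hypotheses of \cite[Def.~2.1]{td} (done in Proposition~\ref{prop:mbs}) and cite its conclusions, rather than redo the sign analysis here.
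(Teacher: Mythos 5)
Your proposal is correct and matches the paper's approach exactly: the paper's proof is the one-line observation that Proposition~\ref{prop:cascadekey} follows by applying \cite[Prop.\ 3.2]{td} to the Morse-Bott system established in Proposition~\ref{prop:mbs}, which is precisely the route you recommend. The additional sketch of the boundary strata and sign bookkeeping correctly describes what \cite{td} handles internally, but the paper does not redo any of it.
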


\begin{proof}
This follows by applying \cite[Prop.\ 3.2]{td} to the Morse-Bott system in Proposition~\ref{prop:mbs}.
\end{proof}

\begin{remark}
To briefly review what goes into the proof of Proposition~\ref{prop:cascadekey}: Each tuple $(u_1,\ldots,u_k)\in \M^\ca_1\left(\widetilde{\alpha},\widetilde{\beta}\right)$ has exactly one element $u_i$ which lives in a one-dimensional moduli space, while all $u_j$ with $j\neq i$ are rigid. This moduli space has ends where $u_i$ breaks, and also where the cyclic ordering condition fails. Some of the latter ends glue to the former ends, while others give rise to the desired boundary points. When $i=1$ and $\widetilde{\alpha}=\widehat{\alpha}$, there can be additional ends where the last condition in Definition~\ref{def:cms} fails; when $\alpha$ is good these are glued together, while when $\alpha$ is bad they cannot be glued together (due to incompatible orientations) and give rise to boundary points involving $\M^\ca_d\left(\widehat{\alpha},\widecheck{\alpha}\right)$. Boundary points involving $\M^\ca_d\left(\widehat{\beta},\widecheck{\beta}\right)$ arise similarly when $i=k$, $\widetilde{\beta}=\widecheck{\beta}$, and $\beta$ is bad.
\end{remark}

\subsection{Definition of nonequivariant contact homology}
\label{sec:defnch}

Let $\mc{P}=\{p_\gamma\}$ be a generic choice of base points as in \S\ref{sec:cms}. We now define a $\Z/2$-graded chain complex $\left(NCC_*(Y,\lambda),\partial^\J_\ca\right)$ over $\Z$ as follows. (The differential also depends on $\mc{P}$, but we omit this from the notation.)

The $\Z$-module $NCC_*(Y,\lambda)$ is the direct sum of two copies of $\mc{O}_\gamma(p_\gamma)$ for each Reeb orbit $\gamma$. For notational convenience, we fix a generator of $\mc{O}_\gamma(p_\gamma)$ for each $\gamma$. We can then regard $NCC_*(Y,\lambda)$ as the free $\Z$-module with two generators $\widehat{\gamma}$ and $\widecheck{\gamma}$ for each Reeb orbit $\gamma$. The mod $2$ gradings of these generators are given by
\[
\begin{split}
|\widecheck{\gamma}| &= CZ(\gamma),\\
|\widehat{\gamma}| &= CZ(\gamma)+1.
\end{split}
\]

\begin{definition}
\label{def:cascadediff}
Define the differential
\[
\partial^\J_\ca: NCC_*(Y,\lambda) \longrightarrow NCC_{*-1}(Y,\lambda)
\]
as follows. Let $\alpha$ and $\beta$ be Reeb orbits, let $\widetilde{\alpha}$ denote either $\widehat{\alpha}$ or $\widecheck{\alpha}$, and let $\widetilde{\beta}$ denote either $\widehat{\beta}$ or $\widecheck{\beta}$. Then the differential coefficient $\left\langle\partial^\J_\ca\widetilde{\alpha},\widetilde{\beta}\right\rangle\in\Z$ is the signed count of points in the cascade moduli space $\mc{M}^\ca_0\left(\widetilde{\alpha},\widetilde{\beta}\right)$. Here the signs are determined by the fixed orientations of $\mc{O}_\alpha(p_\alpha)$ and $\mc{O}_\beta(p_\beta)$, together with the orientation of  $\mc{M}^\ca_0\left(\widetilde{\alpha},\widetilde{\beta}\right)$ with values in $\mc{O}_\alpha(p_\alpha)\tensor \mc{O}_\beta(p_\beta)$.
\end{definition}

\begin{lemma}
\label{lem:dwd}
The differential $\partial^\J_\ca$ is well-defined, decreases the mod $2$ grading by $1$, and satisfies $(\partial^\J_\ca)^2=0$.
\end{lemma}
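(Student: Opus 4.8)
The plan is to deduce all three assertions from the abstract Morse-Bott machinery of \cite{td}, specifically Theorem~\ref{thm:td}(a) and the compactness statement in Proposition~\ref{prop:cascadekey}, applied to the Morse-Bott system constructed in Proposition~\ref{prop:mbs}. The well-definedness of $\partial^\J_\ca$ is immediate: by Proposition~\ref{prop:cascadekey}(a), each cascade moduli space $\M^\ca_0(\widetilde\alpha,\widetilde\beta)$ is finite, so the signed count defining $\langle\partial^\J_\ca\widetilde\alpha,\widetilde\beta\rangle$ makes sense; finiteness of the total differential on a given generator follows from the Finiteness axiom verified in the proof of Proposition~\ref{prop:mbs} (a nonempty $\M^\J_d(\gamma_+,\gamma_-)$ forces the action of $\gamma_+$ to exceed that of $\gamma_-$, and only finitely many Reeb orbits lie below a given action). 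One should also note that the generators with $\alpha=\beta$ contribute only the terms recorded in Definition~\ref{def:cascaa}, in particular the $-2$ coefficient $\langle\partial^\J_\ca\widehat\alpha,\widecheck\alpha\rangle$ for bad $\alpha$; these are already built into the formalism.

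For the grading claim, I would check that $\partial^\J_\ca$ drops the mod $2$ grading by $1$ on each of the four types of coefficient separately, using the dimension formula $\dim\M^\ca_d(\widetilde\alpha,\widetilde\beta)=d$ together with the Grading axiom \eqref{eqn:gradingaxiom}. For instance, a contribution to $\langle\partial^\J_\ca\widehat\alpha,\widecheck\beta\rangle$ comes from $\M^\ca_0(\widehat\alpha,\widecheck\beta)$, whose defining constraint $\sum d_i=0$ combined with \eqref{eqn:gradingaxiom} along a chain $\alpha=\gamma_0,\dots,\gamma_k=\beta$ gives $\CZ(\alpha)-\CZ(\beta)\equiv 0\pmod 2$; since $|\widehat\alpha|=\CZ(\alpha)+1$ and $|\widecheck\beta|=\CZ(\beta)$, this drops the grading by $1$. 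The remaining three cases $(\widecheck\alpha,\widecheck\beta)$, $(\widehat\alpha,\widehat\beta)$, $(\widecheck\alpha,\widehat\beta)$ have $\sum d_i=d+1$ or $d+2$ (with $d=0$), so \eqref{eqn:gradingaxiom} gives $\CZ(\alpha)-\CZ(\beta)\equiv 1$ or $0\pmod 2$ respectively, and one checks using the conventions $|\widecheck\gamma|=\CZ(\gamma)$, $|\widehat\gamma|=\CZ(\gamma)+1$ that each again lowers the grading by exactly $1$; the diagonal terms from Definition~\ref{def:cascaa} likewise respect this.

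The heart of the proof is $(\partial^\J_\ca)^2=0$, which I would obtain from Proposition~\ref{prop:cascadekey}(b): for each pair $\widetilde\alpha,\widetilde\beta$, the compactified $1$-manifold $\overline{\M}^\ca_1(\widetilde\alpha,\widetilde\beta)$ has signed boundary equal to $\coprod_\gamma \M^\ca_0(\widetilde\alpha,\widehat\gamma)\times\M^\ca_0(\widehat\gamma,\widetilde\beta)\sqcup\coprod_\gamma\M^\ca_0(\widetilde\alpha,\widecheck\gamma)\times\M^\ca_0(\widecheck\gamma,\widetilde\beta)$. Since the total signed count of boundary points of a compact oriented $1$-manifold is zero, summing over the oriented boundary components gives $\sum_\gamma\big(\langle\partial^\J_\ca\widetilde\alpha,\widehat\gamma\rangle\langle\partial^\J_\ca\widehat\gamma,\widetilde\beta\rangle+\langle\partial^\J_\ca\widetilde\alpha,\widecheck\gamma\rangle\langle\partial^\J_\ca\widecheck\gamma,\widetilde\beta\rangle\big)=0$, which is exactly the matrix coefficient $\langle(\partial^\J_\ca)^2\widetilde\alpha,\widetilde\beta\rangle=0$; one must include the $\gamma$ with $\gamma=\alpha$ or $\gamma=\beta$, where the diagonal terms from Definition~\ref{def:cascaa} enter, so the sign bookkeeping there (especially the $-2$ for bad orbits, which pairs correctly with the two oppositely-or-negatively oriented points) needs care. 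The main obstacle is purely the orientation/sign verification—confirming that the signs in Proposition~\ref{prop:cascadekey}(b), which are fixed by the conventions of \cite[\S3.2]{td}, combine so that the boundary count is the honest square of the differential with no residual signs. This is where I would spend the most effort, though it is essentially the content of \cite[Thm.\ 1.1(a)]{td} specialized to our Morse-Bott system, so I would invoke that theorem directly: $NCH_*(Y,\lambda;\J)\eqdef H_*^\ca$ of the system in Proposition~\ref{prop:mbs} is well-defined, and Lemma~\ref{lem:dwd} records precisely the chain-level facts underlying that homology.
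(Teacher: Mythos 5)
Your proposal is correct and follows the paper's argument: finiteness of each coefficient comes from Proposition~\ref{prop:cascadekey}(a), finiteness of $\partial^\J_\ca\widetilde\alpha$ from the action argument, the grading drop from the Grading axiom \eqref{eqn:gradingaxiom}, and $(\partial^\J_\ca)^2=0$ from Proposition~\ref{prop:cascadekey}(b) via the boundary-of-a-compact-oriented-$1$-manifold count (where, as you correctly flag, the coproduct over $\gamma$ in Proposition~\ref{prop:cascadekey}(b) includes $\gamma=\alpha,\beta$ so that the diagonal terms of Definition~\ref{def:cascaa} are accounted for). The paper's proof is just a terser statement of exactly these four points, with the sign bookkeeping delegated to \cite{td} as you also do.
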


\begin{proof}
The coefficient $\left\langle\partial^\J_\ca\widetilde{\alpha},\widetilde{\beta}\right\rangle$ is well defined by Proposition~\ref{prop:cascadekey}(a). Since for a given Reeb orbit $\alpha$ there are only finitely many Reeb orbits $\beta$ with symplectic action less than that of $\alpha$, we conclude that $\partial^\J_\ca\widetilde{\alpha}$ is well-defined.

Equation \eqref{eqn:gradingaxiom} implies that the differential $\partial^\J_\ca$ decreases the mod $2$ grading by $1$.

It follows from Proposition~\ref{prop:cascadekey}(b) that $\left(\partial^\J_\ca\right)^2=0$.
\end{proof}

In the terminology of \cite{td}, the homology of the chain complex $\left(NCC_*(Y,\lambda),\partial^\J_\ca\right)$ is the cascade homology of the Morse-Bott system in Proposition~\ref{prop:mbs}. Consequently, Theorem~\ref{thm:td}(a) implies that this homology does not depend on the choice of base points $\mc{P}$, so we can denote it by $NCH_*(Y,\lambda;\J)$, and we call this nonequivariant contact homology. Our next task is to use the rest of Theorem~\ref{thm:td} to show that that NCH in fact depends only on $(Y,\xi)$. (For the computation of NCH in our main example of interest, see Proposition~\ref{prop:nchaut} below.)

\subsection{Cobordism maps}
\label{sec:nchcobmaps}

To prove invariance of nonequivariant contact homology, we need to consider maps induced by certain symplectic cobordisms.

\begin{definition}
\label{def:exactcob}
Let $(Y_+,\lambda_+)$ and $(Y_-,\lambda_-)$ be closed contact manifolds of dimension $2n-1$. An {\em exact symplectic cobordism\/} from\footnote{Instead of using the words ``from'' and ``to'', one could say that $(Y_+,\lambda_+)$ is the convex boundary of $(X,\lambda)$, and $(Y_-,\lambda_-)$ is the concave boundary.} $(Y_+,\lambda_+)$ to $(Y_-,\lambda_-)$ is a pair $(X,\lambda)$ where $X$ is a compact $2n$-dimensional oriented manifold with $\partial X = Y_+ - Y_-$, and $d\lambda$ is a symplectic form on $X$ with $\lambda|_{Y_\pm}=\lambda_\pm$.
\end{definition}

The goal of this subsection is to prove the following:

\begin{proposition}
\label{prop:cobordismmap}
Let $(X,\lambda)$ be an exact symplectic cobordism from $(Y_+,\lambda_+)$ to $(Y_-,\lambda_-)$, where the contact forms $\lambda_\pm$ are nondegenerate and hypertight. Assume further that every Reeb orbit for $\lambda_+$ is noncontractible in $X$. Let $\J_\pm$ be $S^1$-families of almost complex structures as needed to define the nonequivariant contact homology of $(Y_\pm,\lambda_\pm)$. Then $(X,\lambda)$ induces a well-defined map
\[
\Phi(X,\lambda;\J_+,\J_-): NCH_*(Y_+,\lambda_+;\J_+) \longrightarrow NCH_*(Y_-,\lambda_-;\J_-).
\]
\end{proposition}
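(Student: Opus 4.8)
**The plan is to construct the cobordism map at the chain level by introducing auxiliary moduli spaces of holomorphic cylinders in the completed cobordism, and then to verify that these moduli spaces, together with the Morse-Bott systems of the two ends, assemble into a \emph{morphism} of Morse-Bott systems in the sense of \cite[\S2]{td}. Theorem~\ref{thm:td}(b)(i) then produces the induced map $\Phi(X,\lambda;\J_+,\J_-)$ on cascade homology. Concretely, let $(\overline{X},\overline{\lambda})$ denote the symplectic completion, obtained by attaching cylindrical ends $[0,\infty)\times Y_+$ and $(-\infty,0]\times Y_-$, and choose a generic cobordism-compatible almost complex structure $\mathbb{J}_X$ which agrees with $J_{+,t}$ on the positive end and $J_{-,t}$ on the negative end, and which is $S^1$-dependent on the domain $\mathbb{R}\times S^1$ exactly as in \eqref{eqn:Floerdd}. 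For Reeb orbits $\gamma_+$ of $\lambda_+$ and $\gamma_-$ of $\lambda_-$, let $\mathcal{M}^{\mathbb{J}_X}_d(\gamma_+,\gamma_-)$ be the moduli space of finite-energy $\mathbb{J}_X$-holomorphic cylinders in $\overline{X}$ positively asymptotic to $\gamma_+$ and negatively asymptotic to $\gamma_-$, with the index-$d$ constraint as in \eqref{eqn:mdconstraint}; note that here there is \emph{no} $\mathbb{R}$-translation to quotient by, so the index count is shifted by one relative to the symplectization case, and there are evaluation maps $e_\pm$ to $\overline{\gamma_\pm}$ just as before.**

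First I would establish the analytic inputs, which are entirely parallel to those already stated for the symplectization: a transversality statement (the exact analogue of Proposition~\ref{prop:transversality}, for generic $\mathbb{J}_X$ and generic fiber products mixing $\mathcal{M}^{\mathbb{J}_X}$ with $\mathcal{M}^{\J_\pm}$), a coherent-orientation statement giving each $\mathcal{M}^{\mathbb{J}_X}_d(\gamma_+,\gamma_-)$ an orientation with values in $e_+^*\mc{O}_{\gamma_+}\tensor e_-^*\mc{O}_{\gamma_-}$ (the orientation sheaves are the same local systems $\mc{O}_\gamma$ attached to the orbits, since they depend only on the linearized Reeb flow), and a compactness statement. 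Compactness is where the hypothesis that every Reeb orbit of $\lambda_+$ is noncontractible in $X$ is used: it rules out bubbling of holomorphic planes in $\overline{X}$, so that the only degenerations of a sequence in $\mathcal{M}^{\mathbb{J}_X}_d$ are multi-level buildings consisting of one cobordism-level cylinder together with broken symplectization-cylinders above (in $\mathbb{R}\times Y_+$) and below (in $\mathbb{R}\times Y_-$). This yields compactified moduli spaces $\overline{\mathcal{M}^{\mathbb{J}_X}_d}(\gamma_+,\gamma_-)$ whose codimension-one boundary, for $d\le$ the relevant bound, is the union of fiber products
\[
\coprod_{\gamma_0}(-1)^{?}\,\M^{\J_+}_{d_+}(\gamma_+,\gamma_0)\times_{\overline{\gamma_0}}\mathcal{M}^{\mathbb{J}_X}_{d_0}(\gamma_0,\gamma_-)\;\sqcup\;\coprod_{\gamma_0}(-1)^{?}\,\mathcal{M}^{\mathbb{J}_X}_{d_0}(\gamma_+,\gamma_0)\times_{\overline{\gamma_0}}\M^{\J_-}_{d_-}(\gamma_0,\gamma_-),
\]
together with point-constrained variants, with signs dictated by the conventions of \cite[\S2]{td}. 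These are precisely the axioms (Grading, Finiteness, Fiber Product Transversality, Compactification) that a \emph{morphism} of Morse-Bott systems from $A_+$ to $A_-$ is required to satisfy in \cite{td}; I would cite \cite[\S2.2]{td} for the precise list and check each one, exactly as in the proof of Proposition~\ref{prop:mbs}. Thus for generic $\mathbb{J}_X$ and generic base points we get a morphism, and hence $\Phi_* = \Phi(X,\lambda;\J_+,\J_-)$ by Theorem~\ref{thm:td}(b)(i).

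Finally I would address well-definedness, i.e. independence of $\Phi_*$ from the auxiliary choice of $\mathbb{J}_X$ (and of base points). Two generic cobordism-compatible almost complex structures $\mathbb{J}_X^0,\mathbb{J}_X^1$ can be joined by a generic path $\{\mathbb{J}_X^\tau\}_{\tau\in[0,1]}$; the parametrized moduli spaces $\bigcup_\tau\{\tau\}\times\mathcal{M}^{\mathbb{J}_X^\tau}_d$ define a \emph{homotopy} of morphisms of Morse-Bott systems in the sense of \cite{td}, and Theorem~\ref{thm:td}(b)(iv) then gives $\Phi_*^0=\Phi_*^1$. Independence of the base points $p_\gamma$ on the two ends follows from the same part of the formalism (it is built into the statement of Theorem~\ref{thm:td}). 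The functoriality with respect to composition of cobordisms, and the fact that a trivial cobordism induces the identity, are asserted in the ``subsequent discussion'' after the proposition rather than in the proposition itself, but they would follow from Theorem~\ref{thm:td}(b)(iii) and (ii) respectively, using a neck-stretching argument to identify the composition of morphisms with the morphism of the glued cobordism. I expect the main obstacle to be purely bookkeeping: getting all the orientation signs in the boundary formula for $\overline{\mathcal{M}^{\mathbb{J}_X}_d}$ consistent with the sign conventions of \cite[\S2]{td} so that the morphism axioms hold on the nose rather than up to an overall sign — the analytic ingredients (transversality, Gromov compactness, hypertightness killing planes) are all standard and have already been invoked in the symplectization case.
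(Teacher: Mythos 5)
Your proposal is correct and follows essentially the same route as the paper: define moduli spaces of holomorphic cylinders in the completed cobordism for an $S^1$-family $\J$ of cobordism-compatible almost complex structures, verify the transversality, orientation, and compactness properties needed (using the noncontractibility hypothesis to exclude plane bubbling), package the result as a morphism of Morse-Bott systems in the sense of \cite[Def.\ 2.7]{td}, invoke Theorem~\ref{thm:td}(b)(i) to get the induced map on cascade homology, and then show independence of the cobordism almost complex structure by a parametrized moduli space furnishing a homotopy of morphisms and applying Theorem~\ref{thm:td}(b)(iv). One small clarification: the paper does quotient by $\R$-translation of the domain (so the moduli space is $\widetilde{\Phi}/\R$), and the index shift by $1$ comes from the absence of an $\R$-translation on the \emph{target} $\overline{X}$, not from dropping the domain reparametrization.
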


\begin{proof}
The strategy is to construct a ``morphism of Morse-Bott systems'' and invoke Theorem~\ref{thm:td}(b).

To set up the construction, recall that an exact symplectic cobordism $(X,\lambda)$ has a canonical Liouville vector field $V$ characterized by $\imath_Vd\lambda = \lambda$. The vector field $V$ points out of $X$ on $Y_+$ and into $X$ on $Y_-$. For $\epsilon>0$ small, the flow of $V$ then determines neighborhoods $N_\pm$ of $Y_\pm$ in $X$ with identifications
\begin{equation}
\label{eqn:LN}
\begin{split}
(N_+,\lambda) & \simeq \left((-\epsilon,0]\times Y_+,e^r\lambda_+\right),\\
(N_-,\lambda) & \simeq \left([0,\epsilon)\times Y_-,e^r\lambda_-\right)
\end{split}
\end{equation}
where $r$ denotes the $(-\epsilon,0]$ or $[0,\epsilon)$ coordinate. Here $V$ increases the first coordinate at unit speed, and $Y_\pm\subset N_\pm$ is identified with $\{0\}\times Y_\pm$. We now define the {\em completion\/}
\[
\overline{X} = \big((-\infty,0]\times Y_-\big) \cup_{Y_-} X \cup_{Y_+} \big([0,\infty)\times Y_+\big),
\]
glued using the neighborhood identifications \eqref{eqn:LN}.

\begin{definition}
Let $(X,\lambda)$ be an exact symplectic cobordism from $(Y_+,\lambda_+)$ to $(Y_-,\lambda_-)$. An almost complex structure $J$ on $\overline{X}$ is {\em cobordism-compatible\/} if :
\begin{itemize}
\item
$J$ agrees on $[0,\infty)\times Y_+$ with the restriction of a $\lambda_+$-compatible almost complex structure $J_+$ on $\R\times Y_+$.
\item
$J$ agrees on $(-\infty,0]\times Y_-$ with the restriction of a $\lambda_-$-compatible almost complex structure $J_-$ on $\R\times Y_-$.
\item
$J$ is compatible with the symplectic form $d\lambda$ on $X$.
\end{itemize}
\end{definition}

Now let $\J_+=\{J_{+,t}\}_{t\in S^1}$ be an $S^1$-family of $\lambda_+$-compatible almost complex structures on $\R\times Y_+$, and let $\J_-=\{J_{-,t}\}_{t\in S^1}$ be an $S^1$-family of $\lambda_-$-compatible almost complex structures on $\R\times Y_-$. Let $\J=\{J_t\}_{t\in S^1}$ be an $S^1$-family of cobordism-compatible almost complex structures on $\overline{X}$, such that $J_t$ agrees with $J_{+,t}$ on $[0,\infty)\times Y_+$ and with $J_{-,t}$ on $(-\infty,0]\times Y_-$. Note that given $\J_+$ and $\J_-$, the space of $\J$ is contractible.

If $\gamma_\pm$ are Reeb orbits for $\lambda_\pm$, let $\Phi^\J(\gamma_+,\gamma_-)$ denote the moduli spaces of maps $u:\R\times S^1\to\overline{X}$ satisfying the conditions
\begin{gather}
\label{eqn:cobmoduli}
\partial_s u + J_t\partial_t u = 0,\\
\nonumber
\mbox{$u(s,t)\in[0,\infty)\times Y_+$ for $s>>0$,}\\
\nonumber
\mbox{$u(s,t)\in(-\infty,0]\times Y_-$ for $s<<0$,}\\
\nonumber
\lim_{s\to\pm\infty}\pi_\R(u(s,\cdot)) = \pm\infty,\\
\label{eqn:cobmoduliend}
\mbox{$\lim_{s\to\pm\infty}\pi_{Y_\pm}(u(s,\cdot))$ is a parametrization of $\gamma_\pm$,}
\end{gather}
modulo $\R$ translation in the domain. (Note that unlike with the moduli spaces $\widetilde{\M}$ in Definition~\ref{def:Mtilde}, there is now no $\R$ action on the target to mod out by.)

As before, we have evaluation maps
\[
e_\pm:\Phi^\J(\gamma_+,\gamma_-) \longrightarrow \overline{\gamma_\pm}
\]
defined by
\[
e_\pm(u) = \lim_{s\to\pm\infty}\pi_{Y_\pm}(u(s,0)).
\]

Suppose now that the contact forms $\lambda_\pm$ are nondegenerate. If $d$ is an integer, let $\Phi^\J_d(\gamma_+,\gamma_-)$ denote the set of $u\in\Phi^\J(\gamma_+,\gamma_-)$ with
\[
\CZ_\tau(\gamma_+) - \CZ_\tau(\gamma_-) + 2c_1(u^*T\overline{X},\tau) = d-1.
\]

\begin{lemma}
\label{lem:morphism1}
Suppose that $\lambda_\pm$ are nondegenerate and that $\J_\pm$ and $\J$ are generic. Then:
\begin{description}
\item{(a)} For any Reeb orbits $\gamma_\pm$ and any integer $d$, the moduli space $\Phi_d^\J(\gamma_+,\gamma_-)$ is a smooth manifold of dimension $d$, and the evaluation maps $e_\pm$ on it are smooth.
\item{(b)} The moduli space $\Phi_d^\J(\gamma_+,\gamma_-)$ has a canonical orientation with values in $e_+^*\mc{O}_{\gamma_+}\tensor e_-^*\mc{O}_{\gamma_-}$.
\item{(c)} Let $k,l\ge 0$, let $\gamma_0^+,\ldots,\gamma_k^+$ be distinct Reeb orbits for $\lambda_+$, let $\gamma_0^-,\ldots,\gamma_l^-$ be distinct Reeb orbits for $\lambda_-$, and let $d_0,d_1^+,\ldots,d_k^+,d_1^-,\ldots,d_l^-$ be nonnegative integers. Then the fiber product
\begin{gather}
\nonumber
\M_{d_k^+}^{\J_+}(\gamma_k^+,\gamma_{k-1}^+) \times_{\overline{\gamma_{k-1}^+}} \cdots \times_{\overline{\gamma_1^+}} \M_{d_1^+}^{\J_+}(\gamma_1^+,\gamma_0^+)\\
\label{eqn:bigfiberproduct}
\times_{\overline{\gamma_0^+}} \Phi^\J_{d_0}(\gamma_0^+,\gamma_0^-) \times_{\overline{\gamma_0^-}}\\
\nonumber
\M_{d_1^-}^{\J_-}(\gamma_0^-,\gamma_1^-) \times_{\overline{\gamma_1^-}} \cdots \times_{\overline{\gamma_{l-1}^-}} \M_{d_l^-}^{\J_-}(\gamma_{l-1}^-,\gamma_l^-)
\end{gather}
is cut out transversely.
\end{description}
\end{lemma}

\begin{proof}
Parts (a) and (c) are standard transversality arguments, similar to Proposition~\ref{prop:transversality}. The orientation convention for part (b) is explained at the beginning of \S\ref{sec:cobsigns}.
\end{proof}

\begin{definition}
Analogously to Definition~\ref{def:Mdbar}, let $\overline{\Phi_d^\J}(\gamma_+,\gamma_-)$ denote the union of all fiber products \eqref{eqn:bigfiberproduct} with $\gamma_k^+=\gamma_+$, $\gamma_l^-=\gamma_-$, and $\sum_{i=1}^kd_i^+ + d_0 + \sum_{j=1}^ld_j^-=d$.
Define the evaluation maps
\[
e_\pm:\overline{\Phi_d^\J}(\gamma_+,\gamma_-) \longrightarrow \overline{\gamma_\pm}
\]
and the topology on $\overline{\Phi_d^\J}(\gamma_+,\gamma_-)$ as before.
\end{definition}

Under the assumptions of Proposition~\ref{prop:cobordismmap}, $\overline{\Phi_d^\J}(\gamma_+,\gamma_-)$ is compact, similarly to Proposition~\ref{prop:compactness}. Note that the extra hypothesis that every Reeb orbit of $\lambda_+$ is noncontractible in $X$ is needed to avoid bubbling of holomorphic planes.

We now have the following compactness and gluing result, which is analogous to Proposition~\ref{prop:currentcompact}, except that the signs are slightly different.

\begin{lemma}
\label{lem:morphism2}
Under the assumptions of Proposition~\ref{prop:cobordismmap}, let $\gamma_\pm$ be Reeb orbits for $\lambda_\pm$. Then:
\begin{description}
\item{\emph{(a)}} $\Phi^\J_0(\gamma_+,\gamma_-)$ is finite.

\item{\emph{(b)}} $\overline{\Phi^\J_1}(\gamma_+,\gamma_-)$ is a compact oriented topological one-manifold with oriented boundary
\begin{equation}
\label{eqn:phiboundary}
\begin{split}
\partial\overline{\Phi^{\J}_1}(\gamma_+,\gamma_-) = & \coprod_{\substack{\gamma_+'\neq\gamma_+\\d_++d=1}} \M^{\J_+}_{d_+}(\gamma_+,\gamma_+')\times_{\overline{\gamma_+'}}\Phi^\J_{d}(\gamma_+',\gamma_-)\\
&\bigsqcup \coprod_{\substack{\gamma_-'\neq\gamma_-\\d+d_-=1}} (-1)^d \Phi^\J_d(\gamma_+,\gamma_-') \times_{\overline{\gamma_-'}} \M^{\J_-}_{d_-}(\gamma_-',\gamma_-).
\end{split}
\end{equation}

\item{\emph{(c)}}
We also have analogues of \eqref{eqn:phiboundary} with point constraints as in \cite[Eqs.\ (2.13)-(2.15)]{td}.

\end{description}
\end{lemma}

\begin{proof}
Part (a) is similar to Proposition~\ref{prop:currentcompact}(a). Parts (b) and (c) follow from Proposition~\ref{prop:cobsigns}.
\end{proof}

Lemmas~\ref{lem:morphism1} and \ref{lem:morphism2} imply, as in Proposition~\ref{prop:mbs}, that the moduli spaces $\Phi_d^\J(\gamma_+,\gamma_-)$ constitute a ``morphism'', in the sense of \cite[Def.\ 2.7]{td}, from the Morse-Bott system for $(Y_+,\lambda_+;\J_+)$ to the Morse-Bott system for $(Y_-,\lambda_-;\J_-)$. It then follows from Theorem~\ref{thm:td}(b)(i) that we have an induced map
\[
\Phi(X,\lambda;\J): NCH_*(Y_+,\lambda_+;\J_+)\longrightarrow NCH_*(Y_-,\lambda_-;\J_-).
\] 

To complete the proof of Proposition~\ref{prop:cobordismmap}, we need to show that this map does not depend on the choice of generic $S^1$-family of cobordism-compatible complex structures $\J$ restricting to $\J_+$ and $\J_-$. For this purpose, let $\J^0$ and $\J^1$ be two such $S^1$-families of cobordism-compatible almost complex structures, and let $\{\J^\tau\}_{\tau\in[0,1]}$ be a generic homotopy between them.
Given Reeb orbits $\gamma_\pm$ for $\lambda_\pm$, let $K(\gamma_+,\gamma_-)$ denote the set of pairs $(\tau,u)$ where $\tau\in(0,1)$ and $u:\R\times S^1\to \overline{X}$ satisfies the conditions \eqref{eqn:cobmoduli} with $J_t$ replaced by $J^\tau_t$. Here, as usual, we mod out by $\R$ translation in the domain. And again, we have evaluation maps
\[
e_\pm: K(\gamma_+,\gamma_-) \longrightarrow \overline{\gamma_\pm}
\]
defined by
\[
e_\pm(\tau,u) = \lim_{s\to\pm\infty}\pi_{Y_\pm}(u(s,0)).
\]
Continue to assume that the contact forms $\lambda_\pm$ are nondegenerate. If $d$ is an integer, let $K_d(\gamma_+,\gamma_-)$ denote the set of $(\tau,u)\in K(\gamma_+,\gamma_-)$ with
\[
\CZ_\tau(\gamma_+) - \CZ_\tau(\gamma_-) + 2c_1(u^*T\overline{X},\tau)  = d - 2.
\]

We now have the following lemma which is similar to Lemmas~\ref{lem:morphism1} and \ref{lem:morphism2}; we omit the proof.

\begin{lemma}
\label{lem:homotopy}
Under the assumptions of Proposition~\ref{prop:cobordismmap}, suppose that $\J^0$, $\J^1$, and $\{\J^\tau\}$ are generic, and let $\gamma_\pm$ be Reeb orbits for $\lambda_\pm$. Then:
\begin{description}
\item{(a)}
The moduli space $K_d(\gamma_+,\gamma_-)$ is a smooth manifold of dimension $d$ with a canonical orientation taking values in $e_+^*\mc{O}_{\gamma_+}\tensor e_-^*\mc{O}_{\gamma_-}$, and the evaluation maps $e_\pm$ on it are smooth.
\item{(b)} Transversality as in \eqref{eqn:bigfiberproduct} holds, with $\Phi^\J_{d_0}(\gamma_0^+,\gamma_0^-)$ replaced by $K_{d_0}(\gamma_0^+,\gamma_0^-)$.
\item{(c)} $K_0(\gamma_+,\gamma_-)$ is finite.
\item{(d)} $K_1(\gamma_+,\gamma_-)$ has a compactification to a compact oriented topological one-manifold $\overline{K_1}(\gamma_+,\gamma_-)$ with oriented boundary
\begin{equation}
\label{eqn:Kboundary}
\begin{split}
\partial \overline{K_1}(\gamma_+,\gamma_-) =&
-\Phi^{\J^0}_0(\gamma_+,\gamma_-) \bigsqcup \Phi^{\J^1}_0(\gamma_+,\gamma_-)\\
& \bigsqcup \coprod_{\substack{\gamma_+'\neq\gamma_+\\d_++d=1}} (-1)^{d_+}\M^{\J_+}_{d_+}(\gamma_+,\gamma_+')\times_{\overline{\gamma_+'}}K_{d}(\gamma_+',\gamma_-)\\
&\bigsqcup \coprod_{\substack{\gamma_-'\neq\gamma_-\\d+d_-=1}} (-1)^d K_d(\gamma_+,\gamma_-') \times_{\overline{\gamma_-'}} \M^{\J_-}_{d_-}(\gamma_-',\gamma_-).
\end{split}
\end{equation}
\item{(e)}
We also have analogues of \eqref{eqn:Kboundary} with point constraints as in \cite[Eqs.\ (2.35)-(2.37)]{td}.
\end{description}
\end{lemma}

Lemma~\ref{lem:homotopy} implies that the moduli spaces $K_d(\gamma_+,\gamma_-)$ constitute a ``homotopy'', in the sense of \cite[Def.\ 2.15]{td}, between the morphisms of Morse-Bott systems induced by $\J^0$ and $\J^1$. It then follows from Theorem~\ref{thm:td}(b)(iv) that
\[
\Phi(X,\lambda;\J^0) = \Phi(X,\lambda;\J^1) : NCH_*(Y_+,\lambda_+;\J_+)\longrightarrow NCH_*(Y_-,\lambda_-;\J_-).
\] 
This completes the proof of Proposition~\ref{prop:cobordismmap}.
\end{proof}

\subsection{Invariance of NCH}
\label{sec:nchinv}

The cobordism maps in Proposition~\ref{prop:cobordismmap} have two important properties which we will need to prove invariance of nonequivariant contact homology.

We first consider scaling the contact form. Let $\lambda$ be a nondegenerate contact form on $Y$ with Reeb vector field $R$. If $c>0$, then $c\lambda$ is also a nondegenerate contact form on $Y$, with Reeb vector field $c^{-1}R$. Thus there is a canonical bijection between Reeb orbits of $\lambda$ and Reeb orbits of $c\lambda$; if $\gamma$ is a Reeb orbit of $\lambda$, we denote the corresponding Reeb orbit of $c\lambda$ by ${^c}\gamma$.

Let $\J=\{J_t\}$ be a generic $S^1$-family of $\lambda$-compatible almost complex structures as needed to define the nonequivariant contact homology $NCH_*(Y,\lambda;\J)$. There is then a unique $S^1$-family ${^c}\J=\{{^c}J_t\}$ of $c\lambda$-compatible almost complex structures which agrees with $\J$ on the contact distribution $\xi=\Ker(\lambda)=\Ker(c\lambda)$. The diffeomorphism $\phi$ of $\R\times Y$ sending $(r,y)\mapsto (cr,y)$ satisfies $d\phi\circ J_t = {{^c}J_t}\circ d\phi$. Thus for each pair $\gamma_+,\gamma_-$ of distinct Reeb orbits of $\gamma$, we obtain a canonical diffeomorphism of moduli spaces
\begin{equation}
\label{eqn:scop}
\M^\J_d(\gamma_+,\gamma_-)\simeq \M^{{^c}\J}({^c}\gamma_+,{^c}\gamma_-)
\end{equation}
sending $[u]\mapsto [\phi\circ u]$. This diffeomorphism preserves the orientations (see Lemma~\ref{lem:scop}) and evaluation maps. As a result, we have a canonical isomorphism of chain complexes
\[
\left( NCC_*(Y,\lambda),\partial^\J_\ca\right) = \left( NCC_*(Y,c\lambda),\partial^{{^c}\J}_\ca \right).
\]
We denote the induced map on homology by
\begin{equation}
\label{eqn:scaling}
s_c: NCH_*(Y,\lambda;\J) \stackrel{\simeq}{\longrightarrow} NCH_*(Y,c\lambda;{{^c}\J}).
\end{equation}

We also need to consider composition of cobordisms. If $(X_+,\lambda_+)$ is an exact symplectic cobordism from $(Y_1,\lambda_1)$ to $(Y_2,\lambda_2)$, and if $(X_-,\lambda_-)$ is an exact symplectic cobordism from $(Y_2,\lambda_2)$ to $(Y_3,\lambda_3)$, then we can form the composite cobordism
\[
X_-\circ X_+ = X_-\sqcup_{Y_2} X_+,
\]
glued using the neighborhood identifications \eqref{eqn:LN}. We define a $1$-form $\lambda$ on $X_-\circ X_+$ to agree with $\lambda_\pm$ on $X_\pm$, and this makes $(X_-\circ X_+,\lambda)$ into an exact symplectic cobordism from $(Y_1,\lambda_1)$ to $(Y_3,\lambda_3)$.

We can now state:

\begin{proposition}
\label{prop:cobordismmapproperties}
The cobordism maps in Proposition~\ref{prop:cobordismmap} have the following properties:
\begin{description}
\item{(a) (Scaling)}
Suppose $(Y,\lambda_0)$ is nondegenerate and hypertight. Let $\J$ be an $S^1$-family of $\lambda_0$-compatible almost complex structures as needed to define $NCH_*(Y,\lambda_0;\J)$. Consider the trivial cobordism
\begin{equation}
\label{eqn:trivcob}
(X,\lambda) = \left([a,b]\times Y,e^r\lambda_0\right).
\end{equation}
Then the cobordism map
\[
\Phi(X,\lambda;{^{e^a}\J},{^{e^b}\J}): NCH_*(Y,e^b\lambda_0;{^{e^b}\J}) \longrightarrow NCH_*(Y,e^a\lambda_0;{^{e^a}\J})
\]
agrees with the scaling isomorphism $s_{e^{a-b}}$ in \eqref{eqn:scaling}.
\item{(b) (Composition)}
Let $(Y_i,\lambda_i)$ be nondegenerate and hypertight, and let $\J_i$ be an $S^1$-family of $\lambda_i$-compatible almost complex structures as needed to define $NCH_*(Y,\lambda_i;\J_i)$, for $i=1,2,3$. Let $(X_+,\lambda_+)$ be an exact symplectic cobordism from $(Y_1,\lambda_1)$ to $(Y_2,\lambda_2)$, and let $(X_-,\lambda_-)$ be an exact symplectic cobordism from $(Y_2,\lambda_2)$ to $(Y_3,\lambda_3)$. Assume further that every Reeb orbit for $\lambda_1$ is noncontractible in $X_-\circ X_+$, and every Reeb orbit for $\lambda_2$ is noncontractible in $X_-$. Then
\[
\Phi(X_-\circ X_+,\lambda;\J_3,\J_1) = \Phi(X_-,\lambda_-;\J_3,\J_2) \circ \Phi(X_+,\lambda_+;\J_2,\J_1).
\]
\end{description}
\end{proposition}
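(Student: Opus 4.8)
\medskip

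The plan is to derive both statements from the corresponding properties of morphisms of Morse-Bott systems in \cite{td}, combined with Theorem~\ref{thm:td}(b)(iii). The key point is that the composition axioms for Morse-Bott systems are designed precisely so that (a) and (b) reduce to identifying the relevant morphisms of Morse-Bott systems up to homotopy.

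\medskip

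For part (a), the trivial cobordism $(X,\lambda) = ([a,b]\times Y, e^r\lambda_0)$ has completion $\overline{X}$ which is canonically identified with $\R\times Y$ equipped with the contact form $\lambda_0$ (after rescaling the $\R$ coordinate by the diffeomorphism $\phi: (r,y)\mapsto(e^{b-a}r, y)$ or similar). First I would choose the $S^1$-family of cobordism-compatible almost complex structures $\J$ on $\overline{X}$ to be the one induced by $\J$ on $\R\times Y$ under this identification; since $\J$ is $\R$-invariant and $\lambda_0$-compatible, and the trivial cobordism has an $\R$-translation symmetry, this is an admissible choice. With this choice, the moduli spaces $\Phi^\J_d(\gamma_+,\gamma_-)$ of cobordism maps become, via $\phi$, the moduli spaces $\widetilde{\M}^\J({^{e^b}}\gamma_+, {^{e^a}}\gamma_-)$ of cylinders \emph{not} modded out by $\R$-translation on the target. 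Thus $\Phi^\J_0(\gamma_+,\gamma_-)$ is empty unless $\gamma_+ = \gamma_-$ (in which case it consists of the single $\R$-family of trivial cylinders, which after the compactness/gluing analysis contributes the identity), matching the fact that the scaling map comes from a canonical isomorphism of chain complexes via \eqref{eqn:scop}. I would check the orientations agree using Lemma~\ref{lem:scop} and the orientation conventions from \S\ref{sec:cobsigns}. The morphism of Morse-Bott systems induced by this $\J$ is then literally the ``identity-type'' morphism corresponding to the chain isomorphism of \eqref{eqn:scaling}, and since by the last paragraph of the proof of Proposition~\ref{prop:cobordismmap} the induced map on homology is independent of the choice of $\J$, we conclude $\Phi(X,\lambda;\cdot,\cdot) = s_{e^{a-b}}$.

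\medskip

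For part (b), the completion $\overline{X_-\circ X_+}$ is, up to the neighborhood identifications, obtained by gluing $\overline{X_+}$ and $\overline{X_-}$ along the cylindrical end/collar over $Y_2$; more precisely, inserting a neck $[-T,T]\times Y_2$ and letting $T\to\infty$ produces a ``stretching'' family interpolating between a cobordism-compatible almost complex structure on $\overline{X_-\circ X_+}$ and the broken configuration $(\overline{X_-}, \overline{X_+})$. I would run the standard neck-stretching argument: choose a generic one-parameter family $\{J^T\}_{T\ge 0}$ of cobordism-compatible almost complex structures on $\overline{X_-\circ X_+}$ that stretch the neck over $Y_2$, and form the parametrized moduli spaces. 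The codimension-one boundary of the compactified one-dimensional parametrized moduli spaces then consists of: the moduli spaces for $J^0$ (a cobordism-compatible structure on $\overline{X_-\circ X_+}$), the moduli spaces for $T=\infty$ (which are fiber products of $\Phi$-moduli spaces for $X_+$ with $\Phi$-moduli spaces for $X_-$ over Reeb orbits of $Y_2$, plus breakings at the ends over $Y_1$ and $Y_3$), and the usual Morse-Bott cascade breakings at $Y_1$ and $Y_3$. This exhibits a homotopy, in the sense of \cite[Def.\ 2.15]{td}, between the morphism of Morse-Bott systems induced by $J^0$ and the composite morphism $\Psi\circ\Phi$ (the composition of morphisms of Morse-Bott systems, as defined in \cite{td}) induced by $J^\infty = (X_-, X_+)$. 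By Theorem~\ref{thm:td}(b)(iv) the two induce the same map on cascade homology, and by Theorem~\ref{thm:td}(b)(iii) the composite morphism induces $\Psi_*\circ\Phi_*$; together with the independence of $\Phi(X_-\circ X_+,\lambda;\cdot,\cdot)$ on the choice of cobordism almost complex structure, this gives the desired identity. The noncontractibility hypotheses on Reeb orbits of $\lambda_1$ in $X_-\circ X_+$ and of $\lambda_2$ in $X_-$ are exactly what is needed to rule out plane-bubbling in all the relevant compactness arguments, including in the neck-stretched limit.

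\medskip

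The main obstacle I expect is the gluing/compactness analysis for the neck-stretching family in part (b): one must verify that the degeneration at $T=\infty$ produces \emph{exactly} the fiber products over Reeb orbits of $Y_2$ appearing in the composite morphism of Morse-Bott systems (no extra levels, correct signs), and that the gluing map is an orientation-preserving diffeomorphism onto a half-open collar. This is the standard SFT neck-stretching picture \cite{behwz}, and the paper's convention is to cite such arguments rather than reprove them; the one genuinely new bookkeeping item is matching the orientation conventions of \S\ref{sec:cobsigns} through the gluing, which parallels the sign computations already carried out in Proposition~\ref{prop:cobsigns} and \cite{td}. Everything else is formal, via Theorem~\ref{thm:td}.
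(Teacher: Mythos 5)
Your proposal is correct and follows essentially the same route as the paper: part (a) by exploiting the $\R$-symmetry of the trivial cobordism to identify the cobordism moduli spaces with the cylinder moduli spaces (checking orientations via the scaling lemmas) and recognizing the identity morphism of Morse-Bott systems, and part (b) by neck-stretching over $Y_2$ to produce a parametrized moduli space realizing a homotopy, in the sense of \cite{td}, between the glued morphism and the composite morphism, then invoking Theorem~\ref{thm:td}(b). The only points the paper handles that you gloss over are the need to achieve transversality for the stretching family only for orbits of action less than $L$ (with a perturbation supported at small $R$) and then take a direct limit over $L$, and the verification that the two morphisms are actually composable in the sense of \cite[Def.\ 2.10]{td}; also, for the trivial cobordism it is the index-$1$ (not index-$0$) moduli space over a fixed orbit that equals $\overline{\gamma}$ and furnishes the identity morphism.
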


Proposition~\ref{prop:cobordismmapproperties} will be proved in \S\ref{sec:scaling} and \S\ref{sec:composition} below. Meanwhile, we now use Proposition~\ref{prop:cobordismmapproperties} to deduce the invariance of nonequivariant contact homology by a simple formal argument.

\begin{proof}[Proof of Theorem~\ref{thm:NCHinvariant}.] 
Let $\lambda_1$ and $\lambda_2$ be nondegenerate hypertight contact forms on $Y$ with $\Ker(\lambda_1)=\Ker(\lambda_2)$. Let $\J_1$ and $\J_2$ be generic $S^1$-families of almost complex structures as needed to define the nonequivariant contact homology of $\lambda_1$ and $\lambda_2$. We define a map
\begin{equation}
\label{eqn:phils}
\phi_{(\lambda_2;\J_2),(\lambda_1;\J_1)}: NCH_*(Y,\lambda_1;\J_1) \longrightarrow NCH_*(Y,\lambda_2;\J_2)
\end{equation}
as follows. We know that $\lambda_1=e^f\lambda_2$ for some smooth function $f:Y\to\R$. Pick a sufficiently large constant $c$ so that $c+f>0$ on all of $Y$. We then have an exact symplectic cobordism $(X,\lambda)$ from $(Y,e^c\lambda_1)$ to $(Y,\lambda_2)$ given by
\begin{equation}
\label{eqn:cobforinv}
\begin{split}
X &= \{(r,y)\in\R\times Y \mid 0\le r \le f(y)+c\},\\
\lambda &= e^r\lambda_2.
\end{split}
\end{equation}
We define the map \eqref{eqn:phils} to be the composition of the scaling isomorphism
\[
s_{e^c}: NCH_*(Y,\lambda_1;\J_1) \stackrel{\simeq}{\longrightarrow} NCH_*(Y,e^c\lambda_1;{^{e^c}\J_1})
\]
with the cobordism map
\[
\Phi(X,\lambda;\J_2,{^{e^c}\J_1}): NCH_*(Y,e^c\lambda_1;{^{e^c}\J_1}) \longrightarrow NCH_*(Y,\lambda_2;\J_2).
\]

We now prove that:

\begin{description}
\item{(i)} The map \eqref{eqn:phils} does not depend on the choice of constant $c$ used to define it.
\item{(ii)} When $\lambda_1=\lambda_2$ and $\J_1=\J_2$, we have
\[
\phi_{(\lambda;\J),(\lambda;\J)} = \op{id}_{NCH_*(Y,\lambda;\J)}.
\]
\item{(iii)} If $\lambda_3$ is another nondegenerate hypertight contact form and $\J_3$ is a generic $S^1$-family of $\lambda_3$-compatible almost complex structures, then 
\begin{equation}
\label{eqn:phicomp}
\phi_{(\lambda_3;\J_3),(\lambda_2;\J_2)}\circ \phi_{(\lambda_2;\J_2),(\lambda_1;\J_1)} = \phi_{(\lambda_3;\J_3),(\lambda_1;\J_1)}.
\end{equation}
\end{description}
The above three properties imply that the maps \eqref{eqn:phils} canonically identify the nonequivariant contact homologies for different choices of $\lambda$ and $\J$ with each other.

(i) To prove that the map \eqref{eqn:phils} does not depend on the choice of $c$, suppose that $c'>c$, and let $(X',\lambda)$ denote the cobordism \eqref{eqn:cobforinv} defined using $c'$ instead of $c$. We then have a commutative diagram
\[
\begin{CD}
NCH_*(Y,\lambda_1;\J_1) @>{s_{e^{c'}}}>> NCH_*(Y,e^{c'}\lambda_1;{^{e^{c'}}\J_1}) @>{\Phi(X',\lambda;\J_2,{^{e^{c'}}\J_1})}>> NCH_*(Y,\lambda_2;\J_2)\\
@| @V{s_{e^{c-c'}}}VV  @|\\
NCH_*(Y,\lambda_1;\J_1) @>{s_{e^c}}>> NCH_*(Y,e^{c}\lambda_1;{^{e^c}\J_1}) @>{\Phi(X,\lambda;\J_2,{^{e^c}\J_1})}>> NCH_*(Y,\lambda_2;\J_2).
\end{CD}
\]
Here the top row is the map \eqref{eqn:phils} defined using $c'$, and the bottom row is the map \eqref{eqn:phils} defined using $c$. The left square commutes because the composition of two scaling isomorphisms is, by definition, a scaling isomorphism. Commutativity of the right square follows from both parts of Proposition~\ref{prop:cobordismmapproperties} and the fact that the cobordism $X'$ is the composition of $X$ with the trivial cobordism
\[
\big(\{(r,y)\in\R\times Y \mid f(y)+c \le r \le f(y)+c'\}, e^r\lambda_2\big) \simeq \big([c,c']\times Y, e^r(e^c\lambda_1)\big).
\]

(ii) This follows from the Scaling property in Proposition~\ref{prop:cobordismmapproperties}.

(iii) Write $\lambda_1 = e^{f_1}\lambda_2$ and $\lambda_2=e^{f_2}\lambda_3$. By arguments as in the proof of part (i), we can assume without loss of generality that the contact forms have been scaled so that $f_1>f_2>0$ everywhere. We can then define all of the maps in \eqref{eqn:phicomp} using $c=0$. Equation \eqref{eqn:phicomp} now follows from the Composition property in Proposition~\ref{prop:cobordismmapproperties}.
\end{proof}

\subsection{Proof of the scaling property}
\label{sec:scaling}

\begin{proof}[Proof of Proposition~\ref{prop:cobordismmapproperties}(a).] 
We can identify the completion $\overline{X}$ of the trivial cobordism \eqref{eqn:trivcob} with $\R\times Y$, so that $(-\infty,0]\times Y$ is identified with $(-\infty,a]\times Y$ by shifting the $\R$ coordinate by $a$, and $[0,\infty)\times Y$ is identified with $[b,\infty)\times Y$ by shifting the $\R$ coordinate by $b$. We now define an $S^1$-family $\J^X=\{J^X_t\}_{t\in S^1}$ of cobordism-compatible almost complex structures on $\overline{X}$ as follows. Choose a positive function $f:\R\to\R$ with $f(r)=e^{-a}$ for $r\le a$ and $f(r)=e^{-b}$ for $r\ge b$. There is then a unique cobordism-compatible almost complex structure $J^X_t$ on $\overline{X}$ such that
\[
J^X_t(v) = J_t(v)
\]
for $v\in\xi=\Ker(\lambda)$, and
\[
J^X_t(\partial_r) = f(r)R.
\]

Now let $g:\R\to\R$ be an antiderivative of $f$. Then the diffeomorphism $\phi$ of $\R\times Y$ sending $(r,y)\mapsto (g(r),y))$ satisfies $d\phi\circ J^X_t = J_t\circ d\phi$. Thus if $\gamma_+$ and $\gamma_-$ are Reeb orbits of $\gamma$, we obtain a diffeomorphism of moduli spaces
\[
\Phi^{\J^X}\left({^{e^b}}\gamma_+,{^{e^a}}\gamma_-\right) \simeq \widetilde{\M}^\J(\gamma_+,\gamma_-).
\]
If $\gamma_+\neq\gamma_-$, then after choosing a smooth slice of the $\R$ action on the right hand side, we obtain a diffeomorphism
\begin{equation}
\label{eqn:scacob1}
\Phi^{\J^X}_d \left({^{e^b}}\gamma_+,{^{e^a}}\gamma_-\right) \simeq \R\times \M^\J_{d-1}(\gamma_+,\gamma_-).
\end{equation}
And if $\gamma_+=\gamma_-$, then we have a canonical diffeomorphism
\begin{equation}
\label{eqn:scacob2}
\Phi^{\J^X}_d \left({^{e^b}}\gamma,{^{e^a}}\gamma\right) = \left\{\begin{array}{cl} \overline{\gamma}, & d=1,\\
\emptyset, & d\neq 1.
\end{array}\right.
\end{equation}
Moreover, the diffeomorphisms \eqref{eqn:scacob1} and \eqref{eqn:scacob2} are orientation preserving, as shown in Lemma~\ref{lem:scacob}. These orientation preserving diffeomorphisms imply that in the terminology of \cite[Ex.\ 2.8]{td}, the pushforwards under $\phi$ of the moduli spaces $\Phi^{\J^X}_d({^{e^b}}\gamma_+,{^{e^a}}\gamma_-)$ constitute the identity morphism on the Morse-Bott system determined by $(Y,\lambda;\J)$. It then follows from Theorem~\ref{thm:td}(b)(ii) that $\Phi(X,\lambda;\J^X)$ agrees with the scaling isomorphism $s_{e^{a-b}}$.
\end{proof}

\subsection{Proof of the composition property}
\label{sec:composition}

\begin{proof}[Proof of Proposition~\ref{prop:cobordismmapproperties}(b).]
If $R\ge 0$ is a nonnegative real number (here $R$ does not denote a Reeb vector field), define a ``stretched composition''
\[
X_-\circ_R X_+ = X_-\sqcup_{Y_2} \big([-R,R]\times Y_2\big) \sqcup_{Y_2} X_+.
\]
Define a $1$-form $\lambda_R$ on $X_-\circ_R X_+$ by
\[
\lambda_R = \left\{\begin{array}{cl} e^{-R}\lambda_- & \mbox{on $X_-$},\\
e^r\lambda_2 & \mbox{on $[-R,R]\times Y_2$},\\ e^R\lambda_+ & \mbox{on $X_+$}.
\end{array}\right.
\]
This makes $(X_-\circ_R X_+,\lambda_R)$ into an exact symplectic cobordism from $(Y_1,e^R\lambda_1)$ to $(Y_3,e^{-R}\lambda_3)$. 

Generically choose $S^1$-families $\J^\pm=\{J^\pm_t\}_{t\in S^1}$ of cobordism-compatible almost complex structures on $\overline{X_\pm}$ that agree with $\J_2$ near $Y_2$. Define an $S^1$-family $\J^R=\{J^R_t\}_{t\in S^1}$ of almost complex structures on $\overline{X_-\circ_R X_+}$ by
\[
J^R_t = \left\{\begin{array}{cl} J^-_t & \mbox{on $((-\infty,0]\times Y_3)\sqcup_{Y_3}X_-$},\\
J_{2,t} & \mbox{on $[-R,R]\times Y_2$},\\
J^+_t & \mbox{on $X_+\sqcup_{Y_1}([0,\infty)\times Y_1)$}.
\end{array}\right.
\]
Note that $J^R_t$ is not quite cobordism-compatible for $R\neq 0$, because on $(-\infty,0]\times Y_3$ and $[0,\infty)\times Y_1$, we have that $J^R_t(\partial_r)$ is $e^{\pm R}$ times what it should be. However this does not affect our arguments.

If $\gamma_+$ is a Reeb orbit for $\lambda_1$ and $\gamma_-$ is a Reeb orbit for $\gamma_3$, define $K(\gamma_+,\gamma_-)$ to be the set of pairs $(R,u)$ where $R>0$ and $u:\R\times S^1\to \overline{X_-\circ_R X_+}$ satisfies
\begin{gather*}
\partial_s u + J^R_t\partial_t u = 0,\\
\mbox{$u(s,t)\in[0,\infty)\times Y_1$ for $s>>0$,}\\
\mbox{$u(s,t)\in(-\infty,0]\times Y_3$ for $s<<0$,}\\
\mbox{$\lim_{s\to+\infty}\pi_{Y_1}(u(s,\cdot))$ is a parametrization of $\gamma_+$,}\\
\mbox{$\lim_{s\to-\infty}\pi_{Y_3}(u(s,\cdot))$ is a parametrization of $\gamma_-$,}
\end{gather*}
modulo $\R$ translation in the domain. As usual we have evaluation maps
\[
e_\pm: K(\gamma_+,\gamma_-)\longrightarrow \overline{\gamma_\pm}.
\]
If $d$ is an integer, let $K_d(\gamma_+,\gamma_-)$ denote the set of $(R,u)\in K(\gamma_+,\gamma_-)$ with
\[
\CZ_\tau(\gamma_+) - \CZ_\tau(\gamma_-) + 2c_1(u^*T(\overline{X_-\circ_R X_+}),\tau) = d-2.
\]
(Here we are continuing to assume that $\lambda_\pm$ are nondegenerate so that this makes sense.)
Similarly to Lemma~\ref{lem:homotopy}, we have:

\begin{lemma}
\label{lem:homotopy2}
Under the assumptions of Proposition~\ref{prop:cobordismmapproperties}, suppose that $\J^+$ and $\J^-$ are generic. Given $L>0$, there exists $R_0\ge 0$ such that after a small perturbation of the family $\{\J^R\}_{R\ge 0}$ supported where $R\le R_0$, the following is true. Let $\gamma_+$ be a Reeb orbit for $\lambda_1$ with action less than $L$, and let $\gamma_-$ be a Reeb orbit for $\lambda_3$. Then:
\begin{description}
\item{(a)}
The moduli space $K_d(\gamma_+,\gamma_-)$ is a smooth manifold of dimension $d$ with a canonical orientation taking values in $e_+^*\mc{O}_{\gamma_+}\tensor e_-^*\mc{O}_{\gamma_-}$, and the evaluation maps $e_\pm$ on it are smooth.
\item{(b)} Transversality as in \eqref{eqn:bigfiberproduct} holds, with $\J_+$ replaced by $\J_1$, with $\J_-$ replaced by $\J_3$, and with $\Phi^\J_{d_0}(\gamma_0^+,\gamma_0^-)$ replaced by $K_{d_0}(\gamma_0^+,\gamma_0^-)$.
\item{(c)} $K_0(\gamma_+,\gamma_-)$ is finite.
\item{(d)} If $\gamma_0$ is a Reeb orbit for $\lambda_2$, then the fiber product
\[
\Phi^{\J^+}_{d_+}(\gamma_+,\gamma_0)\times_{\overline{\gamma_0}} \Phi^{\J^-}_{d_-}(\gamma_0,\gamma_-)
\]
is cut out transversely. The same also holds for more general such fiber products in which some moduli spaces $\mc{M}^{\J_1}$, $\mc{M}^{\J_2}$, and/or $\mc{M}^{\J_3}$ are inserted.
\item{(e)} $K_1(\gamma_+,\gamma_-)$ has a compactification to a compact oriented topological one-manifold $\overline{K_1}(\gamma_+,\gamma_-)$ with oriented boundary
\begin{equation}
\label{eqn:K2boundary}
\begin{split}
\partial \overline{K_1}(\gamma_+,\gamma_-) =&
-\Phi^{\J^0}_0(\gamma_+,\gamma_-)\\
&\bigsqcup \coprod_{\gamma_0}\Phi^{\J^+}_0(\gamma_+,\gamma_0)\times_{\overline{\gamma_0}} \Phi^{\J^-}_0(\gamma_0,\gamma_-)\\
& \bigsqcup \coprod_{\substack{\gamma_+'\neq\gamma_+\\d_++d=1}} (-1)^{d_+}\M^{\J_1}_{d_+}(\gamma_+,\gamma_+')\times_{\overline{\gamma_+'}}K_{d}(\gamma_+',\gamma_-)\\
&\bigsqcup \coprod_{\substack{\gamma_-'\neq\gamma_-\\d+d_-=1}} (-1)^d K_d(\gamma_+,\gamma_-') \times_{\overline{\gamma_-'}} \M^{\J_3}_{d_-}(\gamma_-',\gamma_-).
\end{split}
\end{equation}
\item{(f)}
We also have analogues of \eqref{eqn:K2boundary} with point constraints as in \cite[Eqs.\ (2.35)-(2.37)]{td}.
\end{description}
\end{lemma}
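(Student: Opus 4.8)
The plan is to exhibit the $R$-parametrized moduli spaces $K_d(\gamma_+,\gamma_-)$, together with their evaluation maps, as a ``homotopy of morphisms'' in the sense of \cite[Def.\ 2.15]{td}: at the $R\to 0$ end one recovers the morphism of Morse--Bott systems induced by the unstretched cobordism $(X_-\circ X_+,\lambda_0)$ with its almost complex structure $\J^0=\J^R|_{R=0}$, while at the $R\to\infty$ end the neck degeneration produces exactly the \emph{composed} morphism built from the morphisms for $(X_+,\lambda_+;\J^+)$ and $(X_-,\lambda_-;\J^-)$. Granting Lemma~\ref{lem:homotopy2}, Proposition~\ref{prop:cobordismmapproperties}(b) then follows formally from Theorem~\ref{thm:td}(b) together with the fact (from \cite{td}) that composition of morphisms of Morse--Bott systems induces composition on cascade homology, by the same argument used to deduce Theorem~\ref{thm:NCHinvariant}. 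Parts (a)--(d) of the lemma are transversality statements, part (e) is the compactness/gluing statement that yields the boundary formula \eqref{eqn:K2boundary}, and part (f) is the point-constrained refinement.

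\emph{Transversality (a)--(d).} These are standard Sard--Smale arguments of the kind underlying Proposition~\ref{prop:transversality} and Lemma~\ref{lem:morphism1}, following \cite[\S8]{wendl} and \cite{dragnev,fhs}. One first perturbs the fixed families $\J^+$, $\J^-$ on $\overline{X_+}$, $\overline{X_-}$ to make all the moduli spaces $\Phi^{\J^\pm}_d$, the symplectization moduli spaces $\M^{\J_i}_d$ for $i=1,2,3$, and all the fiber products among them in (b) and (d) transverse; since $\lambda_1,\lambda_2,\lambda_3$ are hypertight and no Reeb orbit of $\lambda_1$ (resp.\ $\lambda_2$) is contractible in $X_-\circ X_+$ (resp.\ $X_-$), there is no plane bubbling, so these are genuine finite-dimensional problems. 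One then perturbs the parametrized family $\{\J^R\}$ only over the compact interval $R\in[0,R_0]$ (as the statement permits) to achieve transversality of $K_d$ there; for $R>R_0$ no perturbation is needed, as any solution with $R$ large is $C^\infty$-close on compact sets to a transversely cut-out broken configuration split between $\overline{X_+}$ and $\overline{X_-}$ and hence is itself regular by the implicit function theorem of the neck-stretching gluing. (The failure of $\J^R$ to be strictly cobordism-compatible on the outer ends for $R\ne 0$, the discrepancy being a positive constant rescaling of $J(\partial_r)$, is harmless: it only rescales the symplectizations and affects none of the Fredholm or compactness properties.) The orientations in (a) and the signs in \eqref{eqn:K2boundary} are obtained as in Lemma~\ref{lem:homotopy} and \S\ref{sec:cobsigns}, following \cite[\S2]{td}; bad Reeb orbits are handled by the algebraic formalism of \cite{td}, as in Definition~\ref{def:cascadediff} and \eqref{eqn:minus2}.

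\emph{Compactness and the boundary formula (e), (f).} Apply SFT compactness \cite[Thm.\ 10.4]{behwz} to a sequence $(R_\nu,u_\nu)\in K_1(\gamma_+,\gamma_-)$; again there is no plane bubbling. If $R_\nu$ stays bounded, passing to a subsequence with $R_\nu\to R_\infty\in[0,\infty)$ yields either an interior point of $K_1$, or a configuration contributing to the last two lines of \eqref{eqn:K2boundary} in which a cylinder over $Y_1$ or $Y_3$ splits off, exactly as in Lemma~\ref{lem:morphism2}; and when $R_\infty=0$ one obtains a point of $\Phi^{\J^0}_0(\gamma_+,\gamma_-)$, which is the end carrying the sign $-1$, as in Lemma~\ref{lem:homotopy}(d). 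If $R_\nu\to\infty$, the neck $[-R_\nu,R_\nu]\times Y_2$ stretches and the limiting building has a nonempty collection of levels over $\overline{X_+}$, over $\overline{X_-}$, and possibly over the symplectizations of $Y_1,Y_2,Y_3$; since $\dim K_1=1$, each level over $\overline{X_\pm}$ contributes its dimension $\ge 0$, and each nonconstant symplectization level contributes $\ge 1$ because of its free $\R$-action, while all the relevant negative-index moduli spaces are empty for generic data, the only surviving possibility is one level over $\overline{X_+}$ and one over $\overline{X_-}$ joined along a single intermediate Reeb orbit $\gamma_0$ of $\lambda_2$, i.e.\ an element of $\coprod_{\gamma_0}\Phi^{\J^+}_0(\gamma_+,\gamma_0)\times_{\overline{\gamma_0}}\Phi^{\J^-}_0(\gamma_0,\gamma_-)$. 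Conversely, the usual breaking/gluing theorems, together with a neck-stretching gluing theorem at the $R\to\infty$ end, show that each purported boundary configuration arises from a unique end of $K_1$, which establishes \eqref{eqn:K2boundary}. Part (f) is identical, with the evaluation maps at the $\gamma_\pm$ ends replaced by the point constraints $p_{\gamma_\pm}$, following \cite[Eqs.\ (2.35)-(2.37)]{td} as in Lemma~\ref{lem:homotopy}(e).

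\emph{Main obstacle.} The genuinely delicate part is the $R\to\infty$ neck-stretching analysis in (e): proving SFT compactness for the stretched family and, above all, carrying out the dimension/index bookkeeping that rules out limiting buildings with extra levels over $Y_2$ or extra cylinder levels over $Y_1$ or $Y_3$ for the one-dimensional space $K_1$, together with the matching neck-stretching gluing theorem identifying $\coprod_{\gamma_0}\Phi^{\J^+}_0\times_{\overline{\gamma_0}}\Phi^{\J^-}_0$ with genuine ends of $\overline{K_1}$ carrying the correct orientations. As throughout the hypertight setting, the feature that makes this go through is that for generic almost complex structures all moduli spaces $\Phi^{\J^\pm}_d$ and $\M^{\J_i}_d$ with $d<0$ (and $\gamma_+\ne\gamma_-$ in the symplectization case) are empty, so no hidden nontransverse configuration can enter the boundary.
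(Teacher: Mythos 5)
The paper states this lemma without proof, introducing it only as an analogue of Lemma~\ref{lem:homotopy} and relying on its blanket policy that the underlying transversality, compactness, and gluing results ``are proved by standard arguments which we omit.'' Your sketch supplies exactly those intended standard arguments --- Sard--Smale transversality with domain-dependent families for (a)--(d), SFT compactness plus the neck-stretching index bookkeeping and gluing for (e)--(f) --- so it is correct and follows the same route the paper has in mind.
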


Part (d) implies that if we restrict to Reeb orbits of $\lambda_+$ with action less than $L$, then the morphisms of Morse-Bott systems given by the moduli spaces $\Phi^{\J^+}$ and $\Phi^{\J^-}$ are composable in the sense of \cite[Def.\ 2.10]{td}. The rest of Lemma~\ref{lem:homotopy2} then implies that, again restricting to Reeb orbits of $\lambda_+$ with action less than $L$, the moduli spaces $K_d$ give a homotopy between the composition and the morphism given by the moduli spaces $\Phi^{\J^0}$. It then follows by applying Theorem~\ref{thm:td}(b)(iii,iv) and taking the direct limit as $L\to\infty$ that the Composition property in Proposition~\ref{prop:cobordismmapproperties} holds.
\end{proof}

\section{$S^1$-equivariant contact homology}
\label{equicurrents}

Continue to assume that $\lambda$ is a nondegenerate hypertight contact form on a closed manifold $Y$. We now define the $S^1$-equivariant contact homology $CH_*^{S^1}(Y,\lambda;\frak{J})$, where $\frak{J}$ is a generic $S^1$-equivariant $S^1\times ES^1$ family of $\lambda$-compatible almost complex structures, and we prove that the $S^1$-equivariant contact homology depends only on $Y$ and $\xi=\Ker(\lambda)$.  This construction closely parallels the definition of nonequivariant contact homology in \S\ref{section:plainmoduli} and \S\ref{cascades}, with minor modifications which we will explain.

\subsection{$S^1$-equivariant moduli spaces}
\label{sec:familymoduli}

We regard $ES^1=\lim_{N\to\infty} S^{2N+1}$. Let $\pi:ES^1\to BS^1={\mathbb C} P^\infty$ denote the projection.

Let
\[
\frak{J} = \left\{\frak{J}_{t,z} \mid t\in S^1,\; z\in ES^1\right\}
\]
be an $S^1\times ES^1$ family of $\lambda$-compatible almost complex structures on $\R\times Y$. We assume that $\frak{J}$ is smooth in the sense that its restriction to $S^1\times S^{2N+1}$ is smooth for each $N$. We further assume that $\frak{J}$ is $S^1$-equivariant in the sense that
\begin{equation}
\label{eqn:frakJequiv}
\frak{J}_{t,z} = \frak{J}_{t+\varphi,\varphi\cdot z}
\end{equation}
for each $t,\varphi\in S^1$ and $z\in ES^1$

Define a function $\widetilde{f}_N:S^{2N+1}\to\R$ by
\[
\widetilde{f}_N(z_0,\ldots,z_N) = \frac{1}{2}\sum_{i=1}^Ni|z_i|^2.
\]
Under the projection $\pi:S^{2N+1}\to {\mathbb C} P^N$, the function $\widetilde{f}_N$ descends to a Morse function $f_N:{\mathbb C} P^N\to\R$ with one critical point of each index $0,2,\ldots,2N$. If we write points in ${\mathbb C} P^N$ in the form $[z_0:z_1:\cdots:z_N]$, then the critical point of index $2k$ has $z_j=0$ for $j\neq k$.

Let $\widetilde{V}_N$ denote the gradient of $\widetilde{f}_N$ with respect to the standard metric on $S^{2N+1}$. Our convention is that a ``parametrized flow line'' of $\widetilde{V}_N$ is a map $\eta:\R\to S^{2N+1}$ such that
$\eta'(s)=\widetilde{V}_N(\eta(s))$ for all $s$. For such a parametrized flow line, if we write $\eta(0)=(z_0,\ldots,z_N)\in {\mathbb C}^{N+1}$, then we have
\begin{equation}
\label{eqn:efl}
\eta(s) = \frac{\left(z_0,e^sz_1,\ldots,s^{Ns}z_N\right)} {\sqrt{|z_0|^2+e^{2s}|z_1|^2+\cdots+e^{2Ns}|z_N|^2}}.
\end{equation}

Let $\widetilde{f}:ES^1\to\R$ denote the direct limit of the functions $\widetilde{f}_N$, and let $f:BS^1\to\R$ denote the direct limit of the functions $f_N$.  The vector field $\widetilde{V}_N$ pushes forward, under the inclusion $S^{2N+1}\to S^{2N+3}$, to the vector field $\widetilde{V}_{N+1}$; thus we can regard the vector fields $\widetilde{V}_N$ as defining a ``direct limit vector field'' $\widetilde{V}$ on $ES^1$. In particular, we use the terminology ``parametrized flow line of $\widetilde{V}$'' to refer to a parametrized flow line of $\widetilde{V}_N$ for some $N$.

\begin{definition}
\label{def:fammod}
Let $\gamma_+$ and $\gamma_-$ be Reeb orbits, and let $x_+$ and $x_-$ be critical points of $f$. Define $\widehat{\M}^{\frak{J}}((x_+,\gamma_+),(x_-,\gamma_-))$ to be the set of pairs $(\eta,u)$, where:
\begin{itemize}
	\item $\eta:\R\to ES^1$ is a parametrized flow line of $\widetilde{V}$ with $\lim_{s\to\pm\infty}\eta(s) \in\pi^{-1}(x_\pm)$.
	\item 
	$u:\R\times S^1\to \R\times Y$ satisfies the equation
	\begin{equation}
	\label{eqn:familycr}
	\partial_s u + \frak{J}_{t,\eta(s)}\partial_t u = 0.
	\end{equation}
	\item
	$\lim_{s\to\pm\infty}\pi_\R(u(s,\cdot))=\pm\infty$, and $\lim_{s\to\pm\infty}\pi_Y(u(s,\cdot))$ is a parametrization of the Reeb orbit $\gamma_\pm$.
\end{itemize}
\end{definition}

\begin{definition}
\label{def:fammodaction}
Observe that $\R$ acts on $\widehat{\M}^{\frak{J}}$ by translation of the parameter $s$ in $\eta$ and $u$ simultaneously. Moreover, it follows from \eqref{eqn:frakJequiv} that $S^1$ acts on $\widehat{\M}^{\frak{J}}$ by
\begin{equation}
\label{eqn:s1actm}
(\varphi\cdot(\eta,u))(s,t) = (\varphi\cdot\eta(s),u(s,t-\varphi)).
\end{equation}
Let $\widetilde{\M}^\frak{J}((x_+,\gamma_+),(x_-,\gamma_-))$ denote the quotient of $\widehat{\M}^\frak{J}((x_+,\gamma_+),(x_-,\gamma_-))$ by this $\R\times S^1$ action.
\end{definition}

Finally, if the pairs $(x_+,\gamma_+)$ and $(x_-,\gamma_-)$ are distinct, then $\R$ acts freely on $\widetilde{\M}^\frak{J}((x_+,\gamma_+),(x_-,\gamma_-))$ by composing $u$ with translations in the target $\R\times Y$, and we let $\M^\frak{J}((x_+,\gamma_+),(x_-,\gamma_-))$ denote the quotient by this $\R$ action.

If $x\in\op{Crit}(f)$ and $\gamma$ is a Reeb orbit, define
\[
\overline{(x,\gamma)} = \left(\pi^{-1}(x)\times\overline{\gamma}\right)/S^1,
\]
where $S^1$ acts on $\pi^{-1}(x)\times\overline{\gamma}$ as follows: If $\gamma$ has period $T$, meaning that $\gamma: \R/T\Z \to Y$, and if $t\in\R/T\Z$, then $\varphi \in S^1=\R/\Z$ acts by
\begin{equation}
\label{eqn:equivrel}
\varphi\cdot(x,\gamma(t)) = \left(\varphi\cdot x,\gamma(t-T\varphi)\right).
\end{equation}
We then have well-defined evaluation maps
\[
e_\pm: \M^\frak{J}((x_+,\gamma_+),(x_-,\gamma_-)) \longrightarrow \overline{(x_\pm,\gamma_\pm)}
\]
defined by
\begin{equation}
\label{eqn:fameval}
e_\pm(\eta,u) = \left(\lim_{s\to\pm\infty}\eta(s), \lim_{s\to\pm\infty}\pi_Y(u(s,0))\right).
\end{equation}

If $d$ is an integer, let $\M^\frak{J}_d((x_+,\gamma_+),(x_-,\gamma_-))$ denote the set of $(\eta, u)$ in the moduli space $\M^\frak{J}((x_+,\gamma_+),(x_-,\gamma_-))$ such that
\[
\CZ_\tau(\gamma_+)-\CZ_\tau(\gamma_-)+2c_1(u^*\xi,\tau)+\op{ind}(f,x_+) - \op{ind}(f,x_-) =d.
\]
Here $\op{ind}(f,x)$ denotes the Morse index of $f$ at the critical point $x$, which is a nonnegative even integer.

Analogously to Proposition~\ref{prop:transversality}, we have:

\begin{proposition}
If $\frak{J}$ is generic, then:
\begin{description}
\item{\emph{(a)}}
If $\gamma_+,\gamma_-$ are Reeb orbits and $x_+,x_-$ are critical points of $f$, such that the pairs $(x_+,\gamma_+)$ and $(x_-,\gamma_-)$ are distinct, and if $d$ is an integer, then the moduli space $\M_d^\frak{J}((x_+,\gamma_+),(x_-,\gamma_-))$ is cut out transversely and is a smooth manifold of dimension $d$, and the evaluation maps $e_+$ and $e_-$ on it are smooth.
\item{\emph{(b)}}
Each $k$-fold fiber product
\begin{gather*}
\M_{d_1}^\frak{J}((x_0,\gamma_0),(x_1,\gamma_1))\times_{\overline{(x_1,\gamma_1)}}\M_{d_2}^\frak{J}((x_1,\gamma_1),(x_2,\gamma_2))\\
 \times_{\overline{(x_2,\gamma_2)}} \cdots \times_{\overline{(x_{k-1},\gamma_{k-1})}}
 \\
 \M_{d_k}^\frak{J}((x_{k-1},\gamma_{k-1}),(x_k,\gamma_k))
\end{gather*}
is cut out transversely, and in particular is a smooth manifold of dimension $\sum_{i=1}^kd_i - k + 1$.
\end{description}
\end{proposition}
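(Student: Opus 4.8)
The plan is to follow the transversality scheme of \cite[\S8]{wendl} (going back to \cite{dragnev,fhs}) that underlies Proposition~\ref{prop:transversality}, treating the Morse gradient flow line $\eta$ on $ES^1$ as an extra, essentially decoupled, factor. First I would reduce to a finite-dimensional approximation: the critical points $x_\pm$ of $f$ and any gradient flow line $\eta$ from $\pi^{-1}(x_+)$ to $\pi^{-1}(x_-)$ lie in some $S^{2N+1}\subset ES^1$, and since a countable intersection of residual sets is residual, it is enough to achieve transversality for each $N$ using the restriction of $\mathfrak{J}$ to $S^1\times S^{2N+1}$. On $S^{2N+1}$ the function $\widetilde f_N$ is Morse--Bott with critical circles $\pi^{-1}(x)$, and the Morse--Bott--Smale condition holds for the standard metric (this is the classical cell structure of $\mathbb{C}P^N$); hence the space of flow lines $\eta$ with $\eta(-\infty)\in\pi^{-1}(x_+)$, $\eta(+\infty)\in\pi^{-1}(x_-)$ is a manifold and its defining operator is surjective, independently of $\mathfrak{J}$. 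The linearization of the coupled $(\eta,u)$ equations is block upper triangular, with the $\eta$-block equal to this surjective operator, so the problem reduces to transversality for the $u$-equation $\partial_s u+\mathfrak{J}_{t,\eta(s)}\partial_t u=0$, in which the almost complex structure on $\R\times Y$ depends on $(s,t)$ through $t$ and through $\eta(s)$ (and in the degenerate case $x_+=x_-$, $\eta$ is constant and this is literally the nonequivariant equation of Proposition~\ref{prop:transversality} for the $t$-dependent family $\{\mathfrak{J}_{t,z_0}\}$).

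The key input, exactly as in the nonequivariant case, is that for generic $\mathfrak{J}$ every solution $u$ is somewhere injective: a $k$-fold multiple cover $u(s,t)=v(ks,kt)$ would force the underlying somewhere-injective cylinder $v$ to be holomorphic for an almost complex structure possessing a period-$1/k$ symmetry, which a generic $\mathfrak{J}$ does not have, so this reduces to the corresponding fact behind Proposition~\ref{prop:transversality}. Given somewhere injectivity, I would form the universal moduli space over a suitable Banach manifold of $S^1$-equivariant $\lambda$-compatible perturbations of $\mathfrak{J}$ (e.g.\ using a Floer $C^\epsilon$-norm) and show it is a Banach manifold by proving that the linearization of the defining section, including the variation in $\mathfrak{J}$, is surjective at every solution: one picks an injective point $(s_0,t_0)\in\R\times S^1$ of $u$ and perturbs $\mathfrak{J}$ near $(t_0,\eta(s_0),u(s_0,t_0))$. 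The only new feature compared with Proposition~\ref{prop:transversality} is that the perturbation must respect the equivariance relation \eqref{eqn:frakJequiv}; this is harmless because the $S^1$-action on $ES^1$ is free, so an $S^1$-equivariant perturbation supported near the $S^1$-orbit of $(t_0,\eta(s_0))$ in $S^1\times S^{2N+1}$ is equivalent to an arbitrary perturbation supported near a local slice, which provides exactly the freedom needed. Sard--Smale applied to the projection from the universal moduli space to the perturbation space then gives (a): for generic $\mathfrak{J}$ the space $\M_d^{\mathfrak{J}}((x_+,\gamma_+),(x_-,\gamma_-))$ is cut out transversely, and its dimension is $d$ by the same Riemann--Roch computation as in \S\ref{section:plainmoduli}, with the contribution $\op{ind}(f,x_+)-\op{ind}(f,x_-)$ from $\eta$ added and one further quotient taken by the $S^1$-action \eqref{eqn:s1actm}; the evaluation maps \eqref{eqn:fameval} are smooth because they are exponentially convergent limits depending smoothly on $(\eta,u)$. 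For (b) one runs the same argument on $k$-tuples, perturbing $\mathfrak{J}$ near an injective point of each $u_i$, these points chosen generically so as to have pairwise distinct images and to avoid the other factors' curves, so that the perturbations act independently on the various factors.

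The main obstacle is precisely this interaction between achieving transversality and respecting the $S^1$-equivariance of $\mathfrak{J}$; as indicated, it dissolves because $S^1$ acts freely on $ES^1$. The only other point requiring care is the absence of multiply covered solutions, which reduces to the corresponding statement in Proposition~\ref{prop:transversality}. The remaining ingredients --- the Fredholm set-up, the index bookkeeping organizing the above into the stated dimension formula, and the compatibility of the finite-level constructions as $N\to\infty$ --- are entirely parallel to the nonequivariant case and I would omit them.
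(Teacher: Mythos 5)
The paper does not actually prove this proposition: it is stated as ``Analogously to Proposition~\ref{prop:transversality}'' and deferred to the standard transversality literature, so there is no argument in the paper to compare against. Your outline --- finite-dimensional approximation by $S^{2N+1}$, the observation that the Morse flow part is automatically regular so the linearization is block triangular and only the $u$-block needs attention, and the key observation that the free $S^1$-action on $ES^1$ makes $S^1$-equivariant perturbations of $\mathfrak{J}$ just as effective as unconstrained ones --- is the right skeleton, and I believe it is essentially how one would have to argue.

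There is, however, one genuine error in the ``key input'' you identify. The assertion that for generic $\mathfrak{J}$ every solution $u$ is somewhere injective is false. The proposition allows $\gamma_+=\gamma_-=\gamma$ as long as $x_+\neq x_-$, and in that case the action argument (as in the proof of Proposition~\ref{prop:fammbs}) forces $u$ to be a trivial cylinder over $\gamma$. If $\gamma$ is a $d$-fold cover with $d\ge 2$, this $u$ is a $d$-fold multiple cover, and it is a solution for \emph{every} $\mathfrak{J}$: your ``period-$1/k$ symmetry'' obstruction evaporates because all $\lambda$-compatible almost complex structures agree on $\operatorname{span}(\partial_r,R)$, which is exactly where $\partial_t u$ lies for a trivial cylinder. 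So the claimed dichotomy does not rule these out, and the reduction to the corresponding fact behind Proposition~\ref{prop:transversality} fails here (the nonequivariant moduli spaces are only defined for $\gamma_+\ne\gamma_-$, so that proposition never sees this case). What rescues the argument is that somewhere-injectivity is stronger than what the perturbation scheme actually needs. For a domain-dependent almost complex structure one only needs an ``injective time slice'': a point $(s_0,t_0)$ with $\partial_s u(s_0,t_0)\ne 0$ and $u(s,t_0)\ne u(s_0,t_0)$ for $s\ne s_0$, so that a perturbation of $\mathfrak{J}_{t_0,\cdot}$ localized near $u(s_0,t_0)$ affects the equation at no other domain point with the same $t$-coordinate. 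Trivial cylinders satisfy this because the $\R$-coordinate of $u(s,t_0)$ is strictly monotone in $s$ (and in fact $D_u$ together with the asymptotic $\bbC\oplus\bbC$ deformations of Proposition~\ref{prop:tanx} is already surjective, so no perturbation is needed). I would therefore replace the somewhere-injectivity claim by the injective-time-slice claim, and treat the constant-action case $\gamma_+=\gamma_-$ separately. The rest of the sketch, including the fiber-product transversality in (b), is standard but would need the same care with time slices to ensure perturbations near different $u_i$ do not interfere.
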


The precise meaning of transversality in part (a) is that each element of the moduli space is ``regular'' in the sense of Definition~\ref{def:famreg}.

Assume for the rest of this subsection that $\frak{J}$ is generic in the above sense. Analogously to Proposition~\ref{prop:orientations}, we have the following proposition, which is proved in \S\ref{sec:familysigns}:

\begin{proposition}
\label{prop:famor}
For each critical point $x$ of $f$ and each Reeb orbit $\gamma$, there is a canonical local system $\mc{O}_{(x,\gamma)}$ over $\overline{(x,\gamma)}$, locally isomorphic to $\Z$, such that:
\begin{description}
\item{\emph{(a)}}
The local system $\mc{O}_{(x,\gamma)}$ is trivial if and only if $\gamma$ is a good Reeb orbit.
\item{\emph{(b)}}
The moduli space $\M^\frak{J}((x_+,\gamma_+),(x_-,\gamma_-))$ has a canonical orientation with values in $e_+^*\mc{O}_{(x_+,\gamma_+)} \tensor e_-^*\mc{O}_{(x_-,\gamma_-)}$.
\end{description}
\end{proposition}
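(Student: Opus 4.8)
The plan is to bootstrap the equivariant coherent orientations from the nonequivariant ones of Proposition~\ref{prop:orientations}, the point being that the Morse theory of the perfect function $f$ on $BS^1=\CP^\infty$ and the $ES^1$-direction introduced in \S\ref{sec:familymoduli} contribute nothing new to the orientation local systems. Two elementary observations drive this. First, $BS^1$ is a complex manifold, hence canonically oriented, and the stable and unstable manifolds of the critical points of $f$ are complex submanifolds, hence also canonically oriented; consequently the moduli spaces of $\widetilde{V}$-flow lines on $ES^1$ joining fibers over critical points of $f$ carry canonical orientations, and each Morse index $\op{ind}(f,x)$ is even. Second, $\pi:ES^1\to BS^1$ is a principal $S^1$-bundle, so $\pi^{-1}(x)$ is a single free $S^1$-orbit, and a direct computation from the equivalence relation \eqref{eqn:equivrel} shows that quotienting by $S^1$ produces a \emph{canonical} diffeomorphism $\overline{(x,\gamma)}\cong\overline{\gamma}$, sending the class of $(z,\gamma(s))$ to $\gamma(s+T\widetilde{z})$, where $T$ is the period of $\gamma$ and $\widetilde{z}\in\R/\Z$ is the angular coordinate of $z$ along its $S^1$-orbit.

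I would then examine the linearized operator at a point $(\eta,u)$ of $\widehat{\M}^{\frak{J}}$. With respect to the splitting of deformations into those of $\eta$ and those of $u$, it is block lower-triangular: the upper-left block is the linearized Morse operator $L_{\mathrm{Morse}}$ for $\widetilde{V}$, the lower-right block is the Cauchy--Riemann type operator $D_u$ for the $s$-dependent almost complex structure $\frak{J}_{t,\eta(s)}$, and the off-diagonal block is a zeroth-order coupling term coming from the $\eta$-dependence of $\frak{J}$. Deforming the off-diagonal block to zero gives a canonical isomorphism $\det D_{(\eta,u)}\cong\det L_{\mathrm{Morse}}\tensor\det D_u$. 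The one-dimensional directions that get quotiented out --- the $\R$-translation of the domain, the coupled $S^1$-reparametrization \eqref{eqn:s1actm}, and the $\R$-translation of the target --- are each canonically oriented, and the manner in which they interact with the orientations below is fixed by the conventions of \cite[\S2]{td}; so the whole question reduces to orienting $\det D_u$ coherently. That is exactly the problem settled in \S\ref{sec:oms}, now carried out fiberwise over the contractible space of flow lines $\eta$: fixing a trivialization $\tau$ of $\gamma^*\xi$ and reference Cauchy--Riemann data on caps as there produces, for each pair $(x,\gamma)$, a determinant line $\mathfrak{o}_{(x,\gamma)}$ well defined up to canonical isomorphism, and letting the asymptotic marker vary over $\overline{(x,\gamma)}$ assembles these lines into a local system $\mc{O}_{(x,\gamma)}$. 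Since the Morse/$ES^1$ contribution is canonically trivial and $ES^1$ is contractible, the canonical diffeomorphism above carries $\mc{O}_{(x,\gamma)}$ to $\mc{O}_{\gamma}$, and part~(a) follows immediately from Proposition~\ref{prop:orientations}(a).

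For part~(b), I would glue reference caps onto both ends of $(\eta,u)$: on the $ES^1$ side a half-trajectory of $\widetilde{V}$ flowing into the minimum of $f$ (contributing canonically by the first observation above), and on the $\R\times Y$ side the holomorphic caps used in \S\ref{sec:oms}. Linear gluing, together with additivity of the Fredholm index under gluing, then produces a canonical isomorphism identifying $\det D_{(\eta,u)}$, up to a closed --- hence canonically oriented --- capping contribution, with $e_+^*\mathfrak{o}_{(x_+,\gamma_+)}\tensor e_-^*\mathfrak{o}_{(x_-,\gamma_-)}^\vee$; passing to the associated $\{\pm1\}$-local systems, for which $\mc{O}^\vee\cong\mc{O}$ canonically, this is exactly the desired orientation of $\M^{\frak{J}}((x_+,\gamma_+),(x_-,\gamma_-))$ with values in $e_+^*\mc{O}_{(x_+,\gamma_+)}\tensor e_-^*\mc{O}_{(x_-,\gamma_-)}$.

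The hard part will not be any single analytic input --- transversality, compactness, and gluing are all standard --- but the bookkeeping needed to keep everything compatible with \cite{td}: verifying that the coherent-orientation scheme of \S\ref{sec:oms} genuinely extends to the $s$-dependent structures $\frak{J}_{t,\eta(s)}$ organized as a family over the space of $\widetilde{V}$-flow lines; confirming that the coupled $S^1$-action \eqref{eqn:s1actm} introduces no monodromy in $\mc{O}_{(x,\gamma)}$ beyond that already present in $\mc{O}_{\gamma}$ (this is exactly where the canonical diffeomorphism $\overline{(x,\gamma)}\cong\overline{\gamma}$ and the evenness of $\op{ind}(f,x)$ are used); and pinning down all signs so that they match the fiber-product, point-constraint, and composition conventions of \cite[\S2]{td} under which Theorem~\ref{thm:td} is stated. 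This last item is what I expect to occupy the bulk of \S\ref{sec:familysigns}.
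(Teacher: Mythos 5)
Your overall architecture matches the paper's: the local system is obtained from $\mc{O}_{\gamma_\pm}$ of Proposition~\ref{prop:orientations}, the Morse-theoretic directions contribute canonically via the complex orientation of the spaces of $\widetilde{V}$-flow lines in $\CP^N$, and part~(b) is handled by a splitting of the linearization into a Morse block and a $D_u$ block (the paper packages this as the exact sequence \eqref{eqn:fces}, with $W$ a lift of $T\widetilde{\M}^{\op{Morse}}(x_+,x_-)$). However, there is one genuine gap, and it sits at the only nontrivial point in the construction of $\mc{O}_{(x,\gamma)}$ and in part~(a). The space $\overline{(x,\gamma)}$ is the quotient of $\pi^{-1}(x)\times\overline{\gamma}$ by the twisted $S^1$-action \eqref{eqn:equivrel}, and whichever way you set things up (descending the pullback of $\mc{O}_\gamma$, or assembling determinant lines indexed by asymptotic markers), you must check that the datum attached to $(z,p)$ depends only on the $S^1$-orbit, equivalently that the monodromy of $\mc{O}_\gamma$ around each orbit is trivial. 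Each orbit wraps $d(\gamma)$ times around the $\overline{\gamma}$ factor, so the monodromy is $\epsilon^{d(\gamma)}$ where $\epsilon$ is the monodromy of $\mc{O}_\gamma$; when $\gamma$ is bad, $\epsilon=-1$, and the construction survives only because bad orbits are even-degree covers, so $d(\gamma)$ is even. You instead attribute the absence of extra monodromy to the evenness of $\op{ind}(f,x)$, which is irrelevant here (it matters for gradings, not for the descent of the local system). Without the $d(\gamma)$-parity observation, $\mc{O}_{(x,\gamma)}$ is simply not well defined for bad $\gamma$, and part~(a) does not reduce to Proposition~\ref{prop:orientations}(a).

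A smaller but related inaccuracy: the diffeomorphism $\overline{(x,\gamma)}\simeq\overline{\gamma}$ you use is canonical only after choosing a lift $z\in\pi^{-1}(x)$ (there is no preferred angular coordinate on the fiber), and while the induced identification of local systems is independent of that choice up to isomorphism, for bad $\gamma$ the actual isomorphism $\mc{O}_\gamma\simeq\mc{O}_{(x,\gamma)}$ genuinely depends on the lift modulo $1/d(\gamma)$ --- this is exactly the content of the footnote to \eqref{eqn:clift}, and it matters later when these identifications are used to compare the autonomous equivariant differential with $\partial^{S^1,J}$. Part~(b) of your argument (block-triangular linearization, deformation of the coupling term, cap gluing as in \S\ref{sec:oms}) is sound and is essentially the paper's route.
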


\begin{definition}
Let $\overline{\M}^\frak{J}_d((x_+,\gamma_+),(x_-,\gamma_-))$ denote the set of $k$-tuples $(u_1,\ldots,u_k)$ such that:
\begin{itemize}
\item There exist distinct pairs $(x_+,\gamma_+)=(x_0,\gamma_0),(x_1,\gamma_1),\ldots,(x_k,\gamma_k)=(x_-,\gamma_-)$ such that $u_i\in\M^\frak{J}_{d_i}((x_{i-1},\gamma_{i-1}),(x_i,\gamma_i))$ for $i=1,\ldots,k$.
\item
$e_-(u_i)=e_+(u_{i+1})$ for $i=1,\ldots,k-1$.
\item
$\sum_{i=1}^k d_i=d$.
\end{itemize}
Define the topology and evaluation maps on $\overline{M}^\frak{J}_d((x_+,\gamma_+),(x_-,\gamma_-))$ as in Definition~\ref{def:Mdbar}.
\end{definition}

Analogously to Proposition~\ref{prop:compactness}, we have:

\begin{proposition}
\label{prop:famcom}
If the pairs $(x_+,\gamma_+)$ and $(x_-,\gamma_-)$ are distinct and $d$ is an integer, then $\overline{\M}^\frak{J}_d((x_+,\gamma_+),(x_-,\gamma_-))$ is compact.
\end{proposition}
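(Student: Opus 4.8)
The plan is to imitate the proof of Proposition~\ref{prop:compactness}, using the SFT compactness theorem \cite[Thm.\ 10.4]{behwz} for the cylinder components and standard Morse theory for the gradient flow lines on $ES^1$; the only genuinely new ingredient is a preliminary reduction to a finite-dimensional sphere, which is forced by the infinite-dimensionality of $ES^1$. So first I would observe that every gradient flow line $\eta$ occurring in a tuple in $\overline{\M}^\frak{J}_d((x_+,\gamma_+),(x_-,\gamma_-))$, and in any limit thereof, is contained in a single finite-dimensional coordinate sphere $S^{2N_0+1}\subset ES^1$ with $2N_0=\op{ind}(f,x_+)$, independently of the ambient $N$. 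This uses the explicit form of the Morse functions $\widetilde{f}_N(z_0,\ldots,z_N)=\sum_{i=1}^N i|z_i|^2$: if $\eta$ is asymptotic as $s\to+\infty$ to $\pi^{-1}(x)$ with $\op{ind}(f,x)=2m$, then for $s$ large the components $z_i\circ\eta$ with $i>m$ satisfy an ODE forcing $|z_i|$ to grow, so they must vanish identically along $\eta$; hence $\eta$ lies in $\{z_i=0\ \text{for}\ i>m\}\cong S^{2m+1}$. Since $\widetilde{f}$ is nondecreasing along an upward flow line, each flow-line piece in a (possibly broken) configuration from $\pi^{-1}(x_-)$ to $\pi^{-1}(x_+)$ is asymptotic at its positive end to a critical point of index at most $\op{ind}(f,x_+)=2N_0$, and is therefore contained in the fixed sphere $S^{2N_0+1}$. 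Consequently only the restriction of $\frak{J}$ to $S^1\times S^{2N_0+1}$ matters, and this restriction is smooth by hypothesis.

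With $\eta$ confined to the compact manifold $S^{2N_0+1}$, equation \eqref{eqn:familycr} is a parametrized Cauchy--Riemann equation in the symplectization $\R\times Y$ whose almost complex structure depends on the domain coordinate $t$ and on $s$ only through the convergent path $\eta(s)$; in particular the asymptotic operators at the two ends are the usual $t$-dependent ones associated with $\gamma_\pm$ and the limiting almost complex structures $\frak{J}_{\cdot,\eta(\pm\infty)}$. Hence the BEHWZ compactness argument of Proposition~\ref{prop:compactness} applies without change: the domain-dependence and the extra $s$-dependence are harmless, and hypertightness of $\lambda$ rules out bubbling of holomorphic planes. So a sequence in $\M^\frak{J}_d((x_+,\gamma_+),(x_-,\gamma_-))$ has a subsequence whose cylinder components converge, after reparametrization in the domain and target $\R$-directions, to a broken holomorphic building in $\R\times Y$, and whose flow-line components converge, by finite-dimensional Morse theory on $S^{2N_0+1}$, to a broken upward gradient flow line of $\widetilde{f}$.

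Finally I would check that the limit lies in $\overline{\M}^\frak{J}_d((x_+,\gamma_+),(x_-,\gamma_-))$. The breakings of the building and of the flow line are matched levelwise, producing a tuple $(u_1,\ldots,u_k)$ with intermediate data $(x_0,\gamma_0)=(x_+,\gamma_+),\ldots,(x_k,\gamma_k)=(x_-,\gamma_-)$, the matching conditions $e_-(u_i)=e_+(u_{i+1})$ surviving in the limit (a level at which the cylinder is trivial over a Reeb orbit but the flow line is nonconstant is permitted by the definition of $\overline{\M}^\frak{J}_d$). Fredholm index is additive, so $\sum_i d_i=d$, and for generic $\frak{J}$ no $d_i$ is negative. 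Since $(x_+,\gamma_+)\neq(x_-,\gamma_-)$, the configuration is genuinely broken, and between any two consecutive levels either the symplectic action of the Reeb orbit or the Morse index of the critical point strictly decreases; as there are only finitely many Reeb orbits below a given action (by nondegeneracy and compactness of $Y$) and finitely many critical points of $f$ of bounded index, the number of levels $k$ is bounded and only finitely many intermediate pairs occur. Thus the limit lies in $\overline{\M}^\frak{J}_d((x_+,\gamma_+),(x_-,\gamma_-))$, which is therefore compact. The only step that goes beyond Proposition~\ref{prop:compactness}, and the one I expect to require the most care, is the first one --- ruling out escape to higher and higher factors $S^{2N+1}$ of $ES^1$ --- which is precisely what the choice of the functions $\widetilde{f}_N$ is designed to prevent.
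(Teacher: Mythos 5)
Your proof is correct and fleshes out what the paper leaves implicit by the remark ``Analogously to Proposition~\ref{prop:compactness}'': SFT compactness for the cylinder factor and finite-dimensional Morse--Bott compactness for the flow-line factor, merged via the shared $s$-coordinate and the shared $\R\times S^1$ reparametrization group. The one genuinely new point you correctly isolate and settle --- confinement of every relevant gradient flow line to the fixed compact sphere $S^{2N_0+1}$ with $2N_0=\op{ind}(f,x_+)$, forced by the ODE $\tfrac{d}{ds}|z_j\circ\eta|^2 = 4\bigl(j-\widetilde f(\eta)\bigr)|z_j\circ\eta|^2$ together with the asymptotic conditions --- is exactly the ingredient needed to rule out escape to higher factors of the infinite-dimensional $ES^1$, and it is the detail the paper does not spell out.
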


Define constrained moduli spaces $\M^\frak{J}_d((x_+,\gamma_+),p_+,(x_-,\gamma_-))$ for $p_+\in\overline{(x_+,\gamma_+)}$ and so on as in \S\ref{sec:constrained}. We now have the following analogue of Proposition~\ref{prop:currentcompact}:

\begin{proposition}
\label{prop:famcurcom}
Let $(x_+,\gamma_+)$ and $(x_-,\gamma_-)$ be distinct pairs of a critical point of $f$ and a Reeb orbit. Let $p_\pm\in\overline{(x_\pm,\gamma_\pm)}$. Assume that the pair $(p_+,p_-)$ is generic so that:
\begin{itemize}
\item
$p_+$ is a regular value of all evaluation maps
\[
e_+: \M_d^\frak{J}((x_+,\gamma_+),(x_0,\gamma_0)) \longrightarrow \overline{(x_+,\gamma_+)}
\]
for $d\le 2$.
\item
$p_-$ is a regular value of all evaluation maps
\[
e_-:\M_d^\frak{J}((x_0,\gamma_0),(x_-,\gamma_-) \longrightarrow \overline{(x_-,\gamma_-)}
\]
for $d\le 2$.
\item
$(p_+,p_-)$ is a regular value of all products of evaluation maps
\[
e_+\times e_-: \M_d^\frak{J}((x_+,\gamma_+),(x_-,\gamma_-)) \longrightarrow \overline{(x_+,\gamma_+)}\times\overline{(x_-,\gamma_-)}
\]
for $d\le 3$.
\end{itemize}
Then:
\begin{description}
\item{\emph{(a)}} $\M^\frak{J}_0((x_+,\gamma_+),(x_-,\gamma_-))$ is finite.

\item{\emph{(b)}} Analogously to \eqref{eqn:boundaryhatcheck}, $\overline{\M}^\frak{J}_1((x_+,\gamma_+),(x_-,\gamma_-))$ is a compact oriented topological one-manifold with oriented boundary
\[
\begin{split}
\partial\overline{\M}^\frak{J}_1((x_+,\gamma_+),(x_-,\gamma_-)) = \coprod_{\substack{(x_0,\gamma_0)\neq (x_+,\gamma_+), (x_-,\gamma_-)\\d_++d_-=1}} & (-1)^{d_+}\M^\frak{J}_{d_+}((x_+,\gamma_+),(x_0,\gamma_0))
\\
&\times_{\overline{(x_0,\gamma_0)}}\M^\frak{J}_{d_-}((x_0,\gamma_0),(x_-,\gamma_-)).
\end{split}
\]

\item{\emph{(c)}}
Likewise, analogues of \eqref{eqn:boundarycheckcheck} and \eqref{eqn:boundaryhathat} hold with $\overline{\M^\J_2}(\gamma_+,p_+,\gamma_-)$ replaced by $\overline{\M}^\frak{J}_2((x_+,\gamma_+),p_+,(x_-,\gamma_-))$ and so forth.

\item{\emph{(d)}}
Likewise, an analogue of \eqref{eqn:boundarycheckhat} holds with $\overline{\M^\J_3}(\gamma_+,p_+,\gamma_-,p_-)$ replaced by

$\overline{\M}^\frak{J}_3((x_+,\gamma_+), p_+, (x_-,\gamma_-), p_-))$ and so forth.

\end{description}
\end{proposition}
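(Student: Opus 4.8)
The plan is to prove Proposition~\ref{prop:famcurcom} by running the proof of Proposition~\ref{prop:currentcompact} essentially verbatim, replacing every non-equivariant object by its $S^1$-equivariant counterpart: the moduli spaces $\M^\J_d(\gamma_+,\gamma_-)$ become $\M^\frak{J}_d((x_+,\gamma_+),(x_-,\gamma_-))$, the base circles $\overline{\gamma_\pm}$ become $\overline{(x_\pm,\gamma_\pm)}$, and the three analytic inputs used in the non-equivariant argument are replaced by their equivariant versions established earlier in this section. Concretely, transversality for the moduli spaces and all their fiber products is the $S^1$-equivariant analogue of Proposition~\ref{prop:transversality} stated above; compactness of $\overline{\M}^\frak{J}_d((x_+,\gamma_+),(x_-,\gamma_-))$ is Proposition~\ref{prop:famcom}; the coherent orientations with values in $e_+^*\mc{O}_{(x_+,\gamma_+)}\tensor e_-^*\mc{O}_{(x_-,\gamma_-)}$ are Proposition~\ref{prop:famor}; and the manifold-with-boundary structure of the compactified one-dimensional moduli spaces will come from the $S^1$-equivariant analogue of the gluing statement Proposition~\ref{prop:gbs}. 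Granting these, parts (a)--(d) are formal.

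First I would dispose of part (a): by equivariant transversality $\M^\frak{J}_0((x_+,\gamma_+),(x_-,\gamma_-))$ is a zero-manifold, and for any chain of distinct intermediate pairs with nonnegative $d_i$ summing to $0$ and $k\ge 2$ levels, the corresponding fiber product of equivariant moduli spaces has expected dimension $\sum_i d_i-(k-1)=1-k<0$, hence is empty; so $\overline{\M}^\frak{J}_0\setminus\M^\frak{J}_0=\emptyset$, and compactness makes $\M^\frak{J}_0$ finite. For parts (b)--(d) the compactness is again Proposition~\ref{prop:famcom}; the same dimension count kills all $k$-level strata with $k\ge 3$, so the only boundary strata of the relevant one-dimensional compactified moduli spaces are the two-level ones; the equivariant gluing result identifies a neighborhood of each such two-level configuration in the compactification with a half-open interval, so these are topological manifolds with boundary; and the boundary coefficients (the signs $(-1)^{d_+}$ and their point-constrained shifts, exactly matching \eqref{eqn:boundaryhatcheck}--\eqref{eqn:boundarycheckhat}) come from the equivariant orientation conventions of Proposition~\ref{prop:famor} together with the fiber-product and point-constraint sign conventions of \cite[\S2.1, \S2.2]{td}, which are insensitive to the extra $ES^1$ and Morse-index factors.

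Two points require slightly more care than in the non-equivariant case. First, one must check that the quotients defining $\M^\frak{J}$ and $\overline{(x,\gamma)}$ are genuine manifolds: the $S^1$-action \eqref{eqn:s1actm} on $\widehat{\M}^\frak{J}$ is free, since it is already free on the $ES^1=\lim_N S^{2N+1}$ factor through $\eta$, and likewise the $S^1$-action \eqref{eqn:equivrel} on $\pi^{-1}(x)\times\overline{\gamma}$ is free for the same reason; thus the equivariant transversality proposition does produce smooth manifolds of the stated dimensions, and the evaluation maps \eqref{eqn:fameval} descend smoothly. Second, in the compactness and gluing one must verify that a degenerating sequence $(\eta_\nu,u_\nu)$ breaks only into the buildings described in the definition of $\overline{\M}^\frak{J}_d$ above: the breaking of the gradient flow line $\eta$ on $ES^1$ and the SFT breaking of the cylinder $u$ occur simultaneously over a common intermediate pair $(x_0,\gamma_0)$, with no independent flow-line breaking and no bubbling of holomorphic planes. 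The absence of plane bubbling is precisely where hypertightness of $\lambda$ is used, as in Proposition~\ref{prop:famcom}; the synchronization is forced by the coupling of equation \eqref{eqn:familycr} through the shared domain coordinate $s$, together with the fact that the $\widetilde V$-flow and the $\R$-translation of $u$ are quotiented by a single diagonal $\R$.

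The main obstacle is the $S^1$-equivariant gluing analysis itself --- the analogue of Proposition~\ref{prop:gbs} for the coupled pair $(\eta,u)$. Given a rigid two-level configuration one solves a pre-glued equation by a Newton iteration whose linearization is the block operator built from the Morse-theoretic linearization on $ES^1$ and the linearized Cauchy--Riemann operator of $u$, coupled off-diagonally through the $\eta$-dependence of $\frak{J}_{t,\eta(s)}$. The observation that makes this go through is that the coupling term is of lower order --- it involves no derivatives of the gluing data --- so the linearized operator is a small perturbation of the block-diagonal one, which is surjective with one-dimensional kernel by equivariant transversality, and a uniformly bounded right inverse survives neck-stretching. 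This is a standard if lengthy argument of the type used throughout the paper, carried out by adapting \cite{wendl} to the $S^1\times ES^1$-family setting, and I would present it only in outline.
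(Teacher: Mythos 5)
Your plan is correct and follows exactly the route the paper takes: part (a) by repeating the dimension-count-plus-compactness argument of Proposition~\ref{prop:currentcompact}(a) with the equivariant moduli spaces, and parts (b)--(d) from the equivariant analogues of the transversality, compactness, orientation, and gluing results (in particular an analogue of Proposition~\ref{prop:gbs}). The extra care you flag about freeness of the $S^1$-action, synchronized breaking of $\eta$ and $u$, and the lower-order coupling in the glued linearized operator is a sound elaboration of what the paper leaves implicit.
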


\begin{proof}
(a) This is analogous to the proof of Proposition~\ref{prop:currentcompact}(a).

(b)--(d). This follows from an analogue of Proposition~\ref{prop:gbs}.
\end{proof}

\subsection{Definition of $S^1$-equivariant contact homology}

Continue to assume that $\frak{J}$ is generic. By analogy with \S\ref{sec:mbs}, define a Morse-Bott system $(X,|\cdot|,S,\mc{O},M_*,e_\pm)$ as follows.

\begin{itemize}
	\item $X$ is the set of pairs $(x,\gamma)$ where $x$ is a critical point of $f$ on $BS^1$ and $\gamma$ is a Reeb orbit.
	\item If $(x,\gamma)\in X$, then $|(x,\gamma)|$ is the mod $2$ Conley-Zehnder index $\op{CZ}(\gamma)$; $S_{(x,\gamma)} = \overline{(x,\gamma)}$; and $\mc{O}_{(x,\gamma)}$ is the local system in Proposition~\ref{prop:famor}.
	\item If $(x_+,\gamma_+),(x_-,\gamma_-)\in X$ are distinct and $d\in\{0,1,2,3\}$, then
	\[
	M_d((x_+,\gamma_+),(x_-,\gamma_-)) = \M^\frak{J}_d((x_+,\gamma_+),(x_-,\gamma_-)).
	\]
	The evaluation maps $e_\pm$ on $M_d$ are defined by \eqref{eqn:fameval}, and the orientation on $M_d$ is given by Proposition~\ref{prop:famor}.
\end{itemize}

\begin{proposition}
\label{prop:fammbs}
If $\frak{J}$ is generic, then the above data constitute a Morse-Bott system.
\end{proposition}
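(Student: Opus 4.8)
The plan is to verify the four axioms of a Morse--Bott system from \cite[\S2.2]{td}---``Grading'', ``Finiteness'', ``Fiber Product Transversality'', and ``Compactification''---for the data described just above, following the proof of Proposition~\ref{prop:mbs} essentially verbatim and substituting the $S^1$-equivariant moduli-space results for their nonequivariant counterparts. The only point that is not a mechanical transcription is the Finiteness axiom, and that is where I expect the real work to lie.

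For the bookkeeping: $S_{(x,\gamma)}=\overline{(x,\gamma)}=\left(\pi^{-1}(x)\times\overline{\gamma}\right)/S^1$ is a closed connected oriented $1$-manifold (a circle, oriented via the Reeb vector field), the smooth structure on each $\M^{\frak J}_d$ and the smoothness of the evaluation maps $e_\pm$ are part (a) of the transversality proposition for the equivariant moduli spaces (the analogue of Proposition~\ref{prop:transversality}), and the local system $\mc{O}_{(x,\gamma)}$, its triviality precisely when $\gamma$ is good, and the canonical orientation of $\M^{\frak J}_d$ with values in $e_+^*\mc{O}_{(x_+,\gamma_+)}\tensor e_-^*\mc{O}_{(x_-,\gamma_-)}$ are all provided by Proposition~\ref{prop:famor}. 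The Grading axiom---that $\M^{\frak J}_d((x_+,\gamma_+),(x_-,\gamma_-))\neq\emptyset$ forces $\op{CZ}(\gamma_+)-\op{CZ}(\gamma_-)\equiv d\pmod 2$---is immediate from the index formula defining $\M^{\frak J}_d$, because $2c_1(u^*\xi,\tau)$ is even, the Morse indices $\op{ind}(f,x_\pm)$ are even, and the parity of the Conley--Zehnder index does not depend on $\tau$. The Fiber Product Transversality axiom is part (b) of that same transversality proposition, which is a much stronger statement. Finally, parts (a)--(d) of the Compactification axiom are parts (a)--(d) of Proposition~\ref{prop:famcurcom} (with finiteness of $\M^{\frak J}_0$ itself being Proposition~\ref{prop:famcurcom}(a) together with Proposition~\ref{prop:famcom}), and the remaining clauses of that axiom hold automatically as in \cite[Rmk.\ 2.6]{td}.

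It then remains to treat the Finiteness axiom: for each $(x_0,\gamma_0)\in X$ there should be only finitely many tuples $(k,(x_1,\gamma_1),\ldots,(x_k,\gamma_k))$ of distinct elements of $X$ with $\M^{\frak J}_{d_i}((x_{i-1},\gamma_{i-1}),(x_i,\gamma_i))\neq\emptyset$ for some $d_i\in\{0,1,2,3\}$. I would establish this by a filtration argument: if $\M^{\frak J}_d((x_+,\gamma_+),(x_-,\gamma_-))$ is nonempty for distinct pairs, then the symplectic action of $\gamma_+$ is at least that of $\gamma_-$, since the Floer cylinder $u$ underlying a solution $(\eta,u)$ has nonnegative $d\lambda$-energy equal to this action difference; and $\op{ind}(f,x_+)\ge\op{ind}(f,x_-)$, since $\eta$ is an upward gradient trajectory of $\widetilde f$ running from $\pi^{-1}(x_-)$ to $\pi^{-1}(x_+)$, so $f(x_+)\ge f(x_-)$, and for the perfect Morse function $f$ on ${\mathbb C}P^\infty$, which has a single critical point of index $2i$ and value $i$ for each $i\ge 0$, a larger value of $f$ corresponds to a larger Morse index. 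Consequently every $\gamma_i$ occurring in such a tuple has action at most that of $\gamma_0$---of which there are only finitely many, since $\lambda$ is nondegenerate and $Y$ is compact---while every $x_i$ has Morse index at most $\op{ind}(f,x_0)$, of which there are only finitely many. Thus only finitely many pairs $(x,\gamma)$ can occur, and hence only finitely many tuples of distinct such pairs.

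I expect the Finiteness axiom to be the only genuine obstacle, precisely because $BS^1$ is infinite dimensional and $f$ has critical points in every even degree: one must use the direction of the gradient flow---encoded in the asymptotic labeling of $\eta$ together with $f$ being monotone in the Morse index---to bound the Morse indices of the critical points that can appear in a cascade. Everything else is a routine transcription of the proof of Proposition~\ref{prop:mbs}, using the transversality, orientation (Proposition~\ref{prop:famor}), and compactness (Propositions~\ref{prop:famcom} and \ref{prop:famcurcom}) results for the $S^1$-equivariant moduli spaces recorded above.
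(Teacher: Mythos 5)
Your proposal is correct and follows essentially the same route as the paper: the proof is a transcription of Proposition~\ref{prop:mbs} whose only new content is the Finiteness axiom, established exactly as you do via the action inequality (nonnegativity of $\int_{S^1}d\lambda(-\frak{J}_{t,\eta(s)}\partial_tu,\partial_tu)\,dt$ by $\lambda$-compatibility) together with the Morse-index inequality. The one cosmetic difference is that the paper justifies $\op{ind}(f,x_+)\ge\op{ind}(f,x_-)$ by noting that $\widetilde{V}$ projects to a Morse--Smale vector field on $BS^1$, whereas you use the monotonicity of $f$ along upward flow lines together with the explicit critical values of $f_N$; both are valid.
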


\begin{proof}
This parallels the proof of Proposition~\ref{prop:mbs}. The one new ingredient is that in the proof of the Finiteness axiom, we need to know that if $\M^\frak{J}_d((x_+,\gamma_+)(x_-,\gamma_-))$ is nonempty, then $\op{ind}(f,x_+)\ge \op{ind}(f,x_-)$ and $\mc{A}(\gamma_+)\ge \mc{A}(\gamma_-)$. The inequality on Morse indices holds because the vector field $\widetilde{V}$ on $ES^1$ projects to a Morse-Smale vector field on $BS^1$.

To prove the action inequality, suppose that $(\eta,u)\in\M^\frak{J}((x_+,\gamma_+),(x_-,\gamma_-))$. For $s\in\R$, define a map $\gamma_s:S^1\to\R\times Y$ by $\gamma_s(t)=u(s,t)$. Then
\[
\lim_{s\to\pm\infty}\int_{S^1}\gamma_s^*\lambda = \mc{A}(\gamma_\pm),
\]
so it is enough to show that $\frac{d}{ds}\int_{S^1}\gamma_s^*\lambda\ge 0$. We compute that
\[
\begin{split}
\frac{d}{ds}\int_{S^1}\gamma_s^*\lambda &= \int_{S^1}d\lambda(\partial_su,\partial_tu)dt\\
&= \int_{S^1}d\lambda(-\frak{J}_{t,\eta(s)}\partial_tu,\partial_tu)dt.
\end{split}
\]
The integrand is pointwise nonnegative because the almost complex structure $\frak{J}_{t,\eta(s)}$ is $\lambda$-compatible.
\end{proof}

\begin{definition}
We define the {\em $S^1$-equivariant contact homology\/} $CH_*^{S^1}(Y,\lambda;\frak{J})$ to be the cascade homology $H_*^\ca$ (defined in \cite{td}) of the above Morse-Bott system.
\end{definition}


Concretely, $CH_*^{S^1}(Y,\lambda;\frak{J})$ is the homology of a chain complex $(CC_*^{S^1}(Y,\lambda),\partial^{S^1,\frak{J}})$ over $\Z$. The module $CC_*^{S^1}(Y,\lambda)$ has a ``check'' and a ``hat'' generator for each pair $(x,\gamma)$ where $x$ is a critical point of $f$ and $\gamma$ is a Reeb orbit. For convenience, we denote these generators by $\widecheck{\gamma}\tensor U^k$ and $\widehat{\gamma}\tensor U^k$ respectively, where $2k$ is the Morse index of $x$. Equivalently we can write
\[
CC_*^{S^1}(Y,\lambda) = NCC_*(Y,\lambda) \tensor \Z[U].
\]
The mod 2 gradings of the generators are given by
\[
\begin{split}
|\widecheck{\gamma}\tensor U^k| &= \op{CZ}(\gamma),\\
|\widehat{\gamma}\tensor U^k| &= \op{CZ}(\gamma) + 1.
\end{split}
\]
The differential $\partial^{S^1,\frak{J}}$ is defined by counting cascades just as in \S\ref{sec:cms} and \ref{sec:defnch}, except that now the cascades are defined using the moduli spaces $\M^\frak{J}_d((x_+,\gamma_+),(x_-,\gamma_-))$ instead of the moduli spaces $\M^\J_d(\gamma_+,\gamma_-)$. Here one needs to choose a base point $p_{(x,\gamma)}\in\overline{(x,\gamma)}$ for each pair $(x,\gamma)$ to define the cascades. One also needs to choose a generator of $\mc{O}_{(x,\gamma)}(p_{(x,\gamma)})$ for each pair $(x,\gamma)$ to fix the signs in the differential.

We now proceed to show that $CH_*^{S^1}$ is an invariant of the contact structure. (For the computation of $CH_*^{S^1}$ in the main example of interest, see Proposition~\ref{prop:chaut} below.)

\subsection{Cobordism maps}

\begin{proposition}
\label{prop:famcobmap}
Let $(X,\lambda)$ be an exact symplectic cobordism from $(Y_+,\lambda_+)$ to $(Y_-,\lambda_-)$, where the contact forms $\lambda_\pm$ are nondegenerate and hypertight. Assume further that every Reeb orbit for $\lambda_+$ is noncontractible in $X$. Let $\frak{J}_\pm$ be $S^1\times ES^1$-families of almost complex structures as needed to define the $S^1$-equivariant contact homology of $(Y_\pm,\lambda_\pm)$. Then $(X,\lambda)$ induces a well-defined map
\[
\Phi(X,\lambda;
\frak{J}_-,\frak{J}_+): CH_*^{S^1}(Y_+,\lambda_+;\frak{J}_+) \longrightarrow CH_*^{S^1}(Y_-,\lambda_-;\frak{J}_-).
\]
\end{proposition}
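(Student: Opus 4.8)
The plan is to construct a morphism of Morse--Bott systems from the system attached to $(Y_+,\lambda_+;\frak{J}_+)$ to the system attached to $(Y_-,\lambda_-;\frak{J}_-)$ and then invoke Theorem~\ref{thm:td}(b)(i), following the template of the proof of Proposition~\ref{prop:cobordismmap} but now carrying along the $ES^1$ flow-line data as in \S\ref{sec:familymoduli}. First I would pass to the completion $\overline{X}$ and choose a generic $S^1$-equivariant $S^1\times ES^1$-family $\frak{J}=\{\frak{J}_{t,z}\}$ of cobordism-compatible almost complex structures on $\overline{X}$ (cobordism-compatible in the sense of the definition preceding Proposition~\ref{prop:cobordismmap}, for each fixed $(t,z)$, and $S^1$-equivariant in the sense of \eqref{eqn:frakJequiv}) that restricts to $\frak{J}_\pm$ on the cylindrical ends; as in the nonequivariant case, once $\frak{J}_\pm$ are fixed the space of such $\frak{J}$ is nonempty and contractible. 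Then, for Reeb orbits $\gamma_\pm$ of $\lambda_\pm$ and critical points $x_\pm$ of the Morse function $f$ on $BS^1$, I would define $\Phi^{\frak{J}}((x_+,\gamma_+),(x_-,\gamma_-))$ to be the set of pairs $(\eta,u)$, where $\eta:\R\to ES^1$ is a flow line of $\widetilde{V}$ with $\lim_{s\to\pm\infty}\eta(s)\in\pi^{-1}(x_\pm)$ and $u:\R\times S^1\to\overline{X}$ solves $\partial_s u+\frak{J}_{t,\eta(s)}\partial_t u=0$ with the cobordism asymptotics \eqref{eqn:cobmoduli}--\eqref{eqn:cobmoduliend}, modulo the simultaneous $\R$-translation of $s$ and the $S^1$-action \eqref{eqn:s1actm}. (As in the proof of Proposition~\ref{prop:cobordismmap}, and unlike for the moduli spaces in \S\ref{sec:familymoduli}, there is now no $\R$-action on the target to quotient by.) These come with evaluation maps $e_\pm$ to $\overline{(x_\pm,\gamma_\pm)}$ as in \eqref{eqn:fameval}, and I would grade them by declaring $\Phi^{\frak{J}}_d((x_+,\gamma_+),(x_-,\gamma_-))$ to be the locus where
\[
\CZ_\tau(\gamma_+)-\CZ_\tau(\gamma_-)+2c_1(u^*T\overline{X},\tau)+\op{ind}(f,x_+)-\op{ind}(f,x_-)=d-1.
\]

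Next I would prove the analogues of Lemmas~\ref{lem:morphism1} and \ref{lem:morphism2}: for generic $\frak{J}_\pm$ and $\frak{J}$, each $\Phi^{\frak{J}}_d((x_+,\gamma_+),(x_-,\gamma_-))$ is a smooth $d$-manifold with smooth evaluation maps and a canonical orientation with values in $e_+^*\mc{O}_{(x_+,\gamma_+)}\tensor e_-^*\mc{O}_{(x_-,\gamma_-)}$ (combining the family orientation conventions of \S\ref{sec:familysigns} with the cobordism conventions of \S\ref{sec:cobsigns}); the big fiber products with the symplectization moduli spaces $\M^{\frak{J}_+}$ and $\M^{\frak{J}_-}$ from \S\ref{sec:familymoduli} are cut out transversely; $\Phi^{\frak{J}}_0$ is finite (using hypertightness of $\lambda_\pm$ together with the hypothesis that no Reeb orbit of $\lambda_+$ is contractible in $X$, to rule out bubbling of planes); and $\overline{\Phi^{\frak{J}}_1}$ is a compact oriented topological $1$-manifold whose boundary consists of two-level configurations, one level a curve in the symplectization of $Y_+$ or $Y_-$ and the other a cobordism curve, exactly in parallel with \eqref{eqn:phiboundary} (with pairs $(x,\gamma)$ in place of orbits $\gamma$), together with the point-constrained analogues of \cite[Eqs.\ (2.13)-(2.15)]{td}. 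Combined with the grading and finiteness bookkeeping exactly as in Propositions~\ref{prop:mbs} and \ref{prop:fammbs}, these statements show that the $\Phi^{\frak{J}}_d$ constitute a morphism of Morse--Bott systems in the sense of \cite[Def.\ 2.7]{td}, so Theorem~\ref{thm:td}(b)(i) produces an induced map $\Phi(X,\lambda;\frak{J}):CH_*^{S^1}(Y_+,\lambda_+;\frak{J}_+)\to CH_*^{S^1}(Y_-,\lambda_-;\frak{J}_-)$.

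Finally, to see that this map is independent of the auxiliary choice of $\frak{J}$, I would run the homotopy argument as in Lemma~\ref{lem:homotopy}: given two admissible families $\frak{J}^0,\frak{J}^1$ restricting to $\frak{J}_\pm$, pick a generic path $\{\frak{J}^{\tau}\}_{\tau\in[0,1]}$ of $S^1$-equivariant $S^1\times ES^1$-families of cobordism-compatible almost complex structures on $\overline{X}$ between them, form the parametrized moduli spaces $K_d((x_+,\gamma_+),(x_-,\gamma_-))$ of triples $(\tau,\eta,u)$ (with the index identity shifted down by one more), and verify that $\overline{K_1}$ is a compact oriented $1$-manifold with oriented boundary $-\Phi^{\frak{J}^0}_0\sqcup\Phi^{\frak{J}^1}_0$ together with the codimension-one degenerations involving $\M^{\frak{J}_\pm}$, plus the point-constrained analogues of \cite[Eqs.\ (2.35)-(2.37)]{td}. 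This exhibits a homotopy of morphisms in the sense of \cite[Def.\ 2.15]{td}, so by Theorem~\ref{thm:td}(b)(iv) we get $\Phi(X,\lambda;\frak{J}^0)=\Phi(X,\lambda;\frak{J}^1)$, completing the proof.

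The step I expect to be the main obstacle is the compactness-and-gluing input underlying the boundary formula for $\overline{\Phi^{\frak{J}}_1}$ (and its parametrized version $\overline{K_1}$): one must show that a sequence in $\Phi^{\frak{J}}_1$ can only converge to a two-level building in which the broken $ES^1$ flow line and the broken holomorphic curve degenerate compatibly over a common intermediate pair $(x_0,\gamma_0)$, with no bubbling of holomorphic planes (using the non-contractibility hypotheses) and no splitting into the interior of $X$ (using exactness of the cobordism), and that the associated gluing map is an orientation-preserving homeomorphism onto a neighborhood of the relevant boundary stratum. This is precisely where the $S^1$-equivariance of $\frak{J}$, the quotient by the $S^1$-action \eqref{eqn:s1actm}, and the structure of $\overline{(x,\gamma)}$ as $\left(\pi^{-1}(x)\times\overline{\gamma}\right)/S^1$ must all be tracked simultaneously; everything else is a routine adaptation of the proof of Proposition~\ref{prop:cobordismmap} and of the constructions in \S\ref{sec:familymoduli}.
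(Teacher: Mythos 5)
Your proposal is correct and follows essentially the same route as the paper: the authors likewise choose an $S^1$-equivariant cobordism-compatible $S^1\times ES^1$-family $\frak{J}$ interpolating between $\frak{J}_\pm$, define $\Phi^{\frak{J}}_d((x_+,\gamma_+),(x_-,\gamma_-))$ as pairs $(\eta,u)$ modulo the $\R\times S^1$-action with exactly your index convention, verify the analogues of Lemmas~\ref{lem:morphism1} and \ref{lem:morphism2} to obtain a morphism of Morse--Bott systems, apply Theorem~\ref{thm:td}(b)(i), and use the homotopy argument of Lemma~\ref{lem:homotopy} to show independence of $\frak{J}$. Your identification of the compactness/gluing input for $\overline{\Phi^{\frak{J}}_1}$ as the substantive analytic step is also consistent with the paper, which cites this by analogy rather than reproving it.
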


\begin{proof}
This parallels the proof of Proposition~\ref{prop:cobordismmap}.

Let $\frak{J}$ be an $S^1\times ES^1$-family of cobordism-compatible almost complex structures on $\overline{X}$ such that $\frak{J}_{t,z}$ agrees with $\frak{J}_{+,t,z}$ on $[0,\infty)\times Y$ and $\frak{J}_{t,z}$ agrees with $\frak{J}_{-,t,z}$ on $(-\infty,0]\times Y_-$. Assume that $\frak{J}$ is $S^1$-equivariant as in \eqref{eqn:frakJequiv}.

If $x_\pm$ are critical points of $f$ on $BS^1$ and if $\gamma_\pm$ are Reeb orbits for $\lambda_\pm$, let $\widehat{\Phi}^\frak{J}((x_+,\gamma_+),(x_-,\gamma_-))$ denote the set of pairs $(\eta,u)$ such that:
\begin{itemize}
	\item $\eta:\R\to ES^1$ is a flow line of $\widetilde{V}$ with $\lim_{s\to\pm\infty}\eta(t) \in\pi^{-1}(x_\pm)$.
	\item 
	$u:\R\times S^1\to \overline{X}$ satisfies the equation
	\[
	\partial_s u + \frak{J}_{t,\eta(s)}\partial_t u = 0.
	\]
	\item $u(s,t)\in[0,\infty)\times Y_+$ for $s>>0$ and $u(s,t)\in(-\infty,0]\times Y_-$ for $s<<0$.
	\item
	$\lim_{s\to\pm\infty}\pi_\R(u(s,\cdot))=\pm\infty$.
	\item
	$\lim_{s\to\pm\infty}\pi_{Y_\pm}(u(s,\cdot))$ is a parametrization of the Reeb orbit $\gamma_\pm$.
\end{itemize}
Observe that $\R$ acts on $\widehat{\Phi}^\frak{J}((x_+,\gamma_+),(x_-,\gamma_-))$ by translating the $s$ coordinate in the domain of both $\eta$ and $u$; and $S^1$ acts on $\widehat{\Phi}^\frak{J}$ by \eqref{eqn:s1actm}. Let $\Phi^\frak{J}((x_+,\gamma_+),(x_-,\gamma_-))$ denote the quotient by $\R\times S^1$. We have well-defined evaluation maps
\[
e_\pm: \Phi^\frak{J}((x_+,\gamma_+),(x_-,\gamma_-)) \longrightarrow \overline{(x_\pm,\gamma_\pm)}
\]
defined by \eqref{eqn:fameval}. If $d$ is an integer, let $\Phi_d^\frak{J}((x_+,\gamma_+),(x_-,\gamma_-))$ denote the set of $u\in\Phi^\frak{J}((x_+,\gamma_+),(x_-,\gamma_-))$ such that
\[
\op{CZ}_\tau(\gamma_+) - \op{CZ}_\tau(\gamma_-) + 2c_1(u^*T\overline{X},\tau) + \op{ind}(f,x_+) - \op{ind}(f,x_-) = d-1.
\]

Similarly to Lemmas~\ref{lem:morphism1} and \ref{lem:morphism2}, the moduli spaces $\Phi_d^\frak{J}((x_+,\gamma_+),(x_-,\gamma_-))$ constitute a morphism, in the sense of \cite[Def.\ 2.7]{td}, from the Morse-Bott system for $(Y_+,\lambda_+;\frak{J}_+)$ to the Morse-Bott system for $(Y_-,\lambda_-;\frak{J}_-)$. It then follows from Theorem~\ref{thm:td}(b)(i) that we have an induced map
\[
\Phi(X,\lambda;\frak{J}): CH_*^{S^1}(Y_+,\lambda_+;\frak{J}_+)\longrightarrow CH_*^{S^1}(Y_-,\lambda_-;\frak{J}_-).
\]
Similarly to Lemma~\ref{lem:homotopy}, this map does not depend on the choice of $S^1$-equivariant $S^1\times ES^1$-family of almost complex structures $\frak{J}$ extending $\frak{J}_+$ and $\frak{J}_-$, so we can denote it by $\Phi(X,\lambda;\frak{J}_+,\frak{J}_-)$.
\end{proof}

\subsection{Invariance of $S^1$-equivariant contact homology}

If $\frak{J}$ is an $S^1$-equivariant $S^1\times ES^1$-family of $\lambda$-compatible almost complex structures on $Y$, and if $c>0$, then there is a unique $S^1$-equivariant $S^1\times ES^1$-family of $c\lambda$-compatible almost complex structures $^c\frak{J}$ which agrees with $\frak{J}$ on $\xi$.
As in \S\ref{sec:nchinv}, we have canonical diffeomorphisms of moduli spaces
\[
\M^\frak{J}_d((x_+,\gamma_+),(x_-,\gamma_-))\simeq \M^{^c\frak{J}}((x_+,^c\gamma_+),(x_-,^c\gamma_-))
\]
which preserve the orientations and evaluation maps. As a result, we have a canonical isomorphism of chain complexes
\[
\left(CC_*^{S^1}(Y,\lambda),\partial^{S^1,\frak{J}}\right) = \left(CC_*^{S^1}(Y,c\lambda),\partial^{S^1,{^c\frak{J}}}\right).
\]
We denote the induced map on homology by
\begin{equation}
\label{eqn:famscaling}
s_c: CH_*^{S^1}(Y,\lambda;\frak{J}) \stackrel{\simeq}{\longrightarrow} CH_*^{S^1}(Y,c\lambda;{^c\frak{J}}).
\end{equation}

\begin{proposition}
\label{prop:famcobmapprops}
The cobordism maps in Proposition~\ref{prop:famcobmap} have the following properties:
\begin{description}
\item{(a) (Scaling)}
Suppose $(Y,\lambda_0)$ is nondegenerate and hypertight. Let $\frak{J}$ be an $S^1$-equivariant $S^1\times ES^1$-family of $\lambda_0$-compatible almost complex structures as needed to define $CH_*^{S^1}(Y,\lambda_0;\frak{J})$. Then for the trivial cobordism \eqref{eqn:trivcob}, the cobordism map
\[
\Phi(X,\lambda;{^{e^a}}\frak{J},{^{e^b}}\frak{J}): NCH_*(Y,e^b\lambda_0;{^{e^b}}\frak{J}) \longrightarrow NCH_*(Y,e^a\lambda_0;{^{e^a}}\frak{J})
\]
agrees with the scaling isomorphism $s_{e^{a-b}}$ in \eqref{eqn:famscaling}.
\item{(b) (Composition)}
Let $(Y_i,\lambda_i)$ be nondegenerate and hypertight, and let $\frak{J}_i$ be an $S^1$-equivariant $S^1\times ES^1$-family of $\lambda_i$-compatible almost complex structures as needed to define $CH_*^{S^1}(Y,\lambda_i;\frak{J}_i)$, for $i=1,2,3$. Let $(X_+,\lambda_+)$ be an exact symplectic cobordism from $(Y_1,\lambda_1)$ to $(Y_2,\lambda_2)$, and let $(X_-,\lambda_-)$ be an exact symplectic cobordism from $(Y_2,\lambda_2)$ to $(Y_3,\lambda_3)$. Assume further that every Reeb orbit for $\lambda_1$ is noncontractible in $X_-\circ X_+$, and every Reeb orbit for $\lambda_2$ is noncontractible in $X_-$. Then
\[
\Phi(X_-\circ X_+,\lambda;\frak{J}_3,\frak{J}_1) = \Phi(X_-,\lambda_-;\frak{J}_3,\frak{J}_2) \circ \Phi(X_+,\lambda_+;\frak{J}_2,\frak{J}_1).
\]
\end{description}
\end{proposition}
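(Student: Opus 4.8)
The plan is to follow the proofs of Proposition~\ref{prop:cobordismmapproperties}(a),(b) in \S\ref{sec:scaling}--\S\ref{sec:composition}, inserting the extra $ES^1$ gradient-flow data and checking that every construction can be carried out $S^1$-equivariantly. Throughout, the role played in the nonequivariant argument by the moduli spaces $\M^\J_d(\gamma_+,\gamma_-)$ and $\Phi^\J_d(\gamma_+,\gamma_-)$ is played here by $\M^\frak{J}_d((x_+,\gamma_+),(x_-,\gamma_-))$ and $\Phi^\frak{J}_d((x_+,\gamma_+),(x_-,\gamma_-))$, and since the cascade formalism of \cite{td} already absorbs the Morse differential on $BS^1$ into the chain complex, no new algebra is needed; the point is to produce the appropriate morphism and homotopy of Morse-Bott systems and invoke Theorem~\ref{thm:td}(b).

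For part (a), identify the completion $\overline{X}$ of the trivial cobordism \eqref{eqn:trivcob} with $\R\times Y$ as in \S\ref{sec:scaling}, fix a positive function $f\colon\R\to\R$ with $f(r)=e^{-a}$ for $r\le a$ and $f(r)=e^{-b}$ for $r\ge b$, and let $\frak{J}^X=\{\frak{J}^X_{t,z}\}$ be the unique $S^1\times ES^1$-family of cobordism-compatible almost complex structures on $\overline{X}$ with $\frak{J}^X_{t,z}(v)=\frak{J}_{t,z}(v)$ for $v\in\xi$ and $\frak{J}^X_{t,z}(\partial_r)=f(r)R$; the $S^1$-equivariance \eqref{eqn:frakJequiv} is inherited from $\frak{J}$. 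Taking an antiderivative $g$ of $f$, the diffeomorphism $\phi(r,y)=(g(r),y)$ of $\R\times Y$ satisfies $d\phi\circ\frak{J}^X_{t,z}=\frak{J}_{t,z}\circ d\phi$, so replacing the target map $u$ of a solution $(\eta,u)$ by $\phi\circ u$ and leaving $\eta$ untouched gives diffeomorphisms between the cobordism moduli spaces for $\frak{J}^X$ and the symmetrized moduli spaces for $\frak{J}$: for distinct pairs one gets the analogue of \eqref{eqn:scacob1}, namely $\Phi^{\frak{J}^X}_d((x_+,{^{e^b}}\gamma_+),(x_-,{^{e^a}}\gamma_-))\simeq\R\times\M^\frak{J}_{d-1}((x_+,\gamma_+),(x_-,\gamma_-))$ after choosing a slice, and for equal pairs the analogue of \eqref{eqn:scacob2}. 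One then checks, by the $S^1$-equivariant version of Lemma~\ref{lem:scacob} with the sign conventions of \S\ref{sec:familysigns}, that these diffeomorphisms are orientation-preserving, so that by \cite[Ex.\ 2.8]{td} the pushforwards under $\phi$ of the $\Phi^{\frak{J}^X}_d$ constitute the identity morphism on the Morse-Bott system of $(Y,\lambda;\frak{J})$. Theorem~\ref{thm:td}(b)(ii) then identifies $\Phi(X,\lambda;\frak{J}^X)$ with $s_{e^{a-b}}$, and since (as in the proof of Proposition~\ref{prop:famcobmap}) the cobordism map is independent of the extending family, this proves (a).

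For part (b), introduce the stretched composition $X_-\circ_R X_+$ and the $1$-form $\lambda_R$ exactly as in \S\ref{sec:composition}, choose generic $S^1$-equivariant $S^1\times ES^1$-families $\frak{J}^\pm$ of cobordism-compatible almost complex structures on $\overline{X_\pm}$ agreeing with $\frak{J}_2$ near $Y_2$, and glue them (with $\frak{J}_2$ on the neck $[-R,R]\times Y_2$) to an $S^1$-equivariant family $\frak{J}^R$ on $\overline{X_-\circ_R X_+}$, which as in \S\ref{sec:composition} is not quite cobordism-compatible at the ends but harmlessly so. Let $K_d((x_+,\gamma_+),(x_-,\gamma_-))$ be the moduli space of triples $(R,\eta,u)$ with $R>0$, $\eta$ a $\widetilde{V}$-flow line asymptotic to $\pi^{-1}(x_\pm)$, and $u\colon\R\times S^1\to\overline{X_-\circ_R X_+}$ solving $\partial_s u+\frak{J}^R_{t,\eta(s)}\partial_t u=0$ with the obvious asymptotics, modulo the simultaneous $\R$- and $S^1$-actions and the target-$\R$ action, and normalized by the index formula of Proposition~\ref{prop:famcobmap} shifted down by $2$ as in Lemma~\ref{lem:homotopy2}. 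I would then prove the $S^1$-equivariant analogue of Lemma~\ref{lem:homotopy2}: transversality of $K_d$ and of the relevant fiber products, finiteness of $K_0$, the statement that for Reeb orbits of $\lambda_+$ of action below a fixed $L$ the morphisms $\Phi^{\frak{J}^+}$ and $\Phi^{\frak{J}^-}$ become composable in the sense of \cite[Def.\ 2.10]{td} (the neck-stretching step, which fixes $R_0$ and a small perturbation of $\{\frak{J}^R\}_{R\le R_0}$), and the boundary formula for $\overline{K_1}$, whose ends are $-\Phi^{\frak{J}^0}_0$, the composed fiber products $\Phi^{\frak{J}^+}_0\times\Phi^{\frak{J}^-}_0$, the two families of breakings off Reeb orbits of $\lambda_1$ and $\lambda_3$, and their point-constrained analogues. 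This exhibits $K_d$ as a homotopy in the sense of \cite[Def.\ 2.15]{td} between $\Phi^{\frak{J}^-}\circ\Phi^{\frak{J}^+}$ and the morphism induced by $\frak{J}^0$, so Theorem~\ref{thm:td}(b)(iii,iv), together with the direct limit $L\to\infty$, yields the composition formula.

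I expect the main obstacle to be the same analytic input as in \S\ref{sec:composition}: the neck-stretching and gluing underlying the composability statement of the $S^1$-equivariant Lemma~\ref{lem:homotopy2}(d), where one must show that for large $R$ the solutions with bounded-action positive end correspond bijectively, and compatibly with orientations, to glued pairs of solutions in $\Phi^{\frak{J}^+}$ and $\Phi^{\frak{J}^-}$, with the required transversality obtained by perturbing only where $R\le R_0$. The genuinely new content relative to the nonequivariant proof is bookkeeping: carrying the $ES^1$ flow line $\eta$ and the $BS^1$ Morse indices through the compactness statements, noting that $\eta$ can break only over the ends (contributing the Morse differential on $BS^1$, which is already built into the cascade complex via \cite{td}), and checking that all almost complex structures and perturbations can be kept $S^1$-equivariant. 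The orientation identities needed (the $S^1$-equivariant analogues of Lemma~\ref{lem:scacob}) are routine given the coherent-orientation conventions set up in \S\ref{sec:familysigns}.
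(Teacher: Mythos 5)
Your proposal is correct and follows essentially the same route as the paper, whose proof of this proposition consists precisely of the instruction to repeat the arguments of \S\ref{sec:scaling}--\S\ref{sec:composition} with the modifications already made in the proof of Proposition~\ref{prop:famcobmap} (carrying the flow line $\eta$ and the $S^1$-equivariance of the families through the scaling diffeomorphism, the stretched compositions, and the homotopy moduli spaces $K_d$ before invoking Theorem~\ref{thm:td}(b)). Your write-up simply makes explicit the details the paper leaves implicit.
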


\begin{proof}
This follows the proof of Proposition~\ref{prop:cobordismmapproperties}, modified as in the proof of Proposition~\ref{prop:famcobmap}.
\end{proof}

We can now deduce that $S^1$-equivariant contact homology depends only on $Y$ and $\xi$.

\begin{proof}[Proof of Theorem~\ref{thm:CHS1invariant}.]
This follows from Proposition~\ref{prop:famcobmapprops} in the same way that Theorem~\ref{thm:NCHinvariant} is deduced from Proposition~\ref{prop:cobordismmapproperties} in \S\ref{sec:nchinv}.
\end{proof}

\section{Computations in the autonomous case}
\label{equicasc}

Continue to assume that $\lambda$ is a nondegenerate hypertight contact form on a closed manifold $Y$. In this section we study the nonequivariant and $S^1$-equivariant contact homology in the special case when the $S^1$-family of almost complex structures $\J$ or the $S^1\times ES^1$-family of almost complex structures $\frak{J}$ is constant, given by a single almost complex structure $J$ on $\R\times Y$. Here we need to assume that $J$ satisfies suitable transversality conditions, namely that $J$ is ``admissible'' in the sense of Definition~\ref{def:admissible} below, which also implies that cylindrical contact homology is defined.  We use these calculations to prove Theorem~\ref{thm:cchinv}, asserting that if $J$ is admissible, then cylindrical contact homology is canonically isomorphic to $S^1$-equivariant contact homology tensor $\Q$. Finally, we show that admissibility holds for generic $J$ when $\dim(Y)=3$.

\subsection{Nonequivariant contact homology in the autonomous case}
\label{sec:nchaut}

Let $J$ be a $\lambda$-compatible almost complex structure on $\R\times Y$. We now study nonequivariant contact homology for the constant $S^1$-family of almost complex structures $\J=\{J_t\}_{t\in S^1}$ where $J_t\equiv J$. Note that in this case, if $\alpha$ and $\beta$ are distinct Reeb orbits and $d$ is an integer, then $S^1$ acts on $\M^\J_d(\alpha,\beta)$ by precomposing maps $u:\R\times S^1\to\R\times Y$ with rotations of the $S^1$ factor, and we have\footnote{See sections \S\ref{sec:contactprelim} and \S\ref{sec:trans} for the notation in equation \eqref{eqn:mjds1}.}
\begin{equation}
\label{eqn:mjds1}
\M^J_d(\alpha,\beta) = \M^\J_d(\alpha,\beta)/S^1.
\end{equation}

\begin{definition}
\label{def:admissible}
Let $J$ be a $\lambda$-compatible almost complex structure on $\R\times Y$. Let $\J$ be the constant $S^1$-family $\{J_t\}_{t\in S^1}$ where $J_t\equiv J$. Let $\mc{P}$ be a choice of base point $p_\alpha\in\overline{\alpha}$ for each Reeb orbit $\alpha$. We say that the pair $(J,\mc{P})$ is {\em admissible\/} if the following hold for every pair of distinct Reeb orbits $\alpha,\beta$:
\begin{description}
	\item{(a)} If $d\le 0$, then $\M^\J_d(\alpha,\beta) = \emptyset$.
	\item{(b)} $\M^\J_1(\alpha,\beta)$ is cut out transversely, i.e.\ each cylinder in this moduli space is regular in the sense of Definition~\ref{def:regular}.
	\item{(c)} $\M^\J_1(\alpha,p_\alpha,\beta,p_\beta)=\emptyset$.
	\item{(d)} $\M^\J_2(\alpha,p_\alpha,\beta,p_\beta)$ is cut out transversely. That is, for each $u\in \M^\J_2(\alpha,p_\alpha,\beta,p_\beta)$, the moduli space $\M^\J_2(\alpha,\beta)$ is cut out transversely in a neighborhood of $u$, and $(p_\alpha,p_\beta)$ is a regular value of $e_+\times e_-$ on this neighborhood.
	\item{(e)} If $\gamma$ is a Reeb orbit distinct from $\alpha$ and $\beta$, then
\[	\M^\J_1(\alpha,p_\alpha,\gamma)\times_{\overline{\gamma}}\M^\J_1(\gamma,\beta,p_\beta) = \emptyset.
\] 
\end{description}
We say that $J$ is {\em admissible\/} if there exists $\mc{P}$ such that $(J,\mc{P})$ is admissible.
\end{definition}

Recall from \S\ref{sec:defnch} that $NCC_*(Y,\lambda)$ denotes the free $\Z$-module with two generators $\widecheck{\gamma}$ and $\widehat{\gamma}$ for each Reeb orbit $\gamma$. Proposition~\ref{prop:nchaut}(a) below asserts that an admissible pair $(J,\mc{P})$ determines a well-defined cascade differential on $NCC_*(Y,\lambda)$. By this we mean that if $\alpha$ is a Reeb orbit, then there are only finitely many cascades that contribute to the cascade differential of  $\widecheck{\alpha}$ or $\widehat{\alpha}$ as defined in \S\ref{sec:defnch} using $\J=\{J\}$ and $\mc{P}$; and each of these cascades, regarded as an element of a (product of) moduli space(s) of holomorphic cylinders, is cut out transversely. We denote this cascade differential by $\partial^J_\ca$. With respect to the decomposition into check and hat generators, we can write $\partial^J_\ca$ in block matrix form as
\begin{equation}
\label{eqn:blockmatrix}
\partial^J_\ca = \begin{pmatrix} \widecheck{\partial} & \partial_+ \\ \partial_- & \widehat{\partial}\end{pmatrix}
\end{equation}
where each entry in the block matrix sends the free $\Z$-module generated by the set of Reeb orbits to itself; $\langle\partial^J_\ca\widecheck{\alpha},\widehat{\beta}\rangle = \langle\partial_-\alpha,\beta\rangle$, and so forth.

\begin{proposition}
\label{prop:nchaut}
Suppose the pair $(J,\mc{P})$ is admissible. Fix a generator of $\mc{O}_\gamma(p_\gamma)$ for each Reeb orbit $\gamma$. Then:
\begin{description}
	\item{(a)} For the constant family $\J=\{J\}$, the moduli spaces $\M^\J_d(\alpha,\beta)$, with or without point constraints at $p_\alpha$ and/or $p_\beta$, determine a well-defined cascade differential $\partial^J_\ca$ on $NCC_*(Y,\lambda)$ that counts elements of cascade moduli spaces $\M^\ca_0$ as in Definition~\ref{def:cascadediff}.
	\item{(b)} $\left(\partial^J_\ca\right)^2=0$.
	\item{(c)} The homology of the chain complex $\left(NCC_*(Y,\lambda),\partial^J_\ca\right)$ is canonically isomorphic to the nonequivariant contact homology $NCH_*(Y,\xi)$.
	\item{(d)} In the block matrix \eqref{eqn:blockmatrix}, we have:
	\begin{description}
		\item{(i)}
		\begin{equation}
		\label{eqn:partial+}
		\partial_+\alpha = \left\{\begin{array}{cl} 0, & \mbox{if $\alpha$ is good},\\ -2\alpha, & \mbox{if $\alpha$ is bad}.\end{array}\right.
		\end{equation}
		\item{(ii)} If $\alpha$ and $\beta$ are distinct Reeb orbits with $\beta$ good, then\footnote{See \S\ref{sec:cchintro} for the notation $\delta$ and $\kappa$.}
		\begin{equation}
		\label{eqn:partialcheck}
		\left\langle \widecheck{\partial}\alpha,\beta \right\rangle = \left\{\begin{array}{cl} \langle\delta\kappa\alpha,\beta\rangle, & \mbox{$\alpha$ good},\\ 0, & \mbox{$\alpha$ bad}.\end{array}\right.
		\end{equation}
		\item{(iii)} If $\alpha$ and $\beta$ are distinct Reeb orbits with $\alpha$ good, then
		\begin{equation}
		\label{eqn:partialhat}
		\left\langle\widehat{\partial}\alpha,\beta\right\rangle = \left\{\begin{array}{cl} \langle -\kappa\delta\alpha,\beta\rangle, & \mbox{$\beta$ good},\\ 0, & \mbox{$\beta$ bad}.\end{array}\right.
		\end{equation}
	\end{description}
\end{description}
\end{proposition}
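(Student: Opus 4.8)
The plan is to obtain parts (a) and (d) by directly analysing the cascade moduli spaces for the autonomous family $\J=\{J\}$, and parts (b) and (c) by a comparison with a nearby generic family.

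The starting point for the direct analysis is equation~\eqref{eqn:mjds1}: in the autonomous case $\M^\J_d(\alpha,\beta)$ carries an $S^1$-action with quotient $\M^J_d(\alpha,\beta)$, and a full rotation of the domain circle carries the top evaluation map once around $\overline{\alpha}$ exactly $d(\alpha)$ times. Hence, for each $[u]\in\M^J_d(\alpha,\beta)$, the constraint $e_+=p_\alpha$ selects precisely $d(\alpha)/d(u)$ points of $\M^\J_d(\alpha,p_\alpha,\beta)$, and similarly $d(\beta)/d(u)$ points for the constraint $e_-=p_\beta$. I would then work through the four block-matrix entries of \eqref{eqn:blockmatrix}, i.e.\ the cascade moduli spaces $\M^\ca_0(\widetilde{\alpha},\widetilde{\beta})$, organised by the number of levels $k$. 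Admissibility condition (a) of Definition~\ref{def:admissible} forces every unconstrained level $\M^\J_{d_i}(\gamma_{i-1},\gamma_i)$ with $d_i\le 0$ to be empty; a dimension count with the conventions of \S\ref{sec:cms} then shows that $\M^\ca_0(\widehat{\alpha},\widecheck{\beta})$, $\M^\ca_0(\widecheck{\alpha},\widecheck{\beta})$ and $\M^\ca_0(\widehat{\alpha},\widehat{\beta})$ receive only $k=1$ contributions (plus, on the diagonal $\alpha=\beta$, the configurations of Definition~\ref{def:cascaa}), while $\M^\ca_0(\widecheck{\alpha},\widehat{\beta})$ — which defines $\partial_-$, the entry not computed in the proposition — receives both $k=1$ and $k=2$ contributions. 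The surviving $k=1$ moduli spaces are $\M^\J_0(\alpha,\beta)$ (empty for $\alpha\ne\beta$), $\M^\J_1(\alpha,p_\alpha,\beta)$, $\M^\J_1(\alpha,\beta,p_\beta)$, and $\M^\J_2(\alpha,p_\alpha,\beta,p_\beta)$; these are transversely cut out by admissibility conditions (b) and (d) together with a generic choice of base points, and finite because admissibility conditions (a), (c) and (e) rule out the degenerations that could spoil compactness. This proves (a).

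Part (d) then follows by bookkeeping. On the diagonal, Definition~\ref{def:cascaa} gives $\langle\partial_+\alpha,\alpha\rangle=0$ for $\alpha$ good and $-2$ for $\alpha$ bad and no diagonal contribution to $\widecheck{\partial}$ or $\widehat{\partial}$; together with the emptiness of $\M^\J_0(\alpha,\beta)$ for $\alpha\ne\beta$ this is exactly \eqref{eqn:partial+}. For the off-diagonal entries, the $d(\alpha)/d(u)$ and $d(\beta)/d(u)$ counts above, combined with the definition \eqref{eqn:defdelta} of $\delta$ and with $\kappa(\alpha)=d(\alpha)\alpha$, identify the signed count of $\M^\J_1(\alpha,p_\alpha,\beta)$ with $\langle\delta\kappa\alpha,\beta\rangle$ and that of $\M^\J_1(\alpha,\beta,p_\beta)$ with $\langle\kappa\delta\alpha,\beta\rangle$; the minus sign in \eqref{eqn:partialhat} is the cascade orientation convention of \cite[\S3.2]{td}, consistent with the sign in \eqref{eqn:minus2}. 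Finally, the vanishing of $\langle\widecheck{\partial}\alpha,\beta\rangle$ when $\alpha$ is bad, and of $\langle\widehat{\partial}\alpha,\beta\rangle$ when $\beta$ is bad, reflects the fact that $\mc{O}_\alpha$ (resp.\ $\mc{O}_\beta$) is then nontrivial by Proposition~\ref{prop:orientations}(a): a sign-reversing symmetry of the relevant constrained moduli space makes the count cancel, by the same mechanism that removes bad orbits from $CC_*^{EGH}$, and this is exactly what the coherent-orientation computations of \S\ref{sec:oms} record.

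For parts (b) and (c), since the autonomous family is not generic one cannot apply Theorem~\ref{thm:td} to it directly; instead I would fix a generic $S^1$-family $\J'$ that is $C^\infty$-close to $\J$. By Proposition~\ref{prop:mbs} and \S\ref{sec:defnch}, $\J'$ defines a Morse--Bott system and a chain complex $(NCC_*(Y,\lambda),\partial^{\J'}_\ca)$ whose homology is $NCH_*(Y,\xi)$ by Theorem~\ref{thm:NCHinvariant}, so it suffices to show $\partial^{\J'}_\ca=\partial^J_\ca$ for $\J'$ sufficiently close to $\J$ with the same base points $\mc{P}$; this gives (b) and (c) simultaneously. The equality has two halves: each transversely cut out cascade counted by $\partial^J_\ca$ persists to a unique nearby $\J'$-cascade (implicit function theorem), and no new cascades appear, since a sequence of $\J'$-cascades with $\J'\to\J$ would, by SFT compactness, converge to a broken configuration of $\J$-cascades of total index zero, and the admissibility conditions exclude every such limit but the ones already counted. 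The main obstacle is precisely this second half: one must bound, uniformly in the distance to the autonomous family, the possible degenerations — and here conditions (c) and especially (e) do the essential work, (e) ruling out the limit in which the cyclic-ordering constraint of a two-level cascade collapses (the two intermediate evaluation points colliding), which is exactly the extra codimension-one stratum a generic family meets at positive distance. An alternative, avoiding perturbations, is to verify directly that the autonomous moduli spaces plus admissibility satisfy the Morse--Bott-system axioms of \cite[\S2.2]{td} in the bounded range of dimensions those axioms involve and to construct a morphism of Morse--Bott systems to a generic family; but identifying the resulting homology with $NCH_*(Y,\xi)$ still comes down to the same comparison.
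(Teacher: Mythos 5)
Your proposal follows essentially the same route as the paper's proof: the same explicit enumeration of the zero-dimensional cascade moduli spaces forced by admissibility condition (a), the same $d(\alpha)/d(u)$ and $d(\beta)/d(u)$ covering counts with sign cancellation over the nontrivial local system for bad orbits to establish (d), and the same perturb-to-generic comparison for (b) and (c) via an implicit-function-theorem/compactness dichotomy in which conditions (c) and (e) exclude exactly the degenerations you identify. The one detail to make explicit is that a single generic perturbation $\J'$ only achieves $\partial^{\J'}_\ca=\partial^J_\ca$ on generators below a fixed action level $L$, so (c) requires passing through the filtered homology and taking a direct limit over $L$, as the paper does.
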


\begin{proof}
(a) Note that we cannot apply Proposition~\ref{prop:cascadekey}(a) directly, because we are not assuming all the transversality conditions (for example regarding three-dimensional moduli spaces) for this proposition to be applicable. Instead we argue more explicitly.

By part (a) of Definition~\ref{def:admissible}, if $(u_1,\ldots,u_k)$ is a cascade, then each $u_i$ is in a moduli space $\M_d^\J$ with $d\ge 1$. It follows that the cascade moduli spaces $\M^\ca_0\left(\widetilde{\alpha},\widetilde{\beta}\right)$ for $\alpha\neq\beta$ are described simply as follows:
\begin{align}
\nonumber
\M^\ca_0\left(\widehat{\alpha},\widecheck{\beta}\right) =& \emptyset,\\
\label{eqn:casc2}
\M^\ca_0\left(\widecheck{\alpha},\widecheck{\beta}\right) =& \M^\J_1\left(\alpha,p_{\alpha},\beta\right),\\
\label{eqn:casc3}
\M^\ca_0\left(\widehat{\alpha},\widehat{\beta}\right) =& \M^\J_1\left(\alpha,\beta,p_\beta\right),\\
\M^\ca_0\left(\widecheck{\alpha},\widehat{\beta}\right) =& \label{eqn:casc4}
\M^\J_2\left(\alpha,p_\alpha,\beta,p_\beta\right)\\
\nonumber
& \bigsqcup \coprod_{\gamma\neq\alpha,\beta} \M^\J_1\left(\alpha,p_\alpha,\gamma\right) \underset{\overline{\gamma}}{\circlearrowleft} \M^\J_1\left(\gamma,\beta,p_\beta\right).
\end{align}
The notation in the last line indicates the set of pairs
\[
(u_1,u_2)\in\M^\J_1(\alpha,p_\alpha,\gamma)\times\M^\J_1(\gamma,\beta,p_\beta)
\]
such that the points $p_\gamma$, $e_-(u_1)$, and $e_+(u_2)$ are distinct and positively cyclically ordered with respect to the orientation of $\overline{\gamma}$ given by the Reeb vector field.

We claim next that all of the cascade moduli spaces $\M^\ca_0$ are cut out transversely. To see this, recall that $\M^\J_1(\alpha,\beta)$ is cut out transversely by part (b) of Definition~\ref{def:admissible}. Then $\M^\J_1(\alpha,p_\alpha,\beta)$ and $\M^\J_1(\alpha,\beta,p_\beta)$ are also cut out transversely, because the evaluation maps $e_\pm$ on $\M^\J_1(\alpha,\beta)$ are submersions as in \eqref{eqn:mjds1}. It follows from this and part (d) of Definition~\ref{def:admissible} that the remaining cascade moduli space $\M^\ca_0\left(\widecheck{\alpha},\widehat{\beta}\right)$ is cut out transversely.

To complete the proof of assertion (a), we need to show that each cascade moduli space $\M^\ca_0$ is finite.

To show that the cascade moduli spaces \eqref{eqn:casc2} and \eqref{eqn:casc3} are finite, we first note that by Proposition~\ref{prop:compactness} and part (a) of Definition~\ref{def:admissible}, the moduli spaces $\M^\J_1(\alpha,\beta)$ are compact. Finiteness of \eqref{eqn:casc2} and \eqref{eqn:casc3} then follows from the above transversality. This finiteness also implies finiteness of the second term on the right hand side of \eqref{eqn:casc4}.

Finiteness of the first term on the right hand side of \eqref{eqn:casc4} follows similarly, with the help of part (e) of Definition~\ref{def:admissible}.

(b)
As in part (a), we cannot apply Proposition~\ref{prop:cascadekey}(b) directly. However we can perturb the constant $S^1$-family $\J=\{J\}$ to a nonconstant $S^1$-family $\J'$ which is generic so that Proposition~\ref{prop:cascadekey}(b) applies to show that $\left(\partial_\ca^{\J'}\right)^2=0$. To deduce $\left(\partial_\ca^J\right)^2=0$ from this, it is enough to show that for every real number $L$, if the perturbation is sufficiently small with respect to $L$, then $\partial_\ca^{\J'}$ agrees with $\partial_\ca^J$ when applied to generators $\widecheck{\alpha}$ or $\widehat{\alpha}$ for which the Reeb orbit $\alpha$ has action $\mc{A}(\alpha)<L$.

To prove the above claim, suppose to get a contradiction that there exist a real number $L$, and a sequence of generic $S^1$-families $\{\J^k\}_{k=1,\cdots}$ converging to the constant $S^1$-family $\J=\{J\}$, such that for each $k$, the cascade differential $\partial_\ca^{\J^k}$ disagrees with $\partial_\ca^J$ on some generator $\widetilde{\alpha}$ (equal to $\widecheck{\alpha}$ or $\widehat{\alpha}$) with $\mc{A}(\alpha)<L$. Since there are only finitely many Reeb orbits with action less than $L$, by passing to a subsequence we may assume that there are fixed generators $\widetilde{\alpha}$ and $\widetilde{\beta}$ with $\mc{A}(\alpha),\mc{A}(\beta)<L$ such that
\begin{equation}
\label{eqn:casccon}
\left\langle \partial_\ca^{\J^k}\widetilde{\alpha},\widetilde{\beta}\right\rangle \neq \left\langle\partial_\ca^J\widetilde{\alpha},\widetilde{\beta}\right\rangle
\end{equation}
for all $k$.

Since each cascade in $\M^\ca_0\left(\widetilde{\alpha},\widetilde{\beta}\right)$ for $\J=\{J\}$ is cut out transversely, the implicit function theorem
gives an injective, orientation-preserving map from the set of such cascades for $\J=\{J\}$ to the set of such cascades for $\J^k$ when $k$ is sufficiently large.
We claim that this map is also surjective for $k$ sufficiently large, which will then give a contradiction to \eqref{eqn:casccon}.

Suppose to get a contradiction that this surjectivity does not hold for all sufficiently large $k$. Then after passing to a subsequence, for each $k$ there is a cascade which is counted by $\left\langle \partial_\ca^{\J^k}\widetilde{\alpha},\widetilde{\beta}\right\rangle$ but which is not a perturbation (coming from the implicit function theorem as above) of a cascade counted by $\left\langle\partial_\ca^J\widetilde{\alpha},\widetilde{\beta}\right\rangle$. We claim that we can pass to a subsequence so that these cascades for each $k$ converge to a cascade counted by $\left\langle\partial_\ca^J\widetilde{\alpha},\widetilde{\beta}\right\rangle$, which will give the desired contradiction.

We will just explain the trickiest case of this, which is when $\widetilde{\alpha}=\widecheck{\alpha}$ and $\widetilde{\beta}=\widehat{\beta}$. Then the cascade counted by $\left\langle \partial_\ca^{\J^k}\widetilde{\alpha},\widetilde{\beta}\right\rangle$ has the form
$(u^k_1, \ldots,u^k_{m_k})$, where there are distinct Reeb orbits $\alpha=\gamma_{k,0},\gamma_{k,1},\ldots,\gamma_{k,m_k}=\beta$, and integers $d_{k,i}$ for $i=1,\ldots,m_k$ with $\sum_{i=1}^{m_k}d_{k,i}=2$, such that $u^k_i\in\M^{\J^k}_{d_{k,i}}(\gamma_{k,i-1},\gamma_{k,i})$ for $i=1,\ldots,m_k$. By passing to a subsequence we may assume that $m_k=m$ does not depend on $k$, and the Reeb orbit $\gamma_{k,i}=\gamma_i$ and the integer $d_{k,i}=d_i$ do not depend on $k$ either.

As in Proposition~\ref{prop:compactness}, we may pass to a further subsequence such that for each $i=1,\ldots,m$, the sequence $\{u^k_i\}_{k=1,\ldots}$ converges to an element of $\overline{\M}^\J_{d_i}(\gamma_{i-1},\gamma_i)$. Since $\sum_{i=1}^md_i=2$, by part (a) of Definition~\ref{def:admissible}, we must have either $m=1$ and $d_1=2$, or $m=2$ and $d_1=d_2=1$.

If $m=1$ and $d_1=2$, then by parts (a) and (e) of Definition~\ref{def:admissible}, the limit of the sequence $\{u^k_1\}_{k=1,\ldots}$ is an element of $\M^\J_2(\alpha,p_\alpha,\beta,p_\beta)$, and thus a cascade in the first term on the right hand side of \eqref{eqn:casc4}.

If $m=2$ and $d_1=d_2=1$, then the sequence $\{u^k_1\}_{k=1,\ldots}$ converges to an element $u^\infty_1\in {\M}^\J_1(\alpha,\gamma)$, and the sequence $\{u^k_2\}_{k=1,\ldots}$ converges to an element $u^\infty_2\in {\M}^\J_1(\gamma,\beta)$, where we are writing $\gamma=\gamma_1$.   By parts (c) and (e) of Definition~\ref{def:admissible}, the three points $p_\gamma$, $e_-(u^\infty_1)$, and $e_+(u^\infty_2)$ in $\overline{\gamma}$ are distinct. Since the three points $p_\gamma$, $e_-(u^k_1)$, and $e_+(u^k_2)$ are positively cyclically ordered for each $k$, it follows that the three points $p_\gamma$, $e_-(u^\infty_1)$, and $e_+(u^\infty_2)$ are also positively cyclically ordered. Thus the pair $(u^\infty_1,u^\infty_2)$ is a cascade in the second term on the right hand side of \eqref{eqn:casc4}. 

(c)
As shown in the proof of part (b), for any real number $L$ we can choose a generic $S^1$-family $\J'$ close to $\J=\{J\}$ such that $\partial^{\J'}_\ca = \partial^J_\ca$ on all generators $\widetilde{\alpha}$ with $\mc{A}(\alpha)<L$. It follows that there is a canonical isomorphism
\begin{equation}
\label{eqn:nchl}
NCH_*^L(Y,\lambda) = H_*\left(NCC_*^L(Y,\lambda),\partial^J_\ca\right).
\end{equation}
Here $NCC_*^L(Y,\lambda)$ denotes the free $\Z$-module generated by $\widecheck{\gamma}$ and $\widehat{\gamma}$ for Reeb orbits $\gamma$ with action $\mc{A}(\gamma)<L$. On the left hand side, $NCH_*^L(Y,\lambda)$ denotes the ``filtered nonequivariant contact homology'', which is the homology of $\left(NCC_*^L(Y,\lambda),\partial^{\J'}_\ca\right)$ for any generic $\J'$. The proof of Theorem~\ref{thm:NCHinvariant} shows that this depends only on $(Y,\lambda)$; see \cite[Thm.\ 1.3]{cc2} for a similar argument. Moreover, if $L<L'$, then inclusion of chain complexes induces a well-defined map $NCH_*^L(Y,\lambda)\to NCH_*^{L'}(Y,\lambda)$, and the canonical isomorphisms \eqref{eqn:nchl} fit into a commutative diagram
\[
\begin{CD}
NCH_*^L(Y,\lambda) @>{=}>> H_*\left(NCC_*^L(Y,\lambda),\partial^J_\ca\right)\\
@VVV  @VVV\\
NCH_*^{L'}(Y,\lambda) @>{=}>> H_*\left(NCC_*^{L'}(Y,\lambda),\partial^J_\ca\right).
\end{CD}
\]
Taking the direct limit over $L$ in \eqref{eqn:nchl} then proves (c).

(d)
(i) We have seen that $\M^\ca_0(\widehat{\alpha},\widecheck{\beta})=\emptyset$ when $\alpha\neq\beta$, so equation \eqref{eqn:partial+} follows from Definition~\ref{def:cascaa}.

(ii)
Let $\alpha$ and $\beta$ be distinct Reeb orbits, and assume that $\beta$ is good. Recall from the proof of part (a) that
\begin{equation}
\label{eqn:rfppa}
\M^\ca_0\left(\widecheck{\alpha},\widecheck{\beta}\right) = \M^\J_1(\alpha,p_\alpha,\beta).
\end{equation}
The coefficient
\[
\left\langle\widecheck{\partial}\alpha,\beta\right\rangle = \left\langle \partial_\ca^J\widecheck{\alpha},\widecheck{\beta}\right\rangle
\]
that we need to understand is a signed count of points in the moduli space \eqref{eqn:rfppa}.

To review how the signs work, recall from \S\ref{sec:defnch} that we are fixing generators of $\mc{O}_\alpha(p_\alpha)$ and $\mc{O}_\beta(p_\beta)$. Since $\beta$ is good, the local system $\mc{O}_\beta$ is trivial by Proposition~\ref{prop:orientations}(a), and our choice of generator of $\mc{O}_\beta(p_\beta)$ trivializes it. By Proposition~\ref{prop:orientations}(b), the moduli space $\M^\J_1(\alpha,\beta)$ has a canonical orientation with values in $e_+^*\mc{O}_\alpha\tensor e_-^*\mc{O}_\beta$. Using the above trivialization of $\mc{O}_\beta$, we obtain an orientation of $\M^\J_1(\alpha,\beta)$ with values in $e_+^*\mc{O}_\alpha$.  Using the chosen generator of $\mc{O}_\alpha(p_\alpha)$, each point in the right hand side of \eqref{eqn:rfppa} then has a sign associated to it by the convention in \cite[Convention 2.2]{td}. This convention says here that the sign is positive if and only if the derivative of $e_+:\M^\J_1(\alpha,\beta)\to\overline{\alpha}$ is orientation preserving with respect to the orientation of $\overline{\alpha}$ determined by the Reeb vector field. By the sign convention in \cite[\S3.2]{td}, the corresponding point on the left hand side of \eqref{eqn:rfppa} is counted with the same sign.

By \eqref{eqn:mjds1}, the moduli space $\M^\J_1(\alpha,\beta)$ consists of a circle for each $u\in \M^J_1(\alpha,\beta)$. Fix such a $u$ and let $M_u$ denote the corresponding circle. The map $e_+:M_u\to \overline{\alpha}$ is a submersion of degree $d(\alpha)/d(u)$ (see \S\ref{sec:cchintro} for this notation). Thus the subset of the moduli space \eqref{eqn:rfppa} coming from $M_u$ consists of $d(\alpha)/d(u)$ points.

If $\alpha$ is bad, then the local system $\mc{O}_\alpha$ is nontrivial, which means that when one travels $d(u)/d(\alpha)$ rotations around the circle $M_u$, the orientation of $M_u$ with values in $e_+^*\mc{O}_\alpha$, because it is continuous, must switch sign. Thus consecutive points in the subset of \eqref{eqn:rfppa} coming from $M_u$ have opposite signs. It follows that the signed count of these points is zero. Since the same is true for every $u\in\M^J_1(\alpha,\beta)$, we conclude that $\left\langle\partial_\ca^J\widecheck{\alpha},\widecheck{\beta}\right\rangle = 0$.

If $\alpha$ is good, then the local system $\mc{O}_\alpha$ is trivial, so all of the points in \eqref{eqn:rfppa} coming from $M_u$ count with the same sign. Moreover, by the convention in Definition~\ref{def:EGHsigns}, this sign agrees with the sign $\epsilon(u)$ in \eqref{eqn:defdelta}. We conclude that
\[
\left\langle\partial_\ca^J\widecheck{\alpha}, \widecheck{\beta}\right\rangle = \sum_{u\in\M^J_1(\alpha,\beta)}\frac{\epsilon(u)d(\alpha)}{d(u)} = \langle\delta\kappa\alpha,\beta\rangle.
\]

(iii) This is proved by a symmetric argument to (ii). Note that there is an extra minus sign in \cite[Convention 2.2]{td} in this case, which is why there is a minus sign in \eqref{eqn:partialhat} which is not present in \eqref{eqn:partialcheck}.
\end{proof}

\begin{corollary}
\label{cor:degh20}
If the pair $(J,\mc{P})$ is admissible, and if $\partial^{EGH}$ denotes the cylindrical contact homology differential determined by $J$ as in \S\ref{sec:cchintro}, then $(\partial^{EGH})^2=0$. 
\end{corollary}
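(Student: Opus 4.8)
The plan is to deduce $(\partial^{EGH})^2=0$ as a purely formal consequence of $\left(\partial^J_\ca\right)^2=0$, which is Proposition~\ref{prop:nchaut}(b), together with the explicit description of the blocks of $\partial^J_\ca$ given in Proposition~\ref{prop:nchaut}(d). No new geometry is required; everything of substance has already been done in Proposition~\ref{prop:nchaut}.

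Concretely, I would begin by writing out the ``check--check'' component of the identity $\left(\partial^J_\ca\right)^2=0$. In the block form \eqref{eqn:blockmatrix} this component is the operator identity
\[
\widecheck{\partial}^2 + \partial_+\partial_- = 0
\]
on the free $\Z$-module generated by all Reeb orbits. Then, for a pair of \emph{good} Reeb orbits $\alpha,\beta$, I would pair $\left(\widecheck{\partial}^2+\partial_+\partial_-\right)\alpha$ with $\beta$ and evaluate the two terms separately.

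For the first term, inserting a complete set of generators gives $\langle\widecheck{\partial}^2\alpha,\beta\rangle=\sum_\gamma\langle\widecheck{\partial}\alpha,\gamma\rangle\langle\widecheck{\partial}\gamma,\beta\rangle$, the sum over all Reeb orbits $\gamma$. Since $\beta$ is good, Proposition~\ref{prop:nchaut}(d)(ii) shows that $\langle\widecheck{\partial}\gamma,\beta\rangle=0$ whenever $\gamma$ is bad and that $\langle\widecheck{\partial}\gamma,\beta\rangle=\langle\delta\kappa\gamma,\beta\rangle$ whenever $\gamma$ is good; the diagonal case $\gamma=\beta$ is harmless, since both sides vanish (there is no index-one $J$-holomorphic cylinder from a Reeb orbit to itself, by positivity of $d\lambda$-energy, so $\langle\delta\beta,\beta\rangle=0$). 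The same applies to the factor $\langle\widecheck{\partial}\alpha,\gamma\rangle$. As $\delta$ and $\kappa$ send good Reeb orbits to combinations of good Reeb orbits, the surviving terms assemble into $\langle\widecheck{\partial}^2\alpha,\beta\rangle=\sum_{\gamma\text{ good}}\langle\delta\kappa\alpha,\gamma\rangle\langle\delta\kappa\gamma,\beta\rangle=\langle(\delta\kappa)^2\alpha,\beta\rangle$. For the second term, $\langle\partial_+\partial_-\alpha,\beta\rangle=\sum_\gamma\langle\partial_-\alpha,\gamma\rangle\langle\partial_+\gamma,\beta\rangle$, and by Proposition~\ref{prop:nchaut}(d)(i) the factor $\langle\partial_+\gamma,\beta\rangle$ equals $-2$ when $\gamma=\beta$ is bad and vanishes otherwise, hence is $0$ for all $\gamma$ because $\beta$ is good. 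Thus $\langle(\delta\kappa)^2\alpha,\beta\rangle=0$ for every pair of good Reeb orbits $\alpha,\beta$, which is precisely $(\partial^{EGH})^2=(\delta\kappa)^2=0$ on $CC^{EGH}_*(Y,\lambda;J)$.

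There is no real obstacle here: the corollary is a formal corollary of Proposition~\ref{prop:nchaut}, whose part (d) carries the weight by identifying the good-to-good block of $\partial^J_\ca$ with $\delta\kappa$ and ensuring that the cross term $\partial_+\partial_-$ contributes nothing between good orbits. The only thing needing a moment's care is the bookkeeping of which blocks of $\partial^J_\ca$ can see bad orbits --- namely that $\widecheck{\partial}$ does not connect a bad Reeb orbit to a good one, and that $\partial_+$ is supported on and valued in bad orbits --- and both facts are read off directly from Proposition~\ref{prop:nchaut}(d).
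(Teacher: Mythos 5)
Your proposal is correct and follows essentially the same route as the paper: both deduce $(\delta\kappa)^2=0$ formally from $\left(\partial^J_\ca\right)^2=0$ (Proposition~\ref{prop:nchaut}(b)) together with the block description in Proposition~\ref{prop:nchaut}(d), the paper organizing the bookkeeping via good/bad block matrices while you insert a complete set of generators and kill the bad intermediate terms by the same vanishing statements. The only cosmetic slip is the phrase ``the same applies to $\langle\widecheck{\partial}\alpha,\gamma\rangle$'': Proposition~\ref{prop:nchaut}(d)(ii) requires the \emph{target} orbit to be good, so one should first discard bad $\gamma$ using the vanishing of $\langle\widecheck{\partial}\gamma,\beta\rangle$ and only then identify the remaining factors with $\delta\kappa$ coefficients, exactly as your final displayed sum does.
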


\begin{proof}
By Proposition~\ref{prop:nchaut}(a), we can use the pair $(J,\mc{P})$ to define a cascade differential $\partial^J_\ca$. By Proposition~\ref{prop:nchaut}(d), the part of $\partial^J_\ca$ going from good Reeb orbits to good Reeb orbits, again written as a block matrix with respect to check and hat generators, has the form
\[
\left(\partial^J_\ca\right)^{\op{good}}_{\op{good}} = \begin{pmatrix} \delta\kappa & 0 \\ \mbox{\Biohazard} & -\kappa\delta \end{pmatrix}.
\]
Here the biohazard symbol {\Biohazard} indicates an unknown matrix. Likewise, the parts of $\partial^J_\ca$ going from bad to good orbits and from good to bad orbits have the form
\[
\left(\partial^J_\ca\right)^{\op{bad}}_{\op{good}} = \begin{pmatrix} 0 & 0 \\ \mbox{\Biohazard} & \mbox{\Biohazard} \end{pmatrix},
\quad\quad\quad
\left(\partial^J_\ca\right)^{\op{good}}_{\op{bad}} = \begin{pmatrix} \mbox{\Biohazard} & 0 \\ \mbox{\Biohazard} & 0 \end{pmatrix}.
\]
It follows that the part of $\left(\partial^J_\ca\right)^2$ going from good to good orbits has the form
\[
\left(\left(\partial^J_\ca\right)^2\right)^{\op{good}}_{\op{good}} = \begin{pmatrix} (\delta\kappa)^2 & 0 \\ \mbox{\Biohazard} & (\kappa\delta)^2 \end{pmatrix}.
\]
It then follows from Proposition~\ref{prop:nchaut}(b) that $(\delta\kappa)^2=0$. Since $\partial^{\op{EGH}}=\delta\kappa$ (after tensoring with $\Q$), the result follows.
\end{proof}

\begin{remark}
\label{rem:jowantsthis}
The above calculations show that if $(J,\mc{P})$ is admissible, then the part of $\widecheck{\partial}$ mapping between good Reeb orbits is a differential, which after tensoring with $\Q$ agrees with $\partial^{\op{EGH}}$. By contrast, if $\J$ is a generic $S^1$-family of $\lambda$-compatible almost complex structures, if $\mc{P}$ is a generic choice of base points, and if we write the associated cascade differential in check-hat block form as
\[
\partial^\J_\ca = \begin{pmatrix} \widecheck{\partial} & \partial_+\\ \partial_- & \widehat{\partial}\end{pmatrix},
\]
then $\widecheck{\partial}$ does {\em not\/} always give a differential. We know from $\left(\partial^\J_\ca\right)^2=0$ that
\[
\left(\widecheck{\partial}\right)^2 + \partial_+\partial_- = 0,
\]
but in general $\partial_+\partial_-$ can be nonzero, even between good Reeb orbits.
\end{remark}

\subsection{$S^1$-equivariant contact homology in the autonomous case}
\label{sec:chaut}

Let $J$ be a $\lambda$-compatible almost complex structure on $\R\times Y$, let $\mc{P}$ be a choice of base points, and assume that the pair $(J,\mc{P})$ is admissible. Use these to define a differential $\partial^J_\ca$ on $NCC_*(Y,\lambda)$ by Proposition~\ref{prop:nchaut}(a).

\begin{definition}
Define a ``BV operator''
\[
\partial_1:NCC_*(Y,\lambda) \longrightarrow NCC_*(Y,\lambda)
\]
by equation \eqref{eqn:defbv}.
Define a differential $\partial^{S^1,J}$ on
\[
CC_*^{S^1}(Y,\lambda) = NCC_*(Y,\lambda)\tensor\Z[U]
\]
by
\begin{equation}
\label{eqn:ds1j}
\partial^{S^1,J} = \partial^J_\ca\tensor 1 + \partial_1\tensor U^{-1}.
\end{equation}
Here $U^{-1}$ denotes the operator sending $U^k\mapsto U^{k-1}$ when $k>0$ and sending $1\mapsto 0$.
\end{definition}

\begin{lemma}
Suppose that the pair $(J,\mc{P})$ is admissible. Then $\left(\partial^{S^1,J}\right)^2=0$.
\end{lemma}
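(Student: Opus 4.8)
The strategy is to expand $\left(\partial^{S^1,J}\right)^2$ using the definition \eqref{eqn:ds1j} and show that each piece vanishes, reducing everything to the identity $\left(\partial^J_\ca\right)^2=0$ from Proposition~\ref{prop:nchaut}(b) together with two algebraic identities about how $\partial_1$ interacts with $\partial^J_\ca$. Concretely, since $U^{-1}$ commutes with $\partial^J_\ca\tensor 1$ (as $\partial^J_\ca$ acts only on the $NCC_*$ factor) and since $\left(U^{-1}\right)^2$ is again the obvious shift operator, we compute
\[
\left(\partial^{S^1,J}\right)^2 = \left(\partial^J_\ca\right)^2\tensor 1 + \left(\partial^J_\ca\partial_1 + \partial_1\partial^J_\ca\right)\tensor U^{-1} + \partial_1^2\tensor\left(U^{-1}\right)^2.
\]
The first term is zero by Proposition~\ref{prop:nchaut}(b). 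So it remains to prove the two identities $\partial_1^2=0$ and $\partial^J_\ca\partial_1 + \partial_1\partial^J_\ca = 0$ as operators on $NCC_*(Y,\lambda)$.

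The identity $\partial_1^2=0$ is immediate from the explicit formula \eqref{eqn:defbv}: $\partial_1$ kills every $\widehat{\alpha}$, and sends $\widecheck{\alpha}$ to a multiple of $\widehat{\alpha}$ (or to zero), so $\partial_1^2\widecheck{\alpha}$ and $\partial_1^2\widehat{\alpha}$ both vanish. For the anticommutation identity, I would work in the block-matrix form \eqref{eqn:blockmatrix}. With respect to the check/hat decomposition, $\partial_1$ is the block matrix $\begin{pmatrix} 0 & 0 \\ D & 0\end{pmatrix}$, where $D$ is the diagonal operator sending $\alpha\mapsto d(\alpha)\alpha$ for $\alpha$ good and $\alpha\mapsto 0$ for $\alpha$ bad; in the notation of \S\ref{sec:cchintro} this is $D=\kappa$ on good orbits. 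Then
\[
\partial^J_\ca\partial_1 + \partial_1\partial^J_\ca = \begin{pmatrix} \partial_+ D & 0 \\ \widehat{\partial}D + D\widecheck{\partial} & D\partial_+ \end{pmatrix}.
\]
So I need: (i) $\partial_+ D = 0$; (ii) $D\partial_+ = 0$; and (iii) $\widehat{\partial}D + D\widecheck{\partial} = 0$. Claims (i) and (ii) follow from Proposition~\ref{prop:nchaut}(d)(i): $\partial_+$ vanishes on good orbits and equals $-2$ on bad orbits, while $D$ vanishes on bad orbits and is an isomorphism (up to the factor $d(\alpha)$) on good orbits, so both composites are zero. Claim (iii) is the substantive one, and here I use Proposition~\ref{prop:nchaut}(d)(ii)--(iii): restricted to good orbits, $\widecheck{\partial}=\delta\kappa$ and $\widehat{\partial}=-\kappa\delta$, and $D=\kappa$, so on the good–good block $\widehat{\partial}D + D\widecheck{\partial} = -\kappa\delta\kappa + \kappa\delta\kappa = 0$. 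One must also check the blocks involving bad orbits: if the source is bad then $D$ annihilates it on one side and $\widecheck{\partial}$ on bad orbits only hits bad targets (so $D\widecheck{\partial}$ on a bad source lands where $\kappa$ acts trivially), and the discussion after \eqref{eqn:tdan} records exactly the vanishing statements needed ($\langle\partial^J_\ca\widecheck{\alpha},\widecheck{\beta}\rangle=0$ for $\alpha$ bad, $\langle\partial^J_\ca\widehat{\alpha},\widehat{\beta}\rangle=0$ for $\beta$ bad, and $\langle\partial^J_\ca\widehat{\alpha},\widecheck{\beta}\rangle=0$ unless $\alpha=\beta$ bad); combining these with the fact that $D$ kills bad orbits disposes of all remaining entries.

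\textbf{Main obstacle.} The genuinely delicate bookkeeping is claim (iii) in the mixed (good/bad) cases, where one cannot simply invoke the clean formulas $\delta\kappa$ and $-\kappa\delta$ — those hold only between good orbits — and must instead combine the vanishing statements listed after \eqref{eqn:tdan} with the fact that $\kappa$ annihilates bad orbits. This is purely formal linear algebra once those inputs are assembled, so the risk is only one of careful case-checking rather than any real analytic difficulty; everything geometric has already been packaged into Proposition~\ref{prop:nchaut}. An alternative, slicker route that avoids the block-matrix case analysis is to note that $\partial^{S^1,J}$ is, by construction and the discussion preceding Proposition~\ref{prop:chaut}, a small perturbation of the genuinely generic equivariant differential $\partial^{S^1,\frak{J}'}$ for a nearby generic $S^1$-equivariant family $\frak{J}'$, whose square is zero by Proposition~\ref{prop:fammbs} and the cascade formalism; then a compactness/implicit-function-theorem argument identical to the one in the proof of Proposition~\ref{prop:nchaut}(b), applied action-filtration level by level, shows $\partial^{S^1,J}$ agrees with $\partial^{S^1,\frak{J}'}$ on generators of bounded action, hence squares to zero. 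I would present the direct algebraic proof as the main argument since all its ingredients are already available, and remark on the perturbative alternative.
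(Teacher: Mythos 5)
Your argument matches the paper's proof essentially step for step: expand $(\partial^{S^1,J})^2$ using \eqref{eqn:ds1j}, observe $\partial_1^2=0$ from \eqref{eqn:defbv}, and reduce the anticommutator $\partial^J_\ca\partial_1+\partial_1\partial^J_\ca$ to the block-matrix identities $\partial_+\kappa=\kappa\partial_+=0$, $\kappa\widecheck{\partial}=\kappa\delta\kappa$, and $\widehat{\partial}\kappa=-\kappa\delta\kappa$ drawn from Proposition~\ref{prop:nchaut}(d). The paper also notes, as you do, that a perturbative argument via Proposition~\ref{prop:chaut} would give an alternative indirect proof but opts for the direct algebraic computation for clarity.
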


\begin{proof}
This can be shown indirectly using the proof of Proposition~\ref{prop:chaut} below, similarly to the proof of Proposition~\ref{prop:nchaut}(b), but we will give a direct proof here for clarity.

By \eqref{eqn:ds1j}, we have
\[
\left(\partial^{S^1,J}\right)^2 = \left(\partial^J_\ca\right)^2\tensor 1 + \left(\partial^J_\ca\partial_1 + \partial_1\partial^J_\ca\right)\tensor U^{-1} + (\partial_1)^2\tensor U^{-2}.
\]
We know from Proposition~\ref{prop:nchaut} that $\left(\partial^J_\ca\right)^2=0$, and it follows immediately from \eqref{eqn:defbv} that $(\partial_1)^2=0$. So we just need to check that
\[
\partial^J_\ca\partial_1 + \partial_1\partial^J_\ca = 0.
\]
As a block matrix with respect to check and hat generators, we have
\[
\partial_1 = \begin{pmatrix} 0 & 0 \\ \kappa & 0 \end{pmatrix},
\]
where we define $\kappa$ of a bad Reeb orbit to be zero. By \eqref{eqn:blockmatrix}, we then have
\[
\partial^J_\ca\partial_1 + \partial_1\partial^J_\ca = \begin{pmatrix} \partial_+\kappa & 0 \\ \kappa\widecheck{\partial} + \widehat{\partial}\kappa & \kappa\partial_+\end{pmatrix}.
\]
It follows from \eqref{eqn:partial+} that $\partial_+\kappa = \kappa\partial_+ = 0$. 
It follows from \eqref{eqn:partialcheck} that $\kappa\widecheck{\partial} = \kappa\delta\kappa$, and it follows from \eqref{eqn:partialhat} that $\widehat{\partial}\kappa = -\kappa\delta\kappa$. Hence the above matrix is $0$.
\end{proof}

\begin{proposition}
\label{prop:chaut}
Suppose that the pair $(J,\mc{P})$ is admissible. Then the homology of the chain complex $\left(CC_*^{S^1}(Y,\lambda),\partial^{S^1,J}\right)$ is canonically isomorphic to the equivariant contact homology $CH_*^{S^1}(Y,\xi)$.
\end{proposition}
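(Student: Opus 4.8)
The plan is to prove Proposition~\ref{prop:chaut} by comparing the ``na\"ive'' chain complex $\left(CC_*^{S^1}(Y,\lambda),\partial^{S^1,J}\right)$ built from the autonomous almost complex structure with the genuine $S^1$-equivariant cascade complex defined in \S\ref{equicurrents}, via a perturbation argument parallel to that of Proposition~\ref{prop:nchaut}(b,c). First I would establish the analogue of the structural identity \eqref{eqn:ds1j}: namely, that for a suitably chosen generic perturbation $\frak{J}$ of the constant family $\{J\}$, the equivariant cascade differential $\partial^{S^1,\frak{J}}$ restricted to Reeb orbits of action less than $L$ agrees with $\partial^{S^1,J}$. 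The key point is to analyze the $S^1$-equivariant cascade moduli spaces $\M^\frak{J}_0\big((x_+,\gamma_+),(x_-,\gamma_-)\big)$ in the autonomous limit, using the splitting of the index into a Morse-index part on $BS^1$ and a Conley--Zehnder part: when $\frak{J}=\{J\}$ is constant, a zero-dimensional equivariant cascade either has $\op{ind}(f,x_+)=\op{ind}(f,x_-)$, in which case it reduces to a nonequivariant cascade counted by $\partial^J_\ca$ (contributing $\partial^J_\ca\tensor 1$), or has $\op{ind}(f,x_+)=\op{ind}(f,x_-)+2$ and consists of a single gradient flow line on $BS^1$ coupled to a holomorphic cylinder that is constant in the holomorphic direction up to the residual $S^1$-action, which is exactly the situation computing the BV operator $\partial_1$ (contributing $\partial_1\tensor U^{-1}$). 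The intermediate index jumps are excluded because, as in the proof of Proposition~\ref{prop:nchaut}(a), admissibility forces each holomorphic piece to lie in a moduli space of dimension $\ge 1$, so there is no room for a nontrivial holomorphic cascade together with a nontrivial $BS^1$-jump.

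Next I would run the same compactness-and-transversality argument as in Proposition~\ref{prop:nchaut}(b): choose a sequence $\frak{J}^k$ of generic $S^1$-equivariant $S^1\times ES^1$-families converging to the constant family $\{J\}$, use the implicit function theorem to get an injective orientation-preserving map from the cascades counted by $\partial^{S^1,J}$ to those counted by $\partial^{S^1,\frak{J}^k}$, and then prove surjectivity for $k$ large by a contradiction argument. If surjectivity failed, one would extract a sequence of equivariant cascades for $\frak{J}^k$ converging (after passing to a subsequence and applying the compactness Proposition~\ref{prop:famcom}) to a broken object; the same case analysis as in Proposition~\ref{prop:nchaut}(b), now carried out on the product $\pi^{-1}(x)\times\overline{\gamma}$ quotiented by $S^1$, together with parts (a), (c), (e) of Definition~\ref{def:admissible} applied to the $Y$-factor and the Morse--Smale property of $\widetilde V$ on the $BS^1$-factor, shows that the limit is in fact counted by $\partial^{S^1,J}$, giving the contradiction. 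This yields the action-filtered statement
\[
CH_*^{S^1,<L}(Y,\lambda) = H_*\left(CC_*^{S^1,<L}(Y,\lambda),\partial^{S^1,J}\right),
\]
where $CC_*^{S^1,<L}$ is generated by $\widecheck\gamma\tensor U^k$ and $\widehat\gamma\tensor U^k$ for $\mc A(\gamma)<L$, exactly as in \eqref{eqn:nchl}. Taking the direct limit over $L$, and invoking the topological invariance of the filtered $S^1$-equivariant homology established by the proof of Theorem~\ref{thm:CHS1invariant}, identifies the homology of $\left(CC_*^{S^1}(Y,\lambda),\partial^{S^1,J}\right)$ canonically with $CH_*^{S^1}(Y,\xi)$.

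The main obstacle I expect is the precise identification, in the autonomous limit, of the index-$2$ equivariant cascades with the BV operator $\partial_1$ of \eqref{eqn:defbv}, including the orientation bookkeeping. Unlike the nonequivariant case, one must understand how the $S^1$-action on the domain of $u$ interacts with the $S^1$-action on $ES^1$ in the target of $\eta$: when the holomorphic cylinder degenerates to a trivial (branched) cover, the residual moduli space is the circle $M_u$ of Proposition~\ref{prop:nchaut}(ii) coupled to an $ES^1$-gradient flow line, and one must check that the quotient by the diagonal $S^1$ produces exactly the count $d(\alpha)\widehat\alpha$ for good $\alpha$ and $0$ for bad $\alpha$, with the sign conventions of \cite[\S3.2]{td}. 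The bad-orbit vanishing is the delicate part, and it follows, as in Proposition~\ref{prop:nchaut}(ii), from the nontriviality of the local system $\mc{O}_{(x,\alpha)}$ (Proposition~\ref{prop:famor}(a)) forcing a sign cancellation around $M_u$; I would record this as a short lemma before assembling the proof. Everything else is a routine transcription of \S\ref{sec:nchaut} with the $BS^1$-factor carried along.
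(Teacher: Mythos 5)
Your proposal is correct and follows essentially the same route as the paper: identify the autonomous equivariant moduli spaces via the fiber bundle over the Morse moduli space on $BS^1$, show the differential splits as $\partial^J_\ca\tensor 1 + \partial_1\tensor U^{-1}$ with the index-$2$ pieces computing the BV operator (degree $d(\gamma)$ for good orbits, cancellation for bad orbits via the nontrivial local system), and then run the perturbation/compactness argument of Proposition~\ref{prop:nchaut}(b,c) followed by a direct limit. The only technical point you elide is that the generic perturbation $\frak{J}'$ can only be arranged on $S^1\times S^{2K+1}$ for each finite $K$, so the final direct limit must be taken over both the action bound $L$ and the $U$-power truncation $K$, as in the paper's Step 5.
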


\begin{proof} We proceed in five steps.

{\em Step 1.\/} Let $\frak{J}$ be the constant $S^1\times ES^1$-family of $\lambda$-compatible almost complex structures on $\R\times Y$ given by $\frak{J}_{t,z}\equiv J$. The family $\frak{J}$ is automatically $S^1$-equivariant as in \eqref{eqn:frakJequiv} since $\frak{J}_{t,z}$ does not depend on $t\in S^1$ or $z\in ES^1$. Given distinct pairs $(x_+,\gamma_+)$ and $(x_-,\gamma_-)$, we now describe the moduli space $\M^\frak{J}((x_+,\gamma_+),(x_-,\gamma_-))$.

Let $\J$ denote the constant $S^1$-family $\{J\}$ as before. It follows from Definition~\ref{def:fammod} that $\widehat{\M}^\frak{J}((x_+,\gamma_+),(x_-,\gamma_-))$ is the set of pairs $(\eta,u)$ where $\eta$ is a parametrized flow line of $\widetilde{V}$ from a lift of $x_-$ to a lift of $x_+$, and $u\in\widetilde{\widetilde{\M}}^\J(\gamma_+,\gamma_-)$; here the latter space is defined like $\widetilde{\M}^\J(\gamma_+,\gamma_-)$ but without modding out by $\R$ translation in the domain, cf.\ \S\ref{sec:tms}. We can write this as
\begin{equation}
\label{eqn:MtildefrakJ}
\widehat{\M}^\frak{J}((x_+,\gamma_+),(x_-,\gamma_-)) = \widetilde{\M}^{\op{Morse}}_{\widetilde{V}}(\pi^{-1}(x_+),\pi^{-1}(x_-)) \times \widetilde{\widetilde{\M}}^\J(\gamma_+,\gamma_-),
\end{equation}
where $\widetilde{\M}^{\op{Morse}}_{\widetilde{V}}$ denotes the set of parametrized flow lines of $\widetilde{V}$ as above. In particular, if $i_\pm$ denotes the Morse index of $x_\pm$, then
\begin{equation}
\label{eqn:conaf}
\widehat{\M}_d^\frak{J}((x_+,\gamma_+),(x_-,\gamma_-)) = \widetilde{\M}^{\op{Morse}}_{\widetilde{V}}(\pi^{-1}(x_+),\pi^{-1}(x_-)) \times \widetilde{\widetilde{\M}}_{d-i_++i_-}^\J(\gamma_+,\gamma_-).
\end{equation}
Taking the quotient of \eqref{eqn:conaf} by the $\R\times S^1$ action in Definition~\ref{def:fammodaction}, we obtain a fiber bundle
\begin{equation}
\label{eqn:bundle}
\begin{CD}
\widetilde{\M}^\J_{d-i_++i_-}(\gamma_+,\gamma_-) @>>> \widetilde{\M}^\frak{J}_d((x_+,\gamma_+),(x_-,\gamma_-)) \\
& & @VVV \\
& & \widetilde{\M}^{\op{Morse}}_V(x_+,x_-).
\end{CD}
\end{equation}
Here $\widetilde{\M}^{\op{Morse}}_V(x_+,x_-)$ denotes the moduli space of parametrized flow lines of $V$ from $x_-$ to $x_+$.

An important special case of \eqref{eqn:bundle} is when $x_+=x_-=x$, so that the base of \eqref{eqn:bundle} is a single point. In this case, after choosing a lift $\widetilde{x}\in ES^1$ of $x$, the constant flow line from $\widetilde{x}$ to itself determines a diffeomorphism
\[
\widetilde{\M}_d^\frak{J}((x,\gamma_+),(x,\gamma_-)) \simeq \widetilde{\M}_d^\J(\gamma_+,\gamma_-).
\]
We can further mod out by the $\R$ action in the target to obtain a diffeomorphism
	\begin{equation}
	\label{eqn:cflb}
	\M_d^\frak{J}((x,\gamma_+),(x,\gamma_-)) \simeq \M_d^\J(\gamma_+,\gamma_-).
	\end{equation}
	This last diffeomorphism is orientation preserving, by Lemma~\ref{lem:chautor}.

Another important special case of \eqref{eqn:bundle} is when $\gamma_+=\gamma_-=\gamma$. In this case, every curve $u\in\widetilde{\M}^\J(\gamma,\gamma)$ maps to the ``trivial cylinder'' $\R\times\overline{\gamma}$. Thus modding out by the $\R$ action on $\widetilde{\M}^\frak{J}$ by translation of the targets is equivalent to modding out by the reparametrization action on $\widetilde{\M}^{\op{Morse}}_V(x_+,x_-)$, so we can replace the bundle \eqref{eqn:bundle} by
\begin{equation}
\label{eqn:lastbundle}
\begin{CD}
\widetilde{\M}^\J_{d-i_++i_-}(\gamma,\gamma) @>>> \M^\frak{J}_d((x_+,\gamma),(x_-,\gamma)) \\
& & @VVV \\
& & \M^{\op{Morse}}_V(x_+,x_-).
\end{CD}
\end{equation}
Here $\M^{\op{Morse}}_V(x_+,x_-)$ denotes the space of flow lines of $V$ from $x_-$ to $x_+$ modulo reparametrization. The fiber
\begin{equation}
\label{eqn:thefiber}
\widetilde{\M}_{d-i_++i_-}^\J(\gamma,\gamma) \simeq \left\{\begin{array}{cl}
S^1, & d=i_+-i_-,\\
\emptyset, & \mbox{otherwise}.
\end{array}
\right.
\end{equation}
When $d=i_+-i_-$, the $S^1$ above is just the set of parametrizations of $\gamma$. 

\medskip

{\em Step 2.}
Recall that we are given a set $\mc{P}$, consisting of a choice of base point $p_\gamma\in\overline{\gamma}$ for each Reeb orbit $\gamma$, for the purpose of defining nonequivariant cascades. To make an analogous choice to define equivariant cascades, for each critical point $x$ of $f$ on $BS^1$, fix a lift $\widetilde{x}\in ES^1$. For each pair $(x,\gamma)$ where $x$ is a critical point of $f$ and $\gamma$ is a Reeb orbit, fix the base point
\[
p_{(x,\gamma)} = [(\widetilde{x},p_\gamma)] \in \overline{(x,\gamma)}.
\]
Let $\frak{P}$ denote the set of these choices.

We claim that the pair $(\frak{J},\frak{P})$ satisfies an analogue of the admissibility in Definition~\ref{def:admissible}. That is, for all distinct pairs $(x_+,\gamma_+)$ and $(x_-,\gamma_-)$, we have:
\begin{description}
	\item{(a)} If $d\le 0$, then $\M^\frak{J}_d((x_+,\gamma_+),(x_-,\gamma_-)) = \emptyset$.
	\item{(b)} $\M^\frak{J}_1((x_+,\gamma_+),(x_-,\gamma_-))$ is cut out transversely.
	\item{(c)} $\M^\frak{J}_1\left((x_+,\gamma_+),p_{(x_+,\gamma_+)}, (x_-,\gamma_-),p_{(x_-,\gamma_-)}\right) = \emptyset$.
	\item{(d)} $\M^\frak{J}_2\left((x_+,\gamma_+),p_{(x_+,\gamma_+)}, (x_-,\gamma_-),p_{(x_-,\gamma_-)}\right)$ is cut out transversely.
	\item{(e)} If $(x_0,\gamma_0)$ is distinct from $(x_+,\gamma_+)$ and $(x_-,\gamma_-)$, then
	\[
	\M^\frak{J}_1\left((x_+,\gamma_+),p_{(x_+,\gamma_+)},(x_0,\gamma_0)\right) \times_{\overline{(x_0,\gamma_0)}} \M^\frak{J}_1\left((x_0,\gamma_0),(x_-,\gamma_-),p_{(x_-,\gamma_-)}\right) = \emptyset.
	\] 
\end{description}

To prove (a), observe by \eqref{eqn:bundle} that if $\M^\frak{J}_d((x_+,\gamma_+),(x_-,\gamma_-))$ is nonempty, then $\M^\J_{d'}(\gamma_+,\gamma_-)$ is nonempty for some $d'\le d$. Since $(J,\mc{P})$ is admissible, we have $d'\ge 0$, so $d\ge 0$.

Next, observe that by \eqref{eqn:bundle} and \eqref{eqn:cflb}, we have
\begin{equation}
\label{eqn:mfrakj1}
\M^\frak{J}_1((x_+,\gamma_+),(x_-,\gamma_-)) \simeq \left\{\begin{array}{cl} \M^\J_1(\gamma_+,\gamma_-), & \mbox{if $x_+=x_-$},\\
\emptyset, & \mbox{if $x_+\neq x_-$}.
\end{array}
\right.
\end{equation}
Given \eqref{eqn:mfrakj1}, assertions (b), (c), and (e) follow from the hypothesis that the pair $(J,\mc{P})$ is admissible.

To prove assertion (d), if $x_+=x_-$ then we are done by \eqref{eqn:cflb} and the admissibility of $(J,\mc{P})$. If $x_+\neq x_-$, then by \eqref{eqn:bundle}, the moduli space $\M^\frak{J}_2((x_+,\gamma_+),(x_-,\gamma_-))$ is empty unless  $\gamma_+=\gamma_-$. (Otherwise the fiber in \eqref{eqn:bundle} would have the form $\widetilde{\M}^\J_{d'}(\gamma_+,\gamma_-)$ with $d'\le 0$ and $\gamma_+\neq\gamma_-$, contradicting the hypothesis that $(J,\mc{P})$ is admissible.) When $\gamma_+=\gamma_-$ we must have $\op{ind}(x_+)-\op{ind}(x_-)=2$ by \eqref{eqn:thefiber}. In this case, assertion (d) follows from \eqref{eqn:wcti} below, together with the transversality calculation in Example~\ref{ex:trivcyl}.

\medskip

{\em Step 3.} We now show that if $\op{ind}(x_+) - \op{ind}(x_-)=2$ and $\gamma$ is any Reeb orbit, then
\begin{equation}
\label{eqn:covspace}
e_+\times e_-: \M^\frak{J}_2((x_+,\gamma),(x_-,\gamma)) \longrightarrow \overline{(x_+,\gamma)}\times \overline{(x_-,\gamma)}
\end{equation}
is a covering space of degree $d(\gamma)$.

We can identify $\overline{(x_\pm,\gamma)}$ with $\overline{\gamma}$ by sending the equivalence class of $(\widetilde{x}_\pm,y)$ to $y$.

If $\eta$ is a flow line of $\widetilde{V}$ from $\pi^{-1}(x_-)$ to $\pi^{-1}(x_+)$, that is $\lim_{s\to\pm\infty}\eta(s)\in\pi^{-1}(x_\pm)$, define
\[
e_\pm(\eta) = \lim_{s\to\pm\infty}\eta(s) \in \pi^{-1}(x_\pm).
\]
If $\overline{\eta}\in\M^{\op{Morse}}_V(x_+,x_-)$, then since the vector field $V$ is $S^1$-invariant, there is a unique $\eta\in\M^{\op{Morse}}_{\widetilde{V}}(\pi^{-1}(x_+),\pi^{-1}(x_-))$ lifting $\overline{\eta}$ with $e_-(\eta)=\widetilde{x}_-$. We can then identify
\begin{equation}
\label{eqn:wcti}
\M^\frak{J}_2((x_+,\gamma),(x_-,\gamma)) \simeq \M^{\op{Morse}}_V(x_+,x_-) \times \widetilde{\M}^\J_0(\gamma,\gamma)
\end{equation}
by sending $(\overline{\eta},u)$ on the right hand side to the equivalence class of the pair $(\eta,u)$ on the left hand side.

Now define a map
\begin{equation}
\label{eqn:Deltadiffeo}
\Delta: \M^{\op{Morse}}_V(x_+,x_-) \longrightarrow S^1
\end{equation}
by
\[
\Delta(\overline{\eta}) = e_+(\eta) - \widetilde{x}_+.
\]
It then follows from \eqref{eqn:s1actm} that under the identification \eqref{eqn:wcti}, the map \eqref{eqn:covspace} is given by
\begin{equation}
\label{eqn:confmap}
(e_+\times e_-)(\overline{\eta},u) = \left( e_+(u) + d(\gamma)\Delta(\overline{\eta}), e_-(u) \right) \in \overline{\gamma}\times\overline{\gamma}.
\end{equation}

It follows from equation \eqref{eqn:efl} that the map \eqref{eqn:Deltadiffeo} is a diffeomorphism. Moreover, because of the sign conventions for the $S^1$ action in \eqref{eqn:s1actm}, this diffeomorphism is orientation preserving.

In addition, we know from \eqref{eqn:thefiber} that the map
\begin{equation}
\label{eqn:ediffeo}
e_\pm: \widetilde{\M}^\J_0(\gamma,\gamma) \longrightarrow \overline{\gamma}
\end{equation}
is a diffeomorphism. Moreover, this diffeomorphism is orientation preserving, by the argument in the proof of Lemma~\ref{lem:scacob}.

Under the identifications \eqref{eqn:Deltadiffeo} and \eqref{eqn:ediffeo}, we can rewrite \eqref{eqn:confmap} as
\begin{equation}
\label{eqn:submersion}
\begin{split}
e_+\times e_- : S^1\times \overline{\gamma} & \longrightarrow \overline{\gamma}\times\overline{\gamma},\\
(t,y) & \longmapsto (d(\gamma)t + y, y).
\end{split}
\end{equation}
This is an orientation preserving covering of degree $d(\gamma)$.

\medskip

{\em Step 4.\/} We claim now that the moduli spaces $\M_d^\frak{J}((x_+,\gamma_+),(x_-,\gamma_-))$, with or without point constraints at $p_{(x_+,\gamma_+)}$ and/or $p_{(x_-,\gamma_-)}$, determine a well-defined cascade differential $\partial^{\frak{J}}_\ca$ on $NCC_*(Y,\lambda)\tensor \Z[U]$. The proof closely follows the proof of Proposition~\ref{prop:nchaut}(a), with the following three modifications. First, in place of conditions (a)--(e) in Definition~\ref{def:admissible}, one uses the corresponding conditions (a)--(e) in Step 2 above. Second, in place of equation \eqref{eqn:mjds1} when $d=1$ or $d=2$, one uses equations \eqref{eqn:mfrakj1} and \eqref{eqn:wcti} respectively. Third, in place of Proposition~\ref{prop:compactness} one uses Proposition~\ref{prop:famcom}.

We now show that
\[
\partial^{\frak{J}}_\ca = \partial^{S^1,J}.
\]
More precisely, let $(x_+,\gamma_+)$ and $(x_-,\gamma_-)$ be distinct. Write $\op{ind}(x_\pm)=2k_\pm$. We need to show that
\begin{equation}
\label{eqn:step4}
\left\langle \partial^{\frak{J}}_\ca\left(\widetilde{\gamma}_+\tensor U^{k_+}\right), \widetilde{\gamma}_-\tensor U^{k_-} \right\rangle = \left\langle \partial^{S^1,J}\left(\widetilde{\gamma}_+\tensor U^{k_+}\right), \widetilde{\gamma}_-\tensor U^{k_-}\right\rangle.
\end{equation}
Here $\widetilde{\gamma}_+$ denotes $\widecheck{\gamma}_+$ or $\widehat{\gamma}_+$, and $\widetilde{\gamma}_-$ denotes $\widecheck{\gamma}_-$ or $\widehat{\gamma}_-$. The left hand side of \eqref{eqn:step4} counts cascades from $(x_+,\gamma_+)$ to $(x_-,\gamma_-)$, and the right hand side of \eqref{eqn:step4} is defined by equation \eqref{eqn:ds1j}.

If $x_+=x_-$, then equation \eqref{eqn:step4} follows from the orientation preserving diffeomorphism \eqref{eqn:cflb}. If $x_+\neq x_-$, then by \eqref{eqn:ds1j} and \eqref{eqn:bundle}, both sides of \eqref{eqn:step4} are zero except when $\op{ind}(x_+) - \op{ind}(x_-) = 2$ and there is a Reeb orbit $\gamma$ such that $\widetilde{\gamma}_+=\widecheck{\gamma}$ and $\widetilde{\gamma}_-=\widehat{\gamma}$. So by \eqref{eqn:defbv}, to complete the proof of \eqref{eqn:step4}, we need to check that if $k\ge 1$ and $\gamma$ is any Reeb orbit, then
\begin{equation}
\label{eqn:step4end}
\left\langle \partial^{\frak{J}}_\ca\left(\widecheck{\gamma}\tensor U^{k}\right), \widehat{\gamma}\tensor U^{k-1} \right\rangle = \left\{\begin{array}{cl}
d(\gamma), & \mbox{$\gamma$ good},\\
0, & \mbox{$\gamma$ bad}.
\end{array}\right.
\end{equation}

By Step 3, the left hand side of \eqref{eqn:step4end} is a signed count of $d(\gamma)$ points. These points are the $d(\gamma)$ inverse images of $(p_{(x_+,\gamma)},p_{(x_-,\gamma)})$ under the map \eqref{eqn:covspace}. In the notation of \eqref{eqn:submersion}, these inverse images all have the same $y$ coordinate, while their $t$ coordinates are evenly spaced around $S^1$.

If $\gamma$ is a bad orbit, then if we rotate $t$ from one such point to the next, the signs alternate as in the proof of \eqref{eqn:partial+}, because $e_+$ rotates once around $\overline{(p_+,\gamma)}\simeq \overline{\gamma}$, while $e_-$ stays fixed. Thus the signed count is zero.

If $\gamma$ is a good orbit, then the signs are all the same. Moreover these signs are all positive, because the diffeomorphism \eqref{eqn:wcti} is orientation preserving\footnote{Note that this statement only makes sense when $\gamma$ is good, because the local system in which the orientation of the left hand side of \eqref{eqn:wcti} takes values is trivial if and only if $\gamma$ is good.}. This last statement follows from the fact that the diffeomorphism \eqref{eqn:ediffeo} is orientation preserving, together with the exact sequence \eqref{eqn:fces} used to orient the left hand side of \eqref{eqn:wcti}.

\medskip

{\em Step 5.\/}
We now conclude the proof, similarly to Proposition~\ref{prop:nchaut}(c).

As in the proof of Proposition~\ref{prop:nchaut}(b), for any real number $L$ and any integer $K$, we can choose a generic $S^1$-equivariant $S^1\times ES^1$-family of almost complex structures $\frak{J}'$ which is close on $S^1\times S^{2K+1}$ to $\frak{J}=\{J\}$, such that $\partial^{\frak{J}'}_\ca = \partial^{J,S^1}$ on all generators $\widetilde{\alpha}\tensor U^k$ with $\mc{A}(\alpha)<L$ and $k\le K$. It follows that
\begin{equation}
\label{eqn:yadl}
CH_*^{S^1,L,K}(Y,\lambda) = H_*\left(NCC_*^L(Y,\lambda)\tensor \Z[U]/U^{K+1}, \partial^{S^1,J}\right).
\end{equation}
Here $CH_*^{S^1,L,K}(Y,\lambda)$ denotes the homology of $\left(NCC_*^L(Y,\lambda) \tensor \Z[U]/U^{K+1},\partial^{\frak{J}'}_\ca\right)$ for generic $\frak{J}'$; the proof of Theorem~\ref{thm:CHS1invariant} shows that this depends only on $(Y,\lambda)$. Taking the direct limit over $L$ and $K$ in \eqref{eqn:yadl} completes the proof of the proposition.
\end{proof}

\subsection{Comparison with cylindrical contact homology}
\label{sec:comparison}

We now prove Theorem~\ref{thm:cchinv}, asserting that equivariant contact homology tensor $\Q$ agrees with cylindrical contact homology, when the latter is defined.

\begin{proof}[Proof of Theorem~\ref{thm:cchinv}.]
Assume that the pair $(J,\mc{P})$ is admissible. The pair $(J,\mc{P})$ determines operators $\partial^J_\ca$ and $\partial^{J,S^1}$ as in \S\ref{sec:nchaut} and \S\ref{sec:chaut}. By Proposition~\ref{prop:chaut}, we have a canonical isomorphism
\[
CH_*^{S^1}(Y,\xi) = H_*\left(CC_*^{S^1}(Y,\lambda), \partial^{S^1,J}\right).
\]
So to complete the proof of Theorem~\ref{thm:cchinv}, we need to show that there is a canonical isomorphism
\[
H_*\left(CC_*^{S^1}(Y,\lambda), \partial^{S^1,J}\right) \tensor \Q = CH_*^{\op{EGH}}(Y,\lambda;J).
\]
We proceed in three steps.

\medskip

{\em Step 1.\/} Recall that $CC_*^{S^1}(Y,\lambda)$ is the free $\Z$-module generated by symbols $\widecheck{\alpha}\tensor U^k$ and $\widehat{\alpha}\tensor U^k$ where $\alpha$ is a Reeb orbit and $k$ is a nonnegative integer. Let $C_*'$ denote the submodule generated by all of the above generators {\em except\/} generators of the form $\widecheck{\beta}\tensor 1$ where $\beta$ is a good Reeb orbit. We claim that $C_*'$ is a subcomplex of $CC_*^{S^1}(Y,\lambda)$.

To prove this, let $\beta$ be a good Reeb orbit. We need to show that $\widecheck{\beta}\tensor 1$ does not appear in $\partial^{S^1,J}$ of any generator that does not have the form $\widecheck{\alpha}\tensor 1$ where $\alpha$ is a good Reeb orbit.  That is, we need to show the following:
\begin{description}
	\item{(i)} If $\alpha$ is a bad Reeb orbit then
	\[
	\left\langle \partial^{S^1,J}\left(\widecheck{\alpha}\tensor 1\right),\widecheck{\beta}\tensor 1\right\rangle = 0.
	\]
	\item{(ii)} If $\alpha$ is any Reeb orbit and $k$ is a positive integer then
	\[
	\left\langle\partial^{S^1,J} \left(\widecheck{\alpha}\tensor U^k\right),\widecheck{\beta}\tensor 1\right\rangle = 0.
	\]
	\item{(iii)} If $\alpha$ is any Reeb orbit and $k$ is a nonnegative integer then
	\[
	\left\langle \partial^{S^1,J}\left(\widehat{\alpha}\tensor U^k\right),\widecheck{\beta}\tensor 1\right\rangle = 0.
	\]
\end{description}
Assertion (i) follows from equation \eqref{eqn:partialcheck}. Assertion (ii) follows from the definition of $\partial^{S^1,J}$. Assertion (iii) follows from the definition of $\partial^{S^1,J}$ when $k>0$, and from equation \eqref{eqn:partial+} when $k=0$.

\medskip

{\em Step 2.} We now show that the homology of the subcomplex $C_*'$ vanishes after tensoring with $\Q$, that is
\begin{equation}
\label{eqn:ifsc}
H_*\left(C'\tensor\Q,\partial^{S^1,J}\tensor 1\right) = 0.
\end{equation}

To see this, note that $\partial^{S^1,J}$ does not increase symplectic action, where the ``symplectic action'' of a generator $\widetilde{\alpha}\tensor U^k$ is understood to be the symplectic action of the Reeb orbit $\alpha$. Let $\partial^{S^1,J}_0$ denote the part of $\partial^{S^1,J}$ that fixes symplectic action. By a spectral sequence argument\footnote{Concretely, let $0<A_1<A_2<\cdots$ denote the (discrete) values of the symplectic action. We then define an integer-valued filtration on the complex in \eqref{eqn:ifsc}, where the $i^{th}$ filtration level is spanned by generators $\widetilde{\alpha}\tensor U^k$ with $\mc{A}(\alpha)\le A_i$. If \eqref{eqn:e1term} holds, then the homology of the associated graded complex vanishes, so by induction on $i$, the homology of the $i^{th}$ filtered subcomplex also vanishes.}, it is enough to show that
\begin{equation}
\label{eqn:e1term}
H_*\left(C_*'\tensor\Q,\partial^{S^1,J}_0\tensor 1\right) = 0.
\end{equation}

The chain complex $\left(C_*',\partial^{S^1,J}_0\right)$ splits into a direct sum over subcomplexes indexed by the set of Reeb orbits. For a Reeb orbit $\alpha$, the corresponding subcomplex is the span of the generators $\widecheck{\alpha}\tensor U^k$ for $k>0$ (and also $k=0$ if $\alpha$ is bad) and $\widehat{\alpha}\tensor U^k$ for $k\ge 0$. We need to show that the homology of this subcomplex vanishes after tensoring with $\Q$. When $\alpha$ is good, the subcomplex is the sum over nonnegative integers $k$ of two-term complexes
\[
\widecheck{\alpha}\tensor U^{k+1} \stackrel{d(\alpha)}{\longrightarrow} \widehat{\alpha}\tensor U^k.
\]
Thus the homology of the subcomplex is an infinite direct sum of copies of $\Z/d(\alpha)\Z$, which vanishes after tensoring with $\Q$. If $\alpha$ is bad, then the subcomplex is the sum over nonnegative integers $k$ of two-term complexes
\[
\widehat{\alpha}\tensor U^k \stackrel{2}{\longrightarrow} \widecheck{\alpha}\tensor U^k.
\]
Thus the homology of the subcomplex is an infinite direct sum of copies of $\Z/2\Z$, which again vanishes after tensoring with $\Q$. (Similar calculations appeared previously in \cite[\S3.2]{bo12} and \cite[\S3.2]{gu}.)

\medskip

{\em Step 3.\/} We now complete the proof. It follows from Step 2 that after tensoring with $\Q$, the homology of the chain complex $\left(CC_*^{S^1}(Y,\lambda),\partial^{S^1,J}\right)$ is canonically isomorphic to the homology of the quotient complex by $C_*'$. That is,
\begin{equation}
\label{eqn:hq1}
CH_*^{S^1}(Y,\xi)\tensor \Q = H_*\left(\left(CC_*^{S^1}(Y,\lambda)/C_*'\right)\tensor\Q, \partial^{S^1,J}\tensor 1\right).
\end{equation}
A basis for the quotient complex by $C_*'$ is given by the generators $\widecheck{\alpha}\tensor 1$ where $\alpha$ is good. By the definition of $\partial^{S^1,J}$, the differential on the quotient complex is induced by $\widecheck{\partial}$. By equation \eqref{eqn:partialcheck}, this differential agrees with $\partial^{EGH}$ after tensoring with $\Q$. Thus we have a canonical isomorphism of chain complexes
\begin{equation}
\label{eqn:hq2}
\left(\left(CC_*^{S^1}(Y,\lambda)/C_*'\right)\tensor\Q,\partial^{S^1, J}\tensor 1\right) = \left(CC_*^{EGH}(Y,\lambda;J),\partial^{EGH}\right).
\end{equation}
Theorem~\ref{thm:cchinv} now follows from \eqref{eqn:hq1} and \eqref{eqn:hq2}.
\end{proof}

\subsection{Existence of admissible $J$ in dimension 3}
\label{sec:admissible3}

\begin{proposition}
\label{prop:admissible}
Suppose that $\op{dim}(Y)=3$. Then a generic $\lambda$-compatible almost complex structure $J$ on $\R\times Y$ is admissible.
\end{proposition}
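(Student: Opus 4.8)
The plan is to split Definition~\ref{def:admissible} into two parts. Conditions (a), (b), and the unconstrained transversality underlying (d) are supplied by the three-dimensional transversality results of \cite{dc}; conditions (c), (e), and the remaining point-constraint part of (d) will hold for a generic choice of base points $\mc{P}=\{p_\gamma\}$ by elementary dimension counting.

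For the first part, observe that a hypertight contact form has no contractible Reeb orbits and is therefore dynamically convex in the sense relevant in dimension three, so the results of \cite{dc} apply: for generic $J$, for all distinct Reeb orbits $\alpha,\beta$ and all $d\le 2$, the cylindrical contact homology moduli space $\M^J_d(\alpha,\beta)$ --- that is, $\widetilde{\M}^J(\alpha,\beta)$ cut down by the Fredholm index constraint and quotiented by the $\R$ and $S^1$ actions --- is cut out transversely, hence a smooth manifold of dimension $d-1$, and in particular empty for $d\le 0$, with $\M^J_1(\alpha,\beta)$ a finite set. Via the identification \eqref{eqn:mjds1}, the autonomous non-equivariant moduli space $\M^\J_d(\alpha,\beta)$ is an $S^1$-bundle with finite stabilizers over $\M^J_d(\alpha,\beta)$; so for generic $J$ it is a smooth manifold of dimension $d$, empty for $d\le 0$ (this is (a)), cut out transversely for $d=1,2$ (this is (b) and the unconstrained half of (d)), and a finite disjoint union of circles when $d=1$, hence compact. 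As in the proof of Proposition~\ref{prop:nchaut}(a), on each such circle fiber $e_+$ covers $\overline{\alpha}$ and $e_-$ covers $\overline{\beta}$, so $e_+$ and $e_-$ are submersions on $\M^\J_d(\alpha,\beta)$ for $d\le 2$.

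It then remains to choose $\mc{P}$ generically. Since the Reeb orbits form a countable set (finitely many below any action bound), a Baire category argument reduces this to finitely many conditions at a time, each of which is generic. Condition (c) holds because $e_+\times e_-\colon\M^\J_1(\alpha,\beta)\to\overline{\alpha}\times\overline{\beta}$ sends a compact one-manifold into a two-manifold, so a generic $(p_\alpha,p_\beta)$ lies outside its image. The point-constraint part of (d) holds because $e_+\times e_-\colon\M^\J_2(\alpha,\beta)\to\overline{\alpha}\times\overline{\beta}$ is a smooth map between two-manifolds, so by Sard's theorem a generic $(p_\alpha,p_\beta)$ is a regular value, which is exactly what (d) asks for beyond the already-established unconstrained transversality. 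Condition (e) holds because $e_\pm$ are submersions, so $\M^\J_1(\alpha,p_\alpha,\gamma)$ and $\M^\J_1(\gamma,\beta,p_\beta)$ are finite, and for fixed $p_\beta$ the finitely many points $e_+\bigl(\M^\J_1(\gamma,\beta,p_\beta)\bigr)\subset\overline{\gamma}$ have finite preimage under $e_-$ in $\M^\J_1(\alpha,\gamma)$, whose image under $e_+$ is the finite ``bad'' set of values of $p_\alpha$ to be avoided. Intersecting these residual conditions over all triples of Reeb orbits produces a $\mc{P}$ for which $(J,\mc{P})$ is admissible.

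The main obstacle is the transversality input from \cite{dc} --- the control of multiply covered holomorphic cylinders of Fredholm index at most two between distinct orbits, which is special to dimension three and fails in general. Note that Proposition~\ref{prop:transversality} cannot be invoked here, since the constant family $\J=\{J\}$ is not generic among $S^1$-families of almost complex structures; this is precisely the reason \cite{dc} is needed. Everything past that input is a routine genericity argument for the base points.
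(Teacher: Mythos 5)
There is a genuine gap in your argument for admissibility condition (d). You claim that for generic $J$ the results of \cite{dc} give transversality of $\M^J_d(\alpha,\beta)$ for all $d\le 2$, and you then apply Sard's theorem to $e_+\times e_-$ on $\M^\J_2(\alpha,\beta)$ as a smooth map between two-manifolds. But what \cite{dc} actually provides (this is Lemma~\ref{lem:fromdc} in the paper) is unconditional transversality of $\M^J_1(\alpha,\beta)$ only, together with the index bound $1\le\op{ind}(\overline{u})\le\op{ind}(u)$ on underlying somewhere-injective curves; for $\M^J_2(\alpha,\beta)$, transversality at a cylinder $u$ is guaranteed \emph{only when $u$ is not a double cover of an index-$1$ cylinder}. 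At double covers of index-$1$ curves, $\M^J_2(\alpha,\beta)$ may fail to be transverse, may have an orbifold point rather than a smooth point, and indeed it is only stated as an expectation (Remark~\ref{rem:jowantsthis} in the vicinity, and the remark after Lemma~\ref{lem:fromdc}) that $\M_2^\J(\alpha,\beta)$ remains a manifold there. So the hypothesis on which your Sard argument for (d) rests is not available.

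The fix is to use admissibility condition (c) \emph{before} attempting to establish (d), which is how the paper's proof proceeds. Having chosen $\mc{P}$ generically so that (c) and (e) hold, one observes that any $u\in\M^\J_2(\alpha,p_\alpha,\beta,p_\beta)$ is automatically a transverse point: if not, then by Lemma~\ref{lem:fromdc} it would be a double cover of an index-$1$ cylinder $\overline{u}$, and since $e_\pm(\overline{u})=e_\pm(u)$ agree (the underlying curve has the same asymptotic limits and evaluation points), $\overline{u}$ would lie in $\M^\J_1(\alpha,p_\alpha,\beta,p_\beta)$, which is empty by (c). Thus the constrained moduli space never sees the problematic multiple covers, and only \emph{then} can one apply Sard over the open transverse locus of $\M^\J_2(\alpha,\beta)$ to arrange (d) for a further generic choice of $\mc{P}$. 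The logical order matters: condition (c) must be established first and is what makes the dimension-count for (d) legitimate, rather than (c) and (d) being independent genericity conditions as your write-up treats them. The remaining parts of your argument --- conditions (a), (b) from \cite{dc}, the Baire/action-filtration reduction to finitely many orbits, and the dimension counts for (c) and (e) --- are fine and match the paper.
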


To prove Proposition~\ref{prop:admissible}, we will use the following facts from \cite{dc}:

\begin{lemma}
\label{lem:fromdc}
Let $(Y,\lambda)$ be a nondegenerate\footnote{Lemma~\ref{lem:fromdc} does not require $\lambda$ to be hypertight.} contact three-manifold, and let $J$ be a generic $\lambda$-compatible almost complex structure on $\R\times Y$. Then for distinct Reeb orbits $\alpha,\beta$, we have:
\begin{description}
\item{{\em (a)}} If $u\in\M^J(\alpha,\beta)$ with $\alpha\neq \beta$, and if $\overline{u}$ is the somewhere injective curve underlying $u$, then
\[
1\le \op{ind}(\overline{u}) \le \op{ind}(u).
\]
\item{\emph{(b)}} $\M_1^J(\alpha,\beta)$ is cut out transversely. More precisely, each $u\in\M_1^J(\alpha,\beta)$ is an immersion, whose normal deformation operator (see Remark~\ref{rem:altreg}) is surjective.
\item{\emph{(c)}} For each $u\in\M_2^J(\alpha,\beta)$, if $u$ is not a double cover of an index $1$ cylinder $\overline{u}$, then $\M_2^J(\alpha,\beta)$ is a smooth manifold near $u$ cut out transversely.
\end{description}
\end{lemma}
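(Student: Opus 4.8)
The plan is to recover all three statements from the transversality analysis of \cite[\S2]{dc}; here I sketch the mechanism. Since $\alpha\neq\beta$, a cylinder $u\in\M^J(\alpha,\beta)$ is never a trivial cylinder, so it factors as $u=\overline u\circ\phi$ with $\overline u$ somewhere injective and $\phi$ a holomorphic covering of the domain; because the domain is a cylinder, Riemann--Hurwitz forces $\phi$ to be an unbranched $d(u)$-fold cover and $\overline u$ to be a somewhere injective cylinder, asymptotic to orbits $\alpha',\beta'$ with $\alpha=(\alpha')^{d(u)}$ and $\beta=(\beta')^{d(u)}$.

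For (a): if $\alpha'=\beta'$ then their $d(u)$-th iterates would agree, contradicting $\alpha\neq\beta$; hence $\alpha'\neq\beta'$, $\overline u$ is not $\R$-invariant, and the $\R$-action on $\widetilde\M^J$ near $\overline u$ is free. For generic $J$, somewhere injective curves are Fredholm-regular by the standard argument (\cite{dragnev,fhs,wendl}), so $\op{ind}(\overline u)\geq 0$; freeness of the $\R$-action then forces $\op{ind}(\overline u)\geq 1$. For the upper bound $\op{ind}(\overline u)\leq\op{ind}(u)$ one computes, using that the relative first Chern term scales exactly by $d(u)$ under an unbranched cover,
\[
\op{ind}(u)-d(u)\op{ind}(\overline u)=\bigl(\CZ_\tau(\alpha)-d(u)\CZ_{\tau'}(\alpha')\bigr)-\bigl(\CZ_\tau(\beta)-d(u)\CZ_{\tau'}(\beta')\bigr),
\]
and applies the iteration estimates for the Conley--Zehnder index of a Reeb orbit with two-dimensional linearized return map, together with the constraint $\op{ind}(\overline u)\geq 1$ just obtained, analysing separately the elliptic, positive-hyperbolic and negative-hyperbolic cases for $\alpha'$ and $\beta'$. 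This last inequality is the technical heart of the lemma and the step I expect to be the main obstacle: the crude bound $|\op{ind}(u)-d(u)\op{ind}(\overline u)|\leq 2(d(u)-1)$ is by itself too weak once $\op{ind}(\overline u)=1$, so one must exploit the precise orbit types.

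For (b): if $u\in\M_1^J(\alpha,\beta)$ is somewhere injective, regularity is standard; if $u$ is multiply covered then part (a) forces $\op{ind}(\overline u)=1$, and one deduces regularity of $u$ from that of $\overline u$ by reducing the cokernel of the linearized operator $D_u$ to cokernels of asymptotically twisted versions of the surjective operator $D_{\overline u}$ and invoking automatic transversality in dimension four. For (c): somewhere injective $u\in\M_2^J(\alpha,\beta)$ are again handled by standard transversality for generic $J$, while if $u$ is multiply covered part (a) gives $\op{ind}(\overline u)\in\{1,2\}$; the cases $\op{ind}(\overline u)=2$, and $\op{ind}(\overline u)=1$ with $d(u)\geq 3$, are treated by the same twisted-operator argument as in (b), and the single remaining possibility, $\op{ind}(\overline u)=1$ with $d(u)=2$, is precisely the one excluded from the statement. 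All of this is contained in \cite[\S2]{dc}, and no new ingredient is needed beyond assembling it.
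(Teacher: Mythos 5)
Your proposal is correct and takes the same route as the paper, whose proof of this lemma consists entirely of citations to \cite{dc}: part (a) is \cite[Lem.\ 2.5(a)]{dc}, and parts (b) and (c) are \cite[Lem.\ 4.2]{dc} (so the relevant material is in \S 2 \emph{and} \S 4 of \cite{dc}). Your sketch---the unbranched-cover factorization, the lower bound from somewhere-injectivity plus the free $\R$-action, the Conley--Zehnder iteration estimates with the finer case analysis when $\op{ind}(\overline{u})=1$, and the twisted-operator/automatic-transversality argument for multiple covers of index-$1$ cylinders---accurately reconstructs the arguments given there.
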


\begin{proof}[Proof of Lemma~\ref{lem:fromdc}.]
(a) This is \cite[Lem.\ 2.5(a)]{dc}.

(b) This is \cite[Lem.\ 4.2(a)]{dc}.

(c) This follows from the proof of \cite[Lem.\ 4.2(b)]{dc}.
\end{proof}

\begin{remark}
In part (c), if $u\in\M_2^J(\alpha,\beta)$ is a double cover of an index $1$ cylinder $\overline{u}$, then $u$ may be an orbifold point of $\M_2^J(\alpha,\beta)$. We expect that $\M_2^\J(\alpha,\beta)$ is still a manifold near $u$.
\end{remark}

\begin{proof}[Proof of Proposition~\ref{prop:admissible}.]
Choose generic $J$ as in Lemma~\ref{lem:fromdc}. We claim that $J$ is admissible.

Note that if $u\in \M^\J_d(\alpha,\beta)$, then $\M^\J_d(\alpha,\beta)$ is cut out transversely near $u$, i.e.\ $u$ is regular in the sense of Definition~\ref{def:regular}, if $\M^J_d(\alpha,\beta)$ is cut out transversely near $u$ (more precisely the equivalence class of $u$ in equation \eqref{eqn:mjds1}) in the sense of Lemma~\ref{lem:fromdc}. This is explained in Remark~\ref{rem:altreg}.

Admissibility conditions (a) and (b) in Definition~\ref{def:admissible} now follow from parts (a) and (b) of Lemma~\ref{lem:fromdc}.  By Sard's theorem, we can then choose $\mc{P}$ generically so that admissibility conditions (c) and (e) hold.

Next observe that $\M^\J_2(\alpha,\beta)$ is a smooth manifold cut out transversely near each $u\in\M^\J_2(\alpha,p_\alpha,\beta,p_\beta)$. Otherwise, by Lemma~\ref{lem:fromdc}, $u$ would be a cover of an index $1$ curve $\overline{u}\in \M^\J_1(\alpha,p_\alpha,\beta,p_\beta)$, but the latter cannot exist by admissibility condition (c).

By Sard's theorem again, we can then choose $\mc{P}$ generically so that admissibility condition (d) holds also. 
\end{proof}

\section{Examples}
\label{examples}

The nonequivariant and $S^1$-equivariant contact homology that we have defined have integer coefficients and contain some interesting torsion. We now compute some examples of this, and we also explain more about how to define local versions of these contact homologies, as sketched in \S\ref{sec:addstr}.

\subsection{Prequantization spaces}


Let $\Sigma$ be a Riemann surface of genus $g>0$, let $\rho:Y\to\Sigma$ be an $S^1$-bundle over $\Sigma$ with Euler class $-e$ where $e>0$, and let $\lambda_0$ be a connection $1$-form on $Y$ with positive curvature. Then $\lambda_0$ is a contact form; let $\xi$ denote the contact structure $\Ker(\lambda_0)$. The simple Reeb orbits of $\lambda_0$ consist of the fibers of $\rho$, which all have action $2\pi$, so that $\lambda_0$ is hypertight, although degenerate. Let $\Gamma$ be the free homotopy class of loops in $Y$ given by $d$ times a fiber where $d>0$. We now sketch how to compute the nonequivariant and equivariant contact homologies of $(Y,\xi)$ in the class $\Gamma$. A related calculation was given in \cite[Thm.\ 1.19]{jo2}.

By the discussion in \S\ref{sec:addstr}, we have
\[
NCH_*(Y,\xi,\Gamma) = NCH_*^{<L}(Y,\lambda,\Gamma)
\]
where $L>2\pi d$, and $\lambda$ is a nondegenerate perturbation of $\lambda_0$ which is small with respect to $L$. An analogous equation holds for equivariant contact homology.

The usual approach for perturbing $\lambda_0$ is to take
\[
\lambda = (1+\rho^*H)\lambda_0
\]
where $H:\Sigma\to\R$ is a $C^2$-small Morse function. The Reeb orbits of $\lambda$ in the class $\Gamma$ of action less than $L$ then consist of the $d$-fold covers of the fibers over the critical points of $H$. These Reeb orbits are all good.

As explained in \S\ref{sec:addstr}, since $c_1(\xi)$ vanishes on toroidal classes in $H_2(Y)\simeq H_1(\Sigma)$, we can noncanonically refine the canonical $\Z/2$ grading on $NCC_*$ to a $\Z$ grading. We can choose this $\Z$ grading such that if $p$ is a critical point of $H$, and if $\gamma_p$ denotes the $d$-fold cover of the fiber over $p$, then
\[
\begin{split}
\left|\widecheck{\gamma_p}\right| &= \op{ind}(H,p) - 1,\\
\left|\widehat{\gamma_p}\right| &= \op{ind}(H,p)
\end{split}
\]
where $\op{ind}(H,p)$ denotes the Morse index of $H$ at $p$.

Next, by Proposition~\ref{prop:admissible}, we can choose an admissible $\lambda$-compatible almost complex structure $J$ on $\R\times Y$, which is close to an $S^1$-invariant $\lambda_0$-compatible almost complex structure coming from a Riemannian metric on $\Sigma$. For suitable orientation choices, if $p$ and $q$ are critical points of $H$, then there is an orientation-preserving bijection
\begin{equation}
\label{eqn:opbhm}
\M^J(\gamma_p,\gamma_q) = \M^{\op{Morse}}(p,q),
\end{equation}
where the left hand side is the moduli space of $J$-holomorphic cylinders \eqref{eqn:modholcyl}, while the right hand side denotes the moduli space of downward gradient flow lines of $H$ from $p$ to $q$, modulo reparametrization. Furthermore, each of the holomorphic cylinders on the left hand side is a $d$-fold cover which is cut out transversely. The above is proved by Moreno \cite[\S3.5.1,\S6.1]{moreno}, modifying the classic arguments of Salamon-Zehnder \cite{sz} computing Hamiltonian Floer homology of $C^2$-small autonomous Hamiltonians.

By Proposition~\ref{prop:nchaut}, we can now compute most of the nonequivariant differential \eqref{eqn:blockmatrix} as follows:
\begin{equation}
\label{eqn:preq1}
\begin{split}
\partial_+ &= 0,\\
\widecheck{\partial} &= \partial_{\op{Morse}},\\
\widehat{\partial} &= -\partial_{\op{Morse}},
\end{split}
\end{equation}
where $\partial_{\op{Morse}}$ denotes the differential on the Morse complex for $H$. 

For simplicity, we can choose $H$ to be a perfect Morse function with one index $2$ critical point $p$, with $2g$ index $1$ critical points $q_1,\ldots,q_{2g}$, and with one index $0$ critical point $r$. Then by \eqref{eqn:preq1}, the only possibly nonzero block in the cascade differential \eqref{eqn:blockmatrix} is $\partial_-$. Moreover, for grading reasons, the only possible nonzero coefficient of $\partial_-$ is the coefficient from $\widecheck{p}$ to $\widehat{r}$. A calculation similar to the proof of equation \eqref{eqn:step4end} shows that this coefficient is given by
\begin{equation}
\label{eqn:preq2}
\langle \partial_-\widecheck{p},\widehat{r}\rangle = \pm de.
\end{equation}

We conclude that the nonequivariant contact homology is generated by $\widehat{p}, \widecheck{q}_i, \widehat{q}_i, \widecheck{r},$ which are free, and $\widehat{r}$, which is $de$-torsion. Thus
\[
NCH_*(Y,\xi,\Gamma) \simeq \left\{\begin{array}{cl} H_2(\Sigma), & *=2,\\
H_1(\Sigma), & *=1,\\
H_1(\Sigma)\oplus \Z/de, & *=0,\\
H_0(\Sigma), & *=-1,\\
0, & \mbox{else}. 
\end{array}
\right.
\]

Next we can compute the equivariant contact homology. By Proposition~\ref{prop:chaut}, this is the homology of the differential $\partial^{S^1,J}$ in equation \eqref{eqn:ds1j}.  By \eqref{eqn:preq1} and \eqref{eqn:preq2}, this differential is given by
\[
\begin{split}
\partial^{S^1,J}(\widecheck{p}\tensor 1) &= \pm de \widehat{r}\tensor 1,\\
\partial^{S^1,J}(\widecheck{p}\tensor U^k) &= \pm de \widehat{r}\tensor U^k + d\widehat{p}\tensor U^{k-1}, \quad k>0,\\
\partial^{S^1,J}(\widecheck{q_i}\tensor U^k) &= d\widehat{q_i}\tensor U^{k-1}, \quad k>0,\\
\partial^{S^1,J}(\widecheck{r}\tensor U^k) &= d\widehat{r}\tensor U^{k-1}, \quad k>0, 
\end{split}
\]
and $\partial^{S^1,J}$ sends all other generators, namely $\widecheck{q_i}\tensor 1, \widecheck{r}\tensor 1$ and $\widehat{p}\tensor U^k, \widehat{q_i}\tensor U^k, \widehat{r}\tensor U^k$ with $k\ge 0$, to zero. It follows that the homology is generated by $\widecheck{p}\tensor 1 \mp e\widecheck{r}\tensor U$, $\widecheck{q_i}\tensor 1$, and $\widecheck{r}\tensor 1$, each of which is free, together with $\widehat{p}\tensor U^k, \widehat{q_i}\tensor U^k, \widehat{r}\tensor U^k$ for $k\ge 0$, each of which is $d$-torsion. We conclude the following:

\begin{proposition}
Let $\Sigma$ be a Riemann surface of genus $g>0$, let $\rho:Y\to\Sigma$ be an $S^1$-bundle over $\Sigma$ with negative Euler class, let $\xi$ be the corresponding prequantization contact structure on $Y$, and let $\Gamma$ be the free homotopy class of loops in $Y$ given by $d$ times a fiber where $d>0$. Then as $\Z$-graded $\Z$-modules we have
\begin{equation}
\label{eqn:preqeq}
CH_*^{S^1}(Y,\xi,\Gamma) \simeq H_*(\Sigma)[-1] \oplus \left(H_*(\Sigma;\Z/d) \tensor \Z[U]\right)
\end{equation}
where the formal variable $U$ has degree $2$.
\end{proposition}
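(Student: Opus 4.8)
The plan is to reduce the statement entirely to the explicit chain-level description already assembled in the preceding subsections, so that the computation becomes a bookkeeping exercise about a small chain complex. First I would invoke Proposition~\ref{prop:chaut}: since $\lambda_0$ is hypertight (if degenerate), and since by the discussion in \S\ref{sec:addstr} we have $CH_*^{S^1}(Y,\xi,\Gamma) = CH_*^{S^1,<L}(Y,\lambda,\Gamma)$ for $L>2\pi d$ and $\lambda=(1+\rho^*H)\lambda_0$ a sufficiently small nondegenerate perturbation, and since by Proposition~\ref{prop:admissible} a generic $\lambda$-compatible $J$ is admissible, the $S^1$-equivariant contact homology is the homology of $\left(CC_*^{S^1}(Y,\lambda),\partial^{S^1,J}\right)$ restricted to the generators in the class $\Gamma$. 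Next I would record the input data: the Reeb orbits of $\lambda$ in class $\Gamma$ with action $<L$ are the $d$-fold covers $\gamma_p$ of fibers over critical points $p$ of the perfect Morse function $H$ (one index-$2$ point $p$, $2g$ index-$1$ points $q_i$, one index-$0$ point $r$), all good; the gradings of $\widecheck{\gamma_p},\widehat{\gamma_p}$ are $\op{ind}(H,p)-1$ and $\op{ind}(H,p)$; and by \eqref{eqn:opbhm} together with Proposition~\ref{prop:nchaut}(d) the nonequivariant blocks are $\partial_+=0$, $\widecheck{\partial}=\partial_{\op{Morse}}=0$, $\widehat{\partial}=-\partial_{\op{Morse}}=0$, while $\partial_-$ has its unique possibly-nonzero coefficient $\langle\partial_-\widecheck{p},\widehat{r}\rangle = \pm de$ by the computation \eqref{eqn:preq2}.

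Given this, I would write out $\partial^{S^1,J}=\partial^J_\ca\tensor 1 + \partial_1\tensor U^{-1}$ explicitly on all generators $\widecheck{\gamma_p}\tensor U^k$, $\widehat{\gamma_p}\tensor U^k$, using \eqref{eqn:defbv} for $\partial_1$ (so $\partial_1\widecheck{\gamma_p}=d\,\widehat{\gamma_p}$ since all orbits here are good, $\partial_1\widehat{\gamma_p}=0$). This yields exactly the four displayed formulas in the paragraph before the Proposition. Then I would compute the homology directly. The complex splits by orbit and by the two-step structure in the $U$-variable; the key observations are: (i) $\widecheck{q_i}\tensor 1$ and $\widecheck{r}\tensor 1$ are cycles not in the image (free classes, contributing $H_1(\Sigma)$ and $H_0(\Sigma)$ shifted up by one, i.e. the $H_*(\Sigma)[-1]$ summand in degrees $2$ and $1$); (ii) $\widecheck{p}\tensor 1 \mp e\,\widecheck{r}\tensor U$ is a cycle (using $\partial^{S^1,J}(\widecheck{p}\tensor 1)=\pm de\,\widehat{r}\tensor 1$ and $\partial^{S^1,J}(\widecheck{r}\tensor U)=d\,\widehat{r}\tensor 1$, which cancel) and is not a boundary, giving the remaining free class in degree $3$; (iii) every $\widehat{\gamma_p}\tensor U^k$ for $k\ge 0$ is a cycle, and multiplication by $d$ hits it (from $\widecheck{\gamma_p}\tensor U^{k+1}$ in the $q_i$ and $r$ cases, and up to the $\widehat{r}$ correction in the $p$ case), so each contributes a $\Z/d$; assembling these $\Z/d$-classes, indexed by a critical point $p$ and an exponent $k\ge0$, gives precisely $H_*(\Sigma;\Z/d)\tensor\Z[U]$ with $U$ of degree $2$. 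Finally I would assemble the two summands and check the gradings match \eqref{eqn:preqeq}.

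I would also need a brief remark justifying the passage from the filtered/perturbed picture back to $(Y,\xi,\Gamma)$: this is exactly the direct-limit statement \eqref{eqn:dirlimfil} together with the ``degenerate case'' paragraph of \S\ref{sec:addstr} applied with the splitting by free homotopy class, since for $L>2\pi d$ no further Reeb orbits in class $\Gamma$ appear and the homology stabilizes. I expect the main obstacle to be purely a matter of sign-and-orientation care: verifying \eqref{eqn:preq2} (the coefficient $\pm de$) and checking that the class $\widecheck{p}\tensor 1 \mp e\,\widecheck{r}\tensor U$ is genuinely closed requires matching the sign in \eqref{eqn:preq2} against the sign conventions built into $\partial_1$ via \eqref{eqn:defbv} and into the identification \eqref{eqn:opbhm}; the Euler class $e$ enters through the degree of the covering map $e_+\times e_-$ in Step~3 of the proof of Proposition~\ref{prop:chaut}, adapted to this $S^1$-bundle setting (where the relevant covering has degree $de$ rather than $d(\gamma)$, reflecting that the fiber winds $e$ times around the base loop). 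All other steps are routine homological algebra on an explicitly given complex, so once the orientations are pinned down the result follows.
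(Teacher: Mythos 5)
Your proposal follows the paper's proof essentially step for step: perturb to $\lambda=(1+\rho^*H)\lambda_0$, read off the nonequivariant blocks via \eqref{eqn:opbhm} and Proposition~\ref{prop:nchaut}, plug in the BV operator \eqref{eqn:defbv} to get $\partial^{S^1,J}$, and compute the homology generator by generator, with the free classes $\widecheck{p}\tensor 1 \mp e\widecheck{r}\tensor U$, $\widecheck{q_i}\tensor 1$, $\widecheck{r}\tensor 1$ and the $d$-torsion classes $\widehat{\gamma_p}\tensor U^k$, exactly as in the paper. The only defect is a small internal inconsistency in the stated degrees: with the convention $|\widecheck{\gamma_p}|=\op{ind}(H,p)-1$ that you yourself fix at the outset, the free generators sit in degrees $-1,0,1$ (matching $H_*(\Sigma)[-1]$), not $1,2,3$ as written in items (i)--(ii); since you announce a final consistency check against \eqref{eqn:preqeq}, this slip would be caught, and it does not affect the argument.
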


\begin{remark}
Strictly speaking, we have not defined the cylindrical contact homology $CH_*^{\op{EGH}}(Y,\xi,\Gamma)$; we have only defined cylindrical contact homology for pairs $(\lambda,J)$ satisfying the hypotheses of Corollary~\ref{cor:eghinv}. But if we define $CH_*^{\op{EGH}}(Y,\xi,\Gamma)$ to be the $S^1$-equivariant contact homology tensor $\Q$ as in Theorem~\ref{thm:cchinv}, then it follows from \eqref{eqn:preqeq}, as one might expect from \eqref{eqn:opbhm}, that
\begin{equation}
\label{eqn:preqcyl}
CH_*^{\op{EGH}}(Y,\xi,\Gamma) \simeq H_*(\Sigma;\Q)[-1].
\end{equation}
\end{remark}

\subsection{Local contact homology}
\label{sec:localcontact}

Let $\lambda_0$ be a contact form on a manifold $Y$ (not necessarily compact), let $\gamma_0$ be a simple Reeb orbit of $\lambda_0$, and let $d$ be a positive integer. Assume that the Reeb orbits $\gamma_0^k$ are isolated in the loop space for $1\le k\le d$ (but not necessarily nondegenerate). We now explain how to define the local nonequivariant contact homology $NCH_*(Y,\lambda_0,\gamma_0,d)$ and the local $S^1$-equivariant contact homology $CH_*^{S^1}(Y,\lambda_0,\gamma_0,d)$.

\paragraph{Isolating neighborhood.} As in \cite[\S3]{hm}, let $N\subset Y$ be a compact tubular neighborhood of (the image of) $\gamma_0$. Choose $N$ sufficiently small so that:
\begin{itemize}
\item There exists a closed $1$-form $\theta$ on $N$ which pairs positively with the Reeb vector field.
\item The only Reeb orbits in $N$ in the homotopy class $k[\gamma_0]$ for $1\le k\le d$ are the iterates $\gamma_0^k$.
\end{itemize}
Let $\lambda$ be a nondegenerate perturbation of $\lambda_0$ on $N$. Assume that the perturbation is small enough so that $\theta$ pairs positively with the Reeb vector field of $\lambda$. (We may need to choose the perturbation smaller below.) Then $\lambda$ is hypertight on $N$. 

\paragraph{Local nonequivariant contact homology.}

Let $\J$ be a generic $S^1$-family of $\lambda$-compatible almost complex structures on $\R\times N$. If $\gamma_+$ and $\gamma_-$ are Reeb orbits of $\lambda$, let $\M^\J(\gamma_+,\gamma_-)$ denote the moduli space of holomorphic cylinders as in \S\ref{section:plainmoduli}, where now
\[
u:\R\times S^1 \to \R\times N.
\]
A compactness argument as in \cite[Lem.\ 3.4]{hm} shows that if $\lambda$ is sufficiently close to $\lambda_0$, and if $\gamma_+$ and $\gamma_-$ are Reeb orbits of $\lambda$ in the class $d[\gamma_0]$, then for any generic $\J$ as above, every holomorphic cylinder in $\M^\J(\gamma_+,\gamma_-)$ avoids $\R\times\partial N$. We then have compactness as in Proposition~\ref{prop:compactness}, i.e.\ the moduli spaces have no additional boundary points arising from holomorphic curves hitting $\R\times\partial N$. We then obtain a well-defined local nonequivariant contact homology as in \S\ref{sec:defnch}, using only Reeb orbits in the class $d[\gamma_0]$, which we denote by $NCH_*(N,\lambda,d;\J)$. Similarly, if $\lambda_+$ and $\lambda_-$ are two sufficiently small perturbations of $\lambda$ as above, and if $\J_\pm$ is a generic $S^1$-family of $\lambda_\pm$-compatible almost complex structures on $\R\times N$, then as in \S\ref{sec:nchcobmaps} and \S\ref{sec:nchinv}, we obtain a canonical isomorphism
\begin{equation}
\label{eqn:lnchinv}
NCH_*(N,\lambda_+,d;\J_+) \simeq NCH_*(N,\lambda_-,d;\J_-).
\end{equation}
Consequently, the local nonequivariant contact homology does not depend on the choice of sufficiently small perturbation $\lambda$ or $\J$, and we can denote this homology by $NCH_*(Y,\lambda_0,\gamma_0,d)$.

\paragraph{Local $S^1$-equivariant contact homology.} 

The local $S^1$-equivariant contact homology $CH_*^{S^1}(Y,\lambda_0,\gamma_0,d)$ is defined likewise, by repeating the construction in \S\ref{equicurrents}, where $\lambda$ is a sufficiently small nondegenerate perturbation of $\lambda_0$ in $N$ as above, $\frak{J}$ is a generic $S^1$-equivariant $S^1\times ES^1$ family of $\lambda$-compatible almost complex structures on $\R\times N$, and we consider Reeb orbits in the class $d[\gamma_0]$ and moduli spaces of pairs $(\eta,u)$ where $u:\R\times S^1\to \R\times N$.

\paragraph{Local cylindrical contact homology.}

Finally, suppose that $\lambda$ is a sufficiently small nondegenerate perturbation of $\lambda_0$ on $N$ as above, and suppose that there exists an admissible $\lambda$-compatible almost complex structure $J$ on $\R\times N$ as in Definition~\ref{def:admissible}. (As in Proposition~\ref{prop:admissible}, we can always find such a $J$ when $\dim(Y)=3$.) Then the cylindrical contact homology $CH_*^{\op{EGH}}(N,\lambda,d;J)$ is defined, where we just consider good Reeb orbits in the class $d[\gamma_0]$. The proof of Theorem~\ref{thm:cchinv} carries over to show that there is a canonical isomorphism
\[
CH_*^{\op{EGH}}(N,\lambda,d;J) = CH_*^{S^1}(Y,\lambda_0,\gamma_0,d)\tensor\Q.
\]

\paragraph{Grading.}

Each of the above versions of local contact homology has a noncanonical $\Z$-grading, as in \S\ref{sec:addstr}. In fact, a symplectic trivialization $\tau_0$ of $\gamma_0^*\xi$ determines a choice of this $\Z$-grading, in which the grading of a Reeb orbit $\gamma$ in cylindrical contact homology, or the corresponding generator $\widehat{\gamma}$ or $\widehat{\gamma}\tensor 1$ of nonequivariant or $S^1$-equivariant contact homology, is the Conley-Zehnder index $\op{CZ}_\tau(\gamma)$, where $\tau$ is a trivialization of $\gamma^*\xi$ in a homotopy class determined by $\tau_0$.


\begin{remark}
There is an alternate and very simple definition of local contact homology over $\Z$, introduced in \cite{adlch}, which avoids transversality difficulties without using $S^1$-equivariant theory. Here one replaces the neighborhood $N$ by a $d$-fold cyclic cover, considers the cylindrical contact homology of this cover in degree $1$ (for which no multiply covered holomorphic cylinders can arise), and takes the $\Z/d$-invariant part of this cylindrical contact homology. Let us denote the resulting local contact homology by $CH_*^\Z(Y,\lambda_0,\gamma_0,d)$. Simple examples show that $CH_*^\Z$ does not agree with $CH_*^{S^1}$ (the former has less torsion), but we expect that there is a canonical isomorphism
\[
CH_*^\Z(Y,\lambda_0,\gamma_0,d)\tensor\Q = CH_*^{S^1}(Y,\lambda_0,\gamma_0,d)\tensor\Q.
\]
\end{remark}

\subsection{The period doubling bifurcation}

On a three-manifold $Y$, a one-parameter family of contact forms $\{\lambda_t\}_{t\in\R}$ can undergo a {\em period-doubling bifurcation\/} in which the following happens\footnote{There is another version of period-doubling, in which $e_1$ has rotation number slightly greater than $1/2$, and $e_2$ has rotation number slightly greater than $1$. This other version behaves similarly and we will not consider it here.}:

\begin{itemize}
	\item The contact form $\lambda_0$ has an simple Reeb orbit $\gamma_0$ for which the linearized return map \eqref{slm} has $-1$ as a single eigenvalue. In particular, the double cover of $\gamma_0$ is nondegenerate. However $\gamma_0$ and its double cover are isolated Reeb orbits in the loop space of $Y$. Let $N$ be a small tubular neighborhood of $\gamma_0$ as in the definition of local contact homology for $d=2$. We can choose a trivialization $\tau_0$ of $\gamma_0^*\xi$ with respect to which the linearized Reeb flow along $\gamma_0$ has rotation number $1/2$.
	\item For $t<0$ small, the only Reeb orbit in $N$ in the homotopy class $[\gamma_0]$ is an elliptic Reeb orbit $e_1$ for which the rotation number with respect to $\tau_0$ is slightly less than $1/2$. The only Reeb orbit in $N$ in the homotopy class $2[\gamma_0]$ is the double cover of $e_1$, which we denote by $E_1$.
	\item For $t>0$ small, the only Reeb orbit in $N$ in the homotopy class $[\gamma_0]$ is a negative hyperbolic Reeb orbit $h_1$ (this means that the linearized return map has distinct negative eigenvalues) with rotation number $1/2$ with respect to $\tau_0$; and the only Reeb orbits in the homotopy class $2[\gamma_0]$ are the double cover of $h_1$, which we denote by $H_1$, together with a simple elliptic orbit $e_2$ whose rotation number with respect to $\tau_0$ is slightly less than $1$.
\end{itemize}

For explicit formulas for this bifurcation see \cite[\S8.3.2]{celest}. The above bullet points are all that we need to know here, but the following picture might be helpful. Let $D$ be a disk transverse to $\gamma_0$. Then the Reeb flow of $\lambda_t$ induces a partially defined return map $\phi_t:D\to D$. One can model the period-doubling bifurcation with
\[
\phi_t = \begin{pmatrix} -1 & 0 \\ 0 & -1\end{pmatrix} \circ \varphi_\epsilon^{X_t}
\]
where $\epsilon>0$ is small, and $\varphi_\varepsilon^{X_t}$ is the time $\varepsilon$ flow of a vector field $X_t$ which is invariant under rotation by $\pi$ and illustrated
in Figures~\ref{fig:before}--\ref{fig:after}.

\begin{figure}[ht]
\begin{minipage}[b]{0.29\linewidth}
\centering
\includegraphics[width=.6\linewidth]{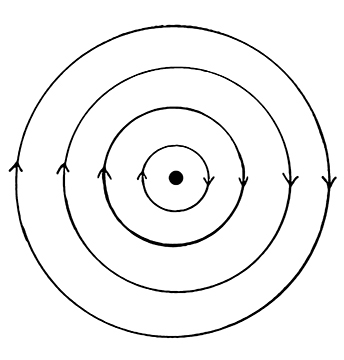}
\caption{\small Flow of $X_{-1}$; \hspace{\textwidth} before the bifurcation.}
\label{fig:before}
\end{minipage}
\hspace{.5cm}
\begin{minipage}[b]{0.29\linewidth}
\centering
\includegraphics[width=.95\linewidth]{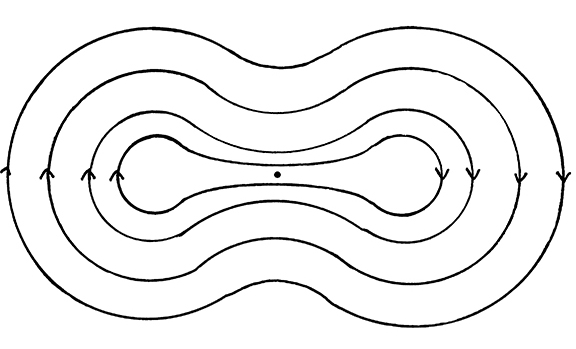}
\caption{\small Flow of $X_0$; \hspace{\textwidth} at the bifurcation.}
\label{fig:during}
\end{minipage}
\hspace{.5cm}
\begin{minipage}[b]{0.31\linewidth}
\centering
\includegraphics[width=\linewidth]{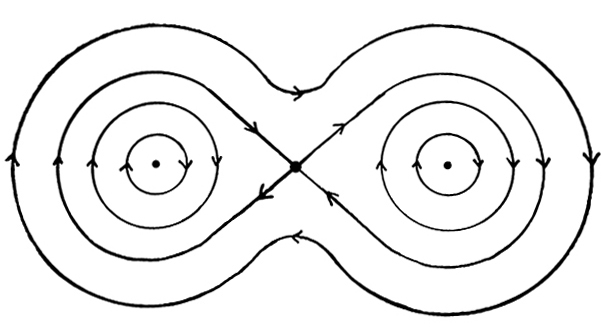}
\caption{\small Flow of $X_1$; \hspace{\textwidth} after the bifurcation.}
\label{fig:after}
\end{minipage}
\end{figure}

The critical point of $X_{-1}$ in Figure~\ref{fig:before} is a fixed point of $\phi_{-1}$, which corresponds to the elliptic orbit $e_1$. The critical point of $X_{0}$ in Figure~\ref{fig:during} is a fixed point of $\phi_{0}$ which corresponds to the degenerate Reeb orbit $\gamma_0$. The central critical point of $X_1$ in Figure~\ref{fig:after} is a fixed point of $\phi_1$ which corresponds to the negative hyperbolic orbit $h_1$. In addition, the left and right critical points of $X_1$ in Figure~\ref{fig:after} are exchanged by $\phi_1$, and so they constitute a period $2$ orbit of the map $\phi_1$, which corresponds to the elliptic orbit $e_2$.

It follows from \cite[Eq.\ (2.3)]{dc} that the Conley-Zehnder indices of the above Reeb orbits with respect to $\tau_0$ are given by $\op{CZ}(e_1)=\op{CZ}(h_1)=1$, $\op{CZ}(E_1)=\op{CZ}(e_2)=1$, and $\op{CZ}(H_1)=2$.

Let $\lambda_+=\lambda_t$ for some small $t>0$, and let $\lambda_-=\lambda_t$ for some small $t<0$. It is instructive to compute the different versions of local contact homology of $(Y,\lambda_0,\gamma_0,d=2)$ using $\lambda_-$ and $\lambda_+$. For this purpose, we can use admissible $\lambda_\pm$-compatible almost complex structures $J_\pm$ on $\R\times N$.

\paragraph{Local cylindrical contact homology.} For $\lambda_-$, the local cylindrical chain complex just has the one generator $E_1$ (this is a good Reeb orbit). Thus the cylindrical contact homology is given by
\begin{equation}
\label{eqn:pdegh}
CH_*^{\op{EGH}}(N,\lambda_-,d=2;J_-) \simeq \left\{\begin{array}{cl} \Q, & *=1,\\ 0, & \mbox{otherwise}.
\end{array}\right.
\end{equation}

For $\lambda_+$, the local cylindrical chain complex has just the one generator $e_2$, because $H_2$ is a bad Reeb orbit. Thus we get the same answer\footnote{This is one of the reasons why bad orbits have to be discarded in the definition of cylindrical contact homology; otherwise one would not have invariance under this bifurcation.}.

\paragraph{Local nonequivariant contact homology.} For $\lambda_-$, the local nonequivariant chain complex has the two generators $\widecheck{E}_1$ and $\widehat{E}_1$ of gradings $1$ and $2$ respectively. Since $E_1$ is a good Reeb orbit, the differential \eqref{eqn:blockmatrix} is zero by equation \eqref{eqn:partial+}. Thus we obtain
\begin{equation}
\label{eqn:codd}
NCH_*(Y,\lambda_0,\gamma_0,d=2) \simeq \left\{\begin{array}{cl} \Z, & *=1,2,\\ 0, & \mbox{otherwise}.\end{array}\right.
\end{equation}

For $\lambda_+$, the local nonequivariant chain complex now has four generators: $\widecheck{e}_2$ which has grading $1$; $\widehat{e}_2$ and $\widecheck{H}_1$ which have grading $2$; and $\widehat{H}_1$ which has grading $3$. By Proposition~\ref{prop:nchaut}(c), the differential \eqref{eqn:blockmatrix} satisfies
\[
\partial^{J_+}_\ca\widehat{H_1} = -2\widecheck{H_1} + c\widehat{e_2},
\]
and the differential of all other generators is zero. Here $c\in\Z$ is determined by a count of holomorphic cylinders in the moduli space $\M^J(H_1,e_2)$.  We will not compute $c$ here, but we do know that $c$ is odd, by equation \eqref{eqn:codd} and the invariance \eqref{eqn:lnchinv} of local nonequivariant contact homology.

\paragraph{Local $S^1$-equivariant contact homology.} 

For $\lambda_-$, the $S^1$-equivariant chain complex has the generators $\widecheck{E_1}\tensor U^k$ of grading $2k+1$, and $\widehat{E_1}\tensor U^k$ of grading $2k+2$, for each $k\ge 0$. By equation \eqref{eqn:ds1j}, the differential is given by
\[
\partial^{S^1,J_-}\left(\widecheck{E_1}\tensor U^k\right) = 2\left(\widehat{E_1}\tensor U^{k-1}\right), \quad k>0,
\]
and the differentials of all other generators are zero. It follows that the homology is generated by the cycle $\widecheck{E_1}$, which is free, and the cycles $\widehat{E_1}\tensor U^k$ for $k\ge 0$, which are $2$-torsion. Thus
\begin{equation}
\label{eqn:pdchs1}
CH_*^{S^1}(Y,\lambda_0,\gamma_0,d=2) \simeq \left\{\begin{array}{cl} \Z, & *=1,\\
\Z/2, & *=2,4,\ldots,\\
0, & \mbox{otherwise}.
\end{array}\right.
\end{equation}
Note that tensoring this with $\Q$ correctly recovers \eqref{eqn:pdegh}.

For $\lambda_+$, the $S^1$-equivariant chain complex has generators $\widecheck{e_2}\tensor U^k$ of grading $2k+1$, generators $\widehat{e_2}\tensor U^k$ and $\widecheck{H_1}\tensor U^k$ of grading $2k+2$, and generators $\widehat{H_1}\tensor U^k$ of grading $2k+3$, for each $k\ge 0$. By equation \eqref{eqn:ds1j}, the differential is given by
\[
\begin{split}
\partial^{S^1,J_+}\left(\widecheck{e_2}\tensor U^k\right) &= \widehat{e_2}\tensor U^{k-1}, \quad k>0,\\
\partial^{S^1,J_+}\left(\widehat{H_1}\tensor U^k\right) &= -2\left(\widecheck{H_1}\tensor U^k\right) + c\left(\widehat{e_2}\tensor U^k\right), \quad k\ge 0,
\end{split}
\]
and the differentials of all other generators are zero. It follows that the homology is generated by the cycle $\widecheck{e_2}\tensor 1$, which is free, and the cycles $\widecheck{H_1}\tensor U^k$ for $k\ge 0$, which are $2$-torsion since
\[
\partial^{S^1,J_+}\left(c\left(\widecheck{e_2}\tensor U^{k+1}\right) - \widehat{H_1}\tensor U^k \right) = 2\left(\widecheck{H_1}\tensor U^k\right).
\]
Thus we again obtain \eqref{eqn:pdchs1}.

\appendix

\section{Orientations}
\label{sec:orientations}

In this section we define the orientations on the moduli spaces that we consider, and we justify the claims that we make about signs. (We omit a few cases which do not involve additional ideas.) The main references that we will use are \cite{fh}, which first introduced coherent orientations in the context of Hamiltonian Floer theory; \cite{bm}, which extended \cite{fh} to the case of symplectic field theory; and \cite{ht2}, which worked out more details in connection with obstruction bundle gluing.

\subsection{Orienting the operators}

We first review how to orient the various Fredholm operators that we need to consider, spelling out the conventions that we will be using. We will consider a very general class of Fredholm operators, and not just the specific operators that arise from holomorphic curves in $\R\times Y$. The conclusion that we need is stated in Proposition~\ref{prop:orcy} below. This discussion follows \cite[\S9]{ht2} with minor modifications.

\paragraph{Preliminaries.}
If $V$ is a finite dimensional real vector space, define $\mc{O}(V)$ to be the (two-element) set of orientations of $V$. If $\frak{o}\in\mc{O}(V)$, denote the opposite orientation by $-\frak{o}$. If $W$ is another finite dimensional real vector space, define $\mc{O}(V)\tensor\mc{O}(W)$ to be the set of pairs $(\frak{o}_V,\frak{o}_W)\in\mc{O}(V)\times\mc{O}(W)$, modulo the relation $(\frak{o}_V,\frak{o}_W) \sim (-\frak{o}_V,-\frak{o}_W)$. We denote the equivalence class of $(\frak{o}_V,\frak{o}_W)$ by $\frak{o}_V\tensor\frak{o}_W$. There is a canonical bijection
\[
\mc{O}(V\oplus W) = \mc{O}(V) \tensor \mc{O}(W).
\]
Note that switching the order of $V$ and $W$ multiplies this bijection by $(-1)^{\op{dim}(V)\op{dim}(W)}$. More generally, a finite exact sequence of finite dimensional real vector spaces
\begin{equation}
\label{eqn:fesfdvs}
0 \to V_1 \stackrel{f_1}{\to} V_2 \stackrel{f_2}{\to} \cdots \stackrel{f_{k-1}}{\to} V_k \to 0
\end{equation}
induces a canonical element
\begin{equation}
\label{eqn:eso}
\frak{o}(f_1,\ldots,f_{k-1})\in \mc{O}(V_1)\tensor \cdots \tensor \mc{O}(V_k)
\end{equation}
which is invariant under homotopy of exact sequences. If orientations $\frak{o}_i\in\mc{O}(V_i)$ have already been chosen for $i=1,\ldots,k$, we say that the exact sequence \eqref{eqn:fesfdvs} is {\em orientation preserving\/} if the canonical element \eqref{eqn:eso} agrees with $\frak{o}_1\tensor\cdots\tensor\frak{o}_k$.

If $D$ is a real linear Fredholm operator, define
\[
\mc{O}(D) = \mc{O}(\Ker(D))\tensor\mc{O}(\Coker(D)).
\]
We define an {\em orientation\/} of the Fredholm operator $D$ to be an element of the two-element set $\mc{O}(D)$. A homotopy of Fredholm operators from $D$ to $D'$ induces a bijection $\mc{O}(D) \stackrel{\simeq}{\to} \mc{O}(D')$; see the review in \cite[\S9.1]{ht2}.

\paragraph{Introducing the operators.}
We now introduce the kinds of Fredholm operators that we will need to orient.

\begin{definition}
An {\em orientation loop\/} is a pair $\mc{L} = (E,\nabla)$ where:
\begin{itemize}
\item
 $E$ is a rank $n$ Hermitian vector bundle over $S^1$.
\item
$\nabla$ is a Hermitian connection on $E$.
\end{itemize}
Associated to the orientation loop $\mc{L}$ is a differential operator
\[
A_{\mc{L}} = i\nabla_t: C^\infty(S^1,E) \longrightarrow C^\infty(S^1,E),
\]
where $\nabla_t$ denotes the covariant derivative along $S^1$.
We say that $\mc{L}$ is {\em nondegenerate\/} if $\op{Ker}(A_{\mc{L}})=\{0\}$. Also, we define $\delta_+(\mc{L})>0$ to be the smallest positive eigenvalue of the operator $A_{\mc{L}}$, and $\delta_-(\mc{L})<0$ to be the largest negative eigenvalue.
\end{definition}

\begin{definition}
An {\em orientation surface\/} is a quadruple $\mc{C}=(C,E,\mathscr{L}^+,\mathscr{L}^-)$ where:
\begin{itemize}
\item
$C$ is a (possibly disconnected) punctured compact Riemann surface with $k+l\ge 0$ ordered punctures (which we will regard as ends), of which the first $k$ are designated ``positive'' and the last $l$ are designated ``negative''. Each positive end is conformally identified with $[0,\infty)\times S^1$, and each negative end is conformally identified with $(-\infty,0]\times S^1$. On each end, denote the $[0,\infty)$ or $(-\infty,0]$ coordinate by $s$ and the $S^1$ coordinate by $t$.
\item
$\mathscr{L}^+$ is a list of $k$ orientation loops $\mc{L}^+_j=(E^+_j,\nabla^+_j)$, and $\mathscr{L}^-$ is a list of $l$ orientation loops $\mc{L}^-_j=(E^-_j,\nabla^-_j)$.
\item
$E$ is a rank $n$ Hermitian vector bundle over $C$. An identification is fixed between the restriction of $E$ to the $j^{th}$ positive end of $C$ and the pullback of the bundle $E^+_j$ over $S^1$. Likewise, the restriction of $E$ to the $j^{th}$ negative end of $C$ is identified with the pullback of $E^-_j$.
\end{itemize}
\end{definition}

In this paper we will only need to consider the cases where $C$ is the plane, cylinder, or sphere, but one can also consider more general Riemann surfaces as in \cite{bm,ht2}.

\begin{definition}
\label{def:osdo}
For an orientation surface $\mc{C}=(C,E,\mathscr{L}^+,\mathscr{L}^-)$ as above, define $\mc{D}(\mc{C})$ to be the set of differential operators
\[
D: C^\infty(E) \longrightarrow C^\infty(T^{0,1}C\tensor E)
\]
such that:
\begin{itemize}
\item
In a complex local coordinate on $C$ and a local trivialization of $E$, the operator $D$ equals $\dbar$ plus a zeroth order term.
\item
On the $j^{th}$ positive end of $C$, write
\[
D = \frac{1}{2}(ds-idt) \tensor (\partial_s + i\nabla_t + M_j(s,t))
\]
where $M_j(s,t)$ is an endomorphism of the real vector space $(E^+_j)_t$. Then
\[
\lim_{s \to\infty}M_j(s,\cdot) = 0
\]
in the sense of \cite[\S2]{fh}. Likewise for the negative ends with $s\to-\infty$.
\end{itemize}
\end{definition}

Given an orientation surface $\mc{C}=(C,E,\mathscr{L}^+,\mathscr{L}^-)$, let $\delta=(\delta^+_1,\ldots,\delta^+_k,\delta^-_1,\ldots,\delta^-_l)$ be a tuple of real numbers such that $\delta^+_j$ is not an eigenvalue of $A_{\mc{L}^+_j}$ for any $j=1,\ldots,k$, and $\delta^-_j$ is not an eigenvalue of $-A_{\mc{L}^-_j}$ for any $j=1,\ldots,l$. Then it a standard fact that any operator $D\in\mc{D}(\mc{C})$ extends to a Fredholm operator
\begin{equation}
\label{eqn:Dextended}
D: L^{2,\delta}_1(E) \longrightarrow L^{2,\delta}(T^{0,1}C\tensor E).
\end{equation}
Here $L^{2,\delta}_1$ denotes the exponentially weighted Sobolev space consisting of sections $\psi$ such that $\beta\psi\in L^2_1$, where $\beta:C\to\R$ is a positive smooth function such that on the $j^{th}$ positive end we have $\beta=e^{\delta^+_js}$ for $s$ large, and on the $j^{th}$ negative end we have $\beta=e^{\delta^-_j|s|}$for $|s|$ large.

Assume further that $0\le \delta^+_j< \delta_+(\mc{L}^+_j)$ for each $j=1,\ldots,k$, and $0\le \delta^-_j<-\delta_-(\mc{L}^-_j)$ for each $j=1,\ldots,l$. (Note here that $\delta^\pm_j=0$ is allowed only if the orientation loop $\mc{L}^\pm_j$ is nondegenerate.) We will call such a choice of exponential weights {\em admissible\/}.
Then the set of orientations of the Fredholm operator \eqref{eqn:Dextended} does not depend\footnote{One could also use Sobolev spaces $L^{p,\delta}_k$; the choice of $p$ and $k$ is immaterial for orientations as it does not affect the kernel or (up to canonical isomorphism) the cokernel of the operator.} on $\delta$, and in fact depends only on the orientation surface $\mc{C}$; see the review in \cite[\S9.2]{ht2}. Denote this set of orientations by $\mc{O}(\mc{C})$.

\begin{remark}
\label{rem:conjugate}
It is sometimes useful to eliminate the exponential weights as follows. If $\beta:C\to\R$ is a function as in the definition of $L^{2,\delta}_1$, then
the operator \eqref{eqn:Dextended} is conjugate to the operator
\[
\beta D \beta^{-1}: L^2_1(E) \longrightarrow L^2(T^{0,1}C\tensor E).
\]
This conjugate operator is itself the extension of a differential operator
\[
\beta D \beta^{-1} \in \mc{D}(\mc{C}_\delta),
\]
where $\mc{C}_\delta$ is an orientation surface obtained by shifting the orientation loops by $\delta$. Namely,
\[
\mc{C}_\delta = (C,E,\{(E^+_j,\nabla^+_j+i\delta^+_j)\},\{(E^-_j,\nabla^-_j -i\delta^-_j)\}).
\]
The orientation surface $\mc{C}_\delta$ has all orientation loops at its ends nondegenerate, and the above conjugation operation induces a canonical bijection $\mc{O}(\mc{C}_\delta) \simeq \mc{O}(\mc{C})$.
\end{remark}

\paragraph{Gluing orientations.}
The key operation is now to ``glue'' orientations. Let $\mc{C}'=(C',E',(\mathscr{L}')^+,(\mathscr{L}')^-)$ be another orientation surface. Suppose that $\mc{L}^-_j = (\mc{L}')^+_j$ for $j=1,\ldots,l$. We can then glue the first $l$ negative ends of $\mc{C}$ to the first $l$ positive ends of $\mc{C}'$ to obtain a new orientation surface, which we denote by $\mc{C}\#_l\mc{C}'$. This glued orientation surface also depends on a parameter $\mathscr{R}>0$, which determines where to cut off the ends of $\mc{C}$ and $\mc{C}'$ before gluing; we omit this parameter from the notation.

Given operators $D\in\mc{D}(\mc{C})$ and $D'\in\mc{D}(\mc{C}')$, we can use cutoff functions with derivatives of order $\mathscr{R}^{-1}$ to patch them to a ``glued'' operator
\[
D\# D' \in \mc{D}(\mc{C}\#_l\mc{C}').
\]

\begin{lemma}
\label{lem:glulin}
Fix operators $D,D'$ as above. Suppose that for each $j=1,\ldots,l$, the orientation loop $\mc{L}^-_j=(\mc{L}')^+_j$ is nondegenerate. Choose admissible exponential weights $\delta$ and $\delta'$ such that for each $j=1,\ldots,l$ we have $\delta^-_j=(\delta')^+_j=0$. Then if the gluing parameter $\mathscr{R}$ is sufficiently large, there is up to homotopy a canonical\footnote{The exact sequence just depends on the choice of gluing parameter and cutoff functions; different choices will give homotopic exact sequences.} exact sequence
\[
0\to\Ker(D\#D') \stackrel{f}{\to} \Ker(D)\oplus \Ker(D') \stackrel{g}{\to} \Coker(D)\oplus\Coker(D') \stackrel{h}{\to} \Coker(D\#D')\to 0.
\]
\end{lemma}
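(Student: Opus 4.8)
The plan is to deduce the exact sequence from the standard linear gluing theory for Cauchy--Riemann type operators, following the approach of \cite[\S9]{ht2} (and its antecedents in \cite{fh,bm}). First I would fix, once and for all, the gluing parameter $R$ large, along with the cutoff functions used to patch $D$ and $D'$ into $D\# D'$; all statements below are asserted for $R$ sufficiently large, and different admissible choices of cutoff functions produce homotopic exact sequences, which is why the sequence is canonical only up to homotopy. The nondegeneracy hypothesis on the shared orientation loops $\mc{L}^-_j=(\mc{L}')^+_j$, together with the choice of exponential weights $\delta^-_j=(\delta')^+_j=0$, is exactly what makes the relevant asymptotic operators $A_{\mc{L}^-_j}$ invertible, so that there are no nonzero decaying-at-polynomial-rate solutions on the necks and the standard uniform elliptic estimates on the glued surface $\mc{C}\#_l\mc{C}'$ apply.

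The key steps, in order: (1) Construct the map $f:\Ker(D\#D')\to\Ker(D)\oplus\Ker(D')$ by taking a glued kernel element, restricting it to the two pieces $C$ and $C'$ (truncated at the necks), and projecting the resulting approximate solutions onto the genuine kernels $\Ker(D)$ and $\Ker(D')$ along orthogonal complements; for $R$ large this is well-defined and injective because an element of $\Ker(D\#D')$ is determined (up to exponentially small error controlled by $R$) by its two restrictions, using the invertibility of the neck operators to bound the part supported on the necks. (2) Construct the map $h:\Coker(D)\oplus\Coker(D')\to\Coker(D\#D')$ dually: represent cokernel classes by elements of the $L^2$-orthogonal complements of the images, extend them by the cutoff functions across the necks, and project to $\Coker(D\#D')$; surjectivity of $h$ follows from the gluing estimate showing every cokernel class of $D\#D'$ is, up to small error, such a patched pair. (3) Construct the middle map $g:\Ker(D)\oplus\Ker(D')\to\Coker(D)\oplus\Coker(D')$ as the ``obstruction to gluing'' map: given $(\psi,\psi')$, patch them with cutoff functions to an approximate element of $\Ker(D\# D')$, apply $D\#D'$ to measure the failure (supported near the necks), and record how this failure decomposes against $\Coker(D)$ and $\Coker(D')$ — concretely $g$ is built from the comparison between the matching conditions at the $l$ glued ends and is, up to sign and homotopy, given by the difference of the limiting evaluation maps at the shared ends. (4) Verify exactness at each of the four spots. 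Exactness at $\Ker(D\#D')$ is injectivity of $f$ from step (1); exactness at $\Coker(D\#D')$ is surjectivity of $h$ from step (2); exactness at the two middle terms is the content of the gluing theorem — an approximate solution $(\psi,\psi')$ glues to a genuine element of $\Ker(D\#D')$ iff its obstruction $g(\psi,\psi')$ vanishes (this is precisely where the implicit function theorem / Newton iteration on the glued surface enters, using invertibility of $D\#D'$ transverse to its kernel and cokernel), and dually for $h$. (5) Finally, check that the whole construction is invariant under homotopy of the data (homotopies of $D$, $D'$, the cutoff functions, and the admissible weights within the allowed range), so that the induced element of $\mc{O}(D\#D')\tensor\mc{O}(D)^{-1}\tensor\mc{O}(D')^{-1}$ — obtained by applying the canonical element \eqref{eqn:eso} of the exact sequence — is well-defined; here one invokes Remark~\ref{rem:conjugate} to reduce to the nondegenerate case and the fact that $\mc{O}(\mc{C})$ depends only on the orientation surface.

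The main obstacle will be step (4), specifically making precise and proving the quantitative gluing estimates that underlie exactness at the two middle terms: one must show that the patching construction is a bijection (for $R$ large) between, on one side, pairs $(\psi,\psi')$ with vanishing obstruction modulo the images of $f$ and the corrections, and on the other side, $\Ker(D\#D')$, with uniform control independent of $R$. This is the familiar ``stretching the neck'' analysis — decompose sections into neck and non-neck parts, use the spectral gap $\delta_\pm$ of the nondegenerate asymptotic operators to get exponential decay estimates on the neck, and run a contraction-mapping argument — but it requires care to get the estimates uniform and to confirm that the resulting exact sequence is independent of choices up to homotopy. Since these arguments are entirely standard and are carried out in detail in \cite[\S9.3--9.5]{ht2} and \cite{fh}, I would present the construction of $f$, $g$, $h$ explicitly and then cite those references for the uniform estimates and the homotopy-invariance, rather than reproducing the analysis. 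The remaining steps (1)--(3) and (5) are then routine bookkeeping with cutoff functions and orthogonal projections.
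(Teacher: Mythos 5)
Your proposal is correct and takes essentially the same route as the paper: the paper's proof simply invokes Remark~\ref{rem:conjugate} to reduce to the case where all orientation loops are nondegenerate with no exponential weights, and then cites \cite[Prop.\ 9.3]{ht2}, which is precisely the linear gluing exact sequence whose construction you spell out. Your description of the maps $f$, $g$, $h$ and of where the neck estimates enter matches the content of that cited result, so the only difference is that you unpack what the paper delegates to the reference.
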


\begin{proof}
By Remark~\ref{rem:conjugate}, we can assume without loss of generality that all the orientation loops are nondegenerate and there are no exponential weights. The lemma in this case now follows from \cite[Prop.\ 9.3]{ht2}.
\end{proof}

\begin{remark}
The exact sequence in Lemma~\ref{lem:glulin} describes a linear version of obstruction bundle gluing, in which one attempts to glue elements of $\Ker(D)$ and $\Ker(D')$ to obtain an element of $\Ker(D\#D')$. The obstruction to this gluing is an element of $\Coker(D)\oplus\Coker(D')$, specified by the map $g$ in the exact sequence.
\end{remark}

As in \eqref{eqn:eso}, the above exact sequence determines a bijection
\begin{equation}
\label{eqn:poi}
\frak{o}(f,g,h): \mc{O}(D)\tensor\mc{O}(D')\stackrel{\simeq}{\longrightarrow} \mc{O}(D\#D').
\end{equation}

\begin{lemma}
\label{lem:gluor}
Under the assumptions of Lemma~\ref{lem:glulin}, the bijection
\begin{equation}
\label{eqn:iho}
(-1)^{\dim(\Ker(D'))\dim(\Coker(D))} \frak{o}(f,g,h): \mc{O}(D)\tensor\mc{O}(D')\stackrel{\simeq}{\longrightarrow} \mc{O}(D\#D')
\end{equation}
is invariant under homotopy of $D$ and $D'$ and deformation of the gluing parameter, and thus determines a canonical bijection
\begin{equation}
\label{eqn:gluor}
\mc{O}(\mc{C})\tensor\mc{O}(\mc{C}') \stackrel{\simeq}{\longrightarrow} \mc{O}(\mc{C} \#_l \mc{C}').
\end{equation}
\end{lemma}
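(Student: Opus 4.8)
The plan is to verify that the bijection \eqref{eqn:iho} is well-defined, i.e.\ that it does not depend on the auxiliary choices (homotopy of $D$ and $D'$, choice of gluing parameter $R$ and cutoff functions), and then to check that the sign correction $(-1)^{\dim(\Ker(D'))\dim(\Coker(D))}$ is exactly what is needed to make the construction associative-compatible and descend to the level of orientation surfaces. First I would recall that by Lemma~\ref{lem:glulin} the exact sequence $0\to\Ker(D\#D')\to\Ker(D)\oplus\Ker(D')\to\Coker(D)\oplus\Coker(D')\to\Coker(D\#D')\to 0$ is canonical up to homotopy, and that a homotopy of exact sequences of finite-dimensional vector spaces induces the same element \eqref{eqn:eso}; hence $\frak{o}(f,g,h)$ is already independent of the choice of gluing parameter and cutoff functions once $D,D'$ are fixed. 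The remaining point is invariance under homotopy of $D$ and $D'$ themselves: along such a homotopy the dimensions of the individual kernels and cokernels can jump, but the canonical bijections $\mc{O}(D)\simeq\mc{O}(D_1)$, $\mc{O}(D')\simeq\mc{O}(D_1')$ induced by the homotopies are defined precisely so that the construction is consistent, and $\mc{O}(D\#D')\simeq\mc{O}(D_1\#D_1')$ is induced by the glued homotopy. So the content is to check that the square relating these four bijections commutes.

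The key steps, in order, are: (1) reduce to the nondegenerate, weight-free case using Remark~\ref{rem:conjugate}, so that $\mc{O}(\mc{C})=\mc{O}(D)$ for any $D\in\mc{D}(\mc{C})$ and the gluing $\mc{C}\#_l\mc{C}'$ has no boundary subtleties; (2) given two operators $D_0,D_1\in\mc{D}(\mc{C})$ and $D_0',D_1'\in\mc{D}(\mc{C}')$ joined by paths $D_\tau,D_\tau'$, stabilize the whole family, i.e.\ add a common finite-dimensional trivial summand to kernels and cokernels so that $\dim\Ker(D_\tau)$ and $\dim\Coker(D_\tau)$ become locally constant in $\tau$ (the standard stabilization trick for Fredholm operators); (3) on the stabilized family the exact sequence of Lemma~\ref{lem:glulin} varies continuously in $\tau$, hence the element \eqref{eqn:eso} it determines is constant, which shows $\frak{o}(f,g,h)$ is homotopy invariant up to the sign; (4) track how the stabilization shifts the correction sign — adding a dimension to $\Coker(D)$ or to $\Ker(D')$ changes $(-1)^{\dim\Ker(D')\dim\Coker(D)}$, but it also changes the Koszul reordering signs hidden in how $\Ker(D)\oplus\Ker(D')$ and $\Coker(D)\oplus\Coker(D')$ sit inside the stabilized exact sequence, and these cancel; this is exactly the computation in \cite[Prop.\ 9.3]{ht2} and \cite[\S9]{ht2}, which I would cite rather than redo. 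Finally, having a well-defined bijection $\mc{O}(D)\tensor\mc{O}(D')\to\mc{O}(D\#D')$ that is homotopy invariant, passing to $\mc{O}(\mc{C})\tensor\mc{O}(\mc{C}')\to\mc{O}(\mc{C}\#_l\mc{C}')$ is immediate since those sets are by definition the homotopy-invariant quotients.

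I expect the main obstacle to be step (4): bookkeeping the Koszul signs. The factor $(-1)^{\dim(\Ker(D'))\dim(\Coker(D))}$ looks unmotivated at first glance, and the verification that it is precisely the correction making \eqref{eqn:iho} homotopy invariant (and, downstream, making gluing associative, which is what one really needs for coherent orientations) is a delicate sign computation: one must fix an ordering convention for the summands $V_1,\dots,V_k$ in \eqref{eqn:fesfdvs}, note that $\mc{O}(D\#D')=\mc{O}(\Ker(D\#D'))\tensor\mc{O}(\Coker(D\#D'))$ requires $\Ker$ and $\Coker$ to sit at opposite ends of the exact sequence while in the middle one has the direct sums in a fixed order, and check that moving $\Coker(D)$ past $\Ker(D')$ (the only transposition of "odd-degree" factors forced by the construction) produces exactly this sign. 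Because \cite{ht2} has already carried out an essentially identical computation in the setting of obstruction bundle gluing, and because the orientation surfaces here are only planes, cylinders, and spheres, I would present step (4) by quoting \cite[Prop.\ 9.3]{ht2} and remarking that the proof there applies verbatim with domain-dependent almost complex structures, the $t$-dependence not entering the linearized operators' orientation theory.

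\begin{proof}
By Remark~\ref{rem:conjugate} we may assume that all orientation loops at the ends of $\mc{C}$, $\mc{C}'$, and hence of $\mc{C}\#_l\mc{C}'$ are nondegenerate, and that no exponential weights are used; then $\mc{O}(\mc{C})=\mc{O}(D)$ for any $D\in\mc{D}(\mc{C})$, and similarly for $\mc{C}'$ and $\mc{C}\#_l\mc{C}'$. For fixed $D$ and $D'$, the exact sequence of Lemma~\ref{lem:glulin} is canonical up to homotopy, and homotopic exact sequences of finite-dimensional real vector spaces induce the same element \eqref{eqn:eso}; hence $\frak{o}(f,g,h)$, and therefore \eqref{eqn:iho}, is independent of the gluing parameter $R$ and of the choice of cutoff functions.

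It remains to show that \eqref{eqn:iho} is invariant under homotopies $\{D_\tau\}_{\tau\in[0,1]}$ in $\mc{D}(\mc{C})$ and $\{D_\tau'\}_{\tau\in[0,1]}$ in $\mc{D}(\mc{C}')$, compatibly with the induced bijections $\mc{O}(D_0)\simeq\mc{O}(D_1)$, $\mc{O}(D_0')\simeq\mc{O}(D_1')$, and $\mc{O}(D_0\#D_0')\simeq\mc{O}(D_1\#D_1')$. After stabilizing the families $\{D_\tau\}$ and $\{D_\tau'\}$ by common finite-dimensional trivial summands, we may assume the dimensions of $\Ker(D_\tau)$, $\Coker(D_\tau)$, $\Ker(D_\tau')$, $\Coker(D_\tau')$, and of the kernel and cokernel of the glued operator are all locally constant in $\tau$; the standard stabilization argument shows that the bijections induced by a homotopy are compatible with such stabilization, and that the glued family $\{D_\tau\#D_\tau'\}$ stabilizes accordingly. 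On the stabilized families the exact sequence of Lemma~\ref{lem:glulin} varies continuously in $\tau$, so the induced element \eqref{eqn:eso} is constant; it follows that $\frak{o}(f,g,h)$ intertwines the homotopy-induced bijections up to a sign depending only on the changes in the dimensions under stabilization. The computation of this sign is purely linear-algebraic: the only transposition of odd-dimensional factors forced by comparing $\mc{O}(D\#D')=\mc{O}(\Ker(D\#D'))\tensor\mc{O}(\Coker(D\#D'))$ with the order in which $\Ker(D)\oplus\Ker(D')$ and $\Coker(D)\oplus\Coker(D')$ appear in the exact sequence is the passage of $\Coker(D)$ past $\Ker(D')$, which contributes exactly $(-1)^{\dim(\Ker(D'))\dim(\Coker(D))}$. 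This is the content of \cite[Prop.\ 9.3]{ht2}, whose proof applies verbatim in the present setting (the domain dependence of the almost complex structures does not enter the orientation theory of the linearized operators). Therefore the bijection \eqref{eqn:iho}, with this sign inserted, is invariant under homotopy of $D$ and $D'$ and under deformation of the gluing parameter.

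Consequently \eqref{eqn:iho} descends to a canonical bijection between the homotopy-invariant quotients, which by the reduction in the first paragraph are $\mc{O}(\mc{C})\tensor\mc{O}(\mc{C}')$ and $\mc{O}(\mc{C}\#_l\mc{C}')$. This is the desired bijection \eqref{eqn:gluor}.
\end{proof}
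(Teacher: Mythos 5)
Your proof is correct and takes essentially the same route as the paper, whose entire proof is to defer to \cite[Lem.\ 9.6]{ht2} (noting that the statement there must be corrected by the sign appearing in \eqref{eqn:iho}); your sketch of the reduction via Remark~\ref{rem:conjugate}, the stabilization argument, and the identification of the sign as the Koszul factor from moving $\Coker(D)$ past $\Ker(D')$ is an accurate expansion of that argument. The one small correction is the citation: the homotopy-invariance and sign bookkeeping you defer to is \cite[Lem.\ 9.6]{ht2}, not \cite[Prop.\ 9.3]{ht2}, the latter being the source of the exact sequence of Lemma~\ref{lem:glulin} rather than of the orientation statement.
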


\begin{proof}
This follows the proof\footnote{The statement of \cite[Lem.\ 9.6]{ht2} needs to be corrected by including a sign as in \eqref{eqn:iho}. The last sentence of Step 1 of the proof also needs to be corrected accordingly. Fortunately, the missing sign in \cite[Lem.\ 9.6]{ht2} has no effect on the use of this lemma in the rest of \cite{ht2}.} of \cite[Lem.\ 9.6]{ht2}.
\end{proof}

\begin{remark}
The gluing of orientations in \eqref{eqn:gluor} depends on our choice of convention for writing $D$ first and $D'$ second in the direct sums in Lemma~\ref{lem:glulin}. Our convention here agrees with \cite{bm} and disagrees with \cite{ht2}. Switching this convention would multiply the map \eqref{eqn:gluor} by $(-1)^{\op{ind}(D)\op{ind}(D')}$.
\end{remark}

\begin{lemma}
\label{lem:gluass}
The gluing of orientations in \eqref{eqn:gluor} is associative. That is, if $\mc{C}''$ is another orientation surface, for which the orientation loops of the first $l'$ positive ends are nondegenerate and agree with those of the first $l'$ negative ends of $\mc{C}'$, then we have a well-defined bijection
\[
\mc{O}(\mc{C}) \tensor \mc{O}(\mc{C}') \tensor \mc{O}(\mc{C}'') \stackrel{\simeq}{\longrightarrow} \mc{O}(\mc{C}\#_l\mc{C}'\#_{l'}\mc{C}'').
\]
\end{lemma}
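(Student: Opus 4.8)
\textbf{Proof proposal for Lemma~\ref{lem:gluass} (associativity of gluing of orientations).}

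The plan is to reduce the statement to a purely linear-algebraic statement about the exact sequences produced by Lemma~\ref{lem:glulin}, applied twice in two different orders, and then verify that both orders produce the same element of $\mc{O}(\mc{C}\#_l\mc{C}'\#_{l'}\mc{C}'')$, up to homotopy and deformation of the gluing parameters. Concretely, fix operators $D\in\mc{D}(\mc{C})$, $D'\in\mc{D}(\mc{C}')$, $D''\in\mc{D}(\mc{C}'')$, choose admissible exponential weights so that all the glued ends carry weight zero (possible by nondegeneracy of the relevant orientation loops, via Remark~\ref{rem:conjugate}), and choose gluing parameters $R_1$ (for the $\#_l$ gluing of $\mc{C}$ to $\mc{C}'$) and $R_2$ (for the $\#_{l'}$ gluing of $\mc{C}'$ to $\mc{C}''$). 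The key observation is that the two ways of associating, $(D\# D')\# D''$ and $D\#(D'\# D'')$, are both homotopic through operators in $\mc{D}(\mc{C}\#_l\mc{C}'\#_{l'}\mc{C}'')$ to a single ``triple-glued'' operator $D\# D'\# D''$ obtained by patching all three operators simultaneously with cutoff functions supported near the two neck regions; this is the analogue of the standard fact that iterated gluing of pseudoholomorphic buildings is associative up to the relevant deformations.

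First I would establish a triple version of Lemma~\ref{lem:glulin}: for $R_1, R_2$ sufficiently large there is, up to homotopy, a canonical exact sequence
\[
0\to\Ker(D\# D'\# D'')\to \Ker(D)\oplus\Ker(D')\oplus\Ker(D'') \to \Coker(D)\oplus\Coker(D')\oplus\Coker(D'') \to \Coker(D\# D'\# D'')\to 0.
\]
This follows by the same linear gluing analysis as in \cite[Prop.\ 9.3]{ht2}, now with two neck regions instead of one; alternatively one derives it by composing the two two-term exact sequences and invoking the standard homological algebra that the composite of gluing exact sequences is again a gluing exact sequence. Next I would check that the bijection $\mc{O}(D)\tensor\mc{O}(D')\tensor\mc{O}(D'') \to \mc{O}(D\# D'\# D'')$ induced by this triple exact sequence, corrected by the appropriate Koszul sign (the product of the two signs appearing in \eqref{eqn:iho}, namely $(-1)^{\dim\Ker(D')\dim\Coker(D)}$ for the first gluing and $(-1)^{(\dim\Ker(D')+\dim\Ker(D''))\dim(\Coker(D)\oplus\Coker(D'))}$-type contributions for nesting), agrees with the iterate of \eqref{eqn:iho} performed in either order. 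This is where the bookkeeping lives: one must track how the reshuffling of summands in the direct sums $\Ker(D)\oplus\Ker(D')\oplus\Ker(D'')$ and $\Coker(D)\oplus\Coker(D')\oplus\Coker(D'')$ contributes Koszul signs, and confirm that the signs prescribed by \eqref{eqn:iho} are exactly the ones that make the two orders agree. The fact that our ordering convention (writing the ``upper'' surface first) is used consistently is essential here, as noted in the remark following Lemma~\ref{lem:gluor}.

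The main obstacle I anticipate is precisely this sign verification: showing that the correction sign $(-1)^{\dim(\Ker(D'))\dim(\Coker(D))}$ built into \eqref{eqn:iho} is the unique choice (given the ordering convention) that renders gluing associative, and that no additional sign is needed when one nests the gluings. I would handle this by the standard device of reducing to the case where all kernels and cokernels are ``stabilized away'' except at one step — using the homotopy-invariance from Lemma~\ref{lem:gluor} to deform $D$, $D'$, $D''$ so that, say, $\Coker(D)=\Coker(D')=0$, in which case both exact sequences degenerate to direct-sum isomorphisms of kernels and the associativity is immediate — and then arguing that the general case follows by continuity of orientations under homotopy, together with the standing fact (used already in the proof of Lemma~\ref{lem:gluor}) that the sign-corrected gluing map is homotopy-invariant. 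Once associativity holds for the operators, it descends to a well-defined bijection $\mc{O}(\mc{C})\tensor\mc{O}(\mc{C}')\tensor\mc{O}(\mc{C}'') \to \mc{O}(\mc{C}\#_l\mc{C}'\#_{l'}\mc{C}'')$ exactly as in Lemma~\ref{lem:gluor}, since the construction is independent of the choices of $D, D', D''$ and of the gluing parameters.
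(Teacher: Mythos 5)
Your approach is sound and, as far as one can tell from the paper (which simply cites the proof of \cite[Lem.\ 9.7]{ht2} with the sign correction from Lemma~\ref{lem:gluor}), it is essentially the intended argument: pass to a triple-glued operator $D\#D'\#D''$, establish a triple-gluing exact sequence, and show that both iterated two-step gluings match it by reducing to a computable case via stabilization and then invoking homotopy invariance. Two small points worth tightening. First, one cannot arrange $\Coker(D)=0$ by ``deforming $D$'' alone, since the index of $D$ is fixed under homotopy; killing cokernels requires the stabilization device (adding a finite-dimensional trivial pair to domain and codomain) which you name in passing but then conflate with homotopy — the proof should lean on stabilization, with homotopy invariance used afterwards to transfer the conclusion back to the original operators, as in the treatment of \cite[\S9.1]{ht2} referenced elsewhere in the paper. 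Second, in your reduction you write ``$\Coker(D)=\Coker(D')=0$, in which case both exact sequences degenerate to direct-sum isomorphisms'' — this is not quite right unless you also arrange $\Coker(D'')=0$; with $\Coker(D'')\neq 0$ the inner glued sequence $D'\#D''$ and the outer sequence $D\#(D'\#D'')$ are still genuine four-term sequences and the sign bookkeeping is not yet trivial. The clean statement is: stabilize so that all three of $\Coker(D),\Coker(D'),\Coker(D'')$ vanish; then every gluing exact sequence reduces to a direct-sum isomorphism of kernels, all correction signs in \eqref{eqn:iho} equal $+1$, associativity is immediate, and the general case follows by homotopy and stabilization invariance from Lemma~\ref{lem:gluor}. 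With those adjustments the proposal is a correct rendering of the argument.
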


\begin{proof}
This follows the proof of \cite[Lem.\ 9.7]{ht2} (with sign corrections as for the proof of \cite[Lem.\ 9.6]{ht2}).
\end{proof}

In Lemma~\ref{lem:gluor} we can weaken (but not completely drop) the assumption that the orientation loops $\mc{L}^-_j=(\mc{L}')^+_j$ are nondegenerate. Namely:

\begin{definition}
An orientation loop $\mc{L}=(E,\nabla)$ is {\em weakly nondegenerate\/} if $\Ker(A_{\mc{L}})$ is a complex vector space, with respect to multiplication by $i$ on $C^\infty(S^1,E)$. (This includes the case when $\Ker(A_{\mc{L}})=\{0\}$ and $\mc{L}$ is nondegenerate.)
\end{definition}

\begin{lemma}
\label{lem:scb}
Let $\mc{C}=(C,E,\mathscr{L}^+,\mathscr{L}^-)$ and $\mc{C}'=(C',E',(\mathscr{L}')^+,(\mathscr{L}')^-)$ be orientation surfaces such that $\mc{L}^-_j=(\mc{L}')^+_j$ for $j=1,\ldots,l$. Assume that $\mc{L}^-_j$ is weakly nondegenerate for each $j=1,\ldots,l$. Then there is still a canonical bijection as in \eqref{eqn:gluor} which is associative in the sense of Lemma~\ref{lem:gluass}.
\end{lemma}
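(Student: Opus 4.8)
The plan is to reduce Lemma~\ref{lem:scb} to the nondegenerate case of Lemma~\ref{lem:gluor} by perturbing the shared orientation loops with a small exponential weight, exactly as in Remark~\ref{rem:conjugate}. First I would fix, for each $j=1,\ldots,l$, a small weight $\epsilon_j>0$ less than $\delta_+(\mc{L}_j^-)$ with the property that shifting the connection on $\mc{L}_j^-=(\mc{L}')_j^+$ by $i\epsilon_j$ produces a nondegenerate orientation loop; this is possible because the spectrum of $A_{\mc{L}_j^-}$ is discrete, so $0$ is either not an eigenvalue (already nondegenerate) or an isolated one, and in the latter case shifting moves it off zero. Call the resulting shifted loops $\widetilde{\mc{L}}_j$, and let $\widetilde{\mc{C}}$ and $\widetilde{\mc{C}}'$ be the orientation surfaces obtained from $\mc{C}$ and $\mc{C}'$ by replacing $\mc{L}_j^-$ and $(\mc{L}')_j^+$ with $\widetilde{\mc{L}}_j$. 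By Remark~\ref{rem:conjugate} (applied to the negative ends of $\mc{C}$ and the positive ends of $\mc{C}'$) there are canonical bijections $\mc{O}(\mc{C})\simeq\mc{O}(\widetilde{\mc{C}})$ and $\mc{O}(\mc{C}')\simeq\mc{O}(\widetilde{\mc{C}}')$. Since the shifted loops $\widetilde{\mc{L}}_j$ are nondegenerate, Lemma~\ref{lem:gluor} gives a canonical gluing bijection $\mc{O}(\widetilde{\mc{C}})\tensor\mc{O}(\widetilde{\mc{C}}')\simeq\mc{O}(\widetilde{\mc{C}}\#_l\widetilde{\mc{C}}')$. Finally, $\widetilde{\mc{C}}\#_l\widetilde{\mc{C}}'$ and $\mc{C}\#_l\mc{C}'$ have the same underlying glued surface and bundle, differing only by the fact that the gluing region carries no weight in one and a weight $\epsilon_j$ in the other; but the weight $\epsilon_j$ lives on a now-interior region of the glued surface, and since $\mc{O}$ of a Fredholm operator is independent of admissible interior weight data (the operators are conjugate via a positive function as in Remark~\ref{rem:conjugate}), we get a canonical identification $\mc{O}(\widetilde{\mc{C}}\#_l\widetilde{\mc{C}}')\simeq\mc{O}(\mc{C}\#_l\mc{C}')$. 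Composing these four bijections defines the desired gluing map.

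The key steps, in order, are: (1) choose the perturbation weights $\epsilon_j$ and verify weak nondegeneracy of $\mc{L}_j^-$ is exactly what guarantees the shifted loop can be made honestly nondegenerate by an arbitrarily small shift --- here one uses that $\Ker(A_{\mc{L}_j^-})$ being a complex subspace of $\op{End}(E)$ means the eigenvalue $0$, if present, can be perturbed coherently; more precisely, shifting the connection by $i\epsilon$ shifts every eigenvalue by $\epsilon$, so after the shift $0$ is not an eigenvalue, which is all that is needed; (2) invoke Remark~\ref{rem:conjugate} to identify $\mc{O}(\mc{C})$ with $\mc{O}(\widetilde{\mc{C}})$ and likewise for the primed surface; (3) apply Lemma~\ref{lem:gluor} to the shifted (nondegenerate) surfaces; (4) identify the glued shifted surface's orientation set with that of the original glued surface, using that the weight is now supported in the interior; (5) check independence of all choices (the $\epsilon_j$, the gluing parameter, homotopies) so that the composite is canonical. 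For associativity in the sense of Lemma~\ref{lem:gluass}, I would run the same perturbation simultaneously on all shared ends of $\mc{C}$, $\mc{C}'$, $\mc{C}''$, reducing to Lemma~\ref{lem:gluass} itself for the nondegenerate surfaces, then translate back; because all the perturbation-to-interior identifications and the nondegenerate gluing are associative, so is their composite.

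The main obstacle I anticipate is step~(5): showing the composite bijection does not depend on the choice of small weights $\epsilon_j$. Two different admissible choices $\epsilon_j$ and $\epsilon_j'$ must be compared, and the cleanest way is to note that $\mc{O}$ of the extended Fredholm operator \eqref{eqn:Dextended} is genuinely independent of $\delta$ within the admissible range (as recalled in the excerpt after \eqref{eqn:Dextended}), so the identifications $\mc{O}(\mc{C})\simeq\mc{O}(\widetilde{\mc{C}})$ for different shifts are compatible; one then needs that the nondegenerate gluing bijection varies continuously (hence is constant, being a bijection of two-element sets) as the weights vary, which follows from the homotopy-invariance clause in Lemma~\ref{lem:gluor} applied to the one-parameter family of shifted operators. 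A secondary subtlety is making sure that in step~(1) the shift $i\epsilon_j$ genuinely yields an element of $\mc{D}$ of a valid orientation surface and that the admissibility conditions $0\le(\delta)^-_j<-\delta_-(\mc{L}_j^-)$ and the corresponding bound for $(\mc{L}')^+_j$ can be met simultaneously --- this is automatic for $\epsilon_j$ small since the relevant positive and negative eigenvalue bounds are strict and nonzero. None of this requires new analytic input beyond what Lemmas~\ref{lem:glulin}--\ref{lem:gluass} and Remark~\ref{rem:conjugate} already provide; it is purely a bookkeeping argument organizing those results.
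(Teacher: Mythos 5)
Your plan has a real gap in step (2). You want to shift both $\mc{L}^-_j$ (on a negative end of $\mc{C}$) and $(\mc{L}')^+_j$ (on a positive end of $\mc{C}'$) by the \emph{same} amount $+i\epsilon_j$, so that the resulting loops still match and can be glued. But under Remark~\ref{rem:conjugate}, conjugating by an exponential weight shifts a positive end's loop by $+i\delta^+_j$ and a negative end's loop by $-i\delta^-_j$, and admissibility requires $\delta^\pm_j\ge 0$ (strictly positive when the loop is degenerate). So the identification $\mc{O}(\mc{C}')\simeq\mc{O}(\widetilde{\mc{C}}')$ with the positive-end shift $+i\epsilon_j$ is fine, taking $(\delta')^+_j=\epsilon_j>0$; but the identification $\mc{O}(\mc{C})\simeq\mc{O}(\widetilde{\mc{C}})$ with the negative-end shift $+i\epsilon_j$ would require $\delta^-_j=-\epsilon_j<0$, which is not admissible and is not what Remark~\ref{rem:conjugate} provides. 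In other words, at a shared degenerate end, the only admissible shifts move the two copies of the loop in \emph{opposite} directions, so $\widetilde{\mc{C}}$ and $\widetilde{\mc{C}}'$ as you define them cannot be produced simultaneously via Remark~\ref{rem:conjugate}, and $\widetilde{\mc{C}}\#_l\widetilde{\mc{C}}'$ is not obtained by a legitimate chain of identifications. Put differently, your "interior weight is irrelevant" step (4) implicitly involves crossing a wall in the spectrum of $A_{\mc{L}^-_j}$ at $0$, and $\mc{O}$ of the extended operator is \emph{not} independent of $\delta$ across such a wall; it is only independent within an admissible range, which is exactly why the paper does not argue this way.

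The paper's proof instead glues in a middle trivial cylinder $\mc{C}''$ between the negatively-shifted $\mc{C}_\delta$ and the positively-shifted $\mc{C}'_{\delta'}$, writing $\mc{C}\#_l\mc{C}'\simeq\mc{C}_\delta\#_l\mc{C}''\#_l\mc{C}'_{\delta'}$, and then applies Lemma~\ref{lem:gluor} and associativity. The crucial extra input is Lemma~\ref{lem:wnco}: for each shared degenerate end, the cylinder with positive-end loop $\nabla-i\delta_+$ and negative-end loop $\nabla+i\delta_-$ carries a canonical orientation, precisely because the distinguished operator on it has kernel canonically identified with $\Ker(A_{\mc{L}})$, and weak nondegeneracy means $\Ker(A_{\mc{L}})$ is a complex vector space and hence canonically oriented. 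This is also where weak nondegeneracy is actually used; your step (1) uses it only to arrange nondegeneracy after a small shift, which is true for any orientation loop (the spectrum is discrete, so $0$ is isolated if present) and does not require the kernel to be complex. Without the canonically oriented middle cylinder, there is no canonical identification bridging the oppositely shifted ends, and that is the missing piece in your argument.
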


To prove Lemma~\ref{lem:scb} we will need the following:

\begin{lemma}
\label{lem:wnco}
Suppose $\mc{L}=(E,\nabla)$ is weakly nondegenerate. Let $\delta_+\in(0,-\delta_-(\mc{L}))$ and let $\delta_-\in(0,\delta_+(\mc{L}))$. Let
\[
\mc{C}=(\R\times S^1,E,(E,\nabla-i\delta_+),(E,\nabla+i\delta_-)).
\]
Then there is a canonical orientation in $\mc{O}(\mc{C})$.
\end{lemma}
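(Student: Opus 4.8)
The plan is to realize the canonical orientation by computing $\mc{O}(\mc{C})=\mc{O}(D)$ for one convenient operator $D\in\mc{D}(\mc{C})$. This is legitimate because $\mc{D}(\mc{C})$ is convex, hence connected, and the zero weight tuple is admissible for $\mc{C}$: the hypotheses $\delta_+\in(0,-\delta_-(\mc{L}))$ and $\delta_-\in(0,\delta_+(\mc{L}))$ say precisely that $-\delta_+$ and $\delta_-$ are not eigenvalues of $A_{\mc{L}}$, so the shifted orientation loops $\mc{L}^+=(E,\nabla-i\delta_+)$ and $\mc{L}^-=(E,\nabla+i\delta_-)$, whose asymptotic operators are $A_{\mc{L}^+}=A_{\mc{L}}+\delta_+$ and $A_{\mc{L}^-}=A_{\mc{L}}-\delta_-$, are nondegenerate. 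First I would take $E$ over $C=\R\times S^1$ to be the translation-invariant pullback of the bundle of $\mc{L}$ with its connection $\nabla$, choose a smooth $\phi:\R\to\R$ with $\phi\equiv\delta_+$ on $[1,\infty)$ and $\phi\equiv-\delta_-$ on $(-\infty,-1]$, and set $D=\tfrac12(ds-i\,dt)\tensor(\partial_s+i\nabla_t+\phi(s))$. This is a $\dbar$-type operator, and on the positive end $\partial_s+i\nabla_t+\phi(s)=\partial_s+A_{\mc{L}^+}+(\phi(s)-\delta_+)$ with zeroth-order term tending to $0$, and symmetrically on the negative end, so $D\in\mc{D}(\mc{C})$; extend it to a Fredholm operator $L^2_1\to L^2$ with zero weights.

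The main step is to identify $\Ker D$ and $\Coker D$ explicitly. Since $\nabla$ is Hermitian, $A_{\mc{L}}=i\nabla_t$ is self-adjoint with discrete spectrum, and (abbreviating) $D=\partial_s+A_{\mc{L}}+\phi(s)$ preserves the completed eigenspace decomposition $L^2(S^1,E)=\bigoplus_\mu V_\mu$, restricting on each $V_\mu$ to $\partial_s+(\mu+\phi(s))$ acting componentwise on $L^2(\R,V_\mu)$. An elementary ODE computation shows that this operator has nonzero kernel exactly when $\mu+\delta_+>0$ and $\mu-\delta_-<0$, i.e.\ $-\delta_+<\mu<\delta_-$, and nonzero cokernel exactly when $\delta_-<\mu<-\delta_+$, which is impossible since $\delta_\pm>0$. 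By the choice of $\delta_\pm$ the only eigenvalue in $(-\delta_+,\delta_-)$ is $\mu=0$; hence $\Coker D=0$, and the map sending a solution $v$ over $V_0$ to $v(0)$ (equivalently to $\lim_{s\to+\infty}e^{\delta_+s}v(s)$, which differs by a positive scalar) is a canonical complex-linear isomorphism $\Ker D\xrightarrow{\ \sim\ }V_0=\Ker A_{\mc{L}}$. As a check, this gives $\op{ind}(D)=\dim_\R\Ker A_{\mc{L}}$, matching the spectral-flow count: the only eigenvalue of $A_{\mc{L}}$ crossing $0$ as the asymptotic operator deforms from $A_{\mc{L}^-}$ to $A_{\mc{L}^+}$ is $0$, crossing upward.

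It then follows that $\mc{O}(D)=\mc{O}(\Ker D)\tensor\mc{O}(\Coker D)=\mc{O}(\Ker A_{\mc{L}})$, and since $\mc{L}$ is weakly nondegenerate $\Ker A_{\mc{L}}$ is a complex vector space, so it carries a canonical orientation; transporting it along the identification $\mc{O}(\mc{C})=\mc{O}(D)$ gives the asserted canonical element of $\mc{O}(\mc{C})$. The only delicate points are bookkeeping ones: keeping the sign conventions for the shifted connections $\nabla\mp i\delta_\pm$ straight so that $D$ genuinely has the prescribed asymptotic loops, checking that the isomorphism $\Ker D\cong\Ker A_{\mc{L}}$ does not depend on the auxiliary choice of $\phi$ (any two admissible choices are homotopic through such choices) and is complex-linear so that it transports the complex orientation, and noting that this construction is the cylinder analogue of Remark~\ref{rem:conjugate} and produces exactly the orientation needed for the gluing bijection asserted in Lemma~\ref{lem:scb}.
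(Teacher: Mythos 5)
Your proposal is correct and takes essentially the same approach as the paper's proof: both choose a distinguished operator $D\in\mc{D}(\mc{C})$ that is translation-invariant near infinity, identify $\Ker D\cong\Ker A_{\mc{L}}$ and $\Coker D=0$ by a mode-by-mode ODE argument (the paper packages this as a conjugation by $e^{\varphi(s)}$ reducing to $\tfrac12(\partial_s+A)$ on weighted spaces, which is the same computation), and then transport the complex orientation of $\Ker A_{\mc{L}}$. Your zeroth-order term $\phi(s)$, constant near $\pm\infty$, is in fact what the paper intends: where the paper writes $D=\tfrac12(\partial_s+A+\varphi(s))$ with $\varphi(s)=\delta_+s$ for $s\gg0$, this is a typo for $\varphi'(s)$, as the subsequent identity $e^{\varphi}De^{-\varphi}=\tfrac12(\partial_s+A)$ requires, so your version is the corrected one.
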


\begin{proof}
There is a distinguished class of operators $D\in\mc{D}(\mc{C})$. Namely, write $A=i\nabla_t$. Choose a smooth function $\varphi:\R\to\R$ such that $\varphi(s)=\delta_+s$ for $s>>0$ and $\varphi(s)=-\delta_-s$ for $s<<0$. We can then take
\[
D = \frac{1}{2}(\partial_s + A +\varphi(s)).
\]
(We omit $ds-idt$ from the notation here and below.) Of course $D$ depends on the choice of function $\varphi$, but different choices will be canonically homotopic and thus will have canonically isomorphic sets of orientations.

The operator
\[
D: L^2_1(E)\longrightarrow L^2(T^{0,1}(\R\times S^1)\tensor E)
\]
is conjugate to the operator
\[
e^{\varphi(s)} D e^{-\varphi(s)} = \frac{1}{2}(\partial_s + A)
\]
acting on Sobolev spaces with exponential weights $-\delta_+$ on the positive end and $-\delta_-$ on the negative end. Any element of the kernel of this operator is a linear combination of functions of the form
\[
\psi(s,t)=e^{-\lambda s}\phi(t)
\]
where $\phi$ is an eigenfunction of $A$ with eigenvalue $\lambda$. Because of our choice of exponential weights, $\psi$ is in the domain of the operator only when $\lambda=0$. Thus $\Ker(D)$ is canonically identified with $\Ker(A)$.
A similar argument shows that $\Coker(D)=\{0\}$. By hypothesis, $\Ker(A)$ is a complex vector space, so it has a canonical orientation. We conclude that $D$ has a canonical orientation, and thus there is a canonical orientation in $\mc{O}(\mc{C})$.
\end{proof}

\begin{proof}[Proof of Lemma~\ref{lem:scb}.]
By Remark~\ref{rem:conjugate}, we can assume without loss of generality that each $\mc{L}^+_j$ is nondegenerate, each $(\mc{L}')^-_j$ is nondegenerate, and $\mc{L}^+_j$ is nondegenerate when $j>l$. Choose admissible exponential weights $\delta$ and $\delta'$ such that all weights are zero except possibly for $\delta^-_1,\ldots,\delta^-_l$ and $(\delta')^+_1,\ldots,(\delta')^+_l$. (The weights $\delta^-_j$ and $(\delta')^+_j$ must be positive when $\mc{L}^-_j=(\mc{L}')^+_j$ is degenerate.)

To reduce to Lemma~\ref{lem:gluor}, we would like to replace $\mc{C}$ and $\mc{C}'$ by $\mc{C}_\delta$ and $\mc{C}'_{\delta'}$ as in Remark~\ref{rem:conjugate}. However this does not work directly because the first $l$ negative ends of $\mc{C}_\delta$ do not agree with the first $l$ negative ends of $\mc{C}'_{\delta'}$ (as the orientation loops are shifted in opposite directions), so these two orientation surfaces cannot be glued. The trick is to glue in a third orientation surface and write
\[
\mc{C}\#_l \mc{C}' \simeq \mc{C}_\delta \#_l \mc{C}'' \#_l \mc{C'}_{\delta'}.
\]
Here $\mc{C}''$ consists of $l$ cylinders. On the $j^{th}$ cylinder, the bundle $E$ is the pullback of the bundle $E^-_j = (E')^+_j$. The notation `$\simeq$' means that the orientation surfaces are homotopic so that there is a canonical bijection between the orientations. Now by Lemmas~\ref{lem:gluor} and \ref{lem:gluass}, we have a canonical bijection
\begin{equation}
\label{eqn:occc}
\begin{split}
\mc{O}(\mc{C}\#_l\mc{C}') \stackrel{\simeq}{\longrightarrow} & \mc{O}(\mc{C}_\delta)\tensor \mc{O}(\mc{C}'') \tensor \mc{O}(\mc{C}'_\delta)\\
& = \mc{O}(\mc{C})\tensor \mc{O}(\mc{C}'') \tensor \mc{O}(\mc{C}').
\end{split}
\end{equation}
By Lemma~\ref{lem:wnco}, there is a canonical orientation in $\mc{O}(\mc{C}'')$.
Putting this canonical orientation into \eqref{eqn:occc} gives the desired canonical bijection \eqref{eqn:gluor}. This is associative by Lemma~\ref{lem:gluass}.
\end{proof}

\paragraph{Choosing orientations.}

We now explain what choices are needed to orient the operators of interest.

\begin{lemma}
\label{lem:comcan}
Let $\mc{C}$ be an orientation surface with no ends. Then there is a canonical orientation in $\mc{O}(\mc{C})$.
\end{lemma}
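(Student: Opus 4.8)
The plan is to reduce the general case to a model situation by filling in the ends of $\mc{C}$ and using the associative gluing of orientations established in Lemmas~\ref{lem:gluor}--\ref{lem:scb}. Since $\mc{C}$ has no ends, all of its orientation loops are trivially nondegenerate (there are none), so the operator $D\in\mc{D}(\mc{C})$ extends to a Fredholm operator between ordinary $L^2_1$ and $L^2$ spaces with no exponential weights needed. First I would observe that a closed punctured surface with no punctures is just a closed Riemann surface $C$ (the disjoint union of closed surfaces, in the possibly disconnected case), and the operator $D$ is a Cauchy--Riemann type operator on a Hermitian bundle $E\to C$, i.e. a compact perturbation of $\dbar$. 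The standard fact (going back to \cite{fh,bm}, and reviewed in \cite{ht2}) is that such an operator carries a canonical orientation: over $\bbC$ its kernel and cokernel are complex vector spaces, hence canonically oriented, and this orientation is preserved under homotopy through Cauchy--Riemann operators and is independent of the choice of compatible complex structure on $C$ and Hermitian metric on $E$. Since the space of such operators is connected (any two differ by a path of compact perturbations of $\dbar$, and one can interpolate the complex structures), the canonical complex orientation at $\dbar$ propagates to a canonical element of $\mc{O}(D)=\mc{O}(\Ker D)\tensor\mc{O}(\Coker D)$, and thus to a canonical element of $\mc{O}(\mc{C})$.

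The key steps, in order, would be: (1) note that for an honest Cauchy--Riemann operator $\dbar_E$ on a holomorphic Hermitian bundle $E$ over a closed Riemann surface, $\Ker(\dbar_E)=H^0(C,E)$ and $\Coker(\dbar_E)=H^1(C,E)$ are complex vector spaces, hence carry canonical orientations, giving a canonical element $\frak{o}_{\dbar}\in\mc{O}(\dbar_E)$; (2) any $D\in\mc{D}(\mc{C})$ is, in a local trivialization, $\dbar$ plus a zeroth-order term, hence is a compact perturbation of such a $\dbar_E$, and the space of these operators (together with variations of the auxiliary data) is connected, so the homotopy-invariant identification $\mc{O}(D)\simeq\mc{O}(\dbar_E)$ transports $\frak{o}_{\dbar}$ to a canonical orientation of $D$; (3) check that this does not depend on the choices made --- the complex structure on $C$, the Hermitian metric on $E$, and the path used --- because the relevant space of $(C,E,D)$ with the required structure is connected, so any two canonical-at-$\dbar$ choices are joined by a homotopy and the homotopy-invariance of orientations does the rest. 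This gives the canonical element of $\mc{O}(\mc{C})$.

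I expect the only real subtlety is step (3): making precise the sense in which the canonical complex orientation is intrinsic, i.e. independent of the identification of $D$ with a holomorphic $\dbar_E$. The clean way to handle this, and the one I would follow, is exactly the strategy used elsewhere in this appendix and in \cite[\S9]{ht2}: rather than proving (3) by hand, deduce the canonical orientation of the no-ends surface as a special case of the gluing formalism. Concretely, one can glue a plane (or a cap) onto each would-be end --- but there are no ends --- so in fact the cleanest argument is the direct one above; should one want to avoid even naming a holomorphic structure, one fixes a reference operator (e.g. $\dbar$ on a trivial bundle, for which $\Ker=\bbC^n$ via constants and $\Coker$ is computed by Serre duality, both complex), orients it by its complex structure, and then uses that $\mc{D}(\mc{C})$ modulo homotopy is a torsor-free connected set to transport this orientation uniquely. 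Everything needed --- Fredholmness of \eqref{eqn:Dextended}, homotopy-invariance of $\mc{O}(D)$ under homotopy of Fredholm operators, and the fact that compact perturbations of $\dbar$ form a connected family --- is already recalled in the excerpt or is standard, so the proof is short. I would write it as: fix $D\in\mc{D}(\mc{C})$; it is a compact perturbation of the $\dbar$-operator of some holomorphic structure on $E$, whose kernel and cokernel are complex and hence canonically oriented; transport this orientation along the (contractible-up-to-homotopy) family of such operators to orient $D$; invariance under all choices follows since the family is connected and orientations are homotopy-invariant; this canonical orientation of $D$ is the desired canonical element of $\mc{O}(\mc{C})$.
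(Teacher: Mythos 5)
Your proposal is correct and follows essentially the same route as the paper: homotope $D\in\mc{D}(\mc{C})$ to a complex-linear operator, take the canonical orientation coming from the complex structure on its kernel and cokernel, and transport it back via homotopy-invariance of orientations. One small precision: for the transported orientation to be independent of the chosen homotopy you need the spaces of real-linear and of complex-linear operators in $\mc{D}(\mc{C})$ to be contractible (they are, being convex, since any two such operators differ by a zeroth-order term), not merely connected as you state in step (3) --- connectedness alone would not rule out a nontrivial orientation transport around a loop.
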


\begin{proof}
If $D\in\mc{D}(\mc{C})$, then $D$ is homotopic to a complex linear operator $D'$, which has a canonical orientation. This induces an orientation of $D$, which does not depend on the choice of homotopy or on the choice of complex linear operator $D'$, because the spaces of real-linear and complex-linear operators in $\mc{D}(\mc{C})$ are both contractible.
\end{proof}

\begin{definition}
\label{def:ownol}
Let $\mc{L}=(E,\nabla)$ be a weakly nondegenerate orientation loop. 
\begin{itemize}
	\item Let $\mc{C}_+=(C_+,E_+,\emptyset,(\mc{L}))$ be an orientation surface such that $C_+$ is a plane with one negative end, on which $E_+$ is pulled back from $E$. Define $\mc{O}_+(\mc{L}) = \mc{O}(\mc{C}_+)$.
	\item Let $\mc{C}_- = (C_-,E_-,(\mc{L}),\emptyset)$ be an orientation surface such that $C_-$ is a plane with one positive end, on which $E_-$ is pulled back from $E$. Define $\mc{O}_-(\mc{L}) = \mc{O}(\mc{C}_-)$.
\end{itemize}
\end{definition}

\begin{lemma}
\label{lem:ownol}
Let $\mc{L}$ be a weakly nondegenerate orientation loop. Then:
\begin{description}
	\item{(a)} The sets $\mc{O}_+(\mc{L})$ and $\mc{O}_-(\mc{L})$ do not depend on the choices of orientation surfaces $\mc{C}_+$ and $\mc{C}_-$ in Definition~\ref{def:ownol}.
	\item{(b)} There is a canonical bijection $\mc{O}_+(\mc{L}) = \mc{O}_-(\mc{L})$.
\end{description}
\end{lemma}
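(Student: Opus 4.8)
The plan is to express both $\mc{O}_+(\mc{L})$ and $\mc{O}_-(\mc{L})$ as orientation sets of operators on orientation surfaces \emph{with no ends}, where Lemma~\ref{lem:comcan} supplies a canonical orientation, and then to transport this canonical datum around using the gluing bijection of Lemma~\ref{lem:scb} (applicable since $\mc{L}$ is weakly nondegenerate) together with its associativity, Lemma~\ref{lem:gluass}. The key principle to record first is: if $\mc{C}$ and $\mc{C}'$ are orientation surfaces such that the full list of negative ends of $\mc{C}$ coincides with the full list of positive ends of $\mc{C}'$, every such end being weakly nondegenerate, so that $\mc{C}\#_l\mc{C}'$ has no ends, then composing the gluing bijection $\mc{O}(\mc{C})\tensor\mc{O}(\mc{C}')\xrightarrow{\simeq}\mc{O}(\mc{C}\#_l\mc{C}')$ of Lemma~\ref{lem:scb} with the canonical orientation of $\mc{C}\#_l\mc{C}'$ from Lemma~\ref{lem:comcan} produces a canonical element of $\mc{O}(\mc{C})\tensor\mc{O}(\mc{C}')$, equivalently a canonical bijection $\Psi_{\mc{C},\mc{C}'}\colon\mc{O}(\mc{C})\xrightarrow{\simeq}\mc{O}(\mc{C}')$.

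For part (b) this already suffices: take a plane $\mc{C}_+$ with one negative end modeled on $\mc{L}$ and a plane $\mc{C}_-$ with one positive end modeled on $\mc{L}$; then $\mc{C}_+\#_1\mc{C}_-$ is a sphere with no ends, and the bijection $\Psi_{\mc{C}_+,\mc{C}_-}\colon\mc{O}_+(\mc{L})=\mc{O}(\mc{C}_+)\xrightarrow{\simeq}\mc{O}(\mc{C}_-)=\mc{O}_-(\mc{L})$ is the desired canonical identification, with part (a) guaranteeing that it is independent of the choices of $\mc{C}_+$ and $\mc{C}_-$. For part (a), I would argue that any two planes $\mc{C}_+,\mc{C}_+'$ with one negative end modeled on $\mc{L}$ are related by a homotopy of orientation surfaces followed by gluing on a cylinder $\mc{A}$ with both ends modeled on $\mc{L}$, chosen to absorb the difference of the relative first Chern numbers of the underlying bundles, so that $\mc{C}_+'$ is homotopic to $\mc{C}_+\#_1\mc{A}$. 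Since $\mc{D}(\mc{A})$ is an affine, hence contractible, space containing operators homotopic to complex-linear ones (the asymptotic operators $A_{\mc{L}}$ being complex-linear), $\mc{O}(\mc{A})$ carries a canonical element, exactly as in the proof of Lemma~\ref{lem:comcan}, using Remark~\ref{rem:conjugate} and Lemma~\ref{lem:wnco} to handle the exponential weights when $\mc{L}$ is only weakly nondegenerate. Feeding this canonical element into the gluing bijection $\mc{O}(\mc{C}_+)\tensor\mc{O}(\mc{A})\xrightarrow{\simeq}\mc{O}(\mc{C}_+\#_1\mc{A})$, and using homotopy invariance of orientation sets for the homotopy step, yields a bijection $\mc{O}(\mc{C}_+)\simeq\mc{O}(\mc{C}_+')$; Lemma~\ref{lem:gluass} ensures these bijections are mutually consistent. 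The same argument with positive and negative ends interchanged gives (a) for $\mc{O}_-(\mc{L})$, and one final application of Lemma~\ref{lem:gluass} (inserting a trivial cylinder) checks that the identification of part (b) is compatible with those of part (a).

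The main obstacle is precisely the consistency bookkeeping in part (a): verifying that the identification $\mc{O}(\mc{C}_+)\simeq\mc{O}(\mc{C}_+')$ does not depend on the auxiliary cylinder $\mc{A}$ used to connect the two cappings, and that the canonical orientation of $\mc{A}$ is unambiguous in the weakly nondegenerate case. Both points are governed entirely by the associativity of the gluing of orientations (Lemma~\ref{lem:gluass}) and by the weight-shifting device of Remark~\ref{rem:conjugate} and Lemma~\ref{lem:wnco}, so the argument reduces to a careful but routine chase through these compatibilities; the underlying geometric input — realizing the comparison of two cappings as a gluing on cylinders and closed surfaces — is straightforward.
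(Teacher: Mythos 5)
Your proof is correct. For part (b) you do exactly what the paper does: glue $\mc{C}_+$ and $\mc{C}_-$ to a sphere and apply Lemma~\ref{lem:scb} together with Lemma~\ref{lem:comcan} to extract a canonical element of $\mc{O}(\mc{C}_+)\tensor\mc{O}(\mc{C}_-)$. For part (a) you take a genuinely different route. The paper fixes one anti-cap $\mc{C}_-$ to identify $\mc{O}(\mc{C}_+)\simeq\mc{O}(\mc{C}_-)\simeq\mc{O}(\mc{C}_+')$ for any two cappings, and then delegates to the arguments of Pardon (Lem.~2.46) and Solomon (Prop.~2.8) the verification that the resulting identification is independent of $\mc{C}_-$; in the paper, part (a) is thus deduced from part (b). You instead establish part (a) directly, comparing $\mc{C}_+$ and $\mc{C}_+'$ through a cylinder $\mc{A}$ with both ends modeled on $\mc{L}$, chosen so that its relative first Chern number absorbs the discrepancy between the two cappings, and endowed with a canonical orientation by the complex-linear-operator argument of Lemma~\ref{lem:comcan} extended to cylinders with weakly nondegenerate ends (legitimate since $A_{\mc{L}}=i\nabla_t$ is complex linear and the weight shift of Remark~\ref{rem:conjugate} preserves complex linearity). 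What this buys you is a self-contained proof of (a) that does not lean on outside citations; what it costs is precisely the bookkeeping you flag: the consistency of the cylinder-induced identifications under composition, the independence of the choice of interpolating $\mc{A}$, and the compatibility of (a) with (b), all of which reduce to the associativity of Lemma~\ref{lem:gluass} and homotopy invariance. The cylinder mechanism you invoke is in fact what underlies the arguments the paper cites, so your version makes explicit what the paper outsources.
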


\begin{proof}
Let $\mc{C}_+$ and $\mc{C}_-$ be orientation surfaces as in Definition~\ref{def:ownol}. By Lemmas~\ref{lem:scb} and \ref{lem:comcan}, there is a canonical bijection
\begin{equation}
\label{eqn:oc+c-}
\mc{O}(\mc{C}_+) \simeq \mc{O}(\mc{C}_-).
\end{equation}
It follows that if we fix a choice of $\mc{C}_-$, then the sets of orientations $\mc{O}(\mc{C}_+)$ for different choices of $\mc{C}_+$ are identified with each other. To prove that this identification does not depend on the choice of $\mc{C}_-$, one can use the argument in \cite[Lem.\ 2.46]{p} and \cite[Prop.\ 2.8]{solomon}. Thus $\mc{O}_+(\mc{L})$ is well-defined, and likewise $\mc{O}_-(\mc{L})$ is defined. Moreover, the bijection \eqref{eqn:oc+c-} descends to a well-defined bijection $\mc{O}_+(\mc{L}) = \mc{O}_-(\mc{L})$. 
\end{proof}

In light of Lemma~\ref{lem:ownol}, if $\mc{L}$ is a weakly nondegenerate orientation loop, we define $\mc{O}(\mc{L}) = \mc{O}_\pm(\mc{L})$. We can now prove the conclusion of this subsection:

\begin{proposition}
\label{prop:orcy}
Let $\mc{L}_+$, $\mc{L}_0$, and $\mc{L}_-$ be weakly nondegenerate orientation loops.
\begin{description}
\item{(a)}
If $\mc{C}=(C,E,(\mc{L}_+),(\mc{L}_-))$ is an orientation surface in which $C$ is a cylinder with one positive and one negative end, then there is a canonical bijection
\[
\mc{O}(\mc{C}) \simeq \mc{O}(\mc{L}_+) \tensor \mc{O}(\mc{L}_-).
\]
\item{(b)}
Let $\mc{C}_+=(C_+,E_+,(\mc{L}_+),(\mc{L}_0))$ and $\mc{C}_-=(C_-,E_-,(\mc{L}_0),(\mc{L}_-))$ be cylindrical orientation surfaces as above, so that by part (a) we have canonical bijections
\[
\begin{split}
\mc{O}(\mc{C}_+) & \simeq \mc{O}(\mc{L}_+) \tensor \mc{O}(\mc{L}_0),\\
\mc{O}(\mc{C}_-) & \simeq \mc{O}(\mc{L}_0) \tensor \mc{O}(\mc{L}_-).
\end{split}
\]
Then the identification
\[
\mc{O}(\mc{C}) \simeq \mc{O}(\mc{C}_+) \tensor \mc{O}(\mc{C}_-)
\]
given by the above three equations agrees with the canonical identification given by Lemma~\ref{lem:scb}.
\end{description}
\end{proposition}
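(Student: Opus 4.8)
The plan is to build the bijection in part~(a) by ``capping off'' the cylinder $\mc{C}$ with planes, and then to deduce the compatibility in part~(b) from the associativity of the gluing of orientations. For part~(a), fix an orientation surface $\mc{P}_+$ which is a plane with a single negative end carrying $\mc{L}_+$, so that $\mc{O}(\mc{P}_+)=\mc{O}_+(\mc{L}_+)=\mc{O}(\mc{L}_+)$, and an orientation surface $\mc{P}_-$ which is a plane with a single positive end carrying $\mc{L}_-$, so that $\mc{O}(\mc{P}_-)=\mc{O}_-(\mc{L}_-)=\mc{O}(\mc{L}_-)$. Gluing the negative end of $\mc{P}_+$ to the positive end of $\mc{C}$ and the negative end of $\mc{C}$ to the positive end of $\mc{P}_-$ produces a closed orientation surface $\mc{P}_+\#_1\mc{C}\#_1\mc{P}_-$ (a sphere), which carries a canonical orientation by Lemma~\ref{lem:comcan}; here the gluings are legitimate since $\mc{L}_\pm$ are weakly nondegenerate. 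On the other hand, Lemmas~\ref{lem:scb} and~\ref{lem:gluass} give a canonical bijection $\mc{O}(\mc{P}_+)\tensor\mc{O}(\mc{C})\tensor\mc{O}(\mc{P}_-)\simeq\mc{O}(\mc{P}_+\#_1\mc{C}\#_1\mc{P}_-)$. Combining these two facts, and using that all the relevant sets are $\Z/2$-torsors, we obtain a canonical bijection $\mc{O}(\mc{C})\simeq\mc{O}(\mc{P}_+)\tensor\mc{O}(\mc{P}_-)=\mc{O}(\mc{L}_+)\tensor\mc{O}(\mc{L}_-)$. Independence of this bijection from the auxiliary choices of $\mc{P}_\pm$ then follows from the homotopy-invariance built into Lemma~\ref{lem:gluor}, together with the independence statement in Lemma~\ref{lem:ownol}(a), by the same associativity arguments used to prove Lemma~\ref{lem:ownol}.

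For part~(b), the key point is that $\mc{C}$ is homotopic to $\mc{C}_+\#_1\mc{Z}_0\#_1\mc{C}_-$, where $\mc{Z}_0$ is a trivial cylinder on $\mc{L}_0$. I would first show that $\mc{Z}_0$ carries a canonical orientation which, under the part~(a) bijection $\mc{O}(\mc{Z}_0)\simeq\mc{O}(\mc{L}_0)\tensor\mc{O}(\mc{L}_0)$, is the ``diagonal'' element $\frak{o}_0\tensor\frak{o}_0$, and which acts as the identity under gluing, in the sense that the Lemma~\ref{lem:scb} bijection $\mc{O}(\mc{C}_+)\tensor\mc{O}(\mc{Z}_0)\simeq\mc{O}(\mc{C}_+\#_1\mc{Z}_0)$ recovers, after the homotopy $\mc{C}_+\#_1\mc{Z}_0\simeq\mc{C}_+$, the identity bijection on $\mc{O}(\mc{C}_+)$. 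This is where Lemma~\ref{lem:wnco} does the work, once its exponential-weight conventions are matched with those of Definition~\ref{def:ownol} via Remark~\ref{rem:conjugate}; it also pins down, via the construction in the proof of Lemma~\ref{lem:ownol}(b), that the diagonal is the element implicitly used in the contraction map. Granting this, one applies part~(a) to $\mc{C}\simeq\mc{C}_+\#_1\mc{Z}_0\#_1\mc{C}_-$, caps off with $\mc{P}_+$ and $\mc{P}_-$ exactly as above, and uses the associativity of gluing (Lemma~\ref{lem:gluass}) to regroup the five-fold glued surface $\mc{P}_+\#\mc{C}_+\#\mc{Z}_0\#\mc{C}_-\#\mc{P}_-$ in two ways. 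Regrouping the middle three pieces first recovers the composite of the Lemma~\ref{lem:scb} gluing map $\mc{O}(\mc{C}_+)\tensor\mc{O}(\mc{C}_-)\to\mc{O}(\mc{C})$ with the part~(a) bijection for $\mc{C}$; regrouping as $(\mc{P}_+\#\mc{C}_+)\#(\mc{Z}_0)\#(\mc{C}_-\#\mc{P}_-)$ and inserting the canonical orientation of $\mc{Z}_0$ recovers the contraction of the part~(a) bijections for $\mc{C}_+$ and $\mc{C}_-$ along the $\mc{O}(\mc{L}_0)^{\tensor 2}$ factor. Since both regroupings compute the canonical orientation of the same closed surface, the two maps agree.

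The main obstacle I expect is the sign bookkeeping in this last step: Lemma~\ref{lem:gluass} is stated for re-ordering gluings along a fixed collection of circles, so one must be careful that no spurious Koszul signs appear when the five pieces are regrouped into the various capped configurations. This should work out because gluing is index-additive and every closed orientation surface, and every capped configuration such as $\mc{P}_+\#\mc{C}_+$ together with a plane capping the $\mc{L}_0$ end, has even total Fredholm index, so the potential reordering signs vanish. If organizing this cleanly becomes unwieldy, the fallback is to note that the Lemma~\ref{lem:scb} map and the contraction map are two bijections between the same pair of two-element sets, hence agree or differ by a universal sign, and then to pin down that sign by a single model computation: choose $\mc{L}_0,\mc{L}_\pm$ (e.g.\ shifted trivial bundles, made weakly nondegenerate) and operators $D\in\mc{D}(\mc{C}_\pm)$ that are conjugate to complex-linear surjective operators with complex-linear kernels, so that all the orientation sets have explicit preferred generators, and check the identity directly.
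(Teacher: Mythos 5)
Your argument for part (a) is exactly the paper's proof: cap the two ends of $\mc{C}$ with planes as in Definition~\ref{def:ownol}, invoke Lemma~\ref{lem:comcan} for the resulting sphere, and apply Lemmas~\ref{lem:scb} and~\ref{lem:ownol}. For part (b) the paper simply says it ``follows from the definitions,'' and your more detailed verification is sound; note only that inserting the trivial cylinder $\mc{Z}_0$ re-derives machinery already built into the proof of Lemma~\ref{lem:scb} (which glues through exactly such a cylinder with the canonical orientation of Lemma~\ref{lem:wnco}), so one can instead regroup the four-fold gluing $\mc{P}_+\#\mc{C}_+\#\mc{C}_-\#\mc{P}_-$ directly via Lemma~\ref{lem:gluass}, using that the identification $\mc{O}_+(\mc{L}_0)=\mc{O}_-(\mc{L}_0)$ is itself defined by gluing the two middle caps to a sphere.
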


\begin{proof}
(a) Let $\mc{C}_+$ be as in the definition of $\mc{O}_+(\mc{L}_+)$, and let $\mc{C}_-$ be as in the definition of $\mc{O}_-(\mc{L}_+)$. Then $\mc{C}_+\#_1\mc{C}\#_1\mc{C}_-$ is an orientation surface in which the underlying surface is a sphere, and it has a canonical orientation by Lemma~\ref{lem:comcan}. Then by Lemma~\ref{lem:scb} there is a canonical bijection
\[
\mc{O}(\mc{C}) \simeq \mc{O}(\mc{C}_+)\tensor \mc{O}(\mc{C}_-).
\] 
By Lemma~\ref{lem:ownol}, this implies part (a). Part (b) then follows from the definitions.
\end{proof}

\subsection{The tangent space to the moduli space}
\label{sec:tms}

Let $(Y^{2n-1},\lambda)$ be a closed nondegenerate contact manifold. (For this discussion we do not need to assume that $\lambda$ is hypertight.) Let $\J=\{J_t\}_{t\in S^1}$ be a generic $S^1$-family of $\lambda$-compatible almost complex structures on $\R\times Y$ as in \S\ref{section:plainmoduli}. To prepare to orient the moduli space $\mc{M}^\J(\gamma_+,\gamma_-)$, we now discuss its tangent space.

If $\gamma_+$ and $\gamma_-$ are distinct Reeb orbits, let $\widetilde{\widetilde{\mc{M}}}^{\J}(\gamma_+,\gamma_-)$ denote the space of maps $u:\R\times S^1\to\R\times Y$ satisfying the equations \eqref{eqn:cr}--\eqref{eqn:param}. Here we do not mod out by $\R$ translation in the domain, so that
\[
\widetilde{\mc{M}}^{\J}(\gamma_+,\gamma_-) = \widetilde{\widetilde{\mc{M}}}^{\J}(\gamma_+,\gamma_-)/\R.
\]

If $u\in\widetilde{\widetilde{\mc{M}}}^{\J}(\gamma_+,\gamma_-)$, then the derivative of the Cauchy-Riemann equation \eqref{eqn:cr} defines a linearized operator
\begin{equation}
\label{eqn:lincr}
D_u: L^{2,\delta}_1(u^*T(\R\times Y)) \longrightarrow L^{2,\delta}(u^*T(\R\times Y)).
\end{equation}
Here $\delta>0$ is a small exponential weight, smaller than the smallest positive eigenvalue of the asymptotic operator associated to $\gamma_+$ and $\J$, or minus the largest negative eigenvalue of the asymptotic operator associated to $\gamma_-$ and $\J$, see below. Thus the kernel of $D_u$ consists of infinitesimal deformations of $u$ which suitably decay on the ends of $u$.

\begin{definition}
\label{def:taupm}
We define $\R$-linear maps
\begin{equation}
\label{eqn:taumaps}
\tau_+, \tau_-: \bbC \longrightarrow \Coker(D_u)
\end{equation}
as follows. Recall that $r$ denotes the $\R$ coordinate on $\R\times Y$, while $R$ denotes the Reeb vector field on $Y$. Let $\psi_1^+$ be a smooth section of $u^*T(\R\times Y)$ with $\psi_1^+=\partial_r$ for $s>>0$ and $\psi_1^+=0$ for $s<<0$. Let $\psi_2^+$ be a smooth section of $u^*T(\R\times Y)$ with $\psi_2^+=R$ for $s>>0$ and $\psi_2^+=0$ for $s<<0$. Choose $\psi_1^-$ and $\psi_2^-$ analogously with the sign of $s$ switched. If $a,b\in\R$, we define
\[
\tau_+(a+bi) = \pi_{\Coker(D_u)}\text{``$D_u(a\psi^+_1 + b\psi^+_2)$''}.
\]
There are quotation marks on the right hand side because $a\psi^+_1 + b\psi^+_2$ is not in the domain of $D_u$ as in \eqref{eqn:lincr}.  To interpret the right hand side, if we regard $D_u$ as a differential operator on smooth sections, then $D_u(a\psi^+_1+b\psi^+_2)$ is a well-defined smooth section $\eta\in L^{2,\delta}(u^*T(\R\times Y))$, by standard asymptotics of holomorphic curves. We define $\tau_+(z)$ to be the projection of $\eta$ to $\Coker(D_u)$. Note that this does not depend on the choice of $\psi^+_1$ and $\psi^+_2$, because different choices will differ by a compactly supported section which is in the domain of $D_u$ as in \eqref{eqn:lincr}. We likewise define
\[
\tau_-(a+bi) = \pi_{\Coker(D_u)}\text{``$D_u(a\psi^-_1 + b\psi^-_2)$''}.
\]
\end{definition}

\begin{proposition}
\label{prop:tanx}
If $\J$ is generic, then $\widetilde{\widetilde{\mc{M}}}^{\J}(\gamma_+,\gamma_-)$ is naturally a smooth manifold, and there is a canonical exact sequence
\begin{equation}
\label{eqn:ces1}
0 \longrightarrow \Ker(D_u) \longrightarrow T_u\widetilde{\widetilde{\mc{M}}}^\J(\gamma_+,\gamma_-) \stackrel{(\sigma_+,\sigma_-)}{\longrightarrow} \bbC\oplus\bbC \stackrel{(\tau_+,\tau_-)}{\longrightarrow} \Coker(D_u) \longrightarrow 0.
\end{equation}
\end{proposition}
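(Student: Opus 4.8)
The plan is to identify $\widetilde{\widetilde{\M}}^\J(\gamma_+,\gamma_-)$ with the zero set of a Fredholm section of a Banach bundle and then match up the various tangent spaces. First I would set up the standard Banach manifold $\mathcal{B}$ of $W^{1,2,\delta}$ maps $u:\R\times S^1\to\R\times Y$ that are $C^0$-close to fixed asymptotic cylinders over $\gamma_\pm$ and converge exponentially, where the exponential weight $\delta>0$ is chosen smaller than the spectral gap of the asymptotic operators; the Cauchy--Riemann equation \eqref{eqn:cr} defines a section $\bar\partial_\J$ of a Banach bundle $\mathcal{E}\to\mathcal{B}$ whose zero set is $\widetilde{\widetilde{\M}}^\J(\gamma_+,\gamma_-)$. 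Note that the true tangent space $T_u\widetilde{\widetilde{\M}}^\J$ is larger than $\Ker(D_u|_{W^{1,2,\delta}})$: a deformation of $u$ may change the asymptotic \emph{parametrization} of $\gamma_\pm$, i.e.\ it may shift the $\R$-coordinate and the $S^1$-coordinate near each end, so the correct model space is $W^{1,2,\delta}(u^*T(\R\times Y))\oplus V_+\oplus V_-$, where $V_\pm=\R\langle\psi_1^\pm\rangle\oplus\R\langle\psi_2^\pm\rangle\cong\bbC$ is spanned by the cutoff sections in Definition~\ref{def:taupm} that model $\R$-translation and rotation at the $\pm$ end. (These extra deformations are honest tangent directions because the moduli space only requires $\lim_{s\to\pm\infty}\pi_Y(u(s,\cdot))$ to be \emph{a} parametrization of $\gamma_\pm$, not a fixed one.)

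With this enlarged domain, the linearization of $\bar\partial_\J$ at $u$ is the operator
\[
\widehat{D}_u : W^{1,2,\delta}\bigl(u^*T(\R\times Y)\bigr)\oplus\bbC\oplus\bbC \longrightarrow L^{2,\delta}\bigl(u^*T(\R\times Y)\bigr),\qquad
(\xi,z_+,z_-)\longmapsto D_u\xi + \tau_+(z_+)\text{-lift} + \tau_-(z_-)\text{-lift},
\]
where ``$\tau_\pm(z)$-lift'' denotes the smooth section $D_u(z_1\psi_1^\pm+z_2\psi_2^\pm)$ before projecting to the cokernel. The key points, all standard, are: (i) $\widehat{D}_u$ is Fredholm, being a finite-rank stabilization of the Fredholm operator $D_u$ on the weighted spaces; (ii) for generic $\J$ the section $\bar\partial_\J$ is transverse, i.e.\ $\widehat{D}_u$ is surjective --- this is exactly the transversality input behind Proposition~\ref{prop:transversality}, obtained by the argument of \cite[\S8]{wendl}; and (iii) by the implicit function theorem $\widetilde{\widetilde{\M}}^\J(\gamma_+,\gamma_-)$ is then a smooth manifold near $u$ with $T_u\widetilde{\widetilde{\M}}^\J(\gamma_+,\gamma_-)=\Ker(\widehat{D}_u)$.

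It remains to extract the exact sequence \eqref{eqn:ces1} from $\Ker(\widehat{D}_u)$. Consider the evident short exact sequence of the domain $0\to W^{1,2,\delta}\to W^{1,2,\delta}\oplus\bbC\oplus\bbC\to\bbC\oplus\bbC\to 0$; restricting $\widehat{D}_u$ and applying the snake lemma gives
\[
0\to\Ker(D_u)\to\Ker(\widehat{D}_u)\xrightarrow{(\sigma_+,\sigma_-)}\bbC\oplus\bbC\xrightarrow{\partial}\Coker(D_u)\to\Coker(\widehat{D}_u)\to 0,
\]
where $\sigma_\pm$ is the projection of a tangent vector to its $\bbC$-component at the $\pm$ end, and the connecting map $\partial$ sends $(z_+,z_-)$ to the class of $D_u(z^+_1\psi^+_1+z^+_2\psi^+_2)+D_u(z^-_1\psi^-_1+z^-_2\psi^-_2)$ in $\Coker(D_u)$ --- which is precisely $\tau_+(z_+)+\tau_-(z_-)$ by Definition~\ref{def:taupm}. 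Since $\widehat{D}_u$ is surjective, $\Coker(\widehat{D}_u)=0$, so the sequence truncates to \eqref{eqn:ces1}. Finally one checks that $\Ker(D_u)$, $\Coker(D_u)$, and the maps $\tau_\pm$ are independent of the auxiliary choices (the cutoff profiles of $\psi_i^\pm$, and the precise weight $\delta$ within the allowed range), as already noted in Definition~\ref{def:taupm} and by the weight-independence recorded in \S\ref{sec:trans}; this gives the canonicity of the sequence. The main obstacle is conceptual rather than computational: one must be careful that the enlarged domain $W^{1,2,\delta}\oplus\bbC\oplus\bbC$ genuinely models $T_u\widetilde{\widetilde{\M}}^\J$ --- i.e.\ that the cutoff vector fields $\psi_i^\pm$ span exactly the ``missing'' asymptotic deformations modulo $W^{1,2,\delta}$, and that no further asymptotic modes contribute, which follows from the exponential decay estimates for finite-energy holomorphic cylinders at nondegenerate Reeb orbits.
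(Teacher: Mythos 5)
Your proposal is correct and follows essentially the same route as the paper: enlarging the weighted Sobolev domain by $\bbC\oplus\bbC$ spanned by the cutoff sections $\psi_i^\pm$, invoking surjectivity of the stabilized linearized operator for generic $\J$, and reading off \eqref{eqn:ces1} from the kernel of that operator (the paper verifies the exactness directly rather than via the snake lemma, and works with $L^{p,\delta}_1$ for $p>2$ rather than $W^{1,2,\delta}$, but these are cosmetic differences).
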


\begin{remark}
The proof of Proposition~\ref{prop:tanx} will show that the maps $\sigma_\pm$ are described as follows: If $\{u_\tau\}_{\tau\in\R}$ is a smooth family in $\widetilde{\widetilde{\mc{M}}}^{\J}(\gamma_+,\gamma_-)$, with $u_0=u$, and if we write $\dot{u}=\frac{d}{d\tau}\big|_{\tau=0}u_\tau\in u^*T(\R\times Y)$, then
\[
\sigma_\pm\left(\dot{u}\right) = \lim_{s\to\pm\infty}\left(dr\left(\dot{u}\right) + i\lambda\left(\dot{u}\right)\right).
\]
\end{remark}

\begin{proof}[Proof of Proposition~\ref{prop:tanx}.]
Fix $p>2$. Note that the kernel and cokernel of $D_u$ are unchanged if one replaces $L^{2,\delta}_1$ and $L^2_1$ in \eqref{eqn:lincr} by $L^{p,\delta}_1$ and $L^{p,\delta}$, and we will do this below.

 Let $\mc{B}(\gamma_+,\gamma_-)$ denote the set of continuous maps $u:\R\times S^1\to\R\times Y$ satisfying the asymptotic conditions \eqref{eqn:pirlim} and \eqref{eqn:param} such that for $|s|$ large, $u$ is the exponential of an $L^{p,\delta}_1$ section of the normal bundle to $\R\times\gamma$. We define a map
\begin{equation}
\label{eqn:lpdelta}
L^{p,\delta}_1(u^*(T(\R\times Y))) \oplus \bbC \oplus \bbC \longrightarrow \mc{B}(\gamma_+,\gamma_-)
\end{equation}
by
\[
(\psi,a_++b_+i,a_-+b_-i) \longmapsto \exp_u(\psi+a_+\psi^+_1+b_+\psi^+_2+a_-\psi^-_1+b_-\psi^-_2),
\]
where $\psi^\pm_1$ and $\psi^\pm_2$ are chosen as in the definition of $\tau_\pm$. Any element of $\widetilde{\widetilde{\mc{M}}}^{\J}(\gamma_+,\gamma_-)$ near $u$ is in the image of the map \eqref{eqn:lpdelta}. Thus a neighborhood of $u$ in $\widetilde{\widetilde{\mc{M}}}^{\J}(\gamma_+,\gamma_-)$ can be described as the zero set of a section of a Banach space bundle over the left side of \eqref{eqn:lpdelta}, whose fiber over $u$ is $L^{p,\delta}(u^*T(\R\times Y))$. The derivative of this section at $u$ is the map
\begin{equation}
\label{eqn:dersec}
\begin{split}
\Phi: L^{p,\delta}_1(u^*(T(\R\times Y))) \oplus \bbC \oplus \bbC &\longrightarrow L^{p,\delta}(u^*T(\R\times Y)),\\
(\psi,a_++b_+i,a_-+b_-i) & \longmapsto \text{``$D_u(\psi + a_+\psi^+_1 + b_+\psi^+_2 + a_-\psi^-_1 + b_-\psi^-_2)$''}.
\end{split}
\end{equation}
Here, as in Definition~\ref{def:taupm}, although $\psi^\pm_1$ and $\psi^\pm_2$ are not in the domain of $D_u$, the map \eqref{eqn:dersec} is still well-defined by regarding $D_u$ as a differential operator.

A standard transversality argument, which proves Proposition~\ref{prop:transversality}(a), shows that if $\J$ is generic, then for each $u$ the operator $\Phi$ in \eqref{eqn:dersec} is surjective, which in turn implies that $\widetilde{\widetilde{\mc{M}}}^\J(\gamma_+,\gamma_-)$ is naturally a smooth manifold near $u$ whose tangent space at $u$ is $\Ker(\Phi)$. 

To complete the proof of the proposition, we now define a sequence
\begin{equation}
\label{eqn:ces2}
0 \longrightarrow \Ker(D_u) \longrightarrow \Ker(\Phi) \stackrel{(\sigma_+,\sigma_-)}{\longrightarrow} \bbC\oplus\bbC \stackrel{(\tau_+,\tau_-)}{\longrightarrow} \Coker(D_u) \longrightarrow 0
\end{equation}
and show that it is exact if $\Phi$ is surjective. We define the first arrow to be the inclusion $\psi\mapsto (\psi,0,0)$, and we define the second arrow by
\[
\sigma_\pm(\psi,z_+,z_-) = z_\pm.
\]
By the definitions of $\tau_\pm$ and $\Phi$ we have
\[
(\pi_{\Coker(D_u)}\circ \Phi)(\psi,z_+,z_-) = \tau_+(z_+) + \tau_-(z_-).
\]
It follows that the sequence \eqref{eqn:ces2} is exact, except possibly for surjectivity of the last arrow. If $\Phi$ is surjective, then the last arrow is also surjective.
\end{proof}

For reference elsewhere, we now clarify the criteria for transversality.

\begin{definition}
\label{def:regular}
An element $u$ of $\widetilde{\widetilde{\mc{M}}}^{\J}(\gamma_+,\gamma_-)$, or its equivalence class in $\mc{M}^{\J}(\gamma_+,\gamma_-)$, is {\em regular\/} if the operator \eqref{eqn:dersec} above is surjective. Note that while the operator \eqref{eqn:dersec} itself depends on the choices of $p$, $\psi_1^\pm$, and $\psi_2^\pm$, its surjectivity does not.
\end{definition}

\begin{example}
\label{ex:trivcyl}
While we usually assume that $\gamma_+\neq\gamma_-$, it is instructive to consider the case where $\gamma_+=\gamma_-=\gamma$ and $u$ is a ``trivial cylinder''; that is, $\pi_R\circ u$ is a linear function of $s$ and does not depend on $t$, while $\pi_Y\circ u$ does not depend on $s$ and as a function of $t$ is a parametrization of $\gamma$.

In this case the operator $D_u$ has the form $\partial_s + J_t\nabla_t$ where $\nabla$ is the connection on $\gamma^*T(\R\times Y)$ defined in Definition~\ref{def:col} below. Direct calculation shows that $D_u$ is injective with two-dimensional cokernel. The cokernel is spanned by the images of $\psi_1^+$ and $\psi_2^+$ under $D_u$ (regarded as a differential operator). One can choose $\psi_1^\pm$ and $\psi_2^\pm$ so that $\psi_1^++\psi_1^-\equiv \partial_r$ and $\psi_2^++\psi_2^-\equiv R$. Then the operator \eqref{eqn:dersec} is surjective with a two-dimensional kernel consisting of triples $(0,z,z)$ for $z\in{\mathbb C}$.

In particular, the moduli space $\widetilde{\widetilde{\M}}^\J(\gamma,\gamma)$ is a two-dimensional manifold cut out transversely, consisting of compositions of $u$ with automorphisms of $\R\times S^1$. However $\M^\J(\gamma,\gamma)$ is not cut out transversely, because the $\R$ action on $\widetilde{\widetilde{\M}}^\J(\gamma,\gamma)$ by translations of the target is not free. In particular $\M^J(\gamma,\gamma)=\M^\J_0(\gamma,\gamma)$ is one dimensional, although its expected dimension is zero. (We usually assume that $\gamma_+\neq\gamma_-$ in order to ensure that this $\R$ action is free.)
\end{example}

\begin{remark}
\label{rem:altreg}
In the special case when $\J$ does not depend on $S^1$, that is $\J=\{J_t\}$ where $J_t\equiv J$, and when $u$ is an immersion, there is an alternate notion of regularity used in \cite[\S4]{dc}. In this case let $N$ denote the normal bundle to $u$ in $\R\times Y$. Then deformations of $u$, regarded as an immersed submanifold of $\R\times Y$, are equivalent to sections of $N$, and there is a ``normal deformation operator''
\[
D_N: L^{p,\delta}_1(N) \longrightarrow L^{p,\delta}(N)
\]
which measures the failure of these deformations to be $J$-holomorphic. We claim now that $u$ is regular as in Definition~\ref{def:regular} if $D_N$ is surjective.

To see this, note that with respect to the direct sum decomposition
\[
u^*T(\R\times Y) = T(\R\times S^1) \oplus N,
\]
and the corresponding direct sum decompositions of spaces of sections, the linearized operator $D_u$ in \eqref{eqn:lincr} can be written as a triangular block matrix
\[
D_u = \begin{pmatrix} D_T & \mbox{\Biohazard} \\ 0 & D_N \end{pmatrix}.
\]
Here $D_T$ is the restriction of $D_u$ to $L^{p,\delta}_1(T(\R\times S^1))$, which maps to $L^{p,\delta}(T(\R\times S^1))$. Similarly to Example~\ref{ex:trivcyl}, the operator $D_T$ is injective with a two-dimensional cokernel. Thus surjectivity of $D_N$ implies that $D_u$ has a two-dimensional cokernel, and this cokernel is covered by the images of $\psi_1^+$ and $\psi_2^+$ under $D_u$ (regarded as a differential operator), so that \eqref{eqn:dersec} is surjective.
\end{remark}

\subsection{Orienting the moduli space}
\label{sec:oms}

Let $\J=\{J_t\}_{t\in S^1}$ be a generic $S^1$-family of $\lambda$-compatible almost complex structures. We are now ready to orient the moduli space $\M^\J(\gamma_+,\gamma_-)$.

\begin{definition}
\label{def:col}
Let $\J=\{J_t\}_{t\in S^1}$ be any $S^1$-family of $\lambda$-compatible almost complex structures (not necessarily generic).
Let $\gamma:\R/T\Z\to Y$ be a Reeb orbit, and let $p\in\overline{\gamma}=\op{im}(\gamma)$. We define an orientation loop $\mc{L}_{\gamma,p,\J}=(S,\nabla)$ as follows.

First fix $t_0\in\R/T\Z$ with $\gamma(t_0)=p$, and define a map $\widehat{\gamma}:S^1=\R/\Z\to Y$ by
\[
\widehat{\gamma}(t) = \gamma(t_0+Tt).
\]
\begin{itemize}
	\item
Define $E=\widehat{\gamma}^*T(\R\times Y)$. We have a direct sum decomposition
\begin{equation}
\label{eqn:edsd}
E = \widehat{\gamma}^*\xi \oplus \bbC
\end{equation}
where we identify $a+bi\in\bbC$ with $a\partial_r + bR\in T(\R\times Y)$.
Then $E$ is a Hermitian vector bundle over $S^1$ with the almost complex structure and metric on $\xi_{\widehat{\gamma}(t)}$ determined by $J_t$ and $d\lambda$.
\item
The linearized Reeb flow determines a connection $\nabla^R$ on $\widehat{\gamma}^*\xi$. With respect to the direct sum decomposition \eqref{eqn:edsd}, define $\nabla = \nabla^R\oplus \nabla^0$, where $\nabla^0$ denotes the trivial connection on $S^1 \times \bbC$.
\end{itemize}
\end{definition} 

\begin{lemma}
\begin{description}
	\item{(a)}
The orientation loop $\mc{L}_{\gamma,p,\J}$ is weakly nondegenerate.
	\item{(b)}
If $\J$ and $\J'$ are two $S^1$-families of $\lambda$-compatible almost complex strutures, then there is a canonical bijection
\[
\mc{O}(\mc{L}_{\gamma,p,\J}) \simeq \mc{O}(\mc{L}_{\gamma,p,\J'}),
\]
so we can denote this set of orientations by $\mc{O}_{\gamma,p}$.
\end{description}
\end{lemma}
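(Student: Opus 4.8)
The plan is to prove both parts by reducing the analysis of the asymptotic operator $A_{\mc{L}_{\gamma,p,\J}}$ to the standard facts about asymptotic operators of Reeb orbits. First I would observe that, by the direct sum decomposition \eqref{eqn:edsd} and the corresponding splitting $\nabla = \nabla^R \oplus \nabla^0$, the operator $A_{\mc{L}_{\gamma,p,\J}} = i\nabla_t$ splits as a direct sum $A^\xi \oplus A^0$, where $A^\xi = i\nabla^R_t$ acts on sections of $\widehat{\gamma}^*\xi$ and $A^0 = i\partial_t$ acts on sections of the trivial bundle $S^1 \times \bbC$. For part (a), I would note that $A^0$ has kernel consisting of the constant $\bbC$-valued functions, which is a complex vector space (it is literally $\bbC$); and $A^\xi$ is (up to the usual conventions and the choice of time-$T$ versus time-$1$ parametrization) precisely the asymptotic operator of the Reeb orbit $\gamma$ with respect to the almost complex structure $J_{t_0}$, which is injective exactly because $\gamma$ is nondegenerate — that is the hypothesis that $P_\gamma$ from \eqref{slm} does not have $1$ as an eigenvalue. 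Hence $\op{Ker}(A_{\mc{L}_{\gamma,p,\J}}) = \op{Ker}(A^0) \oplus \op{Ker}(A^\xi) = \bbC \oplus \{0\} = \bbC$, which is a complex vector space, so $\mc{L}_{\gamma,p,\J}$ is weakly nondegenerate.

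For part (b), the key point is that the space of $\lambda$-compatible almost complex structures on $\xi$ is contractible (it is the space of sections of a bundle with contractible fibers, namely the space of complex structures on a symplectic vector space compatible with the symplectic form), so any two choices $\J$ and $\J'$ can be connected by a path $\{\J^s\}_{s\in[0,1]}$. Along such a path the orientation loops $\mc{L}_{\gamma,p,\J^s}$ vary continuously, with the underlying bundle $E = \widehat{\gamma}^*T(\R\times Y)$ and the connection $\nabla = \nabla^R \oplus \nabla^0$ unchanged — only the Hermitian metric and complex structure on $\widehat{\gamma}^*\xi$ (coming from $J^s_t$ and $d\lambda$) vary. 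Since weak nondegeneracy is preserved throughout by part (a), a homotopy of weakly nondegenerate orientation loops induces a canonical bijection on the associated sets of orientations, by the deformation-invariance of $\mc{O}(\mc{L})$ for weakly nondegenerate $\mc{L}$ established in the preceding subsection (this is the content of Lemma~\ref{lem:ownol} together with the conjugation/deformation arguments of Lemmas~\ref{lem:scb} and \ref{lem:wnco}). To see that the resulting bijection $\mc{O}(\mc{L}_{\gamma,p,\J}) \simeq \mc{O}(\mc{L}_{\gamma,p,\J'})$ does not depend on the choice of path, I would invoke the simple connectivity of the space of compatible almost complex structures (which is not merely contractible but has all higher homotopy trivial), so that any two paths are homotopic rel endpoints through weakly nondegenerate loops, and the induced bijections therefore agree.

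The main obstacle I anticipate is purely bookkeeping rather than conceptual: one must check carefully that the splitting of $A_{\mc{L}_{\gamma,p,\J}}$ really does identify $\op{Ker}(A^\xi)$ with the kernel of the standard asymptotic operator of $\gamma$, taking into account the reparametrization $\widehat{\gamma}(t) = \gamma(t_0 + Tt)$ that rescales the domain circle, and the fact that the complex structure used is the fixed $J_{t_0}$ at one instant rather than an $S^1$-family (which is fine, since the connection $\nabla^R$ does not involve $J$ at all — it is the linearized Reeb flow — and $J_{t_0}$ only enters through the metric and almost complex structure on the fibers, which affect the notion of "complex vector space" but not the kernel dimension). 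A secondary point requiring care is that the bijection in part (b) should be compatible with the gluing operations of the previous subsection, so that the local systems $\mc{O}_\gamma$ built from these orientation loops are well-defined independently of all auxiliary choices; but this compatibility follows formally from the associativity in Lemma~\ref{lem:gluass} and the naturality of the deformation-invariance bijections, and does not require new ideas. I would state this compatibility explicitly if it is needed later (e.g. in Proposition~\ref{prop:orientations}), but otherwise the proof of the lemma as stated is complete after the two steps above.
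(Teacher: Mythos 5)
Your proof is correct and follows essentially the same route as the paper's (which is considerably terser): part (a) reduces to identifying $\ker(A_{\mc{L}_{\gamma,p,\J}})$ with the $\bbC$ summand of \eqref{eqn:edsd} via nondegeneracy of $\gamma$, and part (b) uses contractibility of the space of $S^1$-families of $\lambda$-compatible almost complex structures to produce a homotopy of Fredholm operators. One tiny imprecision: the Hermitian structure on the $\xi$ part of $E$ in Definition~\ref{def:col} comes from the full loop $t\mapsto J_t$, not from the single $J_{t_0}$, but as you yourself note the kernel of $A_{\mc{L}}$ is insensitive to this, so nothing breaks.
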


\begin{proof}
(a)
Since we are assuming that the Reeb orbit $\gamma$ is nondegenerate, the kernel of the operator $A_{\mc{L}_{\gamma,p,\J}}$ is canonically identified with the $\bbC$ summand in \eqref{eqn:edsd}.

(b)
This holds because the set of $S^1$-families $\J$ of $\lambda$-compatible almost complex structures is contractible, and a homotopy of such $\J$ defines a homotopy of the Fredholm operators used to define the orientation set $\mc{O}_{\mc{L}_{\gamma,p,\J}}$.
\end{proof}

\begin{definition}
\label{def:Ogamma}
If $\gamma$ is a Reeb orbit and $p\in\overline{\gamma}$, define
\[
\mc{O}_\gamma(p) = \mc{O}_{\gamma,p}\tensor\Z.
\]
Here the right hand side denotes the set of pairs $(\frak{o},k)\in\mc{O}_{\gamma,p}\times\Z$ modulo the equivalence relation $(-\frak{o},k)\sim(\frak{o},-k)$.
\end{definition}

Observe that the assignment to $p$ of $\mc{O}_{\gamma}(p)$ defines a local system over $\overline{\gamma}$, since a homotopy of Fredholm operators induces a bijection on orientations.

\begin{proof}[Proof of Proposition~\ref{prop:orientations}.]
Assertion (a) is proved in \cite[\S5]{bm}. The conventions in \cite{bm} are slightly different, but the argument given there is still valid here.

To prove assertion (b), let $u\in\widetilde{\widetilde{\mc{M}}}^\J(\gamma_+,\gamma_-)$, and consider its equivalence class $[u]\in \M^\J(\gamma_+,\gamma_-)$. From Definition~\ref{def:col} we have orientation loops
\[
\mc{L}_\pm=\mc{L}_{\gamma_\pm,e_\pm(u),\J}.
\]
We need to show that there is a canonical bijection between orientations of the tangent space $T_{[u]}\mc{M}^\J(\gamma_+,\gamma_-)$ and the orientation set $\mc{O}(\mc{L}_+)\tensor\mc{O}(\mc{L}_-)$, and we need to check that this bijection depends continuously on $[u]$.

Define an orientation surface
\[
\mc{C}=(\R\times S^1, u^*T(\R\times Y),(\mc{L}_+),(\mc{L}_-)).
\]
The operator $D_u$ in \eqref{eqn:lincr}, regarded as a differential operator, is an element of the set $\mc{D}(\mc{C})$. Note here that we are implicitly trivializing $T^{0,1}(\R\times S^1)$ using $\frac{1}{2}(ds-idt)$, so that the $T^{0,1}$ factor in Definition~\ref{def:osdo} is not needed in \eqref{eqn:lincr}.

By Proposition~\ref{prop:orcy}, there is a canonical bijection
\[
\mc{O}(\mc{L}_+)\tensor\mc{O}(\mc{L}_-) \simeq \mc{O}(D_u).
\]
By Proposition~\ref{prop:tanx}, an orientation of $D_u$ canonically determines an orientation of $T_u\widetilde{\widetilde{M}}^\J(\gamma_+,\gamma_-)$. (Here we use the canonical orientation of ${\mathbb C}\oplus{\mathbb C}$ in the exact sequence \eqref{eqn:ces1}.) The latter determines an orientation of $T_{[u]}\widetilde{\M}^J(\gamma_+,\gamma_-)$ by the ``$\R$-direction first'' convention, where $\R$ acts on $\widetilde{\widetilde{\M}}$ by
\begin{equation}
\label{eqn:ractiondomain}
(r\cdot u)(s,t) = u(r+s,t).
\end{equation}
This in turn determines an orientation of $T_{[u]}\M^\J(\gamma_+,\gamma_-)$, by the
$\R$-direction first convention again.

Finally, we need to prove that the orientation of $T_{[u]}\M^\J(\gamma_+,\gamma_-)$ determined by an element of $\mc{O}(\mc{L}_+)\tensor\mc{O}(\mc{L}_-)$ depends continuously on $[u]$. As one varies $[u]$, if the dimension of $\Ker(D_u)$ does not jump, then the exact sequences used to define the orientation vary continuously, so the orientation does not change. In the general case one uses a stabilization argument to arrange that the dimensions of the kernels of the operators in question do not jump; cf.\ \cite[\S9.1]{ht2}.
\end{proof}

We can now define the orientation convention in equation \eqref{eqn:defdelta} in the definition of cylindrical contact homology.

\begin{definition}
\label{def:EGHsigns}
Let $J$ be a $\lambda$-compatible almost complex structure on $\R\times Y$. Let $\alpha$ and $\beta$ be good Reeb orbits, so that the local systems $\mc{O}_\alpha$ and $\mc{O}_\beta$ are trivial (and thus can be regarded as $\Z$-modules which are noncanonically isomorphic to $\Z$). Assume that the moduli space $\M^J_1(\alpha,\beta)$ is cut out transversely (and in particular is a discrete set). For each $u\in\M^J_1(\alpha,\beta)$, we define an isomorphism of $\Z$-modules
\begin{equation}
\label{eqn:epsilonu}
\epsilon(u): \mc{O}_\alpha \stackrel{\simeq}{\longrightarrow} \mc{O}_\beta
\end{equation}
as follows.

Let $\J$ be the constant family of almost complex structures $J_t\equiv J$. By definition,
\[
\M^J_d(\alpha,\beta) = \M^\J_d(\alpha,\beta)/S^1
\]
where $S^1$ acts on $\M^\J_d$ by reparametrization. Our convention is that $\varphi\in S^1=\R/\Z$ acts by
\[
(\varphi\cdot u)(s,t) = u(s, t + \varphi).
\]
Transversality of $\M^J_d$ is equivalent to transversality of $\M^\J_d$, see \S\ref{sec:admissible3}. By Proposition~\ref{prop:orientations}, when this transversality holds, the moduli space $\M^\J_d(\alpha,\beta)$ has an orientation with values in $\mc{O}_\alpha\tensor\mc{O}_\beta$.  The orientation of $\M^\J_d$ then induces an orientation of $\M^J_d$ by the ``$S^1$-direction first'' convention. When $d=1$, the latter orientation simply assigns to each $u\in \M^J_1$ a generator of $\mc{O}_\alpha\tensor\mc{O}_\beta$, which is equivalent to an isomorphism \eqref{eqn:epsilonu}.
\end{definition}

\subsection{Properties of the moduli space orientations}
\label{sec:gbs}

\paragraph{Scaling.}

We now prove the following lemma which is needed in \S\ref{sec:nchinv}.

\begin{lemma}
\label{lem:scop}
Let $c>0$.
\begin{description}
\item{(a)} If $\gamma$ is a Reeb orbit of $\lambda$, and if ${^c}\gamma$ denotes the corresponding Reeb orbit of $c\lambda$, then there is a canonical isomorphism of local systems $\mc{O}_{\gamma} = \mc{O}_{{^c}\gamma}$.
\item{(b)}
With respect to the isomorphism in (a), the scaling diffeomorphism \eqref{eqn:scop} is orientation preserving.
\end{description}
\end{lemma}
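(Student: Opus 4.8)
\textbf{Proof plan for Lemma~\ref{lem:scop}.} The idea is that scaling the contact form by $c>0$ is, up to a diffeomorphism of $\R\times Y$, an entirely ``linear'' operation that commutes with all the constructions entering the orientations. First I would set up the comparison at the level of orientation loops. Given a Reeb orbit $\gamma:\R/T\Z\to Y$ of $\lambda$, the corresponding Reeb orbit ${^c}\gamma$ of $c\lambda$ has period $cT$ and the same image, with ${^c}\gamma(ct)=\gamma(t)$. The contact distributions agree, $\Ker(\lambda)=\Ker(c\lambda)=\xi$, and the linearized Reeb flows are related by a time reparametrization, so the linearized return maps $P_\gamma$ and $P_{{^c}\gamma}$ are literally equal as symplectic maps of $(\xi_{\gamma(0)},d\lambda)$ once we note that $c\,d\lambda$ induces the same conformal symplectic structure. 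Choosing the $c\lambda$-compatible almost complex structure ${^c}\J$ to agree with $\J$ on $\xi$ (as in \S\ref{sec:nchinv}), I would check that the orientation loop $\mc{L}_{{^c}\gamma,p,{^c}\J}=(E,\nabla)$ from Definition~\ref{def:col} is canonically isomorphic to $\mc{L}_{\gamma,p,\J}$: the bundle $\widehat{\gamma}^*\xi\oplus\bbC$ is the same, the connection $\nabla^R$ is the same (the reparametrization does not change it after rescaling the $S^1$ coordinate), and the trivial summand matches once we use that $d\phi$ sends $\partial_r\mapsto c^{-1}\partial_r$ and fixes $R\mapsto c^{-1}R$, i.e.\ acts by a positive scalar on the $\bbC$ factor and hence preserves its complex orientation. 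This gives the canonical isomorphism $\mc{O}_{\gamma,p}\simeq\mc{O}_{{^c}\gamma,p}$, and tensoring with $\Z$ yields part (a).

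For part (b), I would trace through the construction of the moduli space orientation in the proof of Proposition~\ref{prop:orientations}(b). The diffeomorphism $\phi:(r,y)\mapsto(cr,y)$ intertwines $J_t$ and ${^c}J_t$, so it identifies $\widetilde{\widetilde{\M}}^\J(\gamma_+,\gamma_-)$ with $\widetilde{\widetilde{\M}}^{{^c}\J}({^c}\gamma_+,{^c}\gamma_-)$ and carries the linearized operator $D_u$ to $D_{\phi\circ u}$ via the pushforward $d\phi$ on sections. Since $d\phi$ is a bundle isomorphism homotopic through bundle isomorphisms to the identity (it is $c^{-1}$ on the $\bbC=\Span(\partial_r,R)$ summand and the identity on $\xi$, and multiplication by a positive scalar is isotopic to the identity in $GL^+$), the induced map on $\mc{O}(D_u)$ is the canonical one compatible with the isomorphism of orientation surfaces. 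I would then check that $\phi$ respects the sections $\psi_1^\pm,\psi_2^\pm$ used to define the maps $\tau_\pm$ in Definition~\ref{def:taupm} (again up to a positive scalar on $\bbC\oplus\bbC$, which preserves the canonical complex orientation appearing in the exact sequence \eqref{eqn:ces1}), so that the exact sequence \eqref{eqn:ces1} for $u$ maps isomorphically and orientation-compatibly to the one for $\phi\circ u$. Finally, the remaining steps — passing from $\widetilde{\widetilde{\M}}$ to $\widetilde{\M}$ to $\M$ by the ``$\R$-direction first'' conventions \eqref{eqn:ractiondomain} — are manifestly compatible with $\phi$, since $\phi$ acts trivially on the domain $\R\times S^1$ and hence commutes with both the domain $\R$-action and the target $\R$-translation (conjugated appropriately by the rescaling). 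Assembling these identifications shows \eqref{eqn:scop} is orientation preserving.

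The main obstacle I anticipate is purely bookkeeping: making sure that every place where a positive real scalar enters (the factor $c^{-1}$ from $d\phi$ on $\partial_r$ and $R$, the reparametrization of the $S^1$ coordinate on the orientation loop, the rescaling of $d\lambda$) genuinely contributes nothing to the orientation, i.e.\ that each such scalar is either isotopic to the identity within the relevant structure group or acts by a complex-linear (hence orientation-preserving) automorphism. There is no deep content here, but one must be careful that the homotopies of Fredholm operators used to transport orientations are through operators of the allowed type, so that Lemma~\ref{lem:glulin} and Proposition~\ref{prop:orcy} apply verbatim. A brief remark that all of this is routine, referring back to the contractibility statements already used in \S\ref{sec:oms}, should suffice.
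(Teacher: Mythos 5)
Your proposal is correct and follows essentially the same route as the paper: identify the orientation loops $\mc{L}_{\gamma,p,\J}$ and $\mc{L}_{{^c}\gamma,p,{^c}\J}$ directly from Definition~\ref{def:col} for part (a), and for part (b) push the linearized operators forward under $\phi$, compare the exact sequences of Proposition~\ref{prop:tanx} (where the only discrepancy is a positive scalar on the $\bbC\oplus\bbC$ factor, which is orientation preserving), and then pass through the two $\R$-quotients. The only quibble is a sign-of-exponent slip ($d\phi$ scales $\partial_r$ by $c$, not $c^{-1}$, and fixes $R$), which is immaterial since only positivity of the scalar matters.
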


\begin{proof}
(a)
Let ${^c}\J$ be the family of $c\lambda$-compatible almost complex structures in \eqref{eqn:scop}. It follows from Definition~\ref{def:col} that if $p\in\overline{\gamma}=\overline{{^c}\gamma}$, then there is a canonical isomorphism $\mc{L}_{\gamma,p,\J} = \mc{L}_{{^c}\gamma,p,{^c}\J}$. Then by Definition~\ref{def:Ogamma} we obtain a canonical isomorphism $\mc{O}_{\gamma}(p) = \mc{O}_{{^c}\gamma}(p)$, which by the reasoning after Definition~\ref{def:Ogamma} depends continuously on $p$.

(b) Let $\phi$ be the diffeomorphism of $\R\times Y$ defined above \eqref{eqn:scop}. The diffeomorphism \eqref{eqn:scop} lifts to a diffeomorphism
\begin{equation}
\label{eqn:scoplift}
\widetilde{\widetilde{\M}}^\J(\gamma_+,\gamma_-) \stackrel{\simeq}{\longrightarrow} \widetilde{\widetilde{\M}}^{{^c}\J}({^c}\gamma_+,{^c}\gamma_-)
\end{equation}
sending $u\mapsto \phi\circ u$. We have a commutative diagram
\[
\begin{CD}
L^{2,\delta}_1(u^*T(\R\times Y)) @>{D_u}>> L^{2,\delta}(u^*T(\R\times Y))\\
@V{\simeq}VV @V{\simeq}VV \\
L^{2,\delta}_1((\phi\circ u)^*T(\R\times Y)) @>{D_{\phi\circ u}}>> L^{2,\delta}((\phi\circ u)^*T(\R\times Y)).
\end{CD}
\]
This induces isomorphisms $\Ker(D_u)\simeq \Ker(D_{\phi\circ u})$ and $\Coker(D_u)\simeq \Coker(D_{\phi\circ u})$, and thus a bijection $\mc{O}(D_u) \simeq \mc{O}(D_{\phi\circ u})$. It follows from the definitions that this bijection respects the canonical isomorphism in (a). On the other hand, these isomorphisms of kernels and cokernels and the exact sequences from Proposition~\ref{prop:tanx} fit into a commutative diagram
\[
\begin{CD}
0 @>>> \Ker(D_u) @>>> T_u\widetilde{\widetilde{\mc{M}}}^\J(\gamma_+,\gamma_-) @>{(\sigma_+,\sigma_-)}>> \bbC\oplus\bbC @>{(\tau_+,\tau_-)}>> \Coker(D_u) @>>> 0\\
& & @V{\simeq}VV @V{\simeq}VV @V{\simeq}V{c}V @V{\simeq}VV & &\\
0 @>>> \Ker(D_{\phi\circ u}) @>>> T_{\phi\circ u}\widetilde{\widetilde{\mc{M}}}^{{^c}\J}({^c}\gamma_+,{^c}\gamma_-) @>{(\sigma_+,\sigma_-)}>> \bbC\oplus\bbC @>{(\tau_+,\tau_-)}>> \Coker(D_{\phi\circ u}) @>>> 0.
\end{CD}
\]
Here the second vertical arrow is the derivative of the diffeomorphism \eqref{eqn:scoplift}, and the third vertical arrow is multiplication by $c$. It follows from this isomorphism of exact sequences that the second vertical arrow is orientation preserving. Thus the diffeomorphism \eqref{eqn:scoplift} is orientation preserving. After modding out by the two $\R$ actions, we conclude that the diffeomorphism \eqref{eqn:scop} is orientation preserving.
\end{proof}

\paragraph{Gluing and boundary signs.}

We now justify the signs that appear in Proposition~\ref{prop:currentcompact}. Continue to make the assumptions from the beginning of \S\ref{sec:oms}.

\begin{proposition}
\label{prop:gbs}
Let $\gamma_+$, $\gamma_0$, and $\gamma_-$ be distinct Reeb orbits. Let $d_+$ and $d_-$ be nonnegative integers and let $d=d_++d_-$. Let
\[
([u_+],[u_-])\in \M^\J_{d_+}(\gamma_+,\gamma_0)\times_{\overline{\gamma_0}}\M^\J_{d_-}(\gamma_0,\gamma_-).
\]
Then there is a neighborhood
\[
U \subset \overline{\M^\J_d}(\gamma_+,\gamma_-)
\]
of $([u_+],[u_-])$, which has the structure of a smooth manifold with boundary, and a neighborhood
\begin{equation}
\label{eqn:gbs}
V \subset (-1)^{d_+}\M^\J(\gamma_+,\gamma_0)\times_{\overline{\gamma_0}} \M^\J(\gamma_0,\gamma_-)
\end{equation}
of $([u_+],[u_-])$, with a canonical orientation preserving diffeomorphism
\[
\partial U \simeq V.
\]
\end{proposition}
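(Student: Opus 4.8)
The plan is to prove Proposition~\ref{prop:gbs} by the standard obstruction-free gluing construction, using the orientation formalism developed in \S\ref{sec:orientations} to track the signs. First I would set up the gluing map. Given $([u_+],[u_-])$ in the fiber product, pick representatives $u_+ \in \widetilde{\widetilde{\M}}^\J(\gamma_+,\gamma_0)$ and $u_- \in \widetilde{\widetilde{\M}}^\J(\gamma_0,\gamma_-)$ with matching asymptotic evaluations at $\overline{\gamma_0}$. For a large gluing parameter $R$, one pre-glues $u_+$ and $u_-$ along their common end over $\overline{\gamma_0}$, using cutoff functions with derivatives of order $R^{-1}$, to produce an approximately $\J$-holomorphic map $u_R$. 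Since $\lambda$ is hypertight and $\J$ is generic, the linearized operator $D_{u_+} \# D_{u_-}$ is surjective (this is built into Proposition~\ref{prop:transversality} and its analogue for the pre-glued operator), so the implicit function theorem corrects $u_R$ to a genuine element $\widetilde{u}_R \in \widetilde{\widetilde{\M}}^\J(\gamma_+,\gamma_-)$. Standard arguments (as in \cite{behwz, wendl}, which I am assuming) show this gluing map, after modding out by the two $\R$ actions on domain and target, gives a diffeomorphism from $(R_0,\infty) \times V'$ onto a half-open collar of $([u_+],[u_-])$ in $\overline{\M^\J_d}(\gamma_+,\gamma_-)$, where $V'$ is a neighborhood in the fiber product; compactness (Proposition~\ref{prop:compactness}) guarantees these are all the ends near this point. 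Reparametrizing $(R_0,\infty)$ to $(-\epsilon,0]$ identifies the collar with a manifold-with-boundary neighborhood $U$ whose boundary $\partial U$ is identified with a neighborhood $V$ of $([u_+],[u_-])$ in the fiber product.

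The heart of the matter is the sign $(-1)^{d_+}$ in \eqref{eqn:gbs}. I would compute this by comparing orientations at two stages. First, the linear gluing result Lemma~\ref{lem:glulin} and Lemma~\ref{lem:gluor} give a canonical orientation-preserving (up to the explicit sign $(-1)^{\dim\Ker(D_{u_-})\dim\Coker(D_{u_+})}$, which vanishes mod $2$ since one of the factors is a cokernel of a surjective operator, hence zero) identification $\mc{O}(D_{u_+}) \tensor \mc{O}(D_{u_-}) \simeq \mc{O}(D_{u_+}\#D_{u_-})$, and the latter agrees with $\mc{O}(D_{\widetilde{u}_R})$ via the homotopy from the pre-glued to the true operator. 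Combining this with the exact sequence of Proposition~\ref{prop:tanx} applied to each of $u_+$, $u_-$, and $\widetilde{u}_R$, one obtains an identification of $T_{\widetilde{u}_R}\widetilde{\widetilde{\M}}^\J(\gamma_+,\gamma_-)$ with $T_{u_+}\widetilde{\widetilde{\M}}^\J(\gamma_+,\gamma_0) \oplus T_{u_-}\widetilde{\widetilde{\M}}^\J(\gamma_0,\gamma_-)$ modulo the diagonal $\bbC$ (the matched asymptotic end) and the gluing direction $\partial_R$. The second stage is bookkeeping: one must carefully commute the gluing parameter direction $\partial_R$, the two domain-$\R$ translations, and the target-$\R$ translation past the factors, and pass to the fiber product orientation convention of \cite[\S2.1]{td}. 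Each time $\partial_R$ or an $\R$-translation is moved past a factor of dimension $d_+$ (the first moduli space) one picks up a sign, and the accumulated sign is exactly $(-1)^{d_+}$; this is where the convention of writing $D_{u_+}$ first and ordering the $\R$-direction first must be applied consistently with Definition~\ref{def:EGHsigns} and the conventions in \S\ref{sec:oms}.

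I would organize the sign computation as follows: (1) write out the orientation of $T_u\widetilde{\widetilde{\M}}^\J$ for each of the three curves via \eqref{eqn:ces1}, with the canonical orientation on $\bbC\oplus\bbC$; (2) invoke Lemma~\ref{lem:gluor} (with the $D_{u_+}$-first convention noted in the remark after it) and Proposition~\ref{prop:orcy}(b) to relate $\mc{O}(D_{\widetilde u_R})$ to $\mc{O}(D_{u_+})\tensor\mc{O}(D_{u_-})$; (3) assemble the gluing-direction splitting $T_{\widetilde u_R}\widetilde{\widetilde{\M}} \cong \R\langle\partial_R\rangle \oplus (T_{u_+}\widetilde{\widetilde{\M}} \times_{\bbC} T_{u_-}\widetilde{\widetilde{\M}})$, being careful that the matched end contributes one copy of $\bbC$ with the canonical complex orientation; (4) mod out by the $\R$-actions in the prescribed order on both sides and read off the residual sign. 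The orientations vary continuously in $([u_+],[u_-])$ by the continuity assertion at the end of the proof of Proposition~\ref{prop:orientations} (stabilizing if kernel dimensions jump), so $V$ inherits the claimed orientation globally on the neighborhood. I expect the main obstacle to be step (4): keeping precise track of the order in which $\partial_R$ and the three $\R$-translations (two on domains, one on the target) are pulled out, and verifying that the collar reparametrization $R \mapsto -\epsilon e^{-R}$ (or similar) sends $\partial_R$ to an inward-pointing normal so that the induced boundary orientation matches the fiber product orientation with precisely the sign $(-1)^{d_+}$ — this is the kind of computation where an off-by-one in the dimension count or an inconsistent "$\R$-first" choice would flip the answer, so it must be done against the conventions of \cite[\S2.1--2.2]{td} with care. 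This parallels the sign verification in \cite[\S9-\S10]{ht2}, which I would follow closely.
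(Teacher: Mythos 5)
Your plan has the right high-level shape: preglue, invoke the orientation gluing of Lemmas~\ref{lem:glulin}--\ref{lem:gluor} and Proposition~\ref{prop:orcy}, and track the sign by carefully ordering $\partial_R$ and the various $\R$-translations against the conventions of \cite[\S2.1]{td}. That is essentially the strategy the paper follows. But there is a genuine conceptual gap in the analytic setup that needs to be named.

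You assert that $D_{u_\pm}$ (and hence $D_{u_+}\#D_{u_-}$) is surjective, ``built into Proposition~\ref{prop:transversality},'' so that the implicit function theorem applies directly and the $(-1)^{\dim\Ker(D_{u_-})\dim\Coker(D_{u_+})}$ sign vanishes because ``$\Coker(D_{u_+})=0$.'' This is false. The operators $D_{u_\pm}$ of \eqref{eqn:lincr} act on the exponentially \emph{weighted} Sobolev spaces $L^{2,\delta}_1$, and they generically have nonzero cokernel; indeed the whole point of Proposition~\ref{prop:tanx} is that $T_u\widetilde{\widetilde{\M}}^\J$ sits in a four-term exact sequence ending in $\Coker(D_u)$. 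If $\Ker(D_{u_+})=0$, a dimension count gives $\dim\Coker(D_{u_+})=2-d_+$, which is nonzero for $d_+\le 1$. What is surjective (for generic $\J$) is the \emph{enlarged} operator $\Phi$ of \eqref{eqn:dersec} that includes the asymptotic translation parameters $\bbC\oplus\bbC$, not $D_u$ itself. Consequently the gluing is \emph{not} obstruction-free: the pregluing correction lands in $\Coker(D_{u_+})\oplus\Coker(D_{u_-})$, and one must set up an obstruction section $\frak{s}: N_+\oplus N_0\oplus N_-\to\Coker(D_{u_+})\oplus\Coker(D_{u_-})$ in the style of \cite[\S5--\S7]{ht2} (with the gluing map defined on $\frak{s}^{-1}(0)$, which is cut out transversely). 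This is precisely what the paper's Step 1 does, and it also supplies the glued-tangent-space exact sequence \eqref{eqn:doses} that you need to start the sign bookkeeping. Separately, in the simplified case the paper treats explicitly ($\Ker(D_{u_\pm})=0$), the sign in Lemma~\ref{lem:gluor} vanishes because $\dim\Ker(D_{u_-})=0$, \emph{not} because $\dim\Coker(D_{u_+})=0$; your reasoning here would not survive the general case where both kernel and cokernel are present.

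Once the obstruction-bundle framework is in place, your Step~(3)--(4) plan is on the right track, and it matches the paper's Steps~2 and~3: one replaces $\frak{s}$ by the linearized section $\frak{s}_0(z_+,z_0,z_-)=(\tau^+_+(z_+)+\tau^+_0(z_0),\tau^-_0(z_0)+\tau^-_-(z_-))$, compares the resulting exact sequences \eqref{eqn:doses} and \eqref{eqn:laes}, and then carefully pulls out the $\R$-directions. In that last step the paper picks up $(-1)^{d_++1}$ when reordering the domain-translation directions (equation \eqref{eqn:tu+u-op}), and then the identification of the ``outward'' normal with the \emph{difference} of the two target-translations, together with the fiber-product convention of \cite[\S2.1]{td}, converts this to the stated $(-1)^{d_+}$. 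Your intuition that ``each time $\partial_R$ or an $\R$-translation is moved past a factor of dimension $d_+$ one picks up a sign'' is roughly right but needs to be carried out against those two explicit comparisons to be convincing.
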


\begin{proof}
Aside from the signs in \eqref{eqn:gbs}, this follows from standard gluing arguments. So we will sketch one approach to the gluing and justify the signs. 

To set the notation, regard $u_+$ as an element of $\widetilde{\widetilde{\M}}^\J(\gamma_+,\gamma_0)$, and regard $u_-$ as an element of $\widetilde{\widetilde{\M}}^\J(\gamma_0,\gamma_-)$. To simplify notation, we will just consider the case where $\Ker(D_{u_+})=0$ and $\Ker(D_{u_-})=0$; the general case can be handled similarly.
By Proposition~\ref{prop:tanx}, we have short exact sequences
\begin{gather}
\label{eqn:Tu+}
0 \longrightarrow T_{u_+}\widetilde{\widetilde{\mc{M}}}^\J(\gamma_+,\gamma_0) \stackrel{(\sigma^+_+,\sigma^+_0)}{\longrightarrow} \bbC_+ \oplus\bbC_0 \stackrel{(\tau^+_+,\tau^+_0)}{\longrightarrow} \Coker(D_{u_+}) \longrightarrow 0,\\
\label{eqn:Tu-}
0 \longrightarrow T_{u_-}\widetilde{\widetilde{\mc{M}}}^\J(\gamma_0,\gamma_-) \stackrel{(\sigma^-_0,\sigma^-_-)}{\longrightarrow} \bbC_0\oplus\bbC_- \stackrel{(\tau^-_0,\tau^-_-)}{\longrightarrow} \Coker(D_{u_-}) \longrightarrow 0.
\end{gather}
Here $\bbC_+$, $\bbC_0$, and $\bbC_-$ are copies of $\bbC$, which one should think of as being associated with $\gamma_+$, $\gamma_0$, and $\gamma_-$ respectively. Here and below, in the notation for the maps $\sigma$ etc.\, we use the convention that where we previously indicated a Reeb orbit with $+$ or $-$, now we indicate it $+$, $0$, or $-$.

\medskip
{\em Step 1.\/}
We now consider how to glue $u_+$ and $u_-$ to obtain elements of $\widetilde{\widetilde{M}}^\J(\gamma_+,\gamma_-)$. To start, translate $u_+$ in the target up by $\mc{R}>>0$ in the $\R$ direction, and translate $u_-$ in the target down by $\mc{R}$ in the $\R$ direction. On $u_+$, choose sections $\psi_1^{+,+}$, $\psi_2^{+,+}$, $\psi_1^{+,0}$, and $\psi_2^{+,0}$ as in \S\ref{sec:tms} whose first derivatives have order $O(\mc{R}^{-1})$. Likewise choose sections $\psi_1^{-,0}$, $\psi_2^{-,0}$, $\psi_1^{-,-}$, and $\psi_2^{-,-}$ on $u_-$.
Given small complex numbers $z_+=a_++ib_+$, $z_0=a_0+ib_0$, and $z_-=a_-+ib_-$, we can ``preglue'' $u_+$ and $u_-$ as follows: We replace $u_+$ by the exponential of $a_+\psi_1^{+,+} + b_+\psi_2^{+,+} + a_0\psi_1^{+,0} + b_0\psi_2^{+,0}$, we replace $u_-$ by the exponential of $a_0\psi_1^{-,0} + b_0\psi_2^{-,0} + a_-\psi_1^{-,-} + b_-\psi_2^{-,-}$, and then we patch using appropriate cutoff functions. As in \cite[\S5]{ht2}, one can perturb the preglued curve to a curve which solves the Cauchy-Riemann equation \eqref{eqn:cr}, up to an error in $\Coker(D_{u_+})\oplus\Coker(D_{u_-})$, which we denote by ${\mathfrak s}(z_+,z_0,z_-)$. We can package these errors for different choices of $z_+$, $z_0$, and $z_-$ into an ``obstruction section''
\[
{\mathfrak s}: N_+\oplus N_0 \oplus N_- \longrightarrow \Coker(D_{u_+}) \oplus \Coker(D_{u_-}).
\]
Here $N_+$, $N_0$, and $N_-$ are small neighborhoods of the origin in ${\mathbb C}$. As in \cite[Lem. 6.3]{ht2}, the obstruction section ${\mathbf s}$ is smooth; and as in \cite[\S10.5]{ht2}, its zero set is cut out transversely. The gluing construction then defines a ``gluing map''
\[
G: {\mathfrak s}^{-1}(0) \longrightarrow \widetilde{\widetilde{\M}}^\J(\gamma_+,\gamma_-).
\]

As in \cite[Thm.\ 7.3]{ht2}, the gluing map is a local diffeomorphism. In particular, if $u$ is in the image of the gluing map, then we obtain a short exact sequence
\begin{equation}
 \label{eqn:doses}
0 \longrightarrow T_u\widetilde{\widetilde{\M}}(\gamma_+,\gamma_-) \stackrel{dG^{-1}}{\longrightarrow} {\mathbb C}_+ \oplus {\mathbb C}_0 \oplus {\mathbb C}_- \stackrel{\nabla {\mathfrak s}}{\longrightarrow} \Coker(D_{u_+}) \oplus \Coker(D_{u_-}) \longrightarrow 0.
 \end{equation}
Moreover, an analogue of \cite[Lem.\ 10.5]{ht2} shows that if we choose $\frak{o}_\pm\in\mc{O}(\gamma_\pm)$ and $\frak{o}_0\in\mc{O}(\gamma_0)$, and use these to orient $\Coker(D_{u_\pm})$ and $T_u\widetilde{\widetilde{\M}}^\J(\gamma_+,\gamma_-)$, then the exact sequence \eqref{eqn:doses} is orientation preserving in the sense of \eqref{eqn:eso}.

\medskip
{\em Step 2.\/}
The obstruction section 
can be approximated, in a sense to be specified below, by a ``linearized section''
\[
{\mathfrak s}_0: N_+\oplus N_0 \oplus N_- \longrightarrow \Coker(D_{u_+}) \oplus \Coker(D_{u_-})
\]
defined by
\[
{\mathfrak s}_0(z_+,z_0,z_-) = (\tau^+_+(z_+) + \tau^+_0(z_0), \tau^-_0(z_0) + \tau^-_-(z_-)).
\]

The zero set of $\frak{s}_0$ is also cut out transversely. More precisely, define
\[
V = \left\{(\eta_+,\eta_-) \in T_{u_+}\widetilde{\widetilde{\M}}^J(\gamma_+,\gamma_0) \oplus T_{u_-}\widetilde{\widetilde{\M}}^\J(\gamma_0,\gamma_-) \bigg| \sigma^+_0(\eta_+) = \sigma^-_0(\eta_-) \right\}.
\]
Here the right hand side is oriented as a level set of the linear map
\begin{equation}
\label{eqn:olsf}
\sigma^+_0 - \sigma^-_0 : T_{u_+}\widetilde{\widetilde{\M}}^J(\gamma_+,\gamma_0) \oplus T_{u_-}\widetilde{\widetilde{\M}}^\J(\gamma_0,\gamma_-) \longrightarrow {\mathbb C}_0.
\end{equation}
This makes sense because we are assuming that $\J$ is generic so that the fiber product
\[
\M^\J(\gamma_+,\gamma_0)\times_{\overline{\gamma_0}}\M^\J(\gamma_0,\gamma_-)
\]
is cut out transversely, which means that the linear map \eqref{eqn:olsf} is surjective. It also follows from this surjectivity and the short exact sequences \eqref{eqn:Tu+} and \eqref{eqn:Tu-} that we have a short exact sequence
\begin{equation}
\label{eqn:laes}
0 \longrightarrow V \stackrel{(\sigma^+_+,\sigma^+_0=\sigma^-_0,\sigma^-_-)}{\longrightarrow} \bbC_+ \oplus \bbC_0 \oplus \bbC_- \stackrel{\nabla\frak{s}_0}{\longrightarrow} \Coker(D_+)\oplus\Coker(D_-)\longrightarrow 0.
\end{equation}
Here of course, $\nabla\frak{s}_0$ is defined by the same formula as $\frak{s}_0$ since $\frak{s}_0$ is linear. Moreover, the exact sequence \eqref{eqn:laes} is orientation preserving in the sense of \eqref{eqn:eso}.

One can argue as in \cite[Cor.\ 8.6]{ht2} that if $\mc{R}$ is sufficiently large, then $\frak{s}^{-1}(0)$ is $C^1$ close to $\frak{s}_0^{-1}(0)$. In particular, if $u\in\widetilde{\widetilde{\M}}^J(\gamma_+,\gamma_-)$ is in the image of the gluing map, then by comparing the exact sequences \eqref{eqn:doses} and \eqref{eqn:laes}, we obtain, up to homotopy, a canonical orientation-preserving isomorphism
\begin{equation}
\label{eqn:okp}
T_u\widetilde{\widetilde{\M}}^\J(\gamma_+,\gamma_-) \simeq V.
\end{equation}

\medskip
{\em Step 3.\/} We now use \eqref{eqn:okp} to justify the signs in \eqref{eqn:gbs}.

First note that in the exact sequence \eqref{eqn:Tu+}, the tangent vector to $\widetilde{\widetilde{\M}}^\J(\gamma_+,\gamma_0)$ corresponding to the derivative of the $\R$ action (by translation of the domain) has $\sigma^+_+=\mc{A}(\gamma_+)$ and $\sigma^+_0=\mc{A}(\gamma_0)$, where $\mc{A}$ denotes the symplectic action (period) of a Reeb orbit. Thus projection defines an orientation preserving isomorphism
\begin{equation}
\label{eqn:tu+op}
T_{[u_+]}\widetilde{\M}^\J(\gamma_+,\gamma_0) \simeq(\op{Re}\sigma^+_0)^{-1}(0) \subset T_{u_+}\widetilde{\widetilde{\M}}^\J(\gamma_+,\gamma_0).
\end{equation}
Likewise, we have an orientation preserving isomorphism
\begin{equation}
\label{eqn:tu-op}
T_{[u_-]}\widetilde{\M}^\J(\gamma_0,\gamma_-) \simeq (\op{Re}\sigma^-_0)^{-1}(0) \subset T_{u_-}\widetilde{\widetilde{\M}}^\J(\gamma_0,\gamma_-).
\end{equation}
Combining the above two isomorphisms, we obtain an orientation preserving isomorphism
\begin{equation}
\label{eqn:tu+u-op}
T_{[u_+]}\widetilde{\M} \oplus T_{[u_-]}\widetilde{\M} \simeq (-1)^{d_++1}(\op{Re}\sigma^+_0\times \op{Re}\sigma^-_0)^{-1}(0,0) \subset T_{u_+}\widetilde{\widetilde{\M}}\oplus T_{u_-}\widetilde{\widetilde{\M}}.
\end{equation}

On the other hand, we can rewrite \eqref{eqn:okp} as
\begin{equation}
\label{eqn:okprw}
T_u\widetilde{\widetilde{\M}} \simeq ((\op{Re}\sigma^+_0 - \op{Re}\sigma^-_0) \times (\op{Im}\sigma^+_0 - \op{Im}\sigma^-_0))^{-1}(0,0) \subset T_{u_+}\widetilde{\widetilde{\M}} \oplus T_{u_-}\widetilde{\widetilde{\M}}.
\end{equation}
In this isomorphism, the tangent vector to $T_u\widetilde{\widetilde{\M}}$ corresponding to the derivative of the $\R$ action (by translation of the domain) has $\sigma^+_0$ and $\sigma^-_0$ close to $\mc{A}(\gamma_0)$.  It follows that we have an orientation preserving isomorphism
\begin{equation}
\label{eqn:okprwr}
T_{[u]}\widetilde{\M} \simeq (\op{Re}\sigma^+_0 \times \op{Re}\sigma^-_0 \times (\op{Im}\sigma^+_0 - \op{Im}\sigma^-_0))^{-1}(0,0,0) \subset T_{u_+}\widetilde{\widetilde{\M}} \oplus T_{u_-}\widetilde{\widetilde{\M}}.
\end{equation}
Comparing \eqref{eqn:tu+u-op} with \eqref{eqn:okprwr}, we obtain an orientation preserving isomorphism
\begin{equation}
\label{eqn:nwgsw}
T_{[u]}\widetilde{\M} \simeq (-1)^{d_++1}(\op{Im}\sigma^+_0 - \op{Im}\sigma^-_0)^{-1}(0) \subset T_{[u_+]}\widetilde{\M} \oplus T_{[u_-]}\widetilde{\M}.
\end{equation}

In the isomorphism \eqref{eqn:nwgsw}, the derivative of the $\R$ action (by translation of the target) on the left hand side corresponds to the direct sum of the derivative of the $\R$ action on both summands on the right hand side. On the other hand, when $\mc{R}>0$ is large, the direction pointing ``out of the boundary'' of the left hand side corresponds to the derivative of the $\R$ action on the first summand on the right hand side, minus the derivative of the $\R$ action on the second summand. It follows that there is a neighborhood $U$ of $([u_+],[u_-])$ in $\overline{\M}^\J_d(\gamma_+,\gamma_-)$ and a neighborhood
\begin{equation}
\label{eqn:locdiff}
V \subset -(e^+_0 - e^-_0)^{-1}(0) \subset \M^\J(\gamma_+,\gamma_0) \times \M^\J(\gamma_0,\gamma_-)
\end{equation}
of $([u_+],[u_-])$ with a canonical orientation preserving diffeomorphism $\partial U\simeq V$.
Here
\[
\begin{split}
e^+_0 : \M^\J(\gamma_+,\gamma_0) &\longrightarrow \overline{\gamma_0},\\
e^-_0: \M^\J(\gamma_0,\gamma_-)  &\longrightarrow \overline{\gamma_0}
\end{split}
\]
denote the evaluation maps. According to the convention from \cite[\S2.1]{td} which we are using to orient fiber products, the middle of \eqref{eqn:locdiff} agrees with the right hand side of \eqref{eqn:gbs}.
\end{proof}

\subsection{Cobordism orientations}
\label{sec:cobsigns}

We now explain the orientations of the cobordism moduli spaces in \S\ref{sec:nchcobmaps}.

Using the notation of \S\ref{sec:nchcobmaps}, let $\widetilde{\Phi}^\J(\gamma_+,\gamma_-)$ denote the moduli space of maps $u:R\times S^1\to\overline{X}$ satisfying the conditions \eqref{eqn:cobmoduli}--\eqref{eqn:cobmoduliend}, but without modding out by $\R$ translation in the domain. Thus
\[
\Phi^\J(\gamma_+,\gamma_-) = \widetilde{\Phi}^\J(\gamma_+,\gamma_-)/\R,
\]
where $\R$ acts by translating the domain as in \eqref{eqn:ractiondomain}.

Given $u\in\widetilde{\Phi}^\J(\gamma_+,\gamma_-)$, we have maps $\tau_\pm:\bbC\to\Coker(D_u)$ as in \eqref{eqn:taumaps}. This definition still make sense, where now $\psi_1^\pm$ and $\psi_2^\pm$ are sections of $u^*T\overline{X}$, since $u$ maps to $\R\times Y_+$ for $s>>0$ and to $\R\times Y_-$ for $s<<0$. In this situation we have the following analogue of Proposition~\ref{prop:tanx}, with the same proof:

\begin{proposition}
\label{prop:tanx2}
If $\J$ is generic, then $\widetilde{\Phi}^\J(\gamma_+,\gamma_-)$ is naturally a smooth manifold, and there is a canonical exact sequence
\begin{equation}
0 \longrightarrow \Ker(D_u) \longrightarrow T_u\widetilde{\Phi}^\J(\gamma_+,\gamma_-) \stackrel{(\sigma_+,\sigma_-)}{\longrightarrow} \bbC\oplus\bbC \stackrel{(\tau_+,\tau_-)}{\longrightarrow} \Coker(D_u) \longrightarrow 0.
\end{equation}
\end{proposition}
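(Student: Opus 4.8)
The plan is to transcribe the proof of Proposition~\ref{prop:tanx} essentially word for word, with $\R\times Y$ replaced by $\overline{X}$ throughout and with the simplification that there is now no $\R$ action on the target to quotient by, so one works directly with $\widetilde{\Phi}^\J(\gamma_+,\gamma_-)$ rather than passing through a ``double tilde'' space. The only substantive point to notice at the outset, and it requires no new idea, is the behaviour near the ends: by conditions \eqref{eqn:cobmoduli}--\eqref{eqn:cobmoduliend}, any $u\in\widetilde{\Phi}^\J(\gamma_+,\gamma_-)$ takes values in the symplectization end $[0,\infty)\times Y_+$ for $s\gg 0$ and in $(-\infty,0]\times Y_-$ for $s\ll 0$, where by construction $\J$ restricts to the $\lambda_+$-, resp.\ $\lambda_-$-compatible $S^1$-family, so near its ends $u$ is governed by exactly the asymptotic operators attached to $(\gamma_+,\J_+)$ and $(\gamma_-,\J_-)$. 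Consequently the exponential weight $\delta>0$ can be chosen exactly as in \S\ref{sec:tms}, the sections $\psi_1^\pm,\psi_2^\pm$ of $u^*T\overline{X}$ equal to $\partial_r$ and to the Reeb vector field $R$ on the appropriate symplectization end are well-defined, the maps $\tau_\pm:\bbC\to\Coker(D_u)$ of Definition~\ref{def:taupm} make sense, and all of the asymptotics-of-holomorphic-curves input used in \S\ref{sec:tms} applies verbatim.

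With this in hand I would first fix $p>2$ and note that the kernel and cokernel of the linearized operator $D_u$ on $u^*T\overline{X}$ are unchanged if one replaces the $L^{2,\delta}_1$ and $L^{2,\delta}$ spaces in its domain and target by $L^{p,\delta}_1$ and $L^{p,\delta}$. Next I would set up the standard local model: let $\mc{B}(\gamma_+,\gamma_-)$ be the Banach manifold of continuous maps $\R\times S^1\to\overline{X}$ satisfying the end and asymptotic conditions in \eqref{eqn:cobmoduli}--\eqref{eqn:cobmoduliend} (everything except the Cauchy--Riemann equation itself), together with the requirement that for $|s|$ large $u$ be the exponential of an $L^{p,\delta}_1$ section of the normal bundle to $\R\times\gamma_\pm$, and define
\[
L^{p,\delta}_1(u^*T\overline{X})\oplus\bbC\oplus\bbC\longrightarrow\mc{B}(\gamma_+,\gamma_-),\qquad
(\psi,z_+,z_-)\longmapsto \exp_u(\psi+a_+\psi_1^++b_+\psi_2^++a_-\psi_1^-+b_-\psi_2^-),
\]
where $z_\pm=a_\pm+b_\pm i$ and $\psi_1^\pm,\psi_2^\pm$ are as above. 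Every element of $\widetilde{\Phi}^\J(\gamma_+,\gamma_-)$ near $u$ lies in the image of this map, so a neighborhood of $u$ in $\widetilde{\Phi}^\J(\gamma_+,\gamma_-)$ is the zero set of a section of a Banach space bundle over the domain displayed above, whose fiber over $u$ is $L^{p,\delta}(u^*T\overline{X})$ and whose derivative at $u$ is
\[
\Phi(\psi,z_+,z_-)=D_u(\psi+a_+\psi_1^++b_+\psi_2^++a_-\psi_1^-+b_-\psi_2^-),
\]
the right-hand side being interpreted, as in Definition~\ref{def:taupm}, by regarding $D_u$ as a differential operator on smooth sections (the $\psi_i^\pm$ not lying in the domain of $D_u$ as a Fredholm operator). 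The transversality argument proving Lemma~\ref{lem:morphism1}(a) shows that for generic $\J$ the operator $\Phi$ is surjective, whence $\widetilde{\Phi}^\J(\gamma_+,\gamma_-)$ is a smooth manifold near $u$ with $T_u\widetilde{\Phi}^\J(\gamma_+,\gamma_-)=\Ker(\Phi)$.

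Finally I would produce the exact sequence. Set
\[
0\longrightarrow\Ker(D_u)\longrightarrow\Ker(\Phi)\stackrel{(\sigma_+,\sigma_-)}{\longrightarrow}\bbC\oplus\bbC\stackrel{(\tau_+,\tau_-)}{\longrightarrow}\Coker(D_u)\longrightarrow0,
\]
where the first arrow is $\psi\mapsto(\psi,0,0)$, the second is $(\psi,z_+,z_-)\mapsto(z_+,z_-)$, and $\tau_\pm$ are the maps of Definition~\ref{def:taupm}. By the definitions of $\tau_\pm$ and $\Phi$ one has $(\pi_{\Coker(D_u)}\circ\Phi)(\psi,z_+,z_-)=\tau_+(z_+)+\tau_-(z_-)$, which makes the sequence exact except possibly for surjectivity of the last arrow, and that surjectivity follows from surjectivity of $\Phi$; since $\Ker(\Phi)=T_u\widetilde{\Phi}^\J(\gamma_+,\gamma_-)$, this is the claimed exact sequence, and (as in the remark following Proposition~\ref{prop:tanx}) the maps $\sigma_\pm$ are the boundary limits $\dot u\mapsto\lim_{s\to\pm\infty}(dr(\dot u)+i\lambda(\dot u))$. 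I do not expect a genuine obstacle: the whole argument is a transcription of \S\ref{sec:tms}, and the one thing that is not automatic --- namely that the cobordism ends behave like symplectization ends, so that the asymptotic estimates transfer --- is immediate from the definition of a cobordism-compatible almost complex structure and was dealt with at the outset.
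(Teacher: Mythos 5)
Your proposal is correct and is essentially identical to the paper's intended argument: the paper introduces $\widetilde{\Phi}^\J$, notes that the maps $\tau_\pm$ of Definition~\ref{def:taupm} still make sense because $u$ lands in the symplectization ends for $|s|$ large, and then states Proposition~\ref{prop:tanx2} ``with the same proof'' as Proposition~\ref{prop:tanx}. Your transcription — including the observation that the cobordism ends behave like symplectization ends (which the paper also records explicitly) and that there is no target $\R$-action to quotient by — matches that intent exactly.
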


Given Proposition~\ref{prop:tanx2}, the argument at the end of \S\ref{sec:oms} shows that $\widetilde{\Phi}^\J(\gamma_+,\gamma_-)$ has a canonical orientation with values in $e_+^*\mc{O}_{\gamma_+}\tensor e_-^*\mc{O}_-$. This then determines an orientation of $\Phi^\J(\gamma_+,\gamma_-)$, with values in the same local system, by the ``$\R$ direction first'' convention.

We now have the following analogue of Proposition~\ref{prop:gbs}.

\begin{proposition}
\label{prop:cobsigns}
With the notation and hypotheses of Lemma~\ref{lem:morphism2}:
\begin{description}
	\item{(a)} Let
	\[
	([u_+],[u_0]) \in \M^{\J_+}_{d_+}(\gamma_+,\gamma_0)\times_{\overline{\gamma_0}} \Phi^\J_{d_0}(\gamma_0,\gamma_-).
	\]
	Then there is a neighborhood
	\[
	U\subset\overline{\Phi}^\J_{d_++d_0}(\gamma_+,\gamma_-)
	\]
	of $([u_+],[u_0])$, which is naturally a smooth manifold with boundary, and a neighborhood
	\begin{equation}
	\label{eqn:gbsa}
	V\subset \M^{\J_+}_{d_+}(\gamma_+,\gamma_0)\times_{\overline{\gamma_0}} \Phi^\J_{d_0}(\gamma_0,\gamma_-)
	\end{equation}
	of $([u_+],[u_0])$, with a canonical orientation preserving diffeomorphism $\partial U \simeq V$.
	\item{(b)}
	Let
	\[
	([u_0],[u_-])\in \Phi^\J_{d_0}(\gamma_+,\gamma_0)\times_{\overline{\gamma_0}} \M^{\J_-}_{d_-}(\gamma_0,\gamma_-).
	\]
	Then there is a neighborhood
	\[
	U\subset\overline{\Phi}^\J_{d_0+d_-}(\gamma_+,\gamma_-)
	\]
	of $([u_0],[u_-])$, which is naturally a smooth manifold with boundary, and a neighborhood
	\begin{equation}
	\label{eqn:gbsb}
	V \subset (-1)^{d_0} \Phi^\J_{d_0}(\gamma_+,\gamma_0)\times_{\overline{\gamma_0}} \M^{\J_-}_{d_-}(\gamma_0,\gamma_-)
	\end{equation}
	of $([u_0],[u_-])$, with a canonical orientation preserving diffeomorphism $\partial U \simeq V$.
\end{description}
\end{proposition}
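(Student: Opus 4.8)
The plan is to repeat, essentially verbatim, the gluing and sign analysis in the proof of Proposition~\ref{prop:gbs}, replacing one of the two symplectization moduli spaces by a cobordism moduli space. The one structural difference to keep track of is that a cobordism moduli space $\Phi^{\J}(\gamma_+,\gamma_-)$ is the quotient of $\widetilde{\Phi}^{\J}(\gamma_+,\gamma_-)$ by a \emph{single} $\R$ action, namely translation of the domain (cf.\ Proposition~\ref{prop:tanx2}), whereas a symplectization moduli space $\M^{\J}(\gamma_+,\gamma_-)$ is the quotient of $\widetilde{\widetilde{\M}}^{\J}(\gamma_+,\gamma_-)$ by \emph{two} $\R$ actions, translation of the domain and translation of the target. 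Hence when one glues a symplectization curve $u_\pm$ to a cobordism curve $u_0$, the target translation of $u_\pm$ is not absorbed by any symmetry of the glued (cobordism) curve; instead it is precisely the gluing parameter, and therefore the outward normal to the boundary stratum of $\overline{\Phi}^{\J}$.

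I would carry out part~(a) as follows. Represent $u_+$ by an element of $\widetilde{\widetilde{\M}}^{\J_+}(\gamma_+,\gamma_0)$ and $u_0$ by an element of $\widetilde{\Phi}^{\J}(\gamma_0,\gamma_-)$, and assume first that $\Ker(D_{u_+})=\Ker(D_{u_0})=0$ (the general case being handled similarly, as in Proposition~\ref{prop:gbs}). Running Steps~1 and~2 of the proof of Proposition~\ref{prop:gbs} --- preglue (shifting $u_+$ up into the cylindrical end $[0,\infty)\times Y_+$ of $\overline{X}$ by a large parameter $R$, leaving $u_0$ in place), correct to an honest solution up to an element of $\Coker(D_{u_+})\oplus\Coker(D_{u_0})$, and compare the resulting short exact sequence for $T_u\widetilde{\Phi}^{\J}(\gamma_+,\gamma_-)$ with the linearized obstruction sequence --- produces, for $R$ large, an orientation preserving identification of $T_u\widetilde{\Phi}^{\J}(\gamma_+,\gamma_-)$ with the fiber product of $T_{u_+}\widetilde{\widetilde{\M}}^{\J_+}(\gamma_+,\gamma_0)$ and $T_{u_0}\widetilde{\Phi}^{\J}(\gamma_0,\gamma_-)$ over the matching of the complex asymptotic data $\sigma$ at $\gamma_0$. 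Then, exactly as in Step~3, I would pass to the $\R$-quotients: the domain translation of the glued curve is the diagonal of the domain translations of $u_+$ and $u_0$ (handled by the slicing in the passage around \eqref{eqn:tu+op}), while the target translation of $u_+$ is the outward normal of $\overline{\Phi}^{\J}_{d_++d_0}(\gamma_+,\gamma_-)$ along this stratum. Because no ``difference of target translations'' enters --- the cobordism curve $u_0$ carries no target direction --- the sign-producing reordering that yields the factor $(-1)^{d_+}$ in \eqref{eqn:gbs} is simply absent, and one obtains the boundary identification \eqref{eqn:gbsa} with trivial sign. Part~(b) is identical with the roles interchanged: now $u_-\in\widetilde{\widetilde{\M}}^{\J_-}(\gamma_0,\gamma_-)$ is the bottom piece and carries the free target translation, so extracting it as the outward normal requires reordering it past the $d_0$-dimensional moduli direction of the first factor $\Phi^{\J}_{d_0}(\gamma_+,\gamma_0)$, which by the same computation as in Step~3 contributes the sign $(-1)^{d_0}$ appearing in \eqref{eqn:gbsb}. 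The point-constrained analogues alluded to in Lemma~\ref{lem:morphism2}(c) follow from the same gluing construction with point constraints imposed, exactly as the constrained cases of Proposition~\ref{prop:currentcompact} follow from Proposition~\ref{prop:gbs}, the constraints being bookkept via \cite[\S2.2]{td}.

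The main obstacle is, as usual, the sign bookkeeping: one must carefully track the ``$\R$-direction first'' conventions relating the orientations at the $\widetilde{\widetilde{\M}}$, $\widetilde{\Phi}$, and $\M/\Phi$ levels, the codomain-first convention for orienting the level sets and fiber products that appear in the exact sequences, and the fiber-product convention of \cite[\S2.1]{td} used in the statement. All remaining ingredients --- transversality of the glued and linearized obstruction sections (Lemma~\ref{lem:morphism1}), smoothness and the local-diffeomorphism property of the gluing map, and the relevant compactness --- are standard and proceed exactly as in \cite[\S\S5--10]{ht2} and the proof of Proposition~\ref{prop:gbs}.
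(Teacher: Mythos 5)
Your proposal is correct and follows essentially the same route as the paper: repeat the gluing and sign analysis of Proposition~\ref{prop:gbs} with one factor replaced by a cobordism moduli space, observe that $\widetilde{\Phi}^{\J}$ carries no target $\R$-action so the target translation of the symplectization piece becomes the outward normal to the boundary stratum, and read off the trivial sign in (a) and the $(-1)^{d_0}$ in (b) from the resulting reordering. This matches the paper's proof, which likewise reduces to the analogue of \eqref{eqn:nwgsw} and identifies the outward direction with (plus or minus) the target translation of $u_+$ or $u_-$ respectively.
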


\begin{proof}
This follows the proof of Proposition~\ref{prop:gbs}, with minor modifications. There are some sign changes at the very end due to the fact that the cobordism moduli spaces $\widetilde{\Phi}^\J$ do not have $\R$ actions by translation in the target. These sign changes work as follows.

(a) Up to \eqref{eqn:nwgsw}, the proof follows the proof of Proposition~\ref{prop:gbs}, with $\widetilde{\widetilde{\M}}^\J(\gamma_+,\gamma_0)$ replaced by $\widetilde{\widetilde{\M}}^{\J_+}(\gamma_+,\gamma_0)$, and with $\widetilde{\widetilde{\M}}^\J(\gamma_0,\gamma_-)$ replaced by $\widetilde{\Phi}^\J(\gamma_0,\gamma_-)$. The analogue of \eqref{eqn:nwgsw} here is
\begin{equation}
\label{eqn:nwgswa}
T_{[u]}\Phi^\J(\gamma_+,\gamma_-) \simeq (-1)^{d_++1}(\op{Im}\sigma^+_0-\op{Im}\sigma^0_0)^{-1}(0) \subset T_{[u_+]}\widetilde{\M}^{\J_+}(\gamma_+,\gamma_0)\oplus T_{u_0}\Phi^\J(\gamma_0,\gamma_-).
\end{equation}

In the isomorphism \eqref{eqn:nwgswa}, the direction pointing ``out of the boundary'' on the left hand side corrresponds to the derivative of the $\R$ action on the first summand on the right hand side. It follows that there is a neighborhood $U$ of $([u_+],[u_0])$ in $\overline{\Phi}^\J_{d_++d_0}(\gamma_+,\gamma_-)$ and a neighborhood
\begin{equation}
\label{eqn:locdiffa}
V \subset (-1)^{d_++1}(e^+_0 - e^0_0)^{-1}(0) \subset \M^{\J_+}_{d_+}(\gamma_+,\gamma_0) \times \Phi^\J_{d_0}(\gamma_0,\gamma_-)
\end{equation}
of $([u_+],[u_0])$ with a canonical orientation preserving diffeomorphism $\partial U\simeq V$.
Here
\[
\begin{split}
e^+_0 : \M^{\J_+}_{d_+}(\gamma_+,\gamma_0) &\longrightarrow \overline{\gamma_0},\\
e^0_0: \Phi^\J_{d_0}(\gamma_0,\gamma_-)  &\longrightarrow \overline{\gamma_0}
\end{split}
\]
denote the evaluation maps. According to the fiber product orientation convention from \cite[\S2.1]{td}, this means that the middle of \eqref{eqn:locdiffa} agrees with the right hand side of \eqref{eqn:gbsa}.

(b) In this case the analogue of \eqref{eqn:nwgsw} is
\begin{equation}
\label{eqn:nwgswb}
T_{[u]}\Phi^\J(\gamma_+,\gamma_-) \simeq (-1)^{d_0+1}(\op{Im}\sigma^0_0 - \op{Im}\sigma^-_0)^{-1}(0) \subset T_u\Phi^\J(\gamma_+,\gamma_0) \oplus T_{[u_-]}\widetilde{\M}^{\J_-}(\gamma_0,\gamma_-).
\end{equation}
In the isomorphism \eqref{eqn:nwgswb}, the direction pointing ``out of the boundary'' on the left hand side corresponds to minus the derivative of the $\R$ action on the second summand on the right hand side. Because of this minus sign, and because the first summand on the right hand side has dimension $d_0$, this results in an extra factor of $(-1)^{d_0}$ in \eqref{eqn:gbsb} as compared with \eqref{eqn:gbsa}.
\end{proof}

We can now prove the following lemma which is needed in \S\ref{sec:scaling}.

\begin{lemma}
\label{lem:scacob}
Under the identifications in Lemma~\ref{lem:scop}(a):
\begin{description}
\item{(a)} The diffeomorphism \eqref{eqn:scacob1} is orientation preserving.
\item{(b)} The diffeomorphism \eqref{eqn:scacob2} is orientation preserving with respect to the orientation of $\overline{\gamma}$ given by the Reeb vector field.
\end{description} 
\end{lemma}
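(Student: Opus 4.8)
The plan is to push everything back to the symplectization picture via the diffeomorphism $\phi$ of $\R\times Y$ and to track orientations through the linearized Cauchy--Riemann operators, exactly as in the proof of Lemma~\ref{lem:scop}. Since $d\phi\circ J^X_t = J_t\circ d\phi$, the map $u\mapsto\phi\circ u$ defines a diffeomorphism $\widetilde{\Phi}^{\J^X}({^{e^b}}\gamma_+,{^{e^a}}\gamma_-)\simeq\widetilde{\widetilde{\M}}^\J(\gamma_+,\gamma_-)$ at the level of maps, before any domain $\R$ or $S^1$ quotient (using the doubly-tilde moduli spaces of \S\ref{sec:tms} and \S\ref{sec:cobsigns}), cf.\ \eqref{eqn:scoplift}. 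On its two ends $\phi$ is the linear rescaling $r\mapsto e^{-b}r+\mathrm{const}$, resp.\ $r\mapsto e^{-a}r+\mathrm{const}$, so it matches the orientation loops $\mc{L}_{{^{e^b}}\gamma_+,p,{^{e^b}}\J}$ and $\mc{L}_{{^{e^a}}\gamma_-,p,{^{e^a}}\J}$ with $\mc{L}_{\gamma_+,p,\J}$ and $\mc{L}_{\gamma_-,p,\J}$ under the canonical identifications of Lemma~\ref{lem:scop}(a), hence identifies the local systems in which the orientations of both moduli spaces take values. So it remains to check that $u\mapsto\phi\circ u$ is orientation preserving, and then to track the $\R$ and $S^1$ quotients.

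For part (a) (so $\gamma_+\ne\gamma_-$): for $u\in\widetilde{\Phi}^{\J^X}$, the commutative square
\[
\begin{CD}
L^{2,\delta}_1(u^*T\overline{X}) @>{D_u}>> L^{2,\delta}(u^*T\overline{X})\\
@V{d\phi}V{\simeq}V @V{d\phi}V{\simeq}V\\
L^{2,\delta}_1((\phi\circ u)^*T(\R\times Y)) @>{D_{\phi\circ u}}>> L^{2,\delta}((\phi\circ u)^*T(\R\times Y))
\end{CD}
\]
induces isomorphisms on kernels and cokernels respecting the identification of local systems above, and the exact sequences of Propositions~\ref{prop:tanx} and \ref{prop:tanx2} assemble into a commutative ladder whose middle vertical map $\bbC\oplus\bbC\to\bbC\oplus\bbC$ is $(z_+,z_-)\mapsto(e^{-b}z_+,e^{-a}z_-)$. (This last identity is the one real computation, carried out using the description of $\sigma_\pm$ from the remark after Proposition~\ref{prop:tanx}; it is the same as the computation in the proof of Lemma~\ref{lem:scop}(b), except that the single constant $c$ is replaced by the two positive constants on the two ends.) As multiplication by a positive real is orientation preserving on $\bbC$, the map $T_u\widetilde{\Phi}^{\J^X}\simeq T_{\phi\circ u}\widetilde{\widetilde{\M}}^\J$ is orientation preserving. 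Now $\Phi^{\J^X}_d({^{e^b}}\gamma_+,{^{e^a}}\gamma_-)$ is $\widetilde{\Phi}^{\J^X}_d/\R$ oriented ``$\R$-direction first'', which $\phi$ carries to $\widetilde{\widetilde{\M}}^\J/\R = \widetilde{\M}^\J$ with the same convention; choosing a slice of the remaining target $\R$-action identifies $\widetilde{\M}^\J_{d-1}$ with $\R\times\M^\J_{d-1}$, again ``$\R$-direction first''. Composing these orientation preserving identifications yields \eqref{eqn:scacob1}.

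For part (b) (so $\gamma_+=\gamma_-=\gamma$): the only $\J^X$-holomorphic maps that occur are the domain translates and reparametrizations of $(s,t)\mapsto(g^{-1}(Ts),\gamma(Tt))$, which $\phi$ carries to the genuine trivial cylinders $(s,t)\mapsto(Ts+c_0,\gamma(t_0+Tt))$; modding out the domain $\R$ identifies both sides of \eqref{eqn:scacob2} with $\widetilde{\M}^\J_0(\gamma,\gamma)$, the circle of parametrizations of $\gamma$ (cf.\ \eqref{eqn:thefiber}), and by the ladder argument of part (a) the orientation claim in \eqref{eqn:scacob2} reduces to the assertion that $e_\pm\colon\widetilde{\M}^\J_0(\gamma,\gamma)\to\overline{\gamma}$ is orientation preserving for the Reeb orientation of $\overline{\gamma}$ --- which is precisely ``the argument in the proof of Lemma~\ref{lem:scacob}'' referred to below \eqref{eqn:ediffeo}. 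To prove that, fix the trivial cylinder $u_0(s,t)=(Ts,\gamma(Tt))$ and split $D_{u_0}=D^\xi\oplus D^{\bbC}$ along $u_0^*T(\R\times Y)=u_0^*\xi\oplus\bbC$. With the small positive weight $\delta$, nondegeneracy of $\gamma$ makes $D^\xi$ invertible, so $\mc{O}(D_{u_0})=\mc{O}(D^{\bbC})$, and $D^{\bbC}$ is the model $\dbar$ operator on the trivial line bundle over $\R\times S^1$ with decay weights, which has vanishing kernel and canonically complex one-dimensional cokernel; one checks that its induced orientation agrees with the canonical orientation of $D_{u_0}$ from Proposition~\ref{prop:orcy}(a). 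Evaluating $\tau_\pm$ and $\sigma_\pm$ on $D^{\bbC}$ as in \S\ref{sec:tms} then identifies $T_{u_0}\widetilde{\widetilde{\M}}^\J(\gamma,\gamma)$ with the diagonal $\bbC\subset\bbC\oplus\bbC$, in which the target-$\R$ direction is $\R$ and the $S^1$-reparametrization direction is $i\R$, and shows that with the canonical orientation of $\Coker(D^{\bbC})$ this identification is orientation preserving for the standard orientation of $\bbC$; removing the $\R$-direction first leaves the $S^1$-direction, which $e_\pm$ sends to the Reeb direction, giving (b).

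The main obstacle is the sign computation in part (b): one must carry the canonical orientation of the cokernel of the model $\dbar$-operator through the exact sequence \eqref{eqn:ces1} and then through the ``$\R$-direction first'' quotient, and confirm that the result matches both the Reeb orientation of $\overline{\gamma}$ and the orientation of the identity morphism as specified in \cite[Ex.\ 2.8]{td}. Part (a) is comparatively mechanical, being the two-ended analogue of the argument already given for Lemma~\ref{lem:scop}(b); the only point needing attention there is that although $\phi$ is not a rescaling in the interior of the cobordism, it is one near each puncture, which is all that enters the ladder.
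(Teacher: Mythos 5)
Your proposal is correct and follows essentially the same route as the paper: part (a) is the two-ended analogue of the commutative-ladder argument from Lemma~\ref{lem:scop}(b) (with the positive scaling factors $e^{-b}$, $e^{-a}$ in the middle column), followed by the ``$\R$-direction first'' conventions, and part (b) reduces to the orientation-preserving identification of $\widetilde{\widetilde{\M}}^\J(\gamma,\gamma)$ with $\R\times\overline{\gamma}$ via trivial cylinders. Your model-operator computation for part (b) supplies detail that the paper leaves implicit (the paper simply asserts that $u\mapsto u(0,0)$ is orientation preserving), but the argument is the same.
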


\begin{proof}
(a)
The proof of Lemma~\ref{lem:scop}(b) shows that the diffeomorphism $\phi$ in \S\ref{sec:scaling} induces an orientation preserving isomorphism
\[
\widetilde{\Phi}^{\J^X}\left(^{{e^b}}\gamma_+,{^{e^a}}\gamma_-\right) \stackrel{\simeq}{\longrightarrow} \widetilde{\widetilde{\M}}^\J(\gamma_+,\gamma_-)
\]
sending $u\mapsto \phi\circ u$. By our convention at the end of \S\ref{sec:oms}, it follows that the diffeomorphism \eqref{eqn:scacob1} is orientation preserving.

(b)
For a single Reeb orbit $\gamma$ of $\lambda$, composition with $\phi$ likewise gives an orientation preserving diffeomorphism
\[
\widetilde{\Phi}^{\J^X}\left(^{{e^b}}\gamma,{^{e^a}}\gamma\right) \stackrel{\simeq}{\longrightarrow} \widetilde{\widetilde{\M}}^\J(\gamma,\gamma)
\]
By symplectic action considerations, the elements of the right hand side consist of holomorphic maps
\[
u:\R\times S^1 \to \R\times\overline{\gamma}
\]
of degree $d(\gamma)$. We then obtain an orientation preserving diffeomorphism
\[
\widetilde{\widetilde{\M}}^\J(\gamma,\gamma) \stackrel{\simeq}{\longrightarrow} \R\times\overline{\gamma}
\]
sending $u\mapsto u(0,0)$. By our convention at the end of \S\ref{sec:oms}, it follows that the diffeomorphism \eqref{eqn:scacob2} is orientation preserving.
\end{proof}

\subsection{Family orientations}
\label{sec:familysigns}

We now explain the orientations of the $S^1$-equivariant moduli spaces defined in \S\ref{sec:familymoduli}, and we use the notation from that section.

\begin{proof}[Proof of Proposition~\ref{prop:famor}.]
Let $x$ be a critical point of $f:BS^1\to\R$ and let $\gamma$ be a Reeb orbit.
We first define the local system $\mc{O}_{(x,\gamma)}$.

Recall from Proposition~\ref{prop:orientations} that there is a canonical local system $\mc{O}_\gamma$ over $\overline{\gamma}$. Let $\widetilde{\mc{O}}$ denote the pullback of this local system to $\pi^{-1}(x)\times\overline{\gamma}$ via the projection
\[
\pi^{-1}(x)\times\overline{\gamma} \longrightarrow \overline{\gamma}.
\]

We claim that the restriction of the local system $\widetilde{\mc{O}}$ to each fiber of the $S^1$ action \eqref{eqn:s1actm} on $\pi^{-1}(x)\times\overline{\gamma}$ is trivial. If the Reeb orbit $\gamma$ is good, then by Proposition~\ref{prop:orientations}(a), the local system $\widetilde{\mc{O}}$ is trivial over all of $\pi^{-1}(x)\times\overline{\gamma}$. If the Reeb orbit $\gamma$ is bad, then the local system $\widetilde{\mc{O}}$ is nontrivial over each circle $\{z\}\times\overline{\gamma}$, and trivial over each circle $\pi^{-1}(x)\times\{p\}$. But for a bad Reeb orbit $\gamma$, the covering multiplicity $d(\gamma)\in\Z$ is even, so each orbit of the $S^1$ action \eqref{eqn:s1actm} wraps an even number of times around the $\overline{\gamma}$ direction, and the restriction of $\overline{\mc{O}}$ to the orbit is still trivial.

It follows from the previous paragraph that $\overline{\mc{O}}$ descends to a well defined local system over $\overline{(x,\gamma)}$, which we denote by $\mc{O}_{(x,\gamma)}$.

Assertion (a) in Proposition~\ref{prop:famor} now follows from the above discussion.

To prove assertion (b), let $\widetilde{\widetilde{\M}}^{\mathfrak J}((x_+,\gamma_+),(x_-,\gamma_-))$ denote the quotient of the moduli space $\widehat{\M}^{\mathfrak J}((x_+,\gamma_+),(x_-,\gamma_-))$ by the $S^1$ action \eqref{eqn:s1actm}. Thus
\[
\widetilde{\M}^{\mathfrak J}((x_+,\gamma_+),(x_-,\gamma_-)) = \widetilde{\widetilde{\M}}^{\mathfrak J}((x_+,\gamma_+),(x_-,\gamma_-))/\R
\]
where $\R$ acts by translation of the parameter $s$.

Let $(\eta,u)\in\widehat{\M}^{\mathfrak J}$. We can then describe the tangent space $T_{[(\eta,u)]}\widetilde{\widetilde{\M}}^{\mathfrak J}$ as follows. Let $\widetilde{\M}^{\op{Morse}}(x_+,x_-)$ denote the moduli space of parametrized flow lines of $V$ asymptotic to $x_+$ and $x_-$. Let $\widetilde{\M}^{\op{Morse}}(\pi^{-1}(x_+),\pi^{-1}(x_-))$ denote the moduli space of parametrized flow lines of  $\widetilde{V}$ asymptotic to points in $\pi^{-1}(x_+)$ and $\pi^{-1}(x_-)$. Thus
\[
\widetilde{\M}^{\op{Morse}}(x_+,x_-) = \widetilde{\M}^{\op{Morse}}(\pi^{-1}(x_+),\pi^{-1}(x_-))/S^1.
\]
It follows from equation \eqref{eqn:efl} that $\widetilde{\M}^{\op{Morse}}(x_+,x_-)$ is identified with a complex linear subspace of a projective space (minus the points $x_+$ and $x_-$ when these are distinct) via the map sending $\eta\mapsto\eta(0)$. Thus $\widetilde{\M}^{\op{Morse}}(x_+,x_-)$ has a canonical complex orientation.

Let
\[
W\subset T_\eta\widetilde{\M}^{\op{Morse}}(\pi^{-1}(x_+),\pi^{-1}(x_-))
\]
be a lift of $T_{\pi\circ\eta}\widetilde{\M}^{\op{Morse}}(x_+,x_-)$. The derivative of the parametrized Cauchy-Riemann equation \eqref{eqn:familycr} at $(\eta,u)$ is described by an operator
\begin{equation}
\label{eqn:famreg}
W\oplus L^{p,\delta}_1(u^*T(\R\times Y)) \oplus{\mathbb C}\oplus{\mathbb C} \longrightarrow L^{p,\delta}(u^*T(\R\times Y)).
\end{equation}
This operator is defined analogously to \eqref{eqn:dersec}, with the domain extended by $W$ to allow for $\eta$ to move in its moduli space of Morse flow lines. As in Proposition~\ref{prop:tanx}, if ${\mathfrak J}$ is generic, then for each $(\eta,u)$ the operator \eqref{eqn:famreg} is surjective, which implies that the moduli space $\widetilde{\widetilde{\M}}^{\mathfrak J}((x_+,\gamma_+), (x_-,\gamma_-))$ is naturally a smooth manifold whose tangent space is the kernel of \eqref{eqn:famreg}. There is then a canonical exact sequence
\begin{equation}
\label{eqn:fces}
0 \longrightarrow \Ker(D_u) \longrightarrow T_{[(\eta,u)]}\widetilde{\widetilde{\mc{M}}}^{\mathfrak J}((x_+,\gamma_+),(x_-,\gamma_-)) {\longrightarrow} W \oplus \bbC\oplus\bbC {\longrightarrow} \Coker(D_u) \longrightarrow 0.
\end{equation}
Here $D_u$ as in \eqref{eqn:lincr} is now the derivative of the parametrized Cauchy-Riemann equation \eqref{eqn:familycr} with respect to deformations of $u$ alone.

Since $W$ has a canonical orientation as noted above, it follows that the exact sequence \eqref{eqn:fces} determines an orientation of $\widetilde{\widetilde{\M}}^{\mathfrak J}$ with values in $e_+^*\mc{O}_{(x_+,\gamma_+)}\tensor e_-^*\mc{O}_{(x_-,\gamma_-)}$, by the same argument as used in \S\ref{sec:oms} to prove Proposition~\ref{prop:orientations}(b). We then orient $\widetilde{\M}^{\mathfrak J}=\widetilde{\widetilde{\M}}^{\mathfrak J}/\R$ using the ``$\R$-direction first'' convention, and finally orient $\M^\frak{J}=\widetilde{\M}^\frak{J}$ using the ``$\R$-direction first'' convention again.
\end{proof}

Analogously to Definition~\ref{def:regular}, we formulate:

\begin{definition}
\label{def:famreg}
A pair $(\eta,u)\in \widetilde{\widetilde{\mc{M}}}^{\mathfrak J}((x_+,\gamma_+),(x_-,\gamma_-))$, or its equivalence class in $\mc{M}^{\mathfrak J}((x_+,\gamma_+),(x_-,\gamma_-))$, is {\em regular\/} if the operator \eqref{eqn:famreg} is surjective.
\end{definition}

We can now prove the following lemma which is needed in \S\ref{sec:chaut}. To prepare to state the lemma, let $x\in\op{Crit}(f)$ and let $z$ be a lift of $x$ to $ES^1$. Given a Reeb orbit $\gamma$, define a diffeomorphism
\[
\rho_z:\overline{\gamma} \stackrel{\simeq}{\longmapsto} \overline{(x,\gamma)}
\]
by sending $p\mapsto [(z,p)]$. By the definitions of the local systems $\mc{O}_\gamma$ and $\mc{O}_{(x,\gamma)}$, the diffeomorphism $\rho_z$ has a canonical lift\footnote{If $z'$ is obtained from $z$ by rotating $1/d(\gamma)$ around the circle, then the diffeomorphisms $\rho_{z'}$ and $\rho_{z}$ agree; but the isomorphisms $\widetilde{\rho_z}$ and $\widetilde{\rho_{z'}}$ agree if and only if $\gamma$ is good.} to an isomorphism
\begin{equation}
\label{eqn:clift}
\widetilde{\rho_z}: \mc{O}_\gamma \stackrel{\simeq}{\longrightarrow} \mc{O}_{(x,\gamma)}.
\end{equation}

\begin{lemma}
\label{lem:chautor}
 Let $\eta$ be the constant flow line of $\widetilde{V}$ from $z$ to itself. Given ${\mathfrak J}$, define $\J=\{J_t\}$ by $J_t={\mathfrak J}_{t,z}$. Suppose that $\J$ is generic in the sense of Proposition~\ref{prop:transversality}. Let $\gamma_+,\gamma_-$ be distinct Reeb orbits. Then the diffeomorphism
 \begin{equation}
 \label{eqn:eqor}
\M^\J_d(\gamma_+,\gamma_-) \stackrel{\simeq}{\longrightarrow} \M_d^{\mathfrak J}((x,\gamma_+),(x,\gamma_-))
 \end{equation}
sending $[u]\mapsto [(\eta,u)]$ is orientation preserving with respect to \eqref{eqn:clift}.
\end{lemma}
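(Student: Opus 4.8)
The plan is to trace through the two orientation constructions — the one on $\M^\J_d(\gamma_+,\gamma_-)$ from \S\ref{sec:oms} and the one on $\M^{\mathfrak J}_d((x,\gamma_+),(x,\gamma_-))$ from \S\ref{sec:familysigns} — and observe that for the constant flow line $\eta$ they are assembled from literally the same data. First I would work at the level of the moduli spaces before taking the $\R$- and $S^1$-quotients. Because $\eta\equiv z$ is constant, equation \eqref{eqn:familycr} is exactly equation \eqref{eqn:cr} for the $S^1$-family $\J=\{J_t\}=\{\mathfrak J_{t,z}\}$, so the set of pairs $(\eta,u)\in\widehat{\M}^{\mathfrak J}((x,\gamma_+),(x,\gamma_-))$ with $\eta\equiv z$ is canonically $\{z\}\times\widetilde{\widetilde{\M}}^\J(\gamma_+,\gamma_-)$. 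Since $S^1$ acts freely on $ES^1$, this condition is a global slice for the $S^1$-action \eqref{eqn:s1actm} on $\widehat{\M}^{\mathfrak J}$, so it identifies $\widetilde{\widetilde{\M}}^{\mathfrak J}((x,\gamma_+),(x,\gamma_-))=\widehat{\M}^{\mathfrak J}/S^1$ with $\widetilde{\widetilde{\M}}^\J(\gamma_+,\gamma_-)$; under this identification the linearized operator $D_u$ of \eqref{eqn:lincr} and the asymptotic maps $\sigma_\pm,\tau_\pm$ of \S\ref{sec:tms} go over to the operator and maps appearing in \eqref{eqn:fces}, and $e_\pm$ composed with \eqref{eqn:eqor} equals $\rho_z\circ e_\pm$.

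Next I would compare the two defining exact sequences. In \eqref{eqn:fces} the extra summand is $W$, a lift of $T_{\pi\circ\eta}\widetilde{\M}^{\op{Morse}}(x_+,x_-)$; but here $x_+=x_-=x$ and $\eta$ is the constant flow line, and since the only flow line of $V$ from $x$ to $x$ is the constant one, $\widetilde{\M}^{\op{Morse}}(x,x)$ is a single point, so $W=0$. Hence \eqref{eqn:fces} collapses to precisely the exact sequence \eqref{eqn:ces1}, compatibly with the identifications of the previous paragraph. The orientation of $\widetilde{\widetilde{\M}}^{\mathfrak J}$ is defined from \eqref{eqn:fces} together with the canonical (complex) orientation of $W\oplus\bbC\oplus\bbC$ — which, since $W=0$, is just the canonical orientation of $\bbC\oplus\bbC$ used to orient $\widetilde{\widetilde{\M}}^\J$ from \eqref{eqn:ces1} — so the diffeomorphism $\widetilde{\widetilde{\M}}^\J(\gamma_+,\gamma_-)\simeq\widetilde{\widetilde{\M}}^{\mathfrak J}((x,\gamma_+),(x,\gamma_-))$ is orientation preserving at this stage. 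Moreover the induced identification of the coefficient local systems is exactly $\widetilde{\rho_z}$, because $\mc{O}_{(x,\gamma_\pm)}$ is by construction the descent of the pullback of $\mc{O}_{\gamma_\pm}$ and $\widetilde{\rho_z}$ is its canonical lift.

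Finally I would descend through the remaining quotients. On the $\J$ side one passes from $\widetilde{\widetilde{\M}}^\J$ to $\widetilde{\M}^\J$ by the ``$\R$-direction first'' convention for domain translation \eqref{eqn:ractiondomain}, and then to $\M^\J$ by the same convention for target translation; on the $\mathfrak J$ side one carries out the analogous two quotients for $\widetilde{\widetilde{\M}}^{\mathfrak J}\to\widetilde{\M}^{\mathfrak J}\to\M^{\mathfrak J}$. The only compatibility to check is that the domain-translation $\R$-action on the $\mathfrak J$ side — simultaneous translation of the parameter $s$ in $\eta$ and in $u$ — restricts on the slice $\eta\equiv z$ to domain translation of $u$ alone, which is immediate since $\eta$ is constant; the target-translation actions agree verbatim. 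Hence \eqref{eqn:eqor} is orientation preserving with respect to \eqref{eqn:clift}. I expect the only genuinely delicate point — the main obstacle — to be pinning down that $W=0$ and that the collapse of \eqref{eqn:fces} to \eqref{eqn:ces1} matches the orientation conventions on the nose, including how the slice of the free $S^1$-action interacts with the descent defining $\mc{O}_{(x,\gamma)}$; the rest is bookkeeping with the sign conventions of \cite{td}.
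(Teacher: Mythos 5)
Your proposal is correct and follows essentially the same route as the paper's proof: pass to the diffeomorphism $\widetilde{\widetilde{\M}}^\J_d(\gamma_+,\gamma_-)\simeq\widetilde{\widetilde{\M}}^{\mathfrak J}_d((x,\gamma_+),(x,\gamma_-))$, observe that $W=0$ so the exact sequence \eqref{eqn:fces} collapses to \eqref{eqn:ces1}, and note that the same ``$\R$-direction first'' conventions govern the descent to the quotients on both sides. Your extra care about the slice $\eta\equiv z$ of the free $S^1$-action and the identification of local systems via $\widetilde{\rho_z}$ is consistent with, and fills in, what the paper leaves implicit.
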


\begin{proof}
We have a diffeomorphism
\begin{equation}
 \label{eqn:eqor2}
\widetilde{\widetilde{\M}}^\J_d(\gamma_+,\gamma_-) \stackrel{\simeq}{\longrightarrow} \widetilde{\widetilde{\M}}_d^{\mathfrak J}((x,\gamma_+),(x,\gamma_-))
 \end{equation}
sending $u\mapsto [(\eta,u)]$. Under the diffeomorphism \eqref{eqn:eqor2}, the exact sequences \eqref{eqn:ces1} and \eqref{eqn:fces} used to orient its two sides agree, since $W$ here is a $0$-dimensional vector space oriented positively. Finally, the same convention is used to pass from the orientations of the two sides of \eqref{eqn:eqor2} to the orientations of the two sides of \eqref{eqn:eqor}.
\end{proof}

\addcontentsline{toc}{section}{References}



\noindent \textsc{Michael Hutchings \\  University of California at Berkeley}\\
{\em email: }\texttt{hutching@math.berkeley.edu}\\

\noindent \textsc{Jo Nelson \\  Rice University}\\
{\em email: }\texttt{jo.nelson@rice.edu}\\


\begin{thebibliography}{99}

\bibitem{celest}V. Arnold, V. Kozlov, and A. Neishtadt. {\em Mathematical Aspects of Classical and Celestial Mechanics.}  [Dynamical systems. III]. Translated from the Russian original by E. Khukhro. Third edition. Encyclopaedia of Mathematical Sciences, 3. Springer-Verlag, 2006. 

\bibitem{banyaga-hurtubise} A. Banyaga and D. Hurtubise, {\em Cascades and perturbed Morse-Bott functions\/}, Alg. Geom. Top. {\bf 13} (2013), 237--275.

\bibitem{bh}E. Bao and K. Honda, {\em Definition of Cylindrical Contact Homology in dimension three}, J. of Topology {\bf 11} (2018), 1002--1053.

\bibitem{bh2}E. Bao and K. Honda, {\em Semi-global Kuranishi charts and the definition of contact homology}, arxiv:1512.00580

\bibitem{bourgeois} F. Bourgeois, {\em A Morse-Bott approach to contact homology\/}, from: ``Symplectic and contact topology: interactions and perspectives (Toronto, ON/Montreal, QC, 2001)'', Fields Inst. Commum. {\bf 35}, AMS (2003), 55--77.

\bibitem{bce} F. Bourgeois, K. Cieliebak and T. Ekholm, {\em A note on Reeb dynamics on the tight 3-sphere\/}, J. Mod. Dyn. {\bf 1} (2007), 597--613.

\bibitem{bee} F. Bourgeois, T. Ekholm, and Y. Eliashberg, {\em Effect of Legendrian surgery\/}, Geom. Topol. {\bf 16} (2012), 301--389.

\bibitem{behwz} F. Bourgeois, Y. Eliashberg, H. Hofer, K. Wysocki, and E. Zehnder, {\em Compactness results in symplectic field theory\/}, Geom. Topol. {\bf 7} (2003), 799-888.

\bibitem{bm} F. Bourgeois and K. Mohnke, {\em Coherent orientations in symplectic field theory\/}, Math. Z. {\bf 248} (2004), 123--146.

\bibitem{boduke} F. Bourgeois and A. Oancea, {\em Symplectic homology, autonomous Hamiltonians, and Morse-Bott moduli spaces\/}, Duke Math. J. {\bf 146} (2009), 71--174.

\bibitem{boinvent} F. Bourgeois and A. Oancea, {\em An exact sequence for contact and symplectic homology.} Invent. Math. 175 (2009), no. 3, 611-680.

\bibitem{bo12} F. Bourgeois and A. Oancea, {\em $S^1$-equivariant symplectic homology and linearized contact homology\/}, Int. Math. Res. Not. IMRN 2017, no. 13, 3849--3937.

\bibitem{ch} V. Colin and K. Honda, {\em Foliations, contact structures and their interactions in dimension three\/}, arXiv:1811.09148.

\bibitem{chz} D. Cristofaro-Gardiner, M. Hutchings, and B. Zhang, in preparation.

\bibitem{dragnev} D. Dragnev, {\em Fredholm theory and transversality for noncompact pseudoholomorphic maps in symplectizations\/}, Comm. Pure Appl. Math. {\bf 57} (2004), 726--763.

\bibitem{egh} Y. Eliashberg, A. Givental and H. Hofer, {\em Introduction to symplectic field theory\/}, Geom. Funct. Anal. 2000, Special Volume, Part II, 560--673.

\bibitem{fh} A. Floer and H. Hofer, {\em Coherent orientations for periodic orbit problems in symplectic geometry\/}, Math. Z. {\bf 212} (1993), 13--38.

\bibitem{fhs} A. Floer, H. Hofer, D. Salamon, {\em Transversality in elliptic Morse theory for the symplectic action.} Duke Math. J. \textbf{80} (1995), no. 1, 251-292.

\bibitem{frauenfelder} U. Frauenfelder, {\em The Arnold-Givental conjecture and moment Floer homology\/}, IMRN {\bf 42} (2004), 2179--2269.

\bibitem{gg} V. Ginzburg and B. G\"urel, {\em The Conley conjecture and beyond\/}, Arnold Math. J. {\bf 1} (2015), 299--337.

\bibitem{ggm} V. Ginzburg, B. G\"urel, and L. Macarini, {\em On the Conley conjecture for Reeb flows\/}, Int. J. Math. {\bf 26} (2015).

\bibitem{ghhm} V. Ginzburg, D. Hein, U. Hryniewicz, and L. Macarini, {\em Closed Reeb orbits on the sphere and symplectically degenerate maxima\/}, Acta Math. Vietnam. {\bf 38} (2013), 55--78.

\bibitem{gu} J. Gutt, {\em The positive equivariant symplectic homology as an invariant for some contact manifolds\/}, J. Symplectic Geom. {\bf 15} (2017), 1019--1069.

\bibitem{guhu} J. Gutt and M. Hutchings, {\em Symplectic capacities from positive $S^1$-equivariant symplectic homology\/}, Alg. Geom. Topol. {\bf 18} (2018), 3537--2600.

\bibitem{HWZbook} H. Hofer, K. Wysocki, E. Zehnder, \emph{Polyfold and Fredholm Theory},  Ergebnisse der Mathematik und ihrer Grenzgebiete {\bf 72}, Springer Verlag (2021), 1001 pp.

\bibitem{hm} U. Hryniewicz and L. Macarini, {\em Local contact homology and applications\/}, J. Topol. Anal. 7 (2015), no. 2, 167--238; 

\bibitem{fam} M. Hutchings, {\em Floer homology of families. I.}  
Algebr. Geom. Topol. 8 (2008), no. 1, 435--492. 

\bibitem{adlch} M. Hutchings, {\em An alternate definition of local contact homology\/}, blog post at \verb=floerhomology.wordpress/com/2015/07/16=.

\bibitem{field} M. Hutchings, {\em Embedded contact homology as a (symplectic) field theory\/}, in preparation.

\bibitem{dc} M. Hutchings and J. Nelson, {\em Cylindrical contact homology for dynamically convex contact forms in three dimensions\/}, \emph{J. Symplectic Geom.} {\bf 14\/} (2016), 983--1012.

\bibitem{td} M. Hutchings and J. Nelson, {\em Axiomatic $S^1$ Morse-Bott theory\/}, Alg. Geom. Topol. {\bf 20} (2020), 1641--1690.

\bibitem{inv} M. Hutchings and J. Nelson, {\em An integral lift of cylindrical contact homology for dynamically convex contact forms\/}, in preparation.

\bibitem{ht2} M. Hutchings and C. H. Taubes, {\em Gluing pseudoholomorphic curves along branched covered cylinders II\/}, J. Symplectic Geom. {\bf 7} (2009), 29--133.

\bibitem{cc2} M. Hutchings and C. H. Taubes, {\em Proof of the Arnold chord conjecture in three dimensions, II\/}, Geom. Topol. {\bf 17} (2013), 2601--2688.

\bibitem{is}S. Ishikawa, {\em Construction of general symplectic field theory}, arXiv:1807.09455.

\bibitem{mcd} D. McDuff, {\em A remark on the stabilized symplectic embedding problem for ellipsoids\/}, European J. of Math. {\bf 4} (2018), 356--371.

\bibitem{moreno} A. Moreno, {\em Algebraic torsion in higher-dimensional contact manifolds\/}, arXiv:1711.01562.

\bibitem{jo1} J. Nelson, {\em Automatic transversality in contact homology I: Regularity\/}, Abh. Math. Semin. Univ. Hambg. {\bf 85} (2015), 125--179.

\bibitem{jo2} J. Nelson, {\em Automatic transversality in contact homology II: filtrations and computations\/}, Proc. Lond. Math. Soc. {\bf 120} (2020), 853--917.

\bibitem{p} J. Pardon, {\em Contact homology and virtual fundamental cycles\/}, J. Amer. Math. Soc. {\bf 32} (2019), 825--919.

\bibitem{sz} D. Salamon and E. Zehnder, {\em Morse theory for periodic solutions of Hamiltonian systems and the Maslov index\/}, Comm. Pure Appl. Math. {\bf 45} (1992), 1303--1360.

\bibitem{sesm} P. Seidel and I. Smith, {\em Localization for involutions in Floer cohomology.} Geom. Funct. Anal. 20 (2010) no. 6, 1464--1501.

\bibitem{solomon} J.\ Solomon, {\em Intersection theory on the moduli space of holomorphic curves with Lagrangian boundary conditions\/}, MIT PhD thesis, arXiv:0606429.

\bibitem{wendl} C. Wendl, {\em Lectures on Symplectic Field Theory}, to appear in EMS Lectures in Mathematics series, arXiv:1612.01009v2

\bibitem{zung} J. Zung, {\em Reeb flows transverse to foliations\/}, arXiv:2103.01325.

\end{thebibliography}
\end{document}